\date{\today}
\tikzset{
	edge node/.code={%
		\expandafter\def\expandafter\tikz@tonodes\expandafter{\tikz@tonodes #1}}}
\tikzset{
	subseteq/.style={
		draw=none,
		edge node={node [sloped, allow upside down, auto=false]{$\subseteq$}}},
	Subseteq/.style={
		draw=none,
		every to/.append style={
			edge node={node [sloped, allow upside down, auto=false]{$\subseteq$}}}
	}
}
\newcommand{\fC}{{\mathfrak C}}
\newcommand{\cM}{{\mathcal M}}
\newcommand{\bbN}{{\mathbb{N}}}
\newcommand{\bbR}{{\mathbb{R}}}
\newcommand{\bbZ}{{\mathbb{Z}}}
\newcommand{\bbQ}{{\mathbb{Q}}}
\newcommand{\cB}{{\mathcal B}}
\newcommand{\restr}{\mathord{\upharpoonright}}
\newcommand{\EZ}{\mathrel{ { {\mathbb E}_0 } } }
\newcommand{\Er}{\mathrel{E}}
\newcommand{\proves}{\vdash}
\DeclareMathOperator{\tp}{{tp}}
\DeclareMathOperator{\dcl}{{dcl}}
\DeclareMathOperator{\Th}{{Th}}
\DeclareMathOperator{\Gal}{{Gal}}
\DeclareMathOperator{\cl}{{cl}}
\DeclareMathOperator{\id}{{id}}
\DeclareMathOperator{\Aut}{{Aut}}
\DeclareMathOperator{\Homeo}{{Homeo}}
\DeclareMathOperator{\Autf}{{Aut\,f}}
\DeclareMathOperator{\Core}{{Core}}
\DeclareMathOperator{\st}{{st}}
\newtheorem*{mainthm}{Main Theorem}
\newtheorem{thm}{Theorem}[section]
\newtheorem{ques}[thm]{Question}
\newtheorem{lem}[thm]{Lemma}
\newtheorem{fct}[thm]{Fact}
\newtheorem{cor}[thm]{Corollary}
\newtheorem{prop}[thm]{Proposition}
\theoremstyle{remark}
\newtheorem{rem}[thm]{Remark}
\theoremstyle{definition}
\newtheorem{dfn}[thm]{Definition}
\newtheorem*{clm*}{Claim}
\newtheorem{ex}[thm]{Example}
\newcounter{claimcounter}[thm]
\newenvironment{clm}{\stepcounter{claimcounter}{\noindent {\textbf{Claim}} \theclaimcounter:}}{}
\newenvironment{clmproof}[1][\proofname]{\proof[#1]}{\endproof}
\newcommand{\xqed}[1]{%
	\leavevmode\unskip\penalty9999 \hbox{}\nobreak\hfill
	\quad\hbox{\ensuremath{#1}}}
\author{Krzysztof Krupi\'nski}
\email[K.\ Krupi\'{n}ski]{kkrup@math.uni.wroc.pl}
\thanks{The first author is supported by the Narodowe Centrum Nauki grants no. 2015/19/B/ST1/01151 and 2016/22/E/ST1/00450}
\author{Tomasz Rzepecki}
\email[T.\ Rzepecki]{tomasz.rzepecki@math.uni.wroc.pl}
\address[K.\ Krupiński, T.\ Rzepecki]{
	Instytut Matematyczny, Uniwersytet Wrocławski\\
	pl. Grunwaldzki 2/4\\
	50-384 Wrocław, Poland
}
\thanks{The second author is supported by the Narodowe Centrum Nauki, via grant	 no. 2015/17/N/ST1/02322 and the doctoral scholarship 2017/24/T/ST1/00224} 
\title{Galois groups as quotients of Polish groups}
\date{\today}
\keywords{topological dynamics, Galois groups, Polish groups, strong types, Borel cardinality, Rosenthal compacta}
\subjclass[2010]{03C45, 54H20, 22C05, 03E15, 54H11}
\begin{document}
	\begin{abstract}
		We present the (Lascar) Galois group of {\em any} countable theory as a quotient of a compact Polish group by an $F_\sigma$ normal subgroup: in general, as a topological group, and under NIP, also in terms of Borel cardinality. This allows us to obtain similar results for arbitrary strong types defined on a single complete type over $\emptyset$. As an easy conclusion of our main theorem, we get the main result of \cite{KPR15} which says that for any strong type defined on a single complete type over $\emptyset$, smoothness is equivalent to type-definability.

		We also explain how similar results are obtained in the case of bounded quotients of type-definable groups. This gives us a generalization of a former result from \cite{KPR15} about bounded quotients of type-definable subgroups of definable groups.
	\end{abstract}
	\maketitle
	
	\section{Introduction}
	The (Lascar) Galois group of a first order theory (see Definition \ref{definition: Galois groups}) is a model-theoretic invariant, generalizing the notion of the absolute Galois group from field theory. The study of the Galois group is closely tied to the so-called strong types (see Definition \ref{definition: strong types}), which are highly relevant for generalizations of stability theory, and to model-theoretic connected group components, which for example were essential in Pillay's conjecture.

	For countable stable theories (e.g. algebraically closed fields), and, more generally, for countable G-compact theories, the Galois group is a compact Polish group. For arbitrary theories, it is still a compact topological group, but it need not be Hausdorff. So a general question is how to view Galois groups and spaces of strong types as mathematical (topological) objects and how to measure their complexity. In \cite{KPS13} it was proposed to do it via the descriptive set theoretic notion of Borel cardinality. Some deep results in this direction were obtained in \cite{KMS14, KM14, KR16}. A completely new approach via topological dynamics was developed in \cite{KP17, KPR15}. In particular, in \cite{KPR15}, it was proved that the descriptive set theoretic smoothness of a strong type defined on a single complete type over $\emptyset$ is equivalent to its type-definability. The key idea was to present the Galois group as a quotient of a compact Hausdorff group, which is interesting in its own right. However, even if the underlying theory is countable, the compact group obtained in \cite{KPR15} is not in general Polish (equivalently, metrizable), which is a serious obstacle if one wants to use it to compute Borel cardinalities of Galois groups or strong types.
	
	In this paper, we use topological dynamics for automorphism groups of suitably chosen countable models, based on the one developed in \cite{KPR15} for automorphisms of the monster model, to show that in a very strong sense (preserving much of the relation to strong type spaces, enough to estimate the Borel cardinality), the Galois group of an arbitrary countable theory is actually a quotient of a compact Polish group. We also get a similar result for any strong type defined on a single complete type over $\emptyset$.
	
	\begin{mainthm}
		\phantomsection
		\label{mainthm}
		The Galois group of a countable first order theory is the quotient of a compact Polish group by an $F_\sigma$ normal subgroup. The space of classes of a bounded invariant equivalence relation $E$ defined on single complete type over $\emptyset$ (in a countable theory) is the quotient of a compact Polish group by some subgroup (which inherits the good descriptive set theoretic properties of $E$).
	\end{mainthm}
	For the precise statement of the conclusion, see Theorem~\ref{thm:main} and Corollary \ref{cor:main theorem for Gal}. (Note that the conclusion is stronger under NIP.) For related statements, see also Theorems~\ref{thm:main_smaller} and \ref{thm:main_group}.

	\subsection*{Related work}
	As already mentioned, the present work was pre-empted by the consideration of Borel cardinalities, and, in particular, (non-)smoothness of strong types and its relation to type-definability.
	
	The equivalence of smoothness and type-definability was first conjectured for the Lascar strong type in \cite[Conjecture 1]{KPS13}, and then proved in \cite[Main Theorem A]{KMS14}. The direction was subsequently explored in \cite{KM14} and \cite{KR16} via descriptive-set-theoretical tools (related to those used in \cite{KMS14}), extending the equivalence to the so-called orbital $F_\sigma$ strong types.
	
	In \cite{KPR15}, a much more general result was obtained by using completely different methods (including substantial use of topological dynamics).
	\begin{fct}
		\label{fct:main_KPR}
		Assume that the language is countable. Let $E$ be a Borel (or, more generally, analytic) strong type on $p(\fC)$ for some $p\in S(\emptyset)$ (in countably many variables). Then exactly one of the following holds:
		\begin{itemize}[nosep]
			\item
			$E$ is relatively definable (on $p(\fC)$), smooth, and has finitely many classes,
			\item
			$E$ is not relatively definable, but it is type-definable, smooth, and has $2^{\aleph_0}$ classes,
			\item
			$E$ is neither type definable nor smooth, and it has $2^{\aleph_0}$ classes.
		\end{itemize}
	\end{fct}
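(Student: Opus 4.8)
The plan is to reduce the trichotomy to a dichotomy for subgroups of a compact group, extracted from the topological dynamics of $\Aut(\fC)$. The starting observation is that, since $E_L$ is the finest bounded invariant equivalence relation, $E \supseteq E_L$; so after fixing some $a_0 \models p$ the orbit map $\sigma\Autf(\fC) \mapsto \sigma(a_0)/E$ is a well-defined surjection $\Gal_L(T) \twoheadrightarrow p(\fC)/E$, and $p(\fC)/E \cong \Gal_L(T)/\Stab(a_0/E)$. Using the Ellis group of a suitable $\Aut(\fC)$-flow equipped with an appropriate topology (as in \cite{KP17, KPR15}), one obtains a compact Hausdorff group $G$ together with a continuous surjective homomorphism onto $\Gal_L(T)$; composing, we get a surjection $r \colon G \twoheadrightarrow p(\fC)/E$, and with $H := r^{-1}(a_0/E) \leq G$ we have $p(\fC)/E \cong G/H$.

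The first substantial step is the complexity transfer between $E$ and $H$: the logic topology on $p(\fC)/E$ is the push-forward, along this isomorphism, of the quotient topology on $G/H$, so that $E$ is type-definable iff $G/H$ is Hausdorff iff $H$ is closed; $E$ is relatively definable iff $H$ is open, which (since a relatively definable bounded equivalence relation on $p(\fC)$ has finitely many classes, by compactness) is in turn equivalent to $G/H$ being finite; and, more generally, the Borel cardinality of $E$ coincides with that of the coset equivalence relation $E_H$ of $H$ in $G$, both taken with respect to these topologies. Each of these is checked by pushing a clopen/closed set down to $p(\fC)/E$, or pulling one back.

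Granting this, the trichotomy becomes a statement about a subgroup $H$ of a compact group $G$, split according to the status of $H$. If $H$ is open, then $G/H$ is finite, $E_H$ is clopen and hence smooth, and $E$ is relatively definable with finitely many classes; this is the first case. If $H$ is closed but not open, then $G/H$ is an infinite, homogeneous, compact Hausdorff space; homogeneity (the group acts transitively on $G/H$ by homeomorphisms) rules out isolated points, for otherwise $G/H$ would be discrete and hence finite, and a nonempty compact Hausdorff space with no isolated points contains a Cantor set (build a Cantor scheme of open sets with pairwise disjoint closures, using regularity to shrink and compactness to intersect), so $G/H$ --- hence $p(\fC)/E$ --- has exactly $2^{\aleph_0}$ classes (at most that many, since $E$ is bounded), while the Hausdorff quotient $G/H$ witnesses smoothness; this is the second case. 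Finally, if $H$ is not closed, then by the Baire property of analytic sets together with Pettis' lemma $H$ is meager in its closure $\overline{H}$, and a Glimm--Effros-style construction --- choosing a null sequence inside $\overline{H}$ whose finite products lie in pairwise distinct cosets of $H$ in a manner controlled by symmetric difference --- produces a continuous reduction of $E_0$ to $E_H$; hence $E$ is non-smooth with continuum-many classes, and not type-definable (so, a fortiori, not relatively definable). Mutual exclusivity is then clear --- the number of classes separates the first case from the other two, and smoothness separates the second from the third --- and exhaustiveness holds because $H$ is open, closed-but-not-open, or not closed.

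The main obstacle is that the compact group $G$ supplied by topological dynamics need not be metrizable, which blocks both the very formulation of Borel cardinality and the $E_0$-reduction in the non-closed case. To get around this, I would use that the language and the tuple of variables of $p$ are countable and that $E$ is analytic: this forces $r$, and hence $H$, to factor through a metrizable compact quotient $\bar G$ of $G$, in which $G/H$ becomes a standard Borel space, $E_H$ a genuine analytic equivalence relation on a Polish group, and the Glimm--Effros argument runs as usual. Making this reduction to a metrizable quotient precise --- and thereby connecting the model-theoretic notions to honest Borel cardinality --- is the delicate technical heart of the argument; it is exactly the point that the present paper sharpens, by arranging that the relevant compact group be Polish from the start.
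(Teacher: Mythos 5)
You have the right high-level strategy, and it is essentially the paper's: realize $p(\fC)/E$ as $G/H$ for $H$ a subgroup of a compact group $G$ mapping onto $\Gal(T)$, transfer the complexity between $E$ and $H$, and then invoke the trichotomy for subgroups of compact \emph{Polish} groups (which is Fact~\ref{fct:trich_polish}, and your case analysis --- open / closed-not-open / not-closed, with Pettis, perfect sets, and a Glimm--Effros-style $E_0$-embedding --- is precisely its content). You also correctly identify the one real obstacle: the group $G$ coming from the $\Aut(\fC)$-dynamics of \cite{KPR15} is a subquotient of $S_m(\fC)^{S_m(\fC)}$ and is nowhere near metrizable, so ``Borel'', ``analytic'', ``null sequence'' and ``Baire property'' all lose their standard meaning there.

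Where the proposal has a genuine gap is in the proposed fix. You write that countability of the language and analyticity of $E$ ``force $r$, and hence $H$, to factor through a metrizable compact quotient $\bar G$ of $G$''. This is not established, and it is not clear it is even true. What would be needed is a \emph{normal} subgroup $N \trianglelefteq G$ with $N \subseteq H$ (or at least $N \subseteq \ker r$) such that $G/N$ is metrizable; nothing about countability of the language produces such an $N$ from the bare compact group $G$, because $G$ itself was built from a flow on the non-Polish space $S_m(\fC)$. The actual proof does not try to patch the $\Aut(\fC)$-Ellis group after the fact. Instead it changes the dynamical system at the outset: it works with $(\Aut(M), S_m(M))$ for a \emph{countable ambitious} model $M$ (Proposition~\ref{prop:amb_exist}), so that the phase space $S_m(M)$ is compact Polish from the start. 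One then proves --- and this is the technical core of Section~\ref{sec:top_dyn_to_Polish} --- that $\overline{u\cM}$ is a continuous image of a Polish space, that $u\cM/H(u\cM)D$ is compact Polish (Proposition~\ref{prop:uM/HuMD_Polish}), and that $u\cM/H(u\cM)\Core(D)$ is a compact Polish \emph{group} (Corollary~\ref{cor:Polish_quotient_Core(D)}). That $H(u\cM)\Core(D) \leq \ker r$ is then checked directly (Propositions~\ref{prop:H(uM)_in_ker}, \ref{lem:r_restr_to_top_quot}, Corollary~\ref{cor:polish_quotient}). Having done all that, you are exactly in the situation you describe, and Fact~\ref{fct:trich_polish} finishes the job. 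So the ``delicate technical heart'' you flag is not filled by a factoring argument; it is filled by building a different, already-Polish $\hat G$, and a complete proof must do this (or something equally substantial).

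Two smaller points. First, you assert that the Borel cardinality of $E$ \emph{coincides} with that of $E_H$; Theorem~\ref{thm:main} only gives $E_H \leq_B E$ unconditionally (the reverse reduction needs tameness/NIP, and whether it holds in general is Question~\ref{ques:without_NIP}). Fortunately, the one-sided reduction is all the trichotomy needs: non-smoothness of $E_H$ passes to $E$, and in the closed case smoothness of $E$ comes directly from type-definability, not from $E_H$. Second, for the middle case you need $\lvert p(\fC)/E \rvert \le 2^{\aleph_0}$ to conclude exactly $2^{\aleph_0}$ classes; you invoke boundedness, which is fine for a countable language and countable tuples, but once $\hat G$ is Polish this is automatic from $\lvert \hat G \rvert \le 2^{\aleph_0}$, which is cleaner.
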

	\begin{proof}
		This is \cite[Corollary 6.1]{KPR15}.
	\end{proof}
	(See also \cite[Corollary 4.10]{Rz16} for a generalization of Fact~\ref{fct:main_KPR} to a certain class of strong types not necessarily defined on a single $p(\fC)$.)
	
	In the proof of Fact~\ref{fct:main_KPR}, the main idea was the following: we consider the natural map $\Gal(T)\to p(\fC)/E$, find a compact Hausdorff group $G$ whose quotient is $\Gal(T)$ and such that the equivalence relation on $G$ induced from equality via the composed map $G\to p(\fC)/E$ is closed if and only if $E$ is type-definable, along with several other technical properties. The group $G$ constructed there is a priori very large (and not metrizable), so the standard notions of smoothness and a Borel equivalence relation do not work as well as we would like, and thus a weak analogue was used instead (using the Souslin operation and the class of sets with the Baire property). Furthermore, the aforementioned equivalence relation on $G$ is the coset equivalence relation of some $H\leq G$. Because of this, it was possible to use classical results related to compact topological groups (similar to Fact~\ref{fct:trich_polish} below) to derive Fact~\ref{fct:main_KPR}.
	Much of the difficulty of the proof lies in the construction of the group $G$. It is performed using topological dynamical tools for the automorphism group of the monster model, based on the ones developed in \cite{KP17} for definable groups.
	
	Broadly, we could say that the main goal of this paper is to replace the group $G$ above by a compact Polish group, and to obtain stronger restrictions on the Borel cardinalities of the Galois group and strong type spaces. Somewhat more precisely, we show that any strong type on a $p(\fC)$ is in a strong (particularly under NIP) sense equivalent to the relation of lying in the same left coset of some subgroup of a compact Polish group (Theorem~\ref{thm:main}). One can hope that this could be helpful in further study of Borel cardinalities of strong types (e.g.\ related to Conjectures 2 and 5.7 in \cite{KPS13}). In any case, we obtain an alternative (and arguably, more natural) proof of Fact~\ref{fct:main_KPR} by reducing it to the following trichotomy.
	
	\begin{fct}
		\label{fct:trich_polish}
		Suppose $G$ is a compact Polish group and $H\leq G$. Suppose in addition that $H$ has the strict Baire property in $G$, i.e. $H \cap C$ has the Baire property in $C$ for any closed $C \subseteq G$ (which is for example the case when $H$ is Borel or, more generally, analytic). Then exactly one of the following conditions holds:
		\begin{enumerate}
			\item
			$H$ is clopen in $G$, and so $[G:H]$ is finite,
			\item
			$[G:H]=2^{\aleph_0}$, $H$ is closed in $G$ (and so the relation of lying in the same left coset of $H$ is smooth in $G$),
			\item
			$[G:H]=2^{\aleph_0}$ and the relation of lying in the same left coset of $H$ is not smooth in $G$ (and hence $H$ is not closed).
		\end{enumerate}
	\end{fct}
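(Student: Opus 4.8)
On their face the three conditions are mutually exclusive: (1) forces $[G:H]$ to be finite, whereas (2) and (3) both assert $[G:H]=2^{\aleph_0}$, and (2) asserts that the relation $E_H$ of lying in the same left coset of $H$ is smooth while (3) asserts it is not. So the content is to prove that at least one of them always holds. The plan is to run two successive dichotomies for Baire-property subgroups of Polish groups: a Pettis-type \emph{meager-versus-open} alternative, and then, inside the meager case, a \emph{closed-versus-non-closed} alternative.

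\emph{Preliminaries and the open case.} Taking $C=G$ in the definition of the strict Baire property, $H$ has the Baire property in $G$. If $G$ is finite then $H$ is clopen and we are in case (1); so assume $G$ is infinite. An infinite compact group has no isolated points (otherwise it is discrete, hence finite), so $G$ is a nonempty perfect Polish space, $|G|=2^{\aleph_0}$, and in particular $[G:H]\le 2^{\aleph_0}$. Now suppose $H$ is non-meager. By the Pettis theorem $H=H^{-1}H$ contains a neighbourhood of the identity, so $H$ is open, hence clopen (every coset $gH$ is open, so $G\setminus H$ is open), and by compactness of $G$ it has finitely many cosets: this is case (1).

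\emph{The closed case.} From now on assume $H$ is meager; then $H$ is not open (nonempty open sets are non-meager in a Baire space). Suppose in addition that $H$ is closed. Then $E_H=\{(x,y):x^{-1}y\in H\}$ is the preimage of $H$ under the continuous map $(x,y)\mapsto x^{-1}y$, hence closed; moreover the quotient map $G\to G/H$ is open and $G$ is compact metrizable, so $G/H$ is compact metrizable and $E_H$ is smooth (witnessed by $G\to G/H$). Since $H$ is not open, $G/H$ is infinite, and being a continuous transitive image of $G$ it is homogeneous, hence perfect, hence of cardinality $2^{\aleph_0}$; so $[G:H]=2^{\aleph_0}$ and we are in case (2).

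\emph{The non-closed case (the crux).} Finally, suppose $H$ is meager and not closed; I will show that $E_H$ is non-smooth and $[G:H]=2^{\aleph_0}$, so that we are in case (3). Pass to $P:=\overline{H}$, a closed---hence Polish---subgroup of $G$ in which $H$ sits densely, with $H\ne P$; here $H$ still has the Baire property, this time by applying the strict Baire property to the closed set $C=\overline{H}$. Running the Pettis alternative inside $P$: if $H$ were non-meager in $P$ it would be open, hence clopen, in $P$, and a dense clopen subset of $P$ is all of $P$---contradiction; so $H$ is meager in $P$. Since the restriction of a smooth equivalence relation to a (Borel) subspace is smooth and $E_H\restr P$ is the same-left-coset relation of $H$ in $P$, it suffices to prove: a dense meager Baire-property subgroup $H$ of a Polish group $P$ has non-smooth coset relation with $2^{\aleph_0}$ classes. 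For this the plan is a Cantor-scheme (fusion) recursion producing a continuous injection $\phi\colon 2^{\mathbb{N}}\to P$ such that $\phi(x)^{-1}\phi(y)\in H$ if and only if $x\mathrel{E_0}y$, using density of $H$ to remain inside $H$ along the finitely-supported stages of the construction and comeagerness of $P\setminus H$ to fall outside $H$ in the limit whenever $x$ and $y$ disagree infinitely often. Such a $\phi$ is a continuous reduction of $E_0$ to $E_H\restr P$, whence $E_H\restr P$---and so $E_H$---is non-smooth (because $E_0$ is), and the induced injection $2^{\mathbb{N}}/E_0\hookrightarrow P/H\subseteq G/H$ gives $[G:H]\ge 2^{\aleph_0}$, hence $[G:H]=2^{\aleph_0}$. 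The non-smoothness taken alone is soft: density of $H$ makes $E_H\restr P$ generically ergodic---every Baire-property $E_H$-invariant subset of $P$ is meager or comeager---so any reduction of $E_H\restr P$ to equality on a Polish space would be constant on a comeager set, forcing some coset $gH$ to be comeager, which is absurd as $gH$ is meager. The real obstacle is therefore the construction of $\phi$, i.e.\ controlling how the infinitely many approximation steps interact with the group operation in the non-abelian case; this is classical, generalizing the non-smoothness of the Vitali relation $\mathbb{R}/\mathbb{Q}$ and belonging to the topological Glimm--Effros circle of ideas (and, when $H$ is analytic, the equality $[G:H]=2^{\aleph_0}$ also follows directly from the Silver--Burgess dichotomy).
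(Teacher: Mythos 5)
Your proposal is correct in structure and reaches the same conclusion, but it departs from the paper's route in one interesting place and leaves one small gap. Like the paper, you use the Pettis theorem to split into ``non-meager, hence open'' versus ``meager, hence not open.'' For the step ``not closed $\Rightarrow$ not smooth,'' the paper simply invokes D.~Miller's theorem (\cite[Theorem 1]{Mil77}), which says that a Baire-property subgroup of a Polish group whose cosets are separated by countably many invariant Borel sets must be closed. You instead reprove (a version of) this: pass to $P=\overline H$, use the strict Baire property to keep the Baire property in $P$, run Pettis again in $P$ to get $H$ meager in $P$, and then argue by generic ergodicity (the topological zero--one law: every invariant Baire-property set is meager or comeager when all classes are dense) that a reduction of $E_H\!\restriction\! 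P$ to equality would force some coset to be comeager, contradicting meagerness. This is a correct, self-contained alternative to citing Miller, and the reduction to the dense-subgroup case via $\overline H$ is exactly where the \emph{strict} Baire property (rather than just the Baire property) is used, which is a nice observation.

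The gap is in the cardinality for case~(3). For a closed meager $H$ your perfect-quotient argument works, but when $H$ is not closed the quotient $G/H$ is not Polish and you cannot argue via homogeneity of a Polish quotient. You offer two fixes: a Cantor-scheme construction of a continuous reduction of $E_0$ into $E_H\!\restriction\! P$ (not carried out, only flagged as ``classical''), or Silver--Burgess for analytic $H$ (which does not cover the general strict-Baire-property hypothesis of the statement). The paper handles this uniformly by observing that $H$ meager implies $E_H$ meager (preimage of $H$ under the open continuous map $(x,y)\mapsto x^{-1}y$) and then citing Mycielski's theorem (\cite[Theorem 5.3.1]{SuG}), which immediately gives a perfect set of pairwise $E_H$-inequivalent points. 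Substituting that one-line application of Mycielski for your Cantor scheme closes the gap, makes the cardinality argument uniform across cases~(2) and~(3), and renders your $E_0$-embedding superfluous for this statement (though it would prove the stronger fact $E_0\le_B E_H$).
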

	\begin{proof}
		By the Pettis theorem \cite[Theorem 9.9]{Kec95}, 
		if $H$ has the Baire property but is not open, it must be meager, and hence, by Mycielski's theorem \cite[Theorem 5.3.1]{SuG}, $[G:H]=2^{\aleph_0}$. On the other hand, from \cite[Theorem 1]{Mil77}, we deduce that if $G/H$ is smooth (equivalently, there is a countable family of Borel sets separating left cosets of $H$), then $H$ must be closed. The fact that closedness implies smoothness is well-known (e.g. see \cite[Corollary 1.32]{KMS14}).
	\end{proof}

	We will also briefly explain how our methods can be adapted to show a variant of Fact~\ref{fct:main_KPR} for arbitrary type-definable groups (which in \cite{KPR15} was only shown for type-definable subgroups of definable groups), in the form of Corollary \ref{cor:main_group}.

	\subsection*{Structure of the paper}

	Section \ref{section: preliminaries} contains basic definitions and facts. 
	
	In Section \ref{section: relations coarser than the Kim-Pillay strong type}, we give a simple proof of the main theorem for strong types coarser than Kim-Pillay strong type. The point is that for such strong types we do not need to develop and use the machinery related to topological dynamics. Instead, we use the Kim-Pillay Galois group, and then focus only on the descriptive set theoretic aspect of the proof. Also, the NIP assumption in the ``moreover part" of Theorem~\ref{thm:main} can be dropped for strong types coarser than KP strong type. 
	
	Section \ref{section: topological dynamics} collects various known definitions and facts from topological dynamics in the form suitable for applications in further sections. In Subsections \ref{section: Rosenthal compacta} and \ref{section: tame systems}, we take the opportunity and present more than is needed in our main applications (where only metrizable systems are used), providing in particular precise references to topological dynamics papers, which we hope will be helpful for future applications of tame systems in model theory and can serve as a reference.
	
	In Section \ref{section: independence, tameness and ambition}, we recall and slightly develop the correspondence between tameness in topological dynamics and NIP in model theory. We also introduce a new notion of an ambitious model, which is essential in the main theorem. 
	
	Section \ref{sec:top_dyn_to_Polish} is essentially new. It contains a general topological dynamical development, the main outcome of which is a construction of a Polish compact group associated with a given metrizable dynamical system.
	
	In Subsection \ref{subsection: adaptation from KPR15}, we adapt the dynamics developed in \cite{KPR15} for the group of automorphisms of a monster model to the dynamics of the groups of automorphisms of countable ambitious models. Finally, in Subsection \ref{subsection: proof of the main theorem}, we give a proof of our main theorem (i.e. Theorem ~\ref{thm:main}), using the theory developed in Sections \ref{sec:top_dyn_to_Polish} and \ref{subsection: adaptation from KPR15}.
	
	In Subsection \ref{subsection: smaller domains}, we extend the context of Theorem~\ref{thm:main} to strong types restricted to appropriate type-definable subsets of the domain. In Subsection \ref{subsection: type-definable groups}, we briefly explain how the method of the proof of Theorem~\ref{thm:main} adapts to show a variant this theorem for bounded quotients of type-definable groups.
	
	In the appendix, we compute the Ellis group of the the flow $(\Aut(M), S_m(M))$ for $M$ being the unique countable models of certain non-G-compact $\omega$-categorical theories from \cite{CLPZ01} and \cite{KPS13} and $m$ being an enumeration of $M$ ($S_m(M)$ denotes the space of complete types over $M$ extending $\tp(m/\emptyset)$). Using this together with our main theorem, we compute the Galois groups and their Borel cardinalities in these examples, confirming what is claimed in \cite[Remark 5.3]{KPS13} (via different methods).

	\section{Preliminaries}\label{section: preliminaries}
	In this section, we recall basic definitions related to Borel equivalence relations in model theory. For a more detailed exposition, one can refer to \cite{KPS13}, \cite{CLPZ01}, \cite{KMS14}, \cite{KR16}, or \cite{KPR15}.

	\subsection{Topology}
	
	In this paper, compact spaces are not Hausdorff by definition, so we will add the adjective ``Hausdorff" whenever it is needed.
	
	Recall that for a compact Hausdorff space $X$ the following conditions are equivalent:
	\begin{itemize}
		\item $X$ is second countable,
		\item $X$ is is metrizable,
		\item $X$ is Polish (i.e. separable and completely metrizable).
	\end{itemize}
	
	\begin{fct}\label{fct: preservation of metrizability}
		Metrizability is preserved under continuous surjections between compact, Hausdorff spaces.
	\end{fct}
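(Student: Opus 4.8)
The plan is to use the characterization of metrizability for compact Hausdorff spaces recalled just above: it suffices to show that the image $Y$ of a continuous surjection $f \colon X \to Y$ is second countable, knowing that $X$ is second countable (being compact, Hausdorff and metrizable). The first observation I would make is that $f$ is automatically a closed map, since a continuous map from a compact space to a Hausdorff space is closed. In the form I will actually use this: for every $y \in Y$ and every open $U \subseteq X$ with $f^{-1}(y) \subseteq U$, there is an open neighbourhood $V$ of $y$ with $f^{-1}(V) \subseteq U$ — the usual "tube-lemma" consequence of closedness of $f$ together with the fact that the fibres $f^{-1}(y)$ are compact.

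Next I would fix a countable basis $\mathcal{U} = \{U_n : n \in \bbN\}$ of $X$, which (replacing it by the collection of finite unions of its members) we may assume is closed under finite unions. For each $n$ set $W_n := Y \setminus f(X \setminus U_n)$; since $f$ is closed, each $W_n$ is open in $Y$. The claim is that $\{W_n : n \in \bbN\}$ is a basis of $Y$, and this is the heart of the argument. Given $y \in Y$ and an open set $V \ni y$, the fibre $f^{-1}(y)$ is a compact subset of the open set $f^{-1}(V)$, hence is covered by finitely many members of $\mathcal{U}$ contained in $f^{-1}(V)$; their union is some $U_n \in \mathcal{U}$ with $f^{-1}(y) \subseteq U_n \subseteq f^{-1}(V)$. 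From $f^{-1}(y) \subseteq U_n$ one gets $y \notin f(X \setminus U_n)$, i.e.\ $y \in W_n$. From $U_n \subseteq f^{-1}(V)$ one gets $X \setminus f^{-1}(V) \subseteq X \setminus U_n$, hence, using surjectivity of $f$, $Y \setminus V = f(X \setminus f^{-1}(V)) \subseteq f(X \setminus U_n)$, so $W_n \subseteq V$. Thus $y \in W_n \subseteq V$, the claim follows, and $Y$ is second countable, hence metrizable.

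The only mildly delicate point — the one place where compactness is genuinely used rather than just formal manipulation — is the verification that the $W_n$ form a basis, and specifically the step where compactness of the fibre $f^{-1}(y)$ is combined with closedness of $f$ to squeeze a single basic open set between $f^{-1}(y)$ and $f^{-1}(V)$. Everything else is bookkeeping with images and preimages. (If one prefers, there is an equally short alternative route: $g \mapsto g \circ f$ is an isometric embedding of $C(Y)$ into the separable Banach space $C(X)$ — separability of $C(X)$ being another standard consequence of compact metrizability of $X$ — so $C(Y)$ is separable, and a countable dense subset of $C(Y)$ separates the points of $Y$ and therefore yields a topological embedding of $Y$ into the Hilbert cube, whence $Y$ is metrizable.)
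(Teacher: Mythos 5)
Your argument is correct and complete. The paper itself does not prove this fact but simply cites \cite[Theorem 4.4.15]{Eng89}; you have reconstructed what is essentially the standard textbook proof behind that citation. The core of your argument — building the candidate countable basis $\{W_n\}$ of $Y$ from a countable basis of $X$ closed under finite unions via $W_n = Y \setminus f(X\setminus U_n)$, using closedness of $f$ (automatic from compactness of $X$ and Hausdorffness of $Y$) to see that each $W_n$ is open, and then using compactness of the fibre $f^{-1}(y)$ to squeeze some $U_n$ between $f^{-1}(y)$ and $f^{-1}(V)$ — is exactly the right combination of the tube lemma with second countability. One small stylistic note: you do not need to invoke the Urysohn metrization theorem explicitly, since the paper already records (just above this Fact) the equivalence of second countability, metrizability, and Polishness for compact Hausdorff spaces, and you are entitled to use that. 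Your alternative route via separability of $C(Y)\hookrightarrow C(X)$ and embedding into the Hilbert cube is also valid and perhaps even shorter, though it trades elementary point-set reasoning for a small amount of functional analysis. Either argument would serve as a self-contained replacement for the citation.
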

	\begin{proof}
		This follows from \cite[Theorem 4.4.15]{Eng89}.
	\end{proof}
	
	\begin{dfn}
	A surjection $f \colon X \to Y$ between topological spaces is said to be a {\em topological quotient map} if it has the property that a subset $A$ of $Y$ is open [closed] if an only if $f^{-1}[A]$ is open [closed]. This is equivalent to saying that the induced bijection $X/E \to Y$ is a homeomorphism, where $E$ in the equivalence relation of lying in the same fiber of $f$ and $X/E$ is equipped with the quotient topology.
	\xqed{\lozenge}
	\end{dfn}
	
	The next remark follows from the fact that continuous functions between compact Hausdorff spaces are closed maps.
	
	\begin{rem}\label{rem: continuous surjection is closed}
		A continuous surjection between compact Hausdorff spaces is a quotient topological map.
		\xqed{\lozenge}
	\end{rem}
	
	\begin{fct}
		\label{fct:quotient_by_closed_subgroup}
		If $G$ is a topological group (i.e.\ a group equipped with possibly non-Hausdorff topology with respect to which the group operations are continuous) and $H\leq G$, then $G/H$ (with the quotient topology) is Hausdorff if and only if $H$ is closed in $G$.
	\end{fct}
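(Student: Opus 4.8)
The plan is to exploit two standard features of topological groups that survive the absence of the Hausdorff axiom: the quotient map $\pi\colon G\to G/H$, $\pi(g)=gH$, is open, and the map $m\colon G\times G\to G$, $m(x,y)=x^{-1}y$, is continuous. First I would record the openness of $\pi$: for open $U\subseteq G$ we have $\pi^{-1}[\pi[U]]=UH=\bigcup_{h\in H}Uh$, which is open as a union of left translates of $U$ (translations are homeomorphisms in any topological group in the sense considered here, no separation needed), so $\pi[U]$ is open in the quotient topology by definition.

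For the implication ``$G/H$ Hausdorff $\Rightarrow$ $H$ closed'', note that a Hausdorff space is in particular $T_1$, so the singleton $\{eH\}$ is closed in $G/H$; since $\pi$ is continuous and $\pi^{-1}[\{eH\}]=H$, it follows that $H$ is closed in $G$. This direction is immediate.

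For the converse, assume $H$ is closed and fix distinct cosets $aH\neq bH$, so $a^{-1}b\notin H$, i.e.\ $(a,b)\in m^{-1}[G\setminus H]$. Since $G\setminus H$ is open and $m$ is continuous, $m^{-1}[G\setminus H]$ is an open neighbourhood of $(a,b)$, so there are open $U\ni a$ and $V\ni b$ with $U\times V\subseteq m^{-1}[G\setminus H]$, that is $U^{-1}V\cap H=\emptyset$. One checks directly that $U^{-1}V\cap H=\emptyset$ is equivalent to $UH\cap VH=\emptyset$, i.e.\ to $\pi[U]\cap\pi[V]=\emptyset$; and $\pi[U],\pi[V]$ are open (because $\pi$ is open) and contain $aH,bH$ respectively. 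Hence $G/H$ is Hausdorff.

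There is essentially no serious obstacle; the only point deserving a moment's care is to confirm that both the openness of $\pi$ and the separation argument in the converse go through without assuming $G$ itself is Hausdorff, which they do, since neither step separates points of $G$ — they only use continuity of the group operations and the homeomorphism property of translations.
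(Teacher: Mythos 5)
Your proof is correct and complete. The paper itself does not give an argument here --- it simply cites Bourbaki (\cite[III.2.5, Proposition 13]{NB66}) --- and your argument is precisely the standard one: openness of the quotient map via right-translation, $T_1$ at the identity coset for the forward direction, and continuity of $(x,y)\mapsto x^{-1}y$ applied to the open set $G\setminus H$ for the converse. All three steps, as you note, avoid any appeal to separation in $G$ itself, which is exactly why the statement holds in the possibly non-Hausdorff setting the paper needs.
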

	\begin{proof}
		See \cite[III.2.5, Proposition 13]{NB66}.
	\end{proof}
	
	\subsection{Borel cardinality}
	\begin{dfn}
		Suppose $E$ and $F$ are equivalence relations on Polish spaces $X$ and $Y$. We say that \emph{$E$ is Borel reducible to $F$} --- written $E\leq_B F$ --- if there is a Borel function $f\colon X\to Y$ such that $x_1\Er x_2$ if and only if $f(x_1)\mathrel{F} f(x_2)$.
		
		If $E\leq_B F$ and $F\leq_B E$, we say that $E$ and $F$ are \emph{Borel equivalent}, written $E\sim_B F$. In this case, we also say that $E$ and $F$, or $X/E$ and $Y/F$, have the same {\em Borel cardinality}; informally speaking, the {\em Borel cardinality} of $E$ is its $\sim_B$-equivalence class.
		\xqed{\lozenge}
	\end{dfn}
	
	\begin{dfn}
		We say that an equivalence relation $E$ on a Polish space $X$ (or the quotient $X/E$) is {\em smooth} if $E$ is Borel reducible to equality on $2^\bbN$ (or, equivalently, $\bbR$).
		\xqed{\lozenge}
	\end{dfn}
	
	\begin{fct}
		\label{fct:borel_section}
		If $X,Y$ are compact Polish spaces and $f\colon X\to Y$ is a continuous surjection, then $f$ has a Borel section $g$. In particular, if $f$ is a continuous reduction from $E$ on $X$ to $F$ on $Y$, then $g$ is a Borel reduction from $F$ to $E$, hence $E\sim_B F$.
	\end{fct}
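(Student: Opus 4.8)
The plan is to obtain the Borel section $g$ by a measurable-selection argument, and then to deduce the ``in particular'' clause by a one-line computation. Fix a compatible complete metric $d$ on the compact Polish space $X$ together with a countable dense sequence $(x_n)_{n\in\bbN}$. For each nonempty closed $C\subseteq X$ I would define a point $s(C)\in C$ by recursion: put $C_0=C$, and, having built a nonempty closed $C_k$ with $\operatorname{diam}(C_k)\le 2^{-k}$, let $n_k$ be least with $C_k\cap B(x_{n_k},2^{-(k+1)})\neq\emptyset$ and set $C_{k+1}=C_k\cap\overline{B(x_{n_k},2^{-(k+1)})}$; by completeness $\bigcap_k C_k=\{s(C)\}$ is a singleton. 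The point is that $C\mapsto s(C)$ is Borel from the space $F(X)$ of nonempty closed subsets of $X$ --- taken with the Effros Borel structure, which for compact $X$ coincides with the Borel structure of the Vietoris topology --- to $X$, because each step of the recursion refers only to the generating Borel conditions ``$C\cap B(x_n,2^{-(k+1)})\neq\emptyset$''. This is essentially the proof of the Kuratowski--Ryll-Nardzewski selection theorem, which one may instead invoke directly.

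Next I would check that the fiber map $y\mapsto f^{-1}[\{y\}]$ is Borel from $Y$ to $F(X)$: for open $V\subseteq X$ one has $\{y:f^{-1}[\{y\}]\cap V\neq\emptyset\}=f[V]$, and since $X$ is compact metric the open set $V$ is $\sigma$-compact, so $f[V]$ is a countable union of compact, hence closed, sets, in particular Borel; and $\{y:f^{-1}[\{y\}]\subseteq U\}=Y\setminus f[X\setminus U]$ is even open. Composing, $g:=s\circ\bigl(y\mapsto f^{-1}[\{y\}]\bigr)$ is a Borel map $Y\to X$ with $f\circ g=\id_Y$, i.e.\ a Borel section of $f$. (Alternatively, since the closed graph of $f$ has compact vertical sections, one can just quote the Arsenin--Kunugui uniformization theorem.)

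For the last sentence: if $f$ reduces $E$ to $F$, then $x_1\Er x_2$ iff $f(x_1)\Fr f(x_2)$ for all $x_1,x_2\in X$, so, using $f\circ g=\id_Y$, for all $y_1,y_2\in Y$ we get $y_1\Fr y_2$ iff $f(g(y_1))\Fr f(g(y_2))$ iff $g(y_1)\Er g(y_2)$; thus the Borel map $g$ witnesses $F\leq_B E$. Since $f$ itself is a continuous, hence Borel, reduction of $E$ to $F$, we also have $E\leq_B F$, and therefore $E\sim_B F$. I expect the only mildly delicate point to be the measurability bookkeeping for the selector $s$ (verifying that the recursion really defines a Borel function on $F(X)$); this is entirely routine, and is in any case subsumed by the Kuratowski--Ryll-Nardzewski or Arsenin--Kunugui theorems mentioned above.
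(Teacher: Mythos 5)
Your argument is correct: it reconstructs, with adequate care, the standard measurable‑selection proof that lies behind \cite[Exercise 24.20]{Kec95}, which is all the paper gives as its ``proof'' of this fact. The only cosmetic slip is the inductive invariant $\operatorname{diam}(C_k)\le 2^{-k}$, which is not quite preserved by your step (intersecting with a closed ball of radius $2^{-(k+1)}$ only yields $\operatorname{diam}(C_{k+1})\le 2^{-k}$); but this is harmless since the diameters still tend to $0$, so $\bigcap_k C_k$ is a singleton and the construction goes through unchanged.
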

	\begin{proof}
		\cite[Exercise 24.20]{Kec95}
	\end{proof}
	
	Recall that analytic subsets of Polish spaces are closed under taking images and preimages by Borel maps \cite[Proposition 4.14]{Kec95}. Borel subsets of Polish spaces are clearly closed under preimages by Borel functions. Using Fact \ref{fct:borel_section}, one easily gets that
	whenever $f \colon X \to Y$ is a continuous surjection between compact Polish spaces, then a subset $B$ of $Y$ is Borel if and only if $f^{-1}[B]$ is Borel in $X$.

	\subsection{Model theory and notation}
	Throughout, $T$ will denote the ambient (first order, complete, often countable) theory. The arguments and results in this paper work for multi-sorted theories with straightforward modifications, but for simplicity, we assume that $T$ is single-sorted, unless specified otherwise.
	
	We fix a strong limit cardinal $\kappa$ larger than $\lvert T\rvert$ and ``all the objects we are interested in".
	
	\begin{dfn}
		A \emph{monster model} is a model $\fC$ of $T$ which is $\kappa$-saturated (i.e.\ each type over an arbitrary set of parameters from $\fC$ of size less than $\kappa$ is realized in $\fC$) and strongly $\kappa$-homogeneous (i.e.\ any elementary map between subsets of $\fC$ of cardinality less than $\kappa$ extends to an isomorphism of $\fC$).
		\xqed{\lozenge}
	\end{dfn}
	
	We fix a monster model $\fC$ and assume that all models we discuss are elementary submodels of $\fC$. (For the existence of a monster model see \cite[Theorem 10.2.1]{Hod93}.)
	
	By \emph{small} we mean smaller than our chosen $\kappa$. When we write $X\subseteq \fC$ we mean that $X$ is a subset of an arbitrary small power of $\fC$. When $a$ is a tuple in $\fC$ and $A\subseteq \fC$, by $S_a(A)$ we mean the subspace of $S(A)$ consisting of types extending the type of $a$ over $\emptyset$. When $A,X\subseteq \fC$, by $X_A$ we mean the subspace of $S(A)$ consisting of types of elements of $X$ over $A$. By $\equiv$ we denote the relation of having the same type over $\emptyset$ (equivalently, of lying in the same orbit of $\Aut(\fC)$).

	\subsection{Strong types}
	\begin{dfn}\label{definition: strong types}
		A \emph{bounded invariant equivalence relation} is an equivalence relation on an ($\Aut(\fC)$-)invariant set $X$ which is itself ($\Aut(\fC)$-)invariant (as a subset of $X^2$), and which has a small number of classes.
		
		A \emph{strong type} is a single class of a bounded invariant equivalence relation finer than $\equiv$, or, abusing the terminology, any relation of this kind.
		\xqed{\lozenge}
	\end{dfn}
	
	Now, we recall some definitions related to descriptive set theoretic treatment of strong types. For more details see \cite[Section 2.1]{KR16}.
	
	\begin{fct}\label{fact: refining by type over M}
		\label{fct:stypes_from_types}
		If $M\prec \fC$ is a small model and $E$ is a bounded invariant equivalence relation on $X$, then the $E$-classes are setwise $\Aut(\fC/M)$-invariant.
		
		Consequently, the quotient map $X\to X/E$ factors through $X\to X_M$, yielding a natural map $X_M\to X/E$.
	\end{fct}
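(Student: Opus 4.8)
The plan is to reduce the statement to the classical fact that every bounded invariant equivalence relation is coarser than the relation of having the same Lascar strong type.

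First, since $\fC$ is strongly $\kappa$-homogeneous and $\lvert M\rvert+\lvert\bar a\rvert<\kappa$, for each $\bar a\in X$ the set $\{\bar b\in X:\tp(\bar b/M)=\tp(\bar a/M)\}$ equals the $\Aut(\fC/M)$-orbit of $\bar a$ (contained in $X$, as $X$ is invariant), which is also the fibre over $\tp(\bar a/M)$ of the restriction map $X\to X_M$. Consequently, the assertion that every $E$-class is setwise $\Aut(\fC/M)$-invariant is exactly the assertion that $\tp(\bar a/M)=\tp(\bar b/M)$ implies $\bar a\Er\bar b$ for $\bar a,\bar b\in X$ (i.e.\ that $E$ is coarser than $\equiv_M$); this is what I would prove.

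For this I would use two standard facts. (i) An automorphism fixing a small model pointwise is a Lascar strong automorphism, so if $M$ is a model then $\tp(\bar a/M)=\tp(\bar b/M)$ implies that $\bar a$ and $\bar b$ have the same Lascar strong type over $\emptyset$. (ii) $E$, being bounded and invariant, is coarser than equality of Lascar strong types; since the latter relation is generated, as an equivalence relation, by ``lying on a common infinite $\emptyset$-indiscernible sequence'', it suffices to show that whenever $(\bar a_i)_{i<\omega}$ is an $\emptyset$-indiscernible sequence with entries in $X$, then $\bar a_0\Er\bar a_1$. This is the only genuine content: stretch the sequence to an $\emptyset$-indiscernible sequence $(\bar a_i)_{i<\lambda^+}$, where $\lambda$ is the (small) number of $E$-classes; by the pigeonhole principle $\bar a_i\Er\bar a_j$ for some $i<j$; and since $(\bar a_i,\bar a_j)\equiv(\bar a_0,\bar a_1)$ by indiscernibility, applying an automorphism of $\fC$ that carries the former pair to the latter gives $\bar a_0\Er\bar a_1$ by invariance of $E$. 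Combining (i) and (ii)---and transferring back along an automorphism of $\fC$ fixing $M$, once more using invariance of $E$---yields that $E$ is coarser than $\equiv_M$.

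Finally, the ``consequently'' clause is formal: by the above, $X\to X/E$ is constant on $\Aut(\fC/M)$-orbits, and these orbits are precisely the fibres of the surjection $X\to X_M$, so the map factors uniquely through a map $X_M\to X/E$. I expect no difficulty here; the one step requiring an idea rather than bookkeeping with homogeneity is the pigeonhole argument, which is exactly where boundedness of $E$ enters. (See e.g.\ \cite[Section 2.1]{KR16} and the references therein for the facts about Lascar strong types used above.)
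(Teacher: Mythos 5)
Your proof is correct, and the core mechanism is the same as the paper's: boundedness of $E$ combined with indiscernibility and a pigeonhole argument. The difference is in how the argument is organized. The paper's proof is more compressed: given $a,b\in X$ with the same type over $M$, it invokes the standard fact that there is a single long sequence $\bar c$ with both $a^\frown\bar c$ and $b^\frown\bar c$ indiscernible, and then concludes $a\Er c_0 \Er b$ directly by pigeonhole (if $a\not\Er c_0$, then by indiscernibility and invariance all the pairs from $\bar c$ fall into distinct classes). You instead pass explicitly through $\equiv_L$, splitting the statement into (i) $\tp(a/M)=\tp(b/M)\Rightarrow a\equiv_L b$, which is immediate from the definition of $\Autf_L(\fC)$, and (ii) $\equiv_L$ refines every bounded invariant $E$, which you prove by the same stretch-and-pigeonhole argument applied to a one-step indiscernible sequence. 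Your decomposition makes the role of $\equiv_L$ transparent (essentially proving the key ingredient of the paper's Fact~\ref{fct:autf_preserves} along the way, which the paper records afterwards as a consequence), and it is slightly cleaner on cardinal bookkeeping: you only stretch the sequence to length $\lambda^+$ where $\lambda$ is the number of $E$-classes (safely below $\kappa$ as $\kappa$ is a strong limit), whereas the paper nominally asks for a sequence of length $\kappa$ sitting inside the $\kappa$-saturated $\fC$. One small remark: the clause about ``transferring back along an automorphism of $\fC$ fixing $M$'' in your final step is unnecessary---once (i) and (ii) are in hand, $\tp(a/M)=\tp(b/M)\Rightarrow a\equiv_L b\Rightarrow a\Er b$ follows directly, and no further automorphism is needed.
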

	\begin{proof}
		This is well-known. It follows from the fact that whenever $a,b \in X$ have the same type over $M$, then there is a sequence $\bar c= (c_i)_{i < \kappa}$ in $\fC$ such that the sequences $a ^\frown \bar c$ and $b ^\frown \bar c$ are both indiscernible, because then $a \Er c_0 \Er b$ (as otherwise, by indiscernibility, $E$ would have at least $\kappa$ classes).
	\end{proof}

	Let $M\preceq \fC$ and $E$ be a bounded invariant equivalence relation on $X$. Then $E^M$ is the relation on $X_M$ (the set of types over $M$ of elements of $X$) defined by $p\Er^M q$ when for some $a\models p$ and $b\models q$ we have $a\Er b$. By Fact \ref{fact: refining by type over M}, $p\Er^M q$ if and only if for every $a\models p$ and $b\models q$ we have $a\Er b$. Hence $E^M$ is an equivalence relation. 

	If $T$ and $M\prec \fC$ are countable, while $E$ is defined on tuples of countable length and $X$ is type-definable, then \cite[Proposition 2.12]{KR16} tells us that the Borel cardinality of the relation $E^M$ on the Polish space $X_M$ does not depend on the choice of $M$, and so the following definition is correct.

	\begin{dfn}\label{dfn: Borel cardinality for strong types}		
		Assume $T$ is countable and $E$ is a bounded invariant equivalence relation on a ($\Aut(\fC)$-invariant) set $X$ of tuples of countable length, where $X$ is type-definable. The {\em Borel cardinality} of $E$ is the Borel cardinality of $E^M$ on $X_M$ for some [every] countable $M \prec \fC$. More generally, if $Y\subseteq X$ is type-definable (over parameters) and $E$-invariant (i.e. a union of some classes of $E$), then the {\em Borel cardinality} of $E \restr_Y$ is defined as the Borel cardinality of $E^M \restr_{Y_M}$.
		\xqed{\lozenge}
	\end{dfn}
	
	\begin{dfn}\label{dfn:Borel strong types}
		Assume $T$ is countable and $E$ is a bounded invariant equivalence relation on tuples of countable length. We say that $E$ is {\em Borel} [resp. {\em analytic}, resp. {\em closed}, etc.] if $E_\emptyset$ is such as a subset of the type space $S(\emptyset)$ in the appropriate number of variables. 
		Similarly, if $Y$ is type-definable (over parameters) and $E$-invariant, then $E \restr_Y$ is {\em Borel} [resp. {\em analytic}, etc.] if $(E \restr_Y)_M$ is such as a subset of $S(M)$ for some [equivalently, any] countable model $M$. \xqed{\lozenge}
	\end{dfn}
	
	Note that if $Y$ is type-definable and $E$-invariant, then $E \restr_Y$ is Borel [resp. analytic, or closed, or $F_\sigma$] if and only if $(E \restr_Y)_M$ is such as a subset of $(Y \times Y)_M$.

	By \cite[Proposition 2.9]{KR16} and the paragraph following \cite[Fact 1.21]{KPR15}, we have
	
	\begin{fct}\label{fct: Borel in various senses}
		 $E$ is type-definable [resp. Borel, or analytic, or $F_\sigma$, or definable] if and only if $E^M$ is closed [resp. Borel, or analytic, or $F_\sigma$, or clopen] in $S(M) \times S(M)$ for some [any] countable model $M$. If $X$ is type-definable, $E^M$ can be equivalently considered only on $X_M \times X_M$ (but then the condition that $E$ is definable should be replaced by the condition that $E$ is relatively definable in $X \times X$). More generally, we have an analogous observation for any type-definable (over parameters) and $E$-invariant subset $Y$ of $X$ and for $E \restr_Y$ in place of $E$.
	\end{fct}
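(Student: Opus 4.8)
The plan is to reduce all five complexity notions to the behaviour of closed, clopen, $F_\sigma$, Borel and analytic sets under continuous surjections of compact Polish spaces --- which is exactly what Remark~\ref{rem: continuous surjection is closed}, Fact~\ref{fct:borel_section} and the paragraph after it provide --- and to carry this out at the level of types of \emph{pairs}. Concretely, I would write $\bar\lambda$ for the (countable) length of the tuples in $X$, and let $S_2(\emptyset)$ and $S_2(M)$ be the spaces of types $\tp(a,b/\emptyset)$, resp.\ $\tp(a,b/M)$, of pairs $a,b\in X$; for countable $T$ and $M$ these are compact Polish, and when $X$ is type-definable (over $\emptyset$, as one may assume for the main clause) they are closed subspaces of $S_{2\bar\lambda}(\emptyset)$, resp.\ $S_{2\bar\lambda}(M)$, so that for any of closed / $F_\sigma$ / Borel / analytic a subset has the property inside $S_2(\cdot)$ iff it has it inside the ambient type space. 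Since $E\subseteq X\times X$ is $\Aut(\fC)$-invariant it is a union of $\Aut(\fC)$-orbits, hence descends to a set $E_\emptyset\subseteq S_2(\emptyset)$ with $(a,b)\in E\iff \tp(a,b/\emptyset)\in E_\emptyset$; by Definition~\ref{dfn:Borel strong types} (reading ``$E$ definable'' as ``$E_\emptyset$ clopen in the full $S_{2\bar\lambda}(\emptyset)$'') this $E_\emptyset$ is precisely what decides which of the five properties $E$ has. Let $r\colon S_2(M)\to S_2(\emptyset)$ be restriction and put $\widetilde{E}_M:=r^{-1}[E_\emptyset]$, so that $\tp(a,b/M)\in\widetilde{E}_M\iff(a,b)\in E$.

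The core step is the identity $\widetilde{E}_M=f^{-1}[E^M]$, where $f\colon S_2(M)\to X_M\times X_M$ is the forgetful map $\tp(a,b/M)\mapsto(\tp(a/M),\tp(b/M))$: by definition $(\tp(a/M),\tp(b/M))\in E^M$ means $a'\Er b'$ for \emph{some} realisations with the corresponding types over $M$, and by Fact~\ref{fact: refining by type over M} this is equivalent to $a\Er b$, i.e.\ to $\tp(a,b/M)\in\widetilde{E}_M$. Now $r$ and $f$ are continuous surjections of compact Polish spaces, hence topological quotient maps (Remark~\ref{rem: continuous surjection is closed}), so a set downstairs is closed (resp.\ clopen) iff its preimage is; and, by the Borel-section remark following Fact~\ref{fct:borel_section}, the same reflection holds for Borel, for analytic, and --- taking countable unions of the closed case --- for $F_\sigma$. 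Chaining the two equivalences through $\widetilde{E}_M$ then gives, for each of closed / $F_\sigma$ / Borel / analytic: $E_\emptyset$ has the property in $S_2(\emptyset)$ $\iff$ $\widetilde{E}_M$ has it in $S_2(M)$ $\iff$ $E^M$ has it in $X_M\times X_M$, equivalently in $S(M)\times S(M)$. Since this runs for every countable $M$, it yields the ``for some [any] $M$'' formulation, and it is the desired biconditional for the first four properties.

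For the clopen/definable case, if $X=\fC^{\bar\lambda}$ then $S_2(\emptyset)$ is the full $S_{2\bar\lambda}(\emptyset)$ and $X_M\times X_M$ is the full $S(M)\times S(M)$, so the argument gives ``$E$ definable iff $E^M$ clopen in $S(M)\times S(M)$''; for a proper type-definable $X$ it only detects clopenness of $E_\emptyset$ inside $S_2(\emptyset)=(X\times X)_\emptyset$, i.e.\ relative definability of $E$ in $X\times X$, matched by clopenness of $E^M$ in $X_M\times X_M$ --- the stated caveat. Finally, for a type-definable (over a small parameter set $A$) and $E$-invariant $Y\subseteq X$, I would run the same argument with $Y$ in place of $X$ and $A$ in place of $\emptyset$ (enlarging $M$ to contain $A$, which affects neither the complexity nor the Borel cardinality): the restriction and forgetful maps are now between pair-type spaces over $M$ and over $A$, still continuous surjections of compact Polish spaces, so no new ingredient is needed.

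The hard part will be the identity $\widetilde{E}_M=f^{-1}[E^M]$ --- that $E^M$, which is read off from \emph{single} types over $M$, is faithfully reflected on \emph{joint} types over $M$. This is precisely where Fact~\ref{fact: refining by type over M} is indispensable: it is the coincidence of ``$a'\Er b'$ for some realisations'' with ``$a\Er b$''. Without it, $f^{-1}[E^M]$ could strictly contain $\widetilde{E}_M$ and the complexities would not be forced to agree. Everything else is routine: the stability of the five complexity classes under continuous surjections of compact Polish spaces, already isolated in the cited facts, together with the observation that $X_M$ (resp.\ $S_2(\cdot)$) is a closed subspace of the relevant ambient type space when $X$ is type-definable.
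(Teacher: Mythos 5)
The paper does not give a self-contained proof of this Fact; it simply cites \cite[Proposition 2.9]{KR16} and the discussion following \cite[Fact 1.21]{KPR15}, so there is no in-paper argument to compare against word for word. That said, your argument is a correct direct proof, and its mechanism is the expected one: the pivotal observation is the common-preimage identity $r^{-1}[E_\emptyset]=f^{-1}[E^M]$ on the pair-type space over $M$, which is exactly what Fact~\ref{fact: refining by type over M} buys you (it is also what the paper itself invokes right after defining $E^M$ to show that ``for some realizations'' and ``for all realizations'' coincide). Once that identity is in hand, pushing each of the five complexity classes back and forth along the two continuous surjections of compact Polish spaces is correctly handled by Remark~\ref{rem: continuous surjection is closed}, Fact~\ref{fct:borel_section}, and the surrounding remarks, and your treatment of the $F_\sigma$ case (decompose the preimage and take forward images of the closed pieces) is right. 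The passage from ``some $M$'' to ``any $M$'' via the $M$-independent object $E_\emptyset$, the discussion of $S(M)\times S(M)$ versus $X_M\times X_M$ and the resulting clopen/relatively-definable caveat, and the reduction of the $Y$-case to a change of base set and parameter set are all sound.

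One small point of imprecision: you remark that ``one may assume $X$ is type-definable for the main clause,'' but the first sentence of the Fact (with ambient $S(M)\times S(M)$) needs no such assumption --- Definition~\ref{dfn:Borel strong types} places $E_\emptyset$ in the full $S_{2\bar\lambda}(\emptyset)$ regardless of $X$. Your argument actually does go through in full generality if you simply take $r\colon S_{2\bar\lambda}(M)\to S_{2\bar\lambda}(\emptyset)$ and $f\colon S_{2\bar\lambda}(M)\to S_{\bar\lambda}(M)\times S_{\bar\lambda}(M)$ on the entire ambient type spaces; the key identity still holds there because types of pairs not in $X\times X$ land outside both $E_\emptyset$ and $f^{-1}[E^M]$. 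So this is a cosmetic slip, not a gap.
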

	
	\begin{dfn}\label{dfn:logic topology}
		If $E$ is a bounded invariant equivalence relation on a type-definable set $X$, the \emph{logic topology} on $X/E$ is defined as follows: a subset of $X/E$ is closed when its preimage in $X$ is type-definable (in $X$).
		
		Equivalently, the logic topology is given as the quotient topology induced by $X_M\to X/E$ for any model $M$, i.e.\ it is the quotient topology on $X_M/E^M$, which we naturally identify with $X/E$.
		\xqed{\lozenge}
	\end{dfn}
	
	Directly from the definition of the logic topology, we get the following remark:
	
	\begin{rem}
		If $E\subseteq F$ are bounded invariant equivalence relations on $X$, then the natural map $X/E\to X/F$ is a topological quotient map.
		\xqed{\lozenge}
	\end{rem}

	\begin{fct}
		\label{fct:tdf_t2}
		The logic topology is compact, and it is Hausdorff if and only if $E$ is type-definable.
	\end{fct}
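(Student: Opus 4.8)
My plan is to read everything off the equivalent description of the logic topology recorded in Definition~\ref{dfn:logic topology}: fix a model $M$ (which we may as well take to contain the parameters over which $X$ is type-definable, if any), so that the logic topology on $X/E$ is exactly the quotient topology on $X_M/E^M$ under the quotient map $q\colon X_M \to X_M/E^M$. Since $X$ is type-definable, $X_M$ is a closed, hence compact, subspace of the compact Hausdorff space $S(M)$; as $q$ is a continuous surjection, $X/E = X_M/E^M$ is compact. This settles the first assertion and reduces the second to the claim that $X_M/E^M$ is Hausdorff if and only if $E$ is type-definable.

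For the ``only if'' direction, suppose $X_M/E^M$ is Hausdorff. Then its diagonal $\Delta$ is closed in $(X_M/E^M)^2$, and since $q$ is continuous so is $q\times q$; hence $E^M = (q\times q)^{-1}[\Delta]$ is closed in $X_M\times X_M$, and by Fact~\ref{fct: Borel in various senses} this means $E$ is type-definable. For the ``if'' direction, suppose $E$ is type-definable; then, again by Fact~\ref{fct: Borel in various senses}, $E^M$ is a \emph{closed} equivalence relation on the compact Hausdorff space $X_M$, and I would invoke the standard fact that a quotient of a compact Hausdorff space by a closed equivalence relation is (compact) Hausdorff. If one prefers not to quote it: writing $\pi_1,\pi_2\colon X_M^2\to X_M$ for the projections, for every closed $C\subseteq X_M$ the set $q^{-1}[q[C]] = \pi_2[\pi_1^{-1}[C]\cap E^M]$ is the continuous image of a compact set, hence closed, so $q$ is a closed map; taking $C=\{a\}$ shows that each $E^M$-class is closed; then, given two distinct classes, one separates their disjoint compact preimages by disjoint open sets in $X_M$ and pushes these forward along the closed map $q$ (replacing an open $U\supseteq q^{-1}[\{q(a)\}]$ by $(X_M/E^M)\setminus q[X_M\setminus U]$) to get disjoint open neighbourhoods of the two points. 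Hence $X_M/E^M$ is Hausdorff.

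The only step that is not a mechanical unwinding of Definition~\ref{dfn:logic topology} and Fact~\ref{fct: Borel in various senses} is the topological lemma on quotients by closed equivalence relations, so I regard it as the ``hard part'' only in the sense of being the one genuinely external input; it is classical and can be cited from a standard reference (e.g.\ \cite{Eng89}). Everything else — compactness, and the passage between ``$\Delta$ closed'', ``$E^M$ closed'', and ``$E$ type-definable'' — is immediate.
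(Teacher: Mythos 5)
Your proof is correct and follows essentially the same route as the paper: compactness from the quotient of the compact space $X_M$, and Hausdorffness $\liff$ closedness of $E^M$ $\liff$ type-definability of $E$, with the ``if'' direction reduced to the standard fact that the quotient of a compact Hausdorff space by a closed equivalence relation is Hausdorff (which the paper simply cites from \cite[3.2.11]{Eng89} and you additionally prove).
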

	\begin{proof}
		Compactness of $X/E$ can be found in \cite[Lemma 2.5]{Pil04}, but note that it follows immediately from the fact that $X/E$ is homeomorphic to a quotient of the compact space $X_M$, where $M$ is any model.

		If the logic topology is Hausdorff, then for any model $M$ the relation $E^M$ is closed as a subset of $X_M$ (because it is the preimage of the diagonal in $X/E$ via the natural continuous map $X_M \times X_M \to X/E \times X/E$), which easily implies that $E$ is type-definable. The converse can be found in \cite[Lemma 2.5]{Pil04}, but it follows immediately from the more general fact that the quotient of a compact Hausdorff space by a closed equivalence relation is Hausdorff \cite[3.2.11]{Eng89}. 
	\end{proof}
	
	\begin{dfn}\label{definition: Galois groups}
		The \emph{Lascar strong automorphism group} $\Autf_L(\fC)$ is the subgroup of $\Aut(\fC)$ generated by all $\Aut(\fC/M)$ for $M\preceq \fC$.
		
		The \emph{Lascar strong type $\equiv_L$} is the orbit equivalence relation of $\Autf_L(\fC)$.
		
		The \emph{(Lascar) Galois group} $\Gal(T)$ is the quotient of $\Aut(\fC)$ by $\Autf_L(\fC)$.
		\xqed{\lozenge}
	\end{dfn}
	
	The following fact is folklore, and it easily follows from Fact \ref{fact: refining by type over M}.
	\begin{fct}
		\label{fct:autf_preserves}
		$\Autf_L(\fC)$ preserves all classes of bounded invariant equivalence relations, and $\equiv_L$ is bounded and invariant. In consequence, $\equiv_L$ it is the finest bounded invariant equivalence relation.
\xqed{\lozenge}
	\end{fct}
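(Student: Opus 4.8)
The plan is to establish the three parts in turn --- preservation of classes, then invariance and boundedness of $\equiv_L$, then the ``finest'' property as a formal consequence --- working directly from the definition of $\Autf_L(\fC)$ as the subgroup of $\Aut(\fC)$ generated by the groups $\Aut(\fC/M)$ for small $M\prec\fC$. For the preservation part, let $E$ be a bounded invariant equivalence relation on an invariant set $X$. Fact~\ref{fact: refining by type over M} says exactly that every $E$-class is setwise $\Aut(\fC/M)$-invariant for each such $M$, i.e.\ that every generator of $\Autf_L(\fC)$ preserves every $E$-class. Since the set of $\sigma\in\Aut(\fC)$ fixing every $E$-class setwise is closed under composition and inversion, it is a subgroup of $\Aut(\fC)$ containing all the $\Aut(\fC/M)$, hence containing $\Autf_L(\fC)$. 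So $\Autf_L(\fC)$ preserves all classes of all bounded invariant equivalence relations.

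For invariance of $\equiv_L$, I would observe that $\sigma\,\Aut(\fC/M)\,\sigma^{-1}=\Aut(\fC/\sigma[M])$ with $\sigma[M]\prec\fC$ again small, so conjugation by any $\sigma\in\Aut(\fC)$ permutes the generators of $\Autf_L(\fC)$; thus $\Autf_L(\fC)\trianglelefteq\Aut(\fC)$, and the orbit equivalence relation of a normal subgroup is automatically $\Aut(\fC)$-invariant (if $b=\tau(a)$ with $\tau\in\Autf_L(\fC)$ then $\sigma(b)=(\sigma\tau\sigma^{-1})(\sigma(a))$ and $\sigma\tau\sigma^{-1}\in\Autf_L(\fC)$). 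It is also clearly finer than $\equiv$, being the orbit relation of a subgroup of $\Aut(\fC)$.

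The one step needing a genuine (if standard) idea is boundedness. Here the key remark is that if $\tp(a/M)=\tp(b/M)$ for a small model $M$, then strong $\kappa$-homogeneity of $\fC$ yields $\sigma\in\Aut(\fC/M)\subseteq\Autf_L(\fC)$ with $\sigma(a)=b$, so $a\equiv_L b$. Fixing a small tuple $a$ and a small model $M$, I would associate to each $\equiv_L$-class $C$ contained in the $\equiv$-class of $a$ the set $D(C):=\{\tp(c/M):c\in C\}\subseteq S_a(M)$; the remark makes $C\mapsto D(C)$ injective, since if $D(C)=D(C')$ and $b'\in C'$, then $\tp(b'/M)\in D(C)$, so some $c\in C$ has $\tp(c/M)=\tp(b'/M)$, whence $c\equiv_L b'$ and $C=C'$. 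As $|S_a(M)|<\kappa$ ($\kappa$ being a strong limit), the number of $\equiv_L$-classes inside each $\equiv$-class is at most $2^{|S_a(M)|}<\kappa$; combined with there being $<\kappa$ classes of $\equiv$ on tuples of a fixed length, $\equiv_L$ has a small number of classes.

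Finally, for the ``in consequence'' clause: if $E$ is any bounded invariant equivalence relation on an invariant set $X$ and $a,b\in X$ satisfy $a\equiv_L b$, say $b=\tau(a)$ with $\tau\in\Autf_L(\fC)$, then by the preservation part $\tau$ fixes the $E$-class of $a$ setwise, so $a\Er b$; hence the restriction of $\equiv_L$ to $X$ refines $E$. Since $\equiv_L$ is itself bounded and invariant, it is the finest bounded invariant equivalence relation. I do not expect any real obstacle: everything is formal except the boundedness count, and that reduces to the separation argument above together with the strong-homogeneity remark --- which is exactly why this is folklore.
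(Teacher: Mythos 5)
Your proposal is correct and follows exactly the route the paper hints at when it remarks that the fact ``easily follows from Fact~\ref{fact: refining by type over M}'': preservation of $E$-classes comes from that fact plus the observation that the class-preserving automorphisms form a subgroup containing every generator $\Aut(\fC/M)$; normality of $\Autf_L(\fC)$ gives invariance of $\equiv_L$; strong $\kappa$-homogeneity gives the key remark that equality of types over a small model implies Lascar equivalence, which yields boundedness; and the ``finest'' clause is the formal consequence you describe. One cosmetic note: for boundedness you can avoid the $2^{|S_a(M)|}$ detour, since the sets $D(C)$ are in fact pairwise disjoint (two tuples with the same type over $M$ are $\equiv_L$-equivalent), so the number of $\equiv_L$-classes of $\alpha$-tuples is directly bounded by $|S_\alpha(M)|<\kappa$ --- but your count is also correct since $\kappa$ is a strong limit.
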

	
	\begin{dfn}
		The {\em Lascar distance} between tuples $a$ and $b$ is the smallest $n$ such that there are $a=a_0,a_1,\ldots,a_n=b$ such that each pair $a_ia_{i+1}$ extends to an infinite indiscernible sequence (or $\infty$ if such a sequence does not exist).
		\xqed{\lozenge}
	\end{dfn}
	
	\begin{fct}
		Given $a$ and $b$, $a\equiv_L b$ if and only if the Lascar distance between them is finite.
	\end{fct}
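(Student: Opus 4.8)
The statement is standard (folklore); I would prove the two implications separately. Throughout, write $\Theta$ for the relation ``$d_L(a,b)<\infty$'', where $d_L$ denotes the Lascar distance; the content of the fact is precisely that $\Theta$ coincides with $\equiv_L$.

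\emph{First, $a\equiv_L b\Rightarrow d_L(a,b)<\infty$.} The plan is to verify that $\Theta$ is a bounded invariant equivalence relation and then invoke Fact~\ref{fct:autf_preserves}, which gives that $\equiv_L$ is the finest such relation, so that $\equiv_L\subseteq\Theta$. That $\Theta$ is an equivalence relation is routine: reflexivity is witnessed by constant sequences, transitivity by concatenating the chains witnessing finiteness of the distance, and symmetry by the fact that the reversal of an infinite indiscernible sequence is again one. Invariance is immediate, since an automorphism of $\fC$ carries an infinite indiscernible sequence to one and hence maps a witnessing chain of length $n$ to one of length $n$. The one point requiring an argument is boundedness: since $\Theta$ refines $\equiv$, it suffices to bound the number of $\Theta$-classes inside a single $p(\fC)$ for $p\in S(\emptyset)$; and if there were a sufficiently large family of pairwise $\Theta$-inequivalent realizations of $p$, the Erd\H{o}s--Rado theorem would extract from it an infinite indiscernible subsequence, any two members of which would have Lascar distance $\leq 1$ --- contradicting pairwise $\Theta$-inequivalence.

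\emph{Second, $d_L(a,b)<\infty\Rightarrow a\equiv_L b$.} Since $\equiv_L$ is transitive and, by the definition of the Lascar distance, $a$ and $b$ are joined by a finite chain of steps of distance $\leq 1$, it is enough to treat the case where $(a,b)$ begins an infinite indiscernible sequence $(a_i)_{i<\omega}$, say $a_0=a$ and $a_1=b$ (the case $a=b$ being trivial). Fix a small model $M$ and, using the $\kappa$-saturation of $\fC$, stretch $(a_i)_{i<\omega}$ to an indiscernible sequence $(a_i)_{i<\lambda}$ with $\lambda$ a small cardinal exceeding the number of complete types over $M$ in the relevant variables. By pigeonhole there are $i<j<\lambda$ with $\tp(a_i/M)=\tp(a_j/M)$; by homogeneity of $\fC$ we may pick $\tau\in\Aut(\fC/M)\subseteq\Autf_L(\fC)$ with $\tau(a_i)=a_j$, so $a_i\equiv_L a_j$. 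Finally, by indiscernibility $\tp(a_ia_j/\emptyset)=\tp(a_0a_1/\emptyset)$, so there is $\rho\in\Aut(\fC)$ sending $(a_i,a_j)$ to $(a_0,a_1)$; as $\equiv_L$ is $\Aut(\fC)$-invariant (again Fact~\ref{fct:autf_preserves}), this yields $a_0\equiv_L a_1$, that is, $a\equiv_L b$.

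I expect the main obstacle to be the second implication, and within it the step of transporting the conclusion $a_i\equiv_L a_j$ --- obtained at an \emph{a priori} unknown pair of positions supplied by the pigeonhole argument --- back to the prescribed pair $(a_0,a_1)$; this is precisely what forces one to exploit the homogeneity coming from indiscernibility together with the invariance of $\equiv_L$. The rest is bookkeeping, the only genuinely quantitative input being the cardinal estimate that legitimises the pigeonhole step while keeping the stretched sequence small.
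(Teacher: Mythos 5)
The paper does not prove this statement — it simply cites \cite[Fact 1.13]{CLPZ01}. Your proposal supplies a complete proof, and it is correct; it is essentially the standard argument found in the literature (including the cited source). A few small points that you handle correctly but could be stated more explicitly: for symmetry, what you need is that if $(a,b)$ begins an infinite indiscernible sequence, so does $(b,a)$, which follows by first extending the sequence backwards by compactness and then observing that the type of a decreasing subtuple depends only on its length (so the reversed, reindexed sequence is again indiscernible). For boundedness, the Erd\H{o}s--Rado extraction does not literally hand you an indiscernible \emph{subsequence}, but it does give an indiscernible $(d_i)_{i<\omega}$ whose initial pair realizes the type of some pair $(c_\alpha,c_\beta)$ with $\alpha<\beta$; since $\tp(d_0d_1)=\tp(c_\alpha c_\beta)$ and $\Theta$ is $\Aut(\fC)$-invariant, $d_0\mathrel{\Theta}d_1$ (from indiscernibility) would force $c_\alpha\mathrel{\Theta}c_\beta$, giving the contradiction. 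With those two clarifications the argument is airtight, and it is worth noting that it does not circularly depend on Fact~\ref{fct:autf_preserves}: the proof sketched in the paper for that fact goes through Fact~\ref{fact: refining by type over M} and does not presuppose the equivalence you are proving.
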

	\begin{proof}
		\cite[Fact 1.13]{CLPZ01}.
	\end{proof}
	
	\begin{fct}
		\label{fct:sm_to_gal}
		For any $M\preceq \fC$, if we fix an enumeration $m$ of $M$, we have a commutative diagram
		\begin{center}
			\begin{tikzcd}
				\Aut(\fC)\ar[r]\ar[d] & S_m(M)\ar[d] \\
				\Gal(T) & S_m(M)/{\equiv_L^M}\ar[l]
			\end{tikzcd}
		\end{center}
		where the top arrow is given by $\sigma\mapsto \tp(\sigma(m)/M)$. All arrows are surjective and the bottom arrow is bijective. In particular, we have a surjection $S_m(M)\to \Gal(T)$ given by $\tp(\sigma(m)/M) \mapsto \sigma \Autf_L(\fC)$.
	\end{fct}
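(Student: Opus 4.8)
The plan is to check the four arrows and the commutativity of the square in turn; the only step that is not purely formal is a short coset computation that uses that $m$ enumerates a \emph{model} and that $\Autf_L(\fC)$ is normal in $\Aut(\fC)$.

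First I would observe that the top map $\sigma\mapsto\tp(\sigma(m)/M)$ really lands in $S_m(M)$ and is surjective. Since $\sigma$ fixes $\emptyset$ pointwise, $\sigma(m)\equiv m$, so $\tp(\sigma(m)/M)$ extends $\tp(m/\emptyset)$; conversely, given $p\in S_m(M)$ and $a\models p$ we have $a\equiv m$, so by strong $\kappa$-homogeneity of $\fC$ there is $\sigma\in\Aut(\fC)$ with $\sigma(m)=a$, and then $\tp(\sigma(m)/M)=p$. The left and right arrows are the canonical quotient maps, hence obviously surjective, so all the content lies in producing the bottom bijection.

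Next I would define the bottom map by $\tp(\sigma(m)/M)/{\equiv_L^M}\mapsto\sigma\Autf_L(\fC)$; this is meaningful because, by surjectivity of the top map, every element of $S_m(M)$ is of the form $\tp(\sigma(m)/M)$. To see it is well defined on $\equiv_L^M$-classes, assume $\tp(\sigma(m)/M)\mathrel{\equiv_L^M}\tp(\tau(m)/M)$. By the definition of $E^M$ together with Fact~\ref{fact: refining by type over M} (which upgrades ``for some'' to ``for every'' realizations), this gives $\sigma(m)\equiv_L\tau(m)$, so there is $f\in\Autf_L(\fC)$ with $f(\sigma(m))=\tau(m)$. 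As $m$ enumerates all of $M$, this forces $f\sigma\restr M=\tau\restr M$, i.e.\ $\tau^{-1}f\sigma\in\Aut(\fC/M)\leq\Autf_L(\fC)$. Since also $f\in\Autf_L(\fC)$, and $\Autf_L(\fC)$ is normal in $\Aut(\fC)$ (conjugation by $g\in\Aut(\fC)$ sends $\Aut(\fC/N)$ to $\Aut(\fC/g(N))$, permuting the generators of $\Autf_L(\fC)$), we conclude $\sigma\in\Autf_L(\fC)\,\tau\,\Autf_L(\fC)=\tau\Autf_L(\fC)$, i.e.\ $\sigma\Autf_L(\fC)=\tau\Autf_L(\fC)$. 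Commutativity of the square is now immediate from the definition of the bottom map, and that map is clearly surjective, as every coset $\sigma\Autf_L(\fC)$ is the image of $\tp(\sigma(m)/M)/{\equiv_L^M}$.

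Finally, injectivity of the bottom map is obtained by running the previous argument backwards: if $\sigma\Autf_L(\fC)=\tau\Autf_L(\fC)$, write $\sigma=\tau g$ with $g\in\Autf_L(\fC)$; then $g(m)\equiv_L m$ and, applying $\tau$ and using normality of $\Autf_L(\fC)$ (so that $\tau g\tau^{-1}\in\Autf_L(\fC)$), we get $\sigma(m)=\tau(g(m))\equiv_L\tau(m)$, hence $\tp(\sigma(m)/M)\mathrel{\equiv_L^M}\tp(\tau(m)/M)$. I do not anticipate any genuine difficulty: the whole statement is bookkeeping around Definition~\ref{definition: Galois groups}, Fact~\ref{fact: refining by type over M} and the normality of $\Autf_L(\fC)$, and the only place where something specific is used is the implication $f(\sigma(m))=\tau(m)\Rightarrow f\sigma\restr M=\tau\restr M$, which relies precisely on $m$ being an enumeration of the whole model $M$.
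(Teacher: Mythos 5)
Your proof is correct and is the standard argument; the paper itself delegates this fact to a citation of \cite{CLPZ01}, so there is nothing to compare beyond noting that you have written out the routine verification. One small remark: in the injectivity step, after writing $\sigma=\tau g$ you do not really need to invoke normality --- since $\equiv_L$ is an $\Aut(\fC)$-invariant relation, $g(m)\equiv_L m$ directly gives $\tau(g(m))\equiv_L\tau(m)$ upon applying $\tau$ --- but your normality route ($\tau g\tau^{-1}\in\Autf_L(\fC)$) is equally valid.
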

	\begin{proof}
		\cite[Fact 2.1]{CLPZ01}
	\end{proof}
	
	\begin{fct}
		The quotient topology on $\Gal(T)$ induced by the surjection $S_m(M)\to \Gal(T)$ from the preceding fact does not depend on $M$ and it makes $\Gal(T)$ a compact (but possibly non-Hausdorff) topological group.
		
		$\Gal(T)$ does not depend on the choice of $\fC$ as a topological group.
	\end{fct}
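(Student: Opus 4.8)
\noindent I would verify, in turn: (a) the topology on $\Gal(T)$ induced by $\pi=\pi_{M,m}\colon S_m(M)\to\Gal(T)$ does not depend on the small model $M$ or its enumeration $m$; (b) $\Gal(T)$ is compact; (c) multiplication and inversion are continuous; and (d) the construction does not depend on $\fC$. Part (b) is clear, $\Gal(T)$ being a continuous image of the compact space $S_m(M)$; and $\pi$ is, by construction, a topological quotient map, a fact used throughout.

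\smallskip
\noindent\emph{(a) Independence from $(M,m)$.} For small $M\preceq M'\preceq\fC$ with compatible enumerations $m\sqsubseteq m'$, restriction of variables and parameters is a continuous surjection $r\colon S_{m'}(M')\to S_m(M)$ of compact Hausdorff spaces, hence a quotient map by Remark~\ref{rem: continuous surjection is closed}; moreover $\pi_{M',m'}=\pi_{M,m}\circ r$, since $\tp(\sigma m'/M')$ and $\tp(\sigma m/M)$ both go to $\sigma\Autf_L(\fC)$. As $r$ is a quotient map, a subset of $\Gal(T)$ has open preimage under $\pi_{M,m}$ if and only if it has open preimage under $\pi_{M',m'}$, so the two quotient topologies agree; an arbitrary pair of small models is compared via a common small elementary extension. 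Independence from the enumeration of a fixed $M$ follows the same pattern, with the mutually inverse coordinate-permutation homeomorphisms between $S_m(M)$ and $S_{m'}(M)$ in place of $r$.

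\smallskip
\noindent\emph{(c) Continuity of the operations.} This is the heart of the matter. For inversion, I first observe that $\tp(\sigma^{-1}m/M)$ depends only on $\tp(\sigma m/M)$: if $\tp(\sigma m/M)=\tp(\sigma'm/M)$ then, by strong homogeneity, $\alpha\sigma m=\sigma'm$ for some $\alpha\in\Aut(\fC/M)$, so $\sigma'=\alpha\sigma\beta$ for some $\beta\in\Aut(\fC/M)$; since every element of $\Aut(\fC/M)$ fixes $m$ pointwise, $(\sigma')^{-1}m=\beta^{-1}\sigma^{-1}\alpha^{-1}m=\beta^{-1}(\sigma^{-1}m)$, and since $\beta^{-1}\in\Aut(\fC/M)$ this has the same type over $M$ as $\sigma^{-1}m$. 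Thus $\widetilde\iota\colon\tp(\sigma m/M)\mapsto\tp(\sigma^{-1}m/M)$ is a well-defined (and involutive) self-map of $S_m(M)$, and it is continuous: for an $L(M)$-formula $\psi(x;\bar m)$, with $\bar m$ a subtuple of $m$, one has $\psi(x;\bar m)\in\widetilde\iota(\tp(\sigma m/M))$ iff $\fC\models\psi(\sigma^{-1}m;\bar m)$ iff $\fC\models\psi(m;\sigma\bar m)$, and $\sigma\bar m$ is a subtuple of $\sigma m$ — a clopen condition on $\tp(\sigma m/M)$. Since $\pi$ is a quotient map and $\pi\circ\widetilde\iota$ equals inversion on $\Gal(T)$ followed by $\pi$, inversion on $\Gal(T)$ is continuous.

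Multiplication does \emph{not} lift through $S_m(M)$, as $\tp(\sigma\tau m/M)$ is genuinely not determined by $(\tp(\sigma m/M),\tp(\tau m/M))$. Instead I would use the compact Hausdorff space $S_2$ of types over $M$ of pairs of realizations of $p^\ast:=\tp(m/\emptyset)$, exploiting that $\tp(\sigma^{-1}\tau m/M)$ \emph{is} determined, continuously, by $\tp((\sigma m,\tau m)/M)$: writing $a=\sigma m$, $b=\tau m$, for an $L(M)$-formula $\chi(z;\bar m)$ with $\bar m$ a subtuple of $m$ one has $\chi(z;\bar m)\in\tp(\sigma^{-1}\tau m/M)$ iff $\fC\models\chi(\sigma^{-1}b;\bar m)$ iff $\fC\models\chi(b;\sigma\bar m)$, and $\sigma\bar m$ is a subtuple of $a$. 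This produces a continuous $c_2\colon S_2\to S_m(M)$ with $\pi\circ c_2=\nu\circ(\pi\times\pi)\circ r_0$, where $r_0\colon S_2\to S_m(M)^2$ is the restriction map (a quotient map, being a continuous surjection of compact Hausdorff spaces) and $\nu\colon\Gal(T)^2\to\Gal(T)$ is $(x,y)\mapsto x^{-1}y$. Hence $\nu\circ(\pi\times\pi)$ is continuous; and $\pi\times\pi$ is itself a quotient map — a standard fact (Whitehead's theorem), using that $S_m(M)$ is locally compact Hausdorff, the one delicate point being that the second factor $\Gal(T)$ is only compact, not Hausdorff — so $\nu$ is continuous. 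Composing $\nu$ with the (already continuous) inversion in the first variable gives continuity of multiplication, $(x,y)\mapsto xy=\nu(x^{-1},y)$.

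\smallskip
\noindent\emph{(d) Independence from $\fC$.} Fix a small model $M$ with an enumeration $m$; it embeds elementarily into every monster model of $T$. The Stone space $S_m(M)$ depends only on $T$ and $M$, and so does the partition of $S_m(M)$ into the fibres of $\pi$, i.e.\ the relation $\equiv_L^M$, because the Lascar distance between two tuples is determined by their type over $\emptyset$. Hence the topological group $\Gal(T)$ computed from $(\fC,M)$ is the same for every monster model $\fC$, and by (a) it does not depend on $M$ either.

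\smallskip
The step I expect to be the main obstacle is the continuity of multiplication — both the ``determinacy'' of $\tp(\sigma^{-1}\tau m/M)$ in terms of $\tp((\sigma m,\tau m)/M)$, and the purely topological point that $\pi\times\pi$ remains a quotient map even though $\Gal(T)$ need not be Hausdorff.
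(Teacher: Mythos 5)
Your parts (a), (b), (d), and the inversion half of (c) are fine; the reduction of multiplication to the map $\nu(x,y)=x^{-1}y$ together with the continuous lift $c_2$ through $S_2$ is also the right model-theoretic content, and your well-definedness check for $c_2$ is correct. (For what it is worth, the paper does not reprove this fact: it simply cites \cite{CLPZ01} and \cite{Zie02}, so any complete argument is already a genuinely different route.)

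The gap is exactly where you flagged it, and it is a real one: the assertion that $\pi\times\pi$ is a quotient map. Whitehead's theorem says $p\times\id_Z$ is a quotient map when $p$ is a quotient map and $Z$ is locally compact Hausdorff (more generally, core-compact). Decomposing $\pi\times\pi$ as, say, $(\id_{\Gal(T)}\times\pi)\circ(\pi\times\id_{S_m(M)})$, the second factor is fine because $S_m(M)$ is compact Hausdorff, but the first requires $\Gal(T)$ to be core-compact, and compactness alone does not give this (the one-point compactification of $\bbQ$ is compact but not core-compact). Tube-lemma--style repairs are also unavailable: the fibres of $\pi$ are the $\equiv_L^M$-classes, which in the interesting non-G-compact case are only $F_\sigma$, not closed, hence not compact. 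The property of $\Gal(T)$ that would rescue Whitehead --- that every open set is a union of $\Gal_0(T)$-cosets, so that $\Gal(T)$ carries the preimage topology from the compact Hausdorff group $\Gal_{KP}(T)$ and is therefore core-compact --- is itself a consequence of $\Gal(T)$ being a topological group; invoking it here is circular. So the continuity of multiplication needs an argument tailored to the situation (e.g.\ establishing the $\Gal_0(T)$-saturation of open sets independently, or working inside a suitable compactification as in Section~\ref{sec:top_dyn_to_Polish}), not a bare appeal to the standard product-of-quotients theorem.
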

	\begin{proof}
		\cite[Fact 2.3]{CLPZ01} and \cite{Zie02}.
	\end{proof}

	\begin{rem}
		\label{rem:gal_action}
		$\Gal(T)$ acts on $X/E$ for any bounded invariant equivalence relation $E$ defined on an invariant $X$. If $X=p(\fC)$ for some $p=\tp(a/\emptyset) \in S(\emptyset)$, then the orbit map $r_{[a]_E} \colon \Gal(T) \to X/E$ given by $\sigma \Autf_L(\fC) \mapsto [\sigma(a)]_E$ is a topological quotient map.
	\end{rem}
	\begin{proof}
		$\Aut(\fC)$ acts on $X$, and by invariance of $E$, it also acts on $X/E$. By Fact~\ref{fct:autf_preserves}, this action factors through $\Gal(T)$, which gives us the first part. The second one follows easily from the definition of the logic topology and the topology on $\Gal(T)$.
	\end{proof}

	\begin{dfn}
		The group $\Autf_{KP}(\fC)$ of Kim-Pillay strong automorphisms consists of those automorphisms which fix setwise all classes of all bounded, $\emptyset$-type-definable equivalence relations.
		
		The \emph{Kim-Pillay strong type $\equiv_{KP}$} is the orbit equivalence relation of $\Autf_{KP}(\fC)$.
		
		The group $\Gal_{KP}(T)$ is the quotient $\Aut(\fC)/\Autf_{KP}(\fC)$.
		\xqed{\lozenge}
	\end{dfn}
	
	\begin{fct}
		$\equiv_{KP}$ is the finest bounded, $\emptyset$-type-definable equivalence relation.
	\end{fct}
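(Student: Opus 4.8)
The plan is to prove separately the two halves of the statement: that $\equiv_{KP}$ is a bounded, $\emptyset$-type-definable equivalence relation, and that every bounded, $\emptyset$-type-definable equivalence relation is coarser than it.

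The second half, together with boundedness, is elementary. Invariance of $\equiv_{KP}$ is clear, since $\Autf_{KP}(\fC)$ is a normal subgroup of $\Aut(\fC)$ (it is the stabiliser of an $\Aut(\fC)$-invariant family of classes). If $E$ is any bounded, $\emptyset$-type-definable equivalence relation, then $E$ is in particular bounded and invariant, so by the very definition of $\Autf_{KP}(\fC)$ every $\sigma\in\Autf_{KP}(\fC)$ fixes each $E$-class setwise; hence each $\Autf_{KP}(\fC)$-orbit, i.e.\ each $\equiv_{KP}$-class, is contained in a single $E$-class, which is precisely the assertion $\equiv_{KP}\,\subseteq\,E$. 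For boundedness, note that by Fact~\ref{fct:autf_preserves} the group $\Autf_L(\fC)$ preserves the classes of \emph{all} bounded invariant equivalence relations, in particular of the bounded, $\emptyset$-type-definable ones, so $\Autf_L(\fC)\leq\Autf_{KP}(\fC)$ and thus $\equiv_L\,\subseteq\,\equiv_{KP}$; since $\equiv_L$ is bounded, so is $\equiv_{KP}$.

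The first half --- that $\equiv_{KP}$ is itself $\emptyset$-type-definable --- is the substance of the fact. Fix a small tuple of variables $\bar x$. There is only a set of $\emptyset$-type-definable equivalence relations on $\bar x$-tuples (each is coded by a partial type over $\emptyset$, of which there are at most $2^{|T|+|\bar x|}$), so one may form their common refinement $E_{\bar x}$, namely the relation defined by the union of the corresponding partial types; this $E_{\bar x}$ is again $\emptyset$-type-definable, and it is bounded, since it refines at most $2^{|T|+|\bar x|}$-many relations, each with at most $2^{|T|+|\bar x|}$ classes, and so has at most $2^{2^{|T|+|\bar x|}}<\kappa$ classes (here we use that $\kappa$ is a strong limit). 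Thus $E_{\bar x}$ is the finest bounded, $\emptyset$-type-definable equivalence relation on $\bar x$-tuples, and by the previous paragraph $\equiv_{KP}$ restricted to $\bar x$-tuples is contained in $E_{\bar x}$. So the whole statement reduces to the reverse inclusion $E_{\bar x}\,\subseteq\,\equiv_{KP}$, for every $\bar x$: granting this, $\equiv_{KP}$ agrees on each sort with the $\emptyset$-type-definable relation $E_{\bar x}$, which is moreover the finest such, and we are done.

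The reverse inclusion is the classical theorem (see \cite{CLPZ01}), and this is where the real work lies. The idea is that, given $a\mathrel{E_{\bar x}}b$ (so in particular $a\equiv b$), one builds an automorphism $\sigma\in\Autf_{KP}(\fC)$ with $\sigma(a)=b$ by a transfinite back-and-forth along an enumeration of $\fC$ beginning with $a$, maintaining as an inductive invariant that the current partial elementary map $f\colon A\to B$ satisfies $A\mathrel{E_{\bar y_A}}B$, where $\bar y_A$ denotes the variable tuple of $A$. Because the pullback of a bounded, $\emptyset$-type-definable equivalence relation on $\bar y_c$-tuples along a coordinate projection $\bar y_A\to\bar y_c$ is again such a relation (an equivalence relation, in particular), the invariant $A\mathrel{E_{\bar y_A}}B$ already forces $c\mathrel{E_{\bar y_c}}f(c)$ for every subtuple $c$ of $A$; hence the resulting $\sigma$ moves every small tuple inside its class of every bounded, $\emptyset$-type-definable equivalence relation, i.e.\ $\sigma\in\Autf_{KP}(\fC)$. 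The main obstacle, I expect, is the extension step: given $A\mathrel{E_{\bar y_A}}B$ and a new element $c$, one must produce $d$ realising the appropriate type over $B$ with $Ac\mathrel{E_{\bar y_{Ac}}}Bd$, which requires a compactness/saturation argument together with a careful analysis of how $E_{\bar y_{Ac}}$ sits over $E_{\bar y_A}$ along the projection. Once $\emptyset$-type-definability is established, Fact~\ref{fct:tdf_t2} yields the familiar equivalent formulation that the logic topology on each $p(\fC)/\equiv_{KP}$ is Hausdorff.
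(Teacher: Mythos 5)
The paper gives no argument here; it simply cites \cite[Fact 1.4]{CLPZ01}, so there is no in-text proof to compare against. Your decomposition and bookkeeping in the first two paragraphs are all fine: normality of $\Autf_{KP}(\fC)$, the observation that $\Autf_L(\fC)\leq\Autf_{KP}(\fC)$ gives boundedness, the cardinality estimate for the intersection $E_{\bar x}$ (though the cleaner route to boundedness is just $\equiv_L\,\subseteq E_{\bar x}$), and the reduction of the whole statement to the inclusion $E_{\bar x}\subseteq\,\equiv_{KP}$ are all correct.

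However, the proof stops precisely where the content is. The back-and-forth sketch rests entirely on the extension step, which you name as an obstacle but do not resolve; and that step is not a routine compactness remark --- it is the theorem. Note that it is not at all obvious that the $\bar y_A$-projection of a single $E_{\bar y_{Ac}}$-class fills up a whole $E_{\bar y_A}$-class, which is what the extension step asserts. One way to close it: for the finest relations $E:=E_{\bar y_{Ac}}$ and $E':=E_{\bar y_A}$, define $F$ on $\bar y_A$-tuples by $A_1\mathrel{F}A_2$ iff $(\forall c_1)(\exists c_2)\,A_1c_1\mathrel{E}A_2c_2$ and $(\forall c_2)(\exists c_1)\,A_1c_1\mathrel{E}A_2c_2$. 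One checks that $F$ is an equivalence relation; that it is $\emptyset$-type-definable (the inner $\exists$ defines a type-definable set by $\kappa$-saturation, and the outer $\forall$ is then the complement of a $\bigvee$-definable set); that $\equiv_L\,\subseteq F$, so $F$ is bounded; and that $F\subseteq E'$ via the pullback of $E'$ to $\bar y_{Ac}$-tuples. Minimality of $E'$ then forces $F=E'$, which is exactly the extension lemma. Even with this in hand, you should also address the limit stages of your transfinite construction: the inductive invariant $A_\alpha\mathrel{E_{\bar y_{A_\alpha}}}B_\alpha$ does not automatically pass to unions, and this requires a further (similar) argument relating $E_{\bar y_{A_\lambda}}$ to the $E_{\bar y_{A_\alpha}}$ for $\alpha<\lambda$. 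So your plan is viable, but as written the substance of the classical theorem is left as an acknowledged gap rather than proved.
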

	\begin{proof}
		\cite[Fact 1.4]{CLPZ01}
	\end{proof}

	By $\Gal_0(T)$ we will denote the closure of the identity in $\Gal(T)$.
	
	\begin{fct}
		\label{fct:galkp_quot}
		$\Gal_{KP}(T)$ is the quotient of $\Gal(T)$ by $\Gal_0(T)$. As a consequence, it is a compact Hausdorff topological group, and it does not depend on $\fC$ (as a topological group). (We call the induced topology the logic topology.)
	\end{fct}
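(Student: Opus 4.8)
The plan is to present both $\Gal(T)$ and $\Gal_{KP}(T)$ as quotients of $\Aut(\fC)$ and to compare the two kernels. Since $\Autf_L(\fC)$ preserves every bounded $\emptyset$-type-definable equivalence relation (Fact~\ref{fct:autf_preserves}), we have $\Autf_L(\fC)\leq\Autf_{KP}(\fC)$, and both of these are normal in $\Aut(\fC)$, so there is a natural surjective homomorphism $\pi\colon\Gal(T)\to\Gal_{KP}(T)$ with kernel $N:=\Autf_{KP}(\fC)/\Autf_L(\fC)$. Fixing a small model $M$ with enumeration $m$, both target groups carry the quotient topology induced from $S_m(M)$ (with $\Gal_{KP}(T)=S_m(M)/{\equiv_{KP}^M}$, exactly as $\Gal(T)=S_m(M)/{\equiv_L^M}$ in Fact~\ref{fct:sm_to_gal}), and since $\equiv_L^M\subseteq\equiv_{KP}^M$ and $S_m(M)\to\Gal(T)$ is a topological quotient map, so is $\pi$; hence $\Gal_{KP}(T)\cong\Gal(T)/N$ as topological groups, and it remains to prove that $N=\Gal_0(T)$. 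One inclusion is easy: $\equiv_{KP}$ is type-definable, so $\equiv_{KP}^M$ is closed in $S_m(M)^2$ and thus $\Gal_{KP}(T)$ is Hausdorff (Fact~\ref{fct:tdf_t2}); then $N=\pi^{-1}(\{e\})$ is closed and contains $e$, so $\Gal_0(T)=\overline{\{e\}}\subseteq N$.

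For the converse inclusion $N\subseteq\Gal_0(T)$, let $H_0\leq\Aut(\fC)$ be the preimage of $\Gal_0(T)$ under $\Aut(\fC)\to\Gal(T)$; as $\Gal_0(T)=\overline{\{e\}}$ is a closed normal subgroup of $\Gal(T)$, the group $H_0$ is normal in $\Aut(\fC)$ and contains $\Autf_L(\fC)$. Put $p:=\tp(m/\emptyset)$ and let $E$ be the orbit equivalence relation of $H_0$ acting on $p(\fC)$; it is invariant (by normality of $H_0$) and bounded (as $[\Aut(\fC):H_0]\leq|\Gal(T)|$ is small). Unwinding the definitions --- and using $\Aut(\fC/M)\leq\Autf_L(\fC)\leq H_0$ --- one checks that the fibres of the orbit map $r_{[m]_E}\colon\Gal(T)\to p(\fC)/E$ from Remark~\ref{rem:gal_action} are exactly the left cosets of $H_0/\Autf_L(\fC)=\Gal_0(T)$ in $\Gal(T)$; since $r_{[m]_E}$ is a topological quotient map, it induces a homeomorphism $\Gal(T)/\Gal_0(T)\cong p(\fC)/E$ (for the logic topology on the right-hand side). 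Now $\Gal(T)/\Gal_0(T)$ is Hausdorff (quotient of a topological group by a closed normal subgroup, Fact~\ref{fct:quotient_by_closed_subgroup}), so $p(\fC)/E$ is Hausdorff, and therefore $E$ is type-definable by Fact~\ref{fct:tdf_t2}; being in addition invariant, $E$ is $\emptyset$-type-definable. Thus $E$ is a bounded $\emptyset$-type-definable equivalence relation on $p(\fC)$, so by the definition of $\Autf_{KP}(\fC)$ every $\sigma\in\Autf_{KP}(\fC)$ fixes the $E$-class of $m$ setwise, i.e.\ $\sigma(m)\mathrel{E}m$; using again $\Aut(\fC/M)\leq H_0$ this gives $\sigma\in H_0$. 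Hence $\Autf_{KP}(\fC)\leq H_0$, i.e.\ $N\subseteq\Gal_0(T)$.

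Combining the two inclusions, $N=\Gal_0(T)$, so $\Gal_{KP}(T)\cong\Gal(T)/\Gal_0(T)$ as topological groups. This quotient is compact (being a continuous image of the compact group $\Gal(T)$) and Hausdorff (by Fact~\ref{fct:quotient_by_closed_subgroup}, since $\Gal_0(T)$ is closed), and it is a topological group since the quotient of a topological group by a normal subgroup is one; independence of $\fC$ follows because $\Gal(T)$ --- hence its closed subgroup $\Gal_0(T)=\overline{\{e\}}$, hence the quotient --- does not depend on $\fC$ as a topological group. The main obstacle is the inclusion $N\subseteq\Gal_0(T)$: the key is the construction of the auxiliary bounded invariant equivalence relation $E$ with $p(\fC)/E\cong\Gal(T)/\Gal_0(T)$, after which the Hausdorffness criterion of Fact~\ref{fct:tdf_t2} forces $E$ to be ($\emptyset$-)type-definable and the defining property of $\Autf_{KP}(\fC)$ finishes the argument.
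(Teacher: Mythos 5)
Your proof is correct. The paper itself offers no argument for this fact --- it simply cites \cite[Fact~2.3]{CLPZ01} --- so you are supplying a self-contained proof using only the machinery already set up in Section~2. The reduction to showing $\Autf_{KP}(\fC)/\Autf_L(\fC)=\Gal_0(T)$ is the right move: the inclusion $\Gal_0(T)\subseteq N$ is the cheap one (Hausdorffness of $\Gal_{KP}(T)$ via Fact~\ref{fct:tdf_t2}), and the converse is where the content lies. Your device for the hard inclusion --- form the orbit equivalence relation $E$ of $H_0:=\text{preimage of }\Gal_0(T)$ on $p(\fC)$ with $p=\tp(m/\emptyset)$, identify $p(\fC)/E$ with $\Gal(T)/\Gal_0(T)$ via the orbit map from Remark~\ref{rem:gal_action}, deduce that $E$ is ($\emptyset$-)type-definable from Hausdorffness via Fact~\ref{fct:tdf_t2}, and then feed $E$ back into the defining property of $\Autf_{KP}(\fC)$ --- is a clean and genuinely elegant argument that exploits the logic-topology dictionary the paper has just established. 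One small point worth making explicit: when you invoke ``the definition of $\Autf_{KP}(\fC)$'' to conclude $\sigma(m)\mathrel{E}m$, you are applying it to a bounded $\emptyset$-type-definable equivalence relation whose domain is $p(\fC)$ rather than a full power of $\fC$; this is compatible with the paper's conventions (its notion of a bounded invariant equivalence relation is already phrased for relations on an arbitrary invariant set), but since the paper's formulation of $\Autf_{KP}(\fC)$ does not spell out the allowed domains, a one-line remark that this is the intended reading (equivalently, that $\equiv_{KP}\restr_{p(\fC)}$ is the finest bounded $\emptyset$-type-definable equivalence relation on $p(\fC)$) would remove any ambiguity.
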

	\begin{proof}
		\cite[Fact 2.3]{CLPZ01}
	\end{proof}
	
	\begin{rem}\label{rem: GalKP is Polish}
		If $T$ is countable, then $\Gal_{KP}(T)$ is a compact, Polish group.
	\end{rem}
	\begin{proof}
		Since $T$ is countable, we can choose a countable model $M$ with an enumeration $m$, and then $S_m(M)$ is compact Polish, and we finish using Facts \ref{fct: preservation of metrizability}, \ref{fct:galkp_quot}, and the fact that $\Gal(T)$ is a continuous image of $S_m(M)$. 
	\end{proof}
	
	Analogously to Remark \ref{rem:gal_action}, we have
	
	\begin{rem}
		\label{rem:galkp_action}
		$\Gal_{KP}(T)$ acts on $X/E$ for any invariant equivalence relation $E$ coarser than $\equiv_{KP}$ defined on an invariant set $X$. If $X=p(\fC)$ for some $p=\tp(a/\emptyset) \in S(\emptyset)$, then the orbit map $r_{[a]_E} \colon \Gal_{KP}(T) \to X/E$ given by $\sigma \Autf_{KP}(\fC) \mapsto [\sigma(a)]_E$ is a topological quotient map.
		\xqed{\lozenge}
	\end{rem}
	
	In Section 5 of \cite{KPS13} it is explained how to define the Borel cardinalities of $\Gal(T)$ and $\Gal_0(T)$. Briefly, let $m$ be an enumeration of a countable model $M$ of a countable theory $T$.  
	Fact \ref{fct:sm_to_gal} identifies $\Gal(T)$ with $S_m(M)/{\equiv_L^M}$, and we define the {\em Borel cardinality} of $\Gal(T)$ as the Borel cardinality of $S_m(M)/{\equiv_L^M}$. Similarly, the {\em Borel cardinality} of $\Gal_0(T)$ is defined as the Borel cardinality of $S_{m}^{KP}(M)/{\equiv_L^M}$, where $S_{m}^{KP}(M):=\{ \tp(n/M): n \equiv_{KP} m\}$.

	\section{Relations coarser than the Kim-Pillay strong type}\label{section: relations coarser than the Kim-Pillay strong type}
	In this section, we will discuss the relations coarser than the Kim-Pillay strong type. The main point is that --- unlike the general case --- we do not need to construct any group using topological dynamics: we can just use $\Gal_{KP}(T)$ instead. This makes the problem much simpler, and allows us to focus only on the descriptive set theoretical aspect of the problem, which will roughly translate into the general case. Note that this approach applies to all strong types if the underlying theory is G-compact (which includes all stable and, more generally, simple theories).

	\begin{lem}
		\label{lem:easy}
		Suppose we have a commutative diagram
		\begin{center}
			\begin{tikzcd}
				A\ar[d, two heads]\ar[r, two heads]&G\ar[d] \\
				C\ar[r]&Q
			\end{tikzcd}
		\end{center}
		
		where:
		\begin{itemize}
			\item
			$A$, $C$ and $G$ are compact Polish spaces,
			\item
			the surjections $A \to C$ and $A \to G$ are continuous.
		\end{itemize}
		Denote by $E_C$ and $E_G$ the equivalence relations on $C$ and $G$ (respectively) induced by equality on $Q$. Then:
		\begin{enumerate}
			\item
			$E_G$ is closed [resp. Borel, or analytic, or $F_\sigma$, or clopen (equivalently, with open classes)] if and only if $E_C$ is such,
			\item
			$E_G\sim _B E_C$.
		\end{enumerate}
	\end{lem}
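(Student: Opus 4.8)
The plan is to introduce an auxiliary equivalence relation on $A$ and to reduce both clauses to comparing that relation with $E_C$ and with $E_G$ separately. Write $\pi_C \colon A \to C$ and $\pi_G \colon A \to G$ for the two given continuous surjections, and let $q \colon A \to Q$ be their common composite into $Q$, which is well-defined since the square commutes. Let $E_A$ be the equivalence relation on $A$ induced by equality on $Q$ via $q$. The elementary observation driving everything is that, because $\pi_C$ is surjective and the square commutes, $E_A = (\pi_C \times \pi_C)^{-1}[E_C]$ and $E_C = (\pi_C \times \pi_C)[E_A]$ (so in particular $E_A$ is $(\pi_C \times \pi_C)$-saturated), and symmetrically for $\pi_G$ and $E_G$. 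Hence it suffices to prove both clauses with $E_C$ replaced by $E_A$ --- that is, to compare $E_A$ with $E_C$, and, by the same argument, with $E_G$ --- and then to chain the two comparisons.

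For clause (1), I would apply the standard properties of the continuous surjection $\pi_C \times \pi_C \colon A \times A \to C \times C$ between compact Polish spaces. It is a topological quotient map by Remark~\ref{rem: continuous surjection is closed} and a closed map (continuous maps between compact Hausdorff spaces are closed), and, by the discussion following Fact~\ref{fct:borel_section}, a subset of $C \times C$ is Borel (resp.\ analytic) if and only if its preimage in $A \times A$ is. From the quotient-map property, $E_C$ is closed iff $E_A$ is closed and $E_C$ is open iff $E_A$ is open, hence the same holds for ``clopen'' (equivalently, ``with open classes''). From the cited preimage statements, together with preservation of analyticity under Borel images and the saturation of $E_A$ for one direction, $E_C$ is Borel (resp.\ analytic) iff $E_A$ is. Finally $E_C$ is $F_\sigma$ iff $E_A$ is, since preimages of closed sets under a continuous map are closed and images of closed sets under the closed map $\pi_C \times \pi_C$ are closed. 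Running the identical argument with $\pi_G$ in place of $\pi_C$ relates each of these properties of $E_G$ with the corresponding property of $E_A$, so clause (1) follows by transitivity.

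For clause (2), observe that by definition of $E_A$ the map $\pi_C$ is a continuous reduction from $E_A$ to $E_C$: indeed $a_1 \mathrel{E_A} a_2$ iff $q(a_1) = q(a_2)$ iff $\pi_C(a_1) \mathrel{E_C} \pi_C(a_2)$. Since $\pi_C$ is a continuous surjection between compact Polish spaces, Fact~\ref{fct:borel_section} yields a Borel section of $\pi_C$, which is then a Borel reduction from $E_C$ to $E_A$; hence $E_A \sim_B E_C$. Symmetrically $E_A \sim_B E_G$, and therefore $E_C \sim_B E_G$.

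The whole argument is essentially bookkeeping once $E_A$ has been introduced. The only points requiring a little care are verifying that $E_A$ is genuinely the full $(\pi_C \times \pi_C)$-preimage of $E_C$ --- which is where surjectivity of $\pi_C$ and commutativity of the square are used, and which is what makes the ``image'' directions in clause (1) go through --- and noticing that $Q$ need not carry any structure, since $E_C$ and $E_G$ depend only on the fibers of the maps into $Q$. I therefore do not anticipate any genuine obstacle.
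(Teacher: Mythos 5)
Your proof is correct and follows essentially the same approach as the paper: introduce the auxiliary relation $E_A$ on $A$ induced by equality on $Q$, observe it is the $(\pi_C\times\pi_C)$- and $(\pi_G\times\pi_G)$-preimage of $E_C$ and $E_G$ respectively, then transfer closedness, Borelness, analyticity, $F_\sigma$-ness, clopenness, and Borel cardinality through these continuous surjections via Remark~\ref{rem: continuous surjection is closed} and Fact~\ref{fct:borel_section}. The only difference is that you spell out the $F_\sigma$ case (closed maps preserve $F_\sigma$ in one direction, continuity in the other) where the paper leaves it implicit, which is a welcome bit of extra care rather than a genuine departure.
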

	\begin{proof}
		Denote by $E_A$ the equivalence relation on $A$ induced by equality on $Q$ via the composed map $A\to Q$.
				
		(1) Since $E_A$ is the preimage of each of $E_C$ and $E_G$ by a continuous surjection between compact Polish spaces, by Remark \ref{rem: continuous surjection is closed} and the comments following Fact \ref{fct:borel_section}, we conclude that closedness [resp. Borelness, or analyticity, or being $F_\sigma$, or being clopen] of $E_A,E_C$ and $E_G$ are all equivalent.
				
		(2) 
		It is clear that the top and the left arrow are continuous, surjective reductions of $E_A$ to $E_G$ and $E_A$ to $E_C$, respectively. So $E_G \sim_B E_A \sim_B E_C$ by Fact \ref{fct:borel_section}.
	\end{proof}
	
	The following theorem is a prototype for the main result (Theorem~\ref{thm:main}).
	\begin{thm}
		\label{thm:main_over_KP}
		Suppose $E$ is a strong type defined on $p(\fC)$ for some $p\in S(\emptyset)$ (in countably many variables, in an arbitrary countable theory) and $E$ is refined by $\equiv_{KP}$. Fix any $a\models p$. 
		
		Consider the orbit map $r_{[a]_E}\colon \Gal_{KP}(T)\to p(\fC)/E$ given by $\sigma\Autf_{KP}(\fC)\mapsto [\sigma(a)]_E$ (the orbit map of the natural action of $\Gal_{KP}(T)$ on $p(\fC)/E$ introduced in Remark~\ref{rem:galkp_action}), and put $H=\ker r_{[a]_E}:=r_{[a]_E}^{-1}[a/E]$. Then:
		\begin{enumerate}
			\item
			$H\leq \Gal_{KP}(T)$ and the fibers of $r_{[a]_E}$ are the left cosets of $H$,
			\item
			 $r_{[a]_E}$ is a topological quotient mapping, and so $p(\fC)/E$ is homeomorphic to $\Gal_{KP}(T)/H$,
			\item
			$E$ is type-definable [resp. Borel, or analytic, or $F_\sigma$, or relatively definable on $p(\fC) \times p(\fC)$] if and only if $H$ is closed [resp. Borel, or analytic, or $F_\sigma$, or clopen],
			\item
			$E_H\sim_B E$, where $E_H$ is the relation of lying in the same left coset of $H$.
		\end{enumerate}
	\end{thm}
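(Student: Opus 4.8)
The plan is to read off (1) and (2) directly from the preliminaries and to reduce (3) and (4) to Lemma~\ref{lem:easy}. Write $\bar\sigma := \sigma\Autf_{KP}(\fC)$ and $X := p(\fC)$. Since $E$ is coarser than $\equiv_{KP}$, Remark~\ref{rem:galkp_action} applies, so $r_{[a]_E}$ is well-defined, is the orbit map of $[a]_E$ for the action of $\Gal_{KP}(T)$ on $X/E$, and is a topological quotient map. Consequently $H = r_{[a]_E}^{-1}[[a]_E]$ is the stabilizer of $[a]_E$, hence a subgroup of $\Gal_{KP}(T)$, and the elementary computation that $r_{[a]_E}(\bar\sigma) = r_{[a]_E}(\bar\tau)$ holds precisely when $\bar\tau^{-1}\bar\sigma \in H$, i.e.\ precisely when $\bar\sigma$ and $\bar\tau$ lie in the same left coset of $H$, identifies the fibers of $r_{[a]_E}$ with the left cosets of $H$; this is (1). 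For (2): being a topological quotient map with these fibers, $r_{[a]_E}$ induces a homeomorphism $\Gal_{KP}(T)/H \to X/E$ by the very definition of a topological quotient map.

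For (3) and (4) the idea is to sandwich everything under a single compact Polish space. I would fix a countable model $M \prec \fC$ with an enumeration $m$, let $ma$ be the (countable) concatenation, and consider
\begin{center}
\begin{tikzcd}
S_{ma}(M)\ar[r, two heads]\ar[d, two heads] & \Gal_{KP}(T)\ar[d] \\
X_M\ar[r] & X/E
\end{tikzcd}
\end{center}
where the top leg is $\tp(\sigma(ma)/M)\mapsto\bar\sigma$ (the restriction $S_{ma}(M)\to S_m(M)$ followed by $S_m(M)\to\Gal(T)\to\Gal_{KP}(T)$ from Facts~\ref{fct:sm_to_gal} and~\ref{fct:galkp_quot}), the left leg is restriction to the coordinates belonging to $a$, $\tp(\sigma(ma)/M)\mapsto\tp(\sigma(a)/M)\in X_M$ (using $\sigma(a)\models p$), the right leg is $r_{[a]_E}$, and the bottom leg is the natural map $X_M \to X/E$ of Fact~\ref{fct:stypes_from_types}. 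The square commutes because both composites send $\tp(\sigma(ma)/M)$ to $[\sigma(a)]_E$; all four spaces are compact Polish (by countability of $T$, $M$, $ma$, and Remark~\ref{rem: GalKP is Polish}); and the two legs out of $S_{ma}(M)$ are continuous (restriction maps of type spaces and the quotient maps onto the Galois groups are continuous) and surjective (every realization of $p$, resp.\ every $b \equiv m$, has the form $\sigma(a)$, resp.\ $\sigma(m)$, by homogeneity of $\fC$).

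Then Lemma~\ref{lem:easy} applies with $A = S_{ma}(M)$, $C = X_M$, $G = \Gal_{KP}(T)$, $Q = X/E$. One identifies $E_G$ with $E_H$ (by (1)) and $E_C$ with $E^M$ (by the definition of $E^M$ as a subset of $X_M \times X_M$). So Lemma~\ref{lem:easy}(2) gives $E_H = E_G \sim_B E_C = E^M$, and since the Borel cardinality of $E$ is by definition that of $E^M$ (Definition~\ref{dfn: Borel cardinality for strong types}), this yields $E_H \sim_B E$, which is (4). For (3): Lemma~\ref{lem:easy}(1) gives that $E_H = E_G$ is closed [resp.\ Borel, analytic, $F_\sigma$, clopen] exactly when $E^M = E_C$ is; Fact~\ref{fct: Borel in various senses}, in its version on $X_M \times X_M$ (legitimate since $X = p(\fC)$ is type-definable), translates this into $E$ being type-definable [resp.\ Borel, analytic, $F_\sigma$, relatively definable on $p(\fC)\times p(\fC)$]; and the standard remark that $E_H = \mu^{-1}[H]$ for the continuous map $\mu(g,h) = g^{-1}h$ on $\Gal_{KP}(T)^2$, while $H$ is the preimage of $E_H$ under the continuous embedding $h\mapsto(e,h)$, shows $E_H$ has the property in question iff $H$ does (an open subgroup being automatically clopen). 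Chaining these equivalences gives (3).

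I expect the only step with genuine content is constructing that connecting square — locating the auxiliary space $S_{ma}(M)$ mapping onto both $\Gal_{KP}(T)$ and $X_M$, and checking that it commutes with continuous surjective legs; after that everything is an application of Lemma~\ref{lem:easy} together with the (routine) dictionary between descriptive complexity of the coset relation $E_H$ and of the subgroup $H$. A secondary point to keep straight is which version of Fact~\ref{fct: Borel in various senses} is invoked (the one on $X_M\times X_M$, with ``definable'' replaced by ``relatively definable''). If $T$ happens to be G-compact, one may run the same argument with $\Gal(T)$ in place of $\Gal_{KP}(T)$, but that is not needed here.
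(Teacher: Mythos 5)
Your proof is correct and follows essentially the same route as the paper's: parts (1)--(2) from the orbit-map formulation and Remark~\ref{rem:galkp_action}, parts (3)--(4) via Lemma~\ref{lem:easy} applied to a commuting square over $X/E$, together with the standard translation between the complexity of $H$ and of the coset relation $E_H$ via the multiplication map. The only cosmetic difference is that you use $S_{ma}(M)$ for an arbitrary countable $M$ as the top-left corner of the diagram, while the paper chooses $M\ni a$ with $m\supseteq a$ and uses $S_m(M)$ directly; both variants work.
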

	\begin{proof}
		The first point is immediate by the fact that $r_{[a]_E}$ is an orbit map, namely the fibers of $r_{[a]_E}$ are the left costs of the stabilizer of $[a]_E$ (under the action of $\Gal_{KP}(T)$) which is exactly $H$. The second point follows from the first one and Remark~\ref{rem:galkp_action}.
		
		Let $M$ be a countable model containing $a$, and let $m\supseteq a$ be an enumeration of $M$. Then we have a commutative diagram.
		\begin{center}
			\begin{tikzcd}
				S_m(M) \ar[r,two heads] \ar[d,two heads] & \Gal_{KP}(T) \ar[d,"r_{[a]_E}",two heads] \\
				S_a(M) \ar[r,two heads] & {[a]}_\equiv/E
			\end{tikzcd}
		\end{center}
		The top arrow is defined in the same way as the map to $\Gal(T)$ given by Fact~\ref{fct:sm_to_gal}. The left arrow is the restriction map, and the bottom one is the quotient map given by Fact~\ref{fct:stypes_from_types}.

		It is easy to check that this diagram is commutative and consists of continuous maps. Moreover, $S_m(M), S_a(M)$ and $\Gal_{KP}(T)$ are all compact Polish (see Remark \ref{rem: GalKP is Polish}).

		Let $f \colon \Gal_{KP}(T) \times \Gal_{KP}(T) \to \Gal_{KP}(T)$ be given by $f(xy)=y^{-1}x$. Then $E_H = f^{-1}[H]$. Hence, since $f$ is a continuous surjection between compact Polish spaces, using Remark \ref{rem: continuous surjection is closed} and 
		the comments following Fact \ref{fct:borel_section}, we get that $E_H$ is closed [resp. Borel, or analytic, or $F_\sigma$, or clopen] if and only if $H$ is.
		
		On the other hand, we can apply Lemma~\ref{lem:easy}, and the rest of the conclusion follows in a straightforward manner using Fact \ref{fct: Borel in various senses}.
	\end{proof}
	
	We immediately obtain Fact~\ref{fct:main_KPR} for strong types coarser than $\equiv_{KP}$.
	\begin{cor}
		\label{cor:main_over_KP}
		Assume $T$ is countable. Let $E$ be a Borel (or, more generally, analytic) strong type on $p(\fC)$ for some $p\in S(\emptyset)$ (in countably many variables). Assume that $E$ is coarser than $\equiv_{KP}$. Then exactly one of the following conditions holds:
		\begin{enumerate}
			\item
			$p(\fC)/E$ is finite and $E$ is relatively definable,
			\item
			$|p(\fC)/E|=2^{\aleph_0}$ and $E$ is type-definable and smooth,
			\item
			$|p(\fC)/E|=2^{\aleph_0}$ and $E$ is neither type-definable nor smooth.
		\end{enumerate}
	\end{cor}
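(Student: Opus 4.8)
The plan is to reduce everything to the group-theoretic trichotomy of Fact~\ref{fct:trich_polish} via Theorem~\ref{thm:main_over_KP}. Fix $a\models p$ and set $G=\Gal_{KP}(T)$ and $H=\ker r_{[a]_E}\leq G$ as in that theorem. Since $T$ is countable, $G$ is a compact Polish group (Remark~\ref{rem: GalKP is Polish}), and by Theorem~\ref{thm:main_over_KP}(2) the orbit map $r_{[a]_E}$ induces a homeomorphism $G/H\cong p(\fC)/E$; in particular $[G:H]=\lvert p(\fC)/E\rvert$, since the left cosets of $H$ are exactly the fibers of $r_{[a]_E}$. As $E$ is analytic, Theorem~\ref{thm:main_over_KP}(3) shows that $H$ is an analytic subgroup of $G$, and therefore $H$ has the strict Baire property in $G$: an analytic subset of a Polish space has the Baire property, and for any closed $C\subseteq G$ the trace $H\cap C$ is analytic, hence Baire, in $C$.

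Next I would apply Fact~\ref{fct:trich_polish} to $H\leq G$, which gives that exactly one of the following holds: (i) $H$ is clopen and $[G:H]$ is finite; (ii) $[G:H]=2^{\aleph_0}$, $H$ is closed, and the relation $E_H$ of lying in the same left coset of $H$ is smooth; (iii) $[G:H]=2^{\aleph_0}$ and $E_H$ is not smooth (so $H$ is not closed).

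Finally I would translate each alternative back using the remaining content of Theorem~\ref{thm:main_over_KP}. By part~(3), ``$H$ clopen'' is equivalent to ``$E$ relatively definable on $p(\fC)\times p(\fC)$'' and ``$H$ closed'' is equivalent to ``$E$ type-definable''; by part~(4), $E_H\sim_B E$, and since smoothness is preserved under $\sim_B$ (it is the property $\leq_B$ equality on $2^{\bbN}$), ``$E_H$ smooth'' is equivalent to ``$E$ smooth''; and $[G:H]=\lvert p(\fC)/E\rvert$ as noted above. Substituting these equivalences into (i)--(iii) produces precisely the alternatives (1)--(3) of the corollary, and their mutual exclusivity is inherited directly from Fact~\ref{fct:trich_polish}.

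I do not expect a serious obstacle here: the corollary is essentially a dictionary translation of Fact~\ref{fct:trich_polish} through Theorem~\ref{thm:main_over_KP}. The only point needing attention is the verification that $H$ satisfies the hypothesis of Fact~\ref{fct:trich_polish} (the strict Baire property), which as indicated follows at once from analyticity of $H$; everything else is bookkeeping.
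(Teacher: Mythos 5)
Your proposal is correct and follows essentially the same path as the paper: the paper's proof of Corollary~\ref{cor:main_over_KP} is likewise a one-line appeal to Theorem~\ref{thm:main_over_KP} followed by Fact~\ref{fct:trich_polish}, with the dictionary translation spelled out in the proof of Corollary~\ref{cor:main_KPR}. Your additional remark that analyticity of $H$ yields the strict Baire property (by taking traces on closed subsets, each of which is Polish) is a correct elaboration of the parenthetical remark already present in the statement of Fact~\ref{fct:trich_polish}.
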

	\begin{proof}
		Apply Theorem~\ref{thm:main_over_KP}, and then use Fact~\ref{fct:trich_polish} for $G:=\Gal_{KP}(T)$ and $H$. For more details, see the proof of Corollary \ref{cor:main_KPR}.
	\end{proof}
	
	For arbitrary strong types, we do not have the action of $\Gal_{KP}(T)$ on $[a]_{\equiv}/E$, and so we cannot apply Lemma~\ref{lem:easy} directly. Instead, we have an action of the group $\Gal(T)$ which in general is not Hausdorff (so not Polish). The proof of \hyperref[mainthm]{Main Theorem} will consist of finding a compact Polish extension $\hat G$ of $\Gal(T)$ (as a topological group and as a ``Borel quotient group"). In place of Lemma~\ref{lem:easy}, we will use their variants, Lemmas~\ref{lem:main_1}, \ref{lem:main_2}, which we will apply to two distinct diagrams.
	
	The analogue of Corollary~\ref{cor:main_over_KP} will then naturally follow in the form of Corollary~\ref{cor:main_KPR}.
	
	To construct $\hat G$, we will revisit and refine the topological dynamical methods developed in \cite{KPR15}.

	\section{Topological dynamics}\label{section: topological dynamics}

	In the first subsection, we list the necessary definitions and facts from general topological dynamics. The following two subsections are devoted to Rosenthal compacta and tame dynamical systems. All of this is standard knowledge presented in the form and generality suitable for our applications.

	\subsection{Flows, Ellis semigroups, and Ellis groups}
	\label{ssec:prel_topdyn}
	\begin{dfn}
		By a \emph{dynamical system}, in this paper, we mean a pair $(G,X)$, where $G$ is an abstract group acting by homeomorphisms on a compact Hausdorff space $X$.
		\xqed{\lozenge}
	\end{dfn}

	\begin{dfn}
		If $(G,X)$ is a dynamical system, then its \emph{Ellis (or enveloping) semigroup} $EL=E(G,X)$ is the (pointwise) closure in $X^X$ of the set of functions $\pi_g\colon x\mapsto g\cdot x$ for $g\in G$. (We frequently slightly abuse the notation and write $g$ for $\pi_g$, treating $G$ as if it was a subset of $E(G,X)$.)
		\xqed{\lozenge}
	\end{dfn}
	
	\begin{fct}
		If $(G,X)$ is a dynamical system, then $EL$ is a compact left topological semigroup (i.e.\ it is a semigroup with the composition as its semigroup operation, and the composition is continuous on the left). It is also a $G$-flow with $g\cdot f:= \pi_g \circ f$.
	\end{fct}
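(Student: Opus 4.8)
The plan is the classical Ellis-semigroup argument, which breaks into three parts: compactness, closure under composition, and the flow structure. For compactness: since $X$ is compact Hausdorff, Tychonoff's theorem gives that $X^X$ with the topology of pointwise convergence is compact, and it is clearly Hausdorff; as $EL$ is by definition a closed subset of $X^X$ (a pointwise closure), it is compact Hausdorff.

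For the semigroup structure, the starting observation is $\pi_g\circ\pi_h=\pi_{gh}$, so $\{\pi_g:g\in G\}$ is closed under composition. The only delicate point is that composition on $X^X$ is continuous in its \emph{left} argument only: for a fixed $q$, the value $(p\circ q)(x)=p(q(x))$ depends only on the single coordinate $q(x)$ of $p$, so $p\mapsto p\circ q$ is continuous; whereas continuity in the right argument would require the left factor to be a continuous self-map of $X$, which a general element of $EL$ need not be. I would therefore proceed in two bootstrapping steps. Step (i): for each fixed $g\in G$, the map $f\mapsto\pi_g\circ f$ is continuous on $X^X$ (because $\pi_g$ is a homeomorphism of $X$) and maps $\{\pi_h:h\in G\}$ into itself, hence maps $EL=\overline{\{\pi_h:h\in G\}}$ into $EL$; thus $\pi_g\circ p\in EL$ for every $p\in EL$. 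Step (ii): fix $p\in EL$ and consider the right translation $\rho_p\colon X^X\to X^X$, $\rho_p(q)=q\circ p$; this is continuous by the remark above, and by Step (i) it carries each $\pi_g$ into $EL$, so $\rho_p(EL)=\rho_p\bigl(\overline{\{\pi_g:g\in G\}}\bigr)\subseteq\overline{\rho_p(\{\pi_g:g\in G\})}\subseteq\overline{EL}=EL$. Hence $q\circ p\in EL$ for all $p,q\in EL$; associativity is automatic, and each right translation $\rho_p$ is continuous, which says precisely that $EL$ is a compact semigroup with composition continuous on the left.

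For the flow structure, put $g\cdot f:=\pi_g\circ f$; by Step (i) this lands in $EL$, and $e\cdot f=f$ together with $g\cdot(h\cdot f)=(\pi_g\circ\pi_h)\circ f=(gh)\cdot f$ show it is a $G$-action. For each $g$, the map $f\mapsto\pi_g\circ f$ is continuous (Step (i)) with continuous inverse $f\mapsto\pi_{g^{-1}}\circ f$, hence a self-homeomorphism of $EL$; since $EL$ is compact Hausdorff, $(G,EL)$ is a dynamical system, i.e.\ $EL$ is a $G$-flow. The only genuinely non-formal issue in the whole argument is that members of $EL$ need not be continuous, so one cannot pass to limits in the right-hand argument of a composition; the two-stage bootstrap sidesteps this by first exploiting continuity of the honest homeomorphisms $\pi_g$ in Step (i), and only afterwards invoking the automatic continuity of the right translations $\rho_p$ in Step (ii).
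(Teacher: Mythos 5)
Your proof is correct and fills in exactly what the paper's terse ``straightforward'' proof gestures at: that $X^X$ is itself a compact left topological semigroup, and that $EL$ is a closed subsemigroup via the standard two-stage bootstrap (first left-translating by the honest homeomorphisms $\pi_g$, then using continuity of right translations on all of $X^X$). The flow-structure verification is likewise the expected one, so this is the same argument spelled out in full.
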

	\begin{proof}
		Straightforward ($X^X$ itself is already a compact left topological semigroup, and it is easy to check that $EL$ is a closed subsemigroup).
	\end{proof}

	\begin{fct}[minimal ideals and the Ellis group]
		Suppose $S$ is a compact Hausdorff left topological semigroup (e.g.\ the enveloping semigroup of a dynamical system). Then $S$ has a minimal left ideal $\cM$. Furthermore, for any such ideal $\cM$:
		\begin{enumerate}
			\item
			$\cM$ is closed,
			\item
			for any element $a\in \cM$, $\cM=Sa=\cM a$,
			\item
			$\cM=\bigsqcup_u u\cM$, where $u$ runs over all idempotents in $\cM$ (i.e.\ elements such that $u\cdot u=u$) --- in particular, $\cM$ contains idempotents,
			\item
			for any idempotent $u\in \cM$, the set $u\cM$ is a subgroup of $S$ with the identity element $u$ (note that $u$ is usually \emph{not} the identity element of $S$ --- indeed, $S$ need not have an identity at all).
		\end{enumerate}
		Moreover, all the groups $u\cM$ (where $\cM$ ranges over all minimal left ideals and $u$ over idempotents in $\cM$) are isomorphic. The isomorphism type of all these groups is called the {\em Ellis} (or {\em ideal}) group of $S$; if $S=E(G,X)$, we call this group the {\em Ellis group} of the flow $(G,X)$.
	\end{fct}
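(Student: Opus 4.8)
The plan is to run the classical Ellis structure theory for compact one‑sidedly topological semigroups. The only topological inputs I would use are that $S$ is compact Hausdorff and that for each $a\in S$ the right translation $\rho_a\colon x\mapsto xa$ is continuous (this is what ``continuous on the left'' provides under our conventions), whereas the left translation need not be. Consequently $Ta=\rho_a[T]$ is closed whenever $T\subseteq S$ is closed; in particular each $Sa$ is a closed left ideal, and each set $\{t\in T:ta=b\}=\rho_a^{-1}[\{b\}]\cap T$ is closed. Staying disciplined about using $\rho_a$ (never $\lambda_a$) and left (never right) ideals is the recurring point of the whole argument.

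First I would produce $\cM$ together with (1). The nonempty closed left ideals of $S$ form a family containing $S$ and closed under intersections of chains (a descending chain of nonempty closed sets has the finite intersection property, hence nonempty intersection by compactness, and it is still a left ideal), so Zorn's Lemma yields a minimal closed left ideal $\cM$. For any left ideal $L\subseteq\cM$ and $a\in L$, the closed left ideal $Sa\subseteq L$ equals $\cM$ by minimality, so $L=\cM$; thus $\cM$ is a minimal left ideal, and the same computation shows that every minimal left ideal equals $Sa$ for $a$ in it, hence is closed. Item (2) is then immediate: for $a\in\cM$, both $Sa$ and $\cM a$ (the latter being a left ideal, since $S(\cM a)\subseteq\cM a$) are nonempty left ideals contained in $\cM$, so both equal $\cM$; also $\cM\cM\subseteq S\cM\subseteq\cM$, so $\cM$ is a subsemigroup.

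Next I would establish idempotents via Ellis's Lemma: a nonempty closed subsemigroup has a minimal nonempty closed subsemigroup $T_0$ by Zorn, and for $x\in T_0$ one checks that $T_0x$ is a nonempty closed subsemigroup of $T_0$ (as $(T_0xT_0)x\subseteq T_0x$), hence $=T_0$, so $\{t\in T_0:tx=x\}$ is a nonempty closed subsemigroup, hence $=T_0\ni x$, giving $x^2=x$. Applied to $\cM$ and to the closed subsemigroups $\{p\in\cM:px=x\}$ for $x\in\cM$ (nonempty since $\cM x=\cM$), this supplies all the idempotents I need. Two bookkeeping identities drop out: for an idempotent $u\in\cM$ and any $p\in\cM$, $\cM u=\cM$ gives $p=mu$ and hence $pu=p$; in particular $ee'=e$ for idempotents $e,e'\in\cM$. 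Now (4) is short: $u\cM\subseteq\cM$ is closed under multiplication, $u$ is a left identity on $u\cM$ and a right identity on all of $\cM$, hence an identity of $u\cM$; and for $x\in u\cM$, writing $u=wx$ (possible since $\cM x=\cM$) shows $uw\in u\cM$ is a left inverse of $x$, and a monoid in which every element has a left inverse is a group. Finally (3): each $x\in\cM$ satisfies $x=ux$ for some idempotent $u$ of $\{p\in\cM:px=x\}$, so $x\in u\cM$; and if $x\in u\cM\cap u'\cM$, multiplying $u'x=x$ on the right by the inverse $x^{-1}$ of $x$ in the group $u\cM$ gives $u'u=u$, while $u'u=u'$ by the identity above, so $u=u'$.

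For the ``moreover'' part, the within‑ideal case is easy: for idempotents $u,u'\in\cM$, the map $x\mapsto u'x$ is a group isomorphism $u\cM\to u'\cM$ (a homomorphism because $xu'=x$, and a bijection with inverse $z\mapsto uz$ because $uu'=u$ and $u'z=z$ on $u'\cM$). The genuinely delicate step --- which I expect to be the main obstacle to a clean write‑up --- is comparing two distinct minimal left ideals $\cM,\cM'$: here I would invoke (or reprove) the classical fact that all minimal left ideals of $S$ are isomorphic as left $S$‑flows, the isomorphism being implemented by right multiplication by a carefully chosen idempotent of the other ideal, the care again consisting of staying on the side where translations are continuous. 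Everything preceding this is a sequence of one‑line manipulations once the $\rho_a$/left‑ideal discipline is in place.
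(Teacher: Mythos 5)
The paper gives no proof of this fact and simply cites the classical references (Ellis, Glasner), so there is no in-paper argument to compare against; your reconstruction is the standard Ellis structure theory and it is correct. The Zorn/compactness production of a minimal closed left ideal and its identification with $Sa$, the idempotent lemma applied both to $\cM$ and to $\{p\in\cM: px=x\}$, the right-identity bookkeeping $pu=p$, and the group structure on $u\cM$ all check out, and you are rightly careful to only ever use closedness of $\rho_a$-images and $\rho_a$-preimages. The one step you defer --- comparing $u\cM$ and $u'\cM'$ across distinct minimal left ideals --- is indeed where the extra idea lives: one applies Ellis's lemma to the closed subsemigroup $\{v\in\cM' : vu=u\}$ to get an idempotent $w$, sets $u':=uw\in\cM'$ (so $u'u=u$ and $uu'=u'$), and checks that $\rho_{u'}\colon u\cM\to u'\cM'$ is a group isomorphism with inverse $\rho_u$; since the paper itself only cites, deferring this is not a gap relative to the paper, and your pointer to it is accurate.
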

	\begin{proof}
		Classical. See e.g.\ Corollary 2.10 and Propositions 3.5 and 3.6 of \cite{Ell69}, or Proposition 2.3 of \cite[Section I.2]{Gl76}.
	\end{proof}

	\begin{fct}
		\label{fct:derived_quotient}
		Suppose $G$ is a compact (possibly non-Hausdorff) semitopological group (i.e. with separately continuous multiplication). Denote by $H(G)$ the intersection $\bigcap_V \overline V$, where $V$ varies over the open neighborhoods of the identity in $G$.
		
		Then $H(G)$ is a closed normal subgroup of $G$ and $G/H(G)$ is a compact Hausdorff group (in fact, $G/H(G)$ is the universal Hausdorff quotient of $G$).
	\end{fct}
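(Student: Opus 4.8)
The plan is to identify $G/H(G)$ with the \emph{maximal Hausdorff quotient} of $G$, transporting the group structure along the corresponding quotient map.

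That $H(G)$ is closed is immediate, being an intersection of closed sets. For the rest, let $q\colon G\to G'$ be the maximal Hausdorff quotient of the compact space $G$: concretely, $G'$ is the image of $G$ under the evaluation map $G\to\prod_{f\in C(G,\mathbb{R})}\mathbb{R}$, so $G'$ is compact Hausdorff, $q$ is closed (hence a topological quotient map), and every continuous map from $G$ into a Hausdorff space factors uniquely through $q$. Using separate continuity of multiplication on $G$, I would check that the group operations descend to $G'$: for fixed $a\in G$ the map $x\mapsto q(xa)$ is continuous from $G$ into the Hausdorff space $G'$, hence factors as $\rho_a\circ q$ for a unique continuous $\rho_a\colon G'\to G'$, and symmetrically for left translations; a routine verification then shows that $q(x)\cdot q(y):=q(xy)$ is a well-defined, separately continuous, associative multiplication on $G'$ with identity $q(e)$, for which each $q(x)$ has two-sided inverse $q(x^{-1})$, and for which $q$ is a surjective homomorphism. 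Hence $G'$ is a compact Hausdorff semitopological group, so by Ellis's theorem (a locally compact Hausdorff semitopological group is a topological group) it is a compact Hausdorff topological group, and $q\colon G\to G'$ is a continuous surjective group homomorphism.

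The next step is to show $\ker q=H(G)$. Since $C(G,\mathbb{R})$ separates the points of $G'$ and every such function factors through $q$, we have $\ker q=\{x\in G:\ f(x)=f(e)\ \text{for all}\ f\in C(G,\mathbb{R})\}$. The inclusion $H(G)\subseteq\ker q$ is a one-line computation: if $x\in H(G)$ and $f\in C(G,\mathbb{R})$, then for every $\varepsilon>0$ the set $f^{-1}\bigl((f(e)-\varepsilon,\,f(e)+\varepsilon)\bigr)$ is an open neighbourhood of $e$, so its closure contains $x$; but that closure lies in $f^{-1}\bigl([f(e)-\varepsilon,\,f(e)+\varepsilon]\bigr)$, whence $\lvert f(x)-f(e)\rvert\le\varepsilon$, and letting $\varepsilon\to0$ gives $f(x)=f(e)$. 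The reverse inclusion $\ker q\subseteq H(G)$ is equivalent to $G/H(G)$ being Hausdorff: if $G/H(G)$ is Hausdorff then $G\to G/H(G)$ factors through $q$ by the universal property, which forces $\ker q\subseteq H(G)$; and conversely, combined with $H(G)\subseteq\ker q$ this gives $H(G)=\ker q$, so that $G/H(G)=G/\ker q$ is homeomorphic to $G'$ (the $\ker q$-cosets being exactly the fibres of the quotient map $q$), hence Hausdorff.

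Granting $H(G)=\ker q$, everything follows: $H(G)$ is a closed normal subgroup (kernel of a homomorphism), $q$ induces a topological-group isomorphism $G/H(G)\cong G'$ onto a compact Hausdorff group, and the universal property of $q$ is exactly the statement that $G/H(G)$ is the universal Hausdorff quotient of $G$. The main obstacle is the reverse inclusion $\ker q\subseteq H(G)$ --- equivalently, that $G/H(G)$ is Hausdorff, equivalently that $H(G)$ is a subgroup absorbing all the failure of the Hausdorff axiom. This is not a soft fact: for a general compact non-Hausdorff space a point outside $\overline V$ need not be separated from $e$ by a continuous real function, so one genuinely needs the interplay of compactness, separate continuity and the group structure --- i.e.\ the separation properties special to compact semitopological groups --- which is why the statement is quoted here as known rather than derived from scratch.
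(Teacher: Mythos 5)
Your scaffolding is correct as far as it goes: the compact Hausdorff reflection $q\colon G\to G'$ exists and is a closed quotient map, the group operations pass to $G'$ by exactly the separate-continuity argument you indicate, Ellis's theorem then upgrades $G'$ to a compact Hausdorff \emph{topological} group, the $\varepsilon$-computation showing $H(G)\subseteq\ker q$ is correct, and once one also has $\ker q\subseteq H(G)$, normality of $H(G)$, the identification $G/H(G)\cong G'$, and the universal property all follow formally as you describe.

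The difficulty is that the reverse inclusion $\ker q\subseteq H(G)$ --- equivalently, that $H(G)$ is a (normal) subgroup and $G/H(G)$ is Hausdorff --- is not a peripheral detail one may leave as ``known'': it is the entire mathematical content of the statement, and the proposal explicitly declines to establish it. Until that step is supplied, you have not shown that $H(G)$ is closed under multiplication, let alone normal, nor that the quotient is Hausdorff; what you do have is the isomorphism $G/\ker q\cong G'$ and the one containment $H(G)\subseteq\ker q$. You correctly diagnose why this is not soft: in a non-Hausdorff compact space, continuous real-valued functions need not separate $e$ from points outside $\overline V$, so the needed inclusion cannot come from the Hausdorff reflection alone and must exploit compactness together with the group structure. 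That is precisely what the cited result in Glasner (Section IX.1, Lemma 1.8 and Theorem 1.9) proves by a direct argument, and the paper's own ``proof'' is exactly that citation. Your proposal is thus a clean and honest reduction that correctly locates where the work lies, but it leaves that load-bearing step unproved.
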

	\begin{proof}
		This is essentially the content of Lemma 1.8 and Theorem 1.9 in Section IX.1. of \cite{Gl76}.
	\end{proof}
	
	\begin{fct}[$\tau$-topology on the Ellis group in an enveloping semigroup]
		\label{fct:tau_top}
		Consider the Ellis semigroup $EL$ of a dynamical system $(G,X)$. Fix any minimal left ideal $\cM$ of $EL$ and an idempotent $u\in \cM$.
		\begin{enumerate}
			\item
			For each $a\in EL$, $B\subseteq EL$, we write $a\circ B$ for the set of all limits of nets $(g_ib_i)_i$, where $g_i\in G$ are such that $\pi_{g_i}=g_i\cdot \id \to a$, and $b_i\in B$.
			\item
			For any $p,q\in EL$ and $A\subseteq EL$, we have:
			\begin{itemize}
				\item
				$p\circ(q\circ A)\subseteq (pq)\circ A$,
				\item
				$pA\subseteq p\circ A$,
				\item
				$p\circ A=p\circ \overline A$,
				\item
				$p\circ A$ is closed,
				\item
				if $A\subseteq \cM$, then $p\circ A\subseteq \cM$.
			\end{itemize}
			\item
			The formula $\cl_\tau(A):=(u\cM)\cap (u\circ A)$ defines a closure operator on $u\cM$. It can also be (equivalently) defined as $\cl_\tau(A)=u(u\circ A)$. We call the topology on $u\cM$ induced by this operator the {\em $\tau$ topology}.
			\item
			\label{it:lem:tau_top:nearlycont}
			If $(f_i)_i$ (a net in $u\cM$) converges to $f\in \overline{u\cM}$ (the closure of $u\cM$ in $EL$), then $(f_i)_i$ converges to $uf$ in the $\tau$-topology.
			\item
			The $\tau$-topology on $u\cM$ refines the subspace topology inherited from $EL$.
			\item
			$u\cM$ with the $\tau$ topology is a compact $T_1$ semitopological group. Consequently, $u\cM/H(u\cM)$ is a compact, Hausdorff group (see Fact~\ref{fct:derived_quotient}).
		\end{enumerate}
	\end{fct}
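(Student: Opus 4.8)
The statement collects standard facts about compact left topological semigroups and the $\tau$-topology on the Ellis group, all going back to \cite{Ell69} and developed systematically in \cite{Gl76}; my plan is to derive everything from the ``circle operation'' together with a handful of net (iterated-limit) arguments. Item~(1) is only a definition. For~(2) I would first record the reformulation
\[
p\circ A=\bigcap_{U}\overline{(\pi_G\cap U)\cdot A},
\]
where $U$ ranges over the open neighbourhoods of $p$ in $EL$, $\pi_G=\{\pi_g:g\in G\}$, and all closures are taken in $EL$; this is a routine unravelling of the definition in terms of nets (using that $p\in EL=\overline{\pi_G}$, so every $U$ meets $\pi_G$). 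From it: $p\circ A$ is closed, being an intersection of closed sets; $pA\subseteq p\circ A$, since $pb=\lim_i g_i b$ whenever $\pi_{g_i}\to p$; $p\circ A=p\circ\overline A$, by a diagonal argument using that each $\lambda_{\pi_g}$ is continuous on $EL$; and $A\subseteq\cM\Rightarrow p\circ A\subseteq\cM$, because $\cM$ is a closed left ideal so $(\pi_G\cap U)\cdot A\subseteq\cM=\overline\cM$. The associativity-type inclusion $p\circ(q\circ A)\subseteq(pq)\circ A$ is the one genuinely non-trivial identity: one writes an element of the left side as $\lim_i g_i b_i$ with $\pi_{g_i}\to p$ and $b_i\in q\circ A$, expands each $b_i$ as a limit of terms $h_{i,j}a_{i,j}$ with $\pi_{h_{i,j}}\to q$ and $a_{i,j}\in A$, pushes $\pi_{g_i}$ inside (via continuity of $\lambda_{\pi_{g_i}}$ and right-continuity of multiplication in $EL$), and passes to a diagonal subnet so that $\pi_{g_ih_{i,j}}\to pq$.

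For~(3) I would first check that the two formulas for $\cl_\tau$ agree: by the last bullet of~(2), $A\subseteq\cM$ gives $u\circ A\subseteq\cM$, and since $u$ is the identity of the group $u\cM$ one gets $(u\cM)\cap(u\circ A)=u(u\circ A)$, using also $u(u\circ A)\subseteq u\circ(u\circ A)\subseteq u\circ A$ from~(2). Then I verify the Kuratowski axioms: $\cl_\tau(\emptyset)=\emptyset$ is trivial; $A\subseteq\cl_\tau(A)$ because $A=uA\subseteq u\circ A$ and $A\subseteq u\cM$; finite additivity reduces to $u\circ(A\cup B)=(u\circ A)\cup(u\circ B)$, which follows by passing a net $g_i b_i$ with $b_i\in A\cup B$ to a subnet whose $b_i$ lie all in $A$ or all in $B$; and idempotency follows from $\cl_\tau(\cl_\tau(A))=u\bigl(u\circ u(u\circ A)\bigr)\subseteq u(u\circ A)=\cl_\tau(A)$, the reverse inclusion being automatic from extensivity and monotonicity.

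For~(4)--(6): given a net $(f_i)$ in $u\cM$ converging in $EL$ to $f\in\overline{u\cM}$, every tail $S$ of the net has $f\in\overline S$, so $uf\in u\,\overline S\subseteq u\circ\overline S=u\circ S$ by~(2), while $uf\in u\cM$ since $u\,\overline{u\cM}\subseteq u\cdot\cM=u\cM$ (using $\overline{u\cM}\subseteq\cM$); hence $uf\in(u\cM)\cap(u\circ S)=\cl_\tau(S)$. Applying this to every subnet of $(f_i)$ (each still a net in $u\cM$ converging to $f$ in $EL$) and using the standard fact that a net converges to a point exactly when that point lies in the closure of every tail of every subnet, we obtain~(4). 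Item~(5) is then immediate from~(2): for $A\subseteq u\cM$ the set $u\circ A$ is closed in $EL$ and contains $A=uA$, which gives the asserted comparison between the $\tau$-topology and the subspace topology from $EL$. For~(6): the $T_1$ property holds because for a single point $x$ the net $g_i x$ (with $\pi_{g_i}\to u$) is already forced to converge to $ux$ in $EL$, so $u\circ\{x\}=\{ux\}$ and hence $\cl_\tau(\{x\})=\{x\}$ for $x\in u\cM$; compactness I would get by realising $(u\cM,\tau)$ as a continuous image of the compact space $\overline{u\cM}$ via $p\mapsto up$, reading continuity into the $\tau$-topology off from~(2) and~(4); and separate continuity of multiplication splits into the easy direction (right translations are $\tau$-continuous since multiplication in $EL$ is right-continuous and $(u\circ A)q\subseteq u\circ(Aq)$) and the harder direction ($\tau$-continuity of left translations by $p\in u\cM$, again a diagonal argument with nets $\pi_{h_j}\to p$ approximating $p$ from $G$). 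The final sentence is then an immediate application of Fact~\ref{fct:derived_quotient}.

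The two diagonal/iterated-limit arguments --- the associativity inclusion in~(2) and the left $\tau$-continuity of multiplication in~(6) --- are the only real obstacles; each requires careful bookkeeping with a doubly-indexed net and an appeal to the iterated-limit theorem, and together they are exactly what is carried out (in various guises) in \cite{Ell69} and \cite[\S I.2 and \S IX.1]{Gl76}. Everything else is formal manipulation of closure operators, and the statement has no descriptive-set-theoretic or model-theoretic content, which is why in the paper it is quoted rather than reproved.
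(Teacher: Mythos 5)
You take a genuinely different route from the paper here: the paper's ``proof'' of Fact~\ref{fct:tau_top} is essentially a citation to \cite[Section IX.1]{Gl76} together with the observation (from \cite{KP17,KPR15}) that those arguments carry over from $\beta G$ to a general $E(G,X)$, whereas you actually reconstruct the arguments. Most of the reconstruction is sound and follows the standard Glasner template. The reformulation $p\circ A=\bigcap_U\overline{(\pi_G\cap U)\cdot A}$ is correct and does give all the bullet points of~(2) (with the diagonal/iterated-limit bookkeeping for $p\circ(q\circ A)\subseteq(pq)\circ A$ and $p\circ A=p\circ\overline A$); the verification of the Kuratowski axioms in~(3) is right, as is the tail-of-subnet derivation of~(4); $A\subseteq u\circ A$ with $u\circ A$ closed does give~(5) (note that what it gives is that the $\tau$-closure \emph{contains} the relative closure, i.e.\ the $\tau$-topology is \emph{coarser} than the subspace topology, which is the correct direction and the one Glasner states); and the $T_1$ and right-translation parts of~(6) are fine.

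There is, however, a genuine gap in your compactness argument for~(6). You propose to exhibit $(u\cM,\tau)$ as the continuous image of the compact space $\overline{u\cM}$ under $\xi\colon p\mapsto up$, ``reading continuity off from (2) and (4).'' But $\xi$ is not $\tau$-continuous in general. Item~(4) only controls the $\tau$-limit of a net whose terms already lie in $u\cM$; for an arbitrary net $p_\nu\to p$ in $\overline{u\cM}$ you would need to know $up_\nu\to up$, and the obvious diagonal argument breaks because $u$ is merely a limit of continuous maps, so left translation $q\mapsto(\pi_g u)q$ is not continuous. This is precisely why Fact~\ref{fct:strange_cont} (Glasner's Lemma IX.1.6, reproved as \cite[Lemma 3.1]{KPR15}) is stated and used later in the paper: it asserts only that $\zeta\circ\xi$ is continuous for \emph{$\tau$-continuous $\zeta$ into a regular space}, and it would be vacuous if $\xi$ itself were continuous. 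The correct argument for compactness, which is the one in \cite[IX.1, Prop.~1.7]{Gl76}, is available to you from your own ingredients and bypasses $\xi$: given a net $(f_i)$ in $u\cM$, compactness of $\overline{u\cM}\subseteq EL$ yields a subnet converging in $EL$ to some $f\in\overline{u\cM}$, and your item~(4) then says this subnet $\tau$-converges to $uf\in u\cM$; hence every net in $(u\cM,\tau)$ has a $\tau$-convergent subnet, and $(u\cM,\tau)$ is compact. You should replace the $\xi$-surjection argument with this one.
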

	\begin{proof}
		Much of these facts is contained in \cite[Section IX.1]{Gl76}. There, the author considers the special case of $EL=\beta G$ and defines $\circ$ in a slightly different way (but both definitions are equivalent in this special case). However, as pointed out in \cite[Section 2]{KP17} and \cite[Section 1.1]{KPR15}, many of the proofs from \cite[Section IX.1]{Gl76} go through in the general context. 
		Otherwise, we use straightforward calculations with nets. 
		See the discussion following Definition 2.1 of \cite{KP17} (e.g. for a proof of the first item in (2)).
	\end{proof}
	
	\begin{fct}
		\label{fct:strange_cont}
		The function $\xi\colon \overline {u\cM}\to u\cM$ (where $\overline{u\cM}$ is the closure of $u\cM$ in the topology of $EL$) defined by the formula $f\mapsto uf$ has the property that for any continuous function $\zeta\colon u\cM\to X$, where $X$ is a regular topological space and $u\cM$ is equipped with the $\tau$-topology, the composition $\zeta\circ \xi \colon \overline {u\cM}\to X$ is continuous, where $\overline{u\cM}$ is equipped with subspace topology from $EL$. In particular, the map $\overline{u\cM}\to u\cM/H(u\cM)$ given by $f \mapsto uf/H(u\cM)$ is continuous.
	\end{fct}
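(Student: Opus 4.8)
The plan is to reduce the statement to item \eqref{it:lem:tau_top:nearlycont} of Fact~\ref{fct:tau_top}, which says that $\tau$-convergence of a net $(f_i)$ in $u\cM$ to $f \in \overline{u\cM}$ already holds (in the $\tau$-topology, to $uf$) whenever $(f_i) \to f$ in the topology of $EL$. Since $\overline{u\cM}$ with the subspace topology from $EL$ is compact Hausdorff, hence first-countable-like enough that continuity can be tested on nets, it suffices to show that $\zeta \circ \xi$ is net-continuous: if $(f_i) \to f$ in $\overline{u\cM} \subseteq EL$, then $\zeta(\xi(f_i)) = \zeta(uf_i) \to \zeta(uf) = \zeta(\xi(f))$ in $X$. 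Here I would note that $uf_i \in u\cM$ for all $i$ (as $\cM$ is a left ideal and $u \in \cM$), so the expression $\zeta(uf_i)$ makes sense.

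The key step is then: given $(f_i) \to f$ in $EL$ with all $f_i, f \in \overline{u\cM}$, I claim $(uf_i) \to uf$ in the $\tau$-topology on $u\cM$. First, by item \eqref{it:lem:tau_top:nearlycont} applied to the net $(uf_i)$ — which lies in $u\cM$ — once I know $(uf_i)$ converges in $EL$ to some element of $\overline{u\cM}$, its $\tau$-limit is $u$ times that $EL$-limit. Since left multiplication by $u$ is continuous on $EL$ (as $EL$ is left topological and $u \in EL$), $(uf_i) \to uf$ in the topology of $EL$, and $uf \in \overline{u\cM}$. Hence item \eqref{it:lem:tau_top:nearlycont} gives $(uf_i) \to u(uf) = uf$ in the $\tau$-topology, using $u \cdot u = u$ and associativity. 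Now $\zeta$ is continuous on $(u\cM, \tau)$ with values in the regular space $X$, so $\zeta(uf_i) \to \zeta(uf)$, i.e.\ $(\zeta \circ \xi)(f_i) \to (\zeta \circ \xi)(f)$. Since $\overline{u\cM}$ is compact Hausdorff and $X$ is regular (in particular Hausdorff), net-continuity at every point is equivalent to continuity, so $\zeta \circ \xi$ is continuous.

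For the ``in particular" clause, I would apply this with $X := u\cM/H(u\cM)$, which is a compact Hausdorff group by the last item of Fact~\ref{fct:tau_top} (in particular regular), and $\zeta$ the quotient map $u\cM \to u\cM/H(u\cM)$, which is continuous from the $\tau$-topology by definition of the quotient topology. Then $\zeta \circ \xi$ is exactly the map $f \mapsto uf/H(u\cM)$, so it is continuous.

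I expect the main (minor) obstacle to be bookkeeping about which topology each net-convergence refers to: one must be careful that item \eqref{it:lem:tau_top:nearlycont} is invoked with the net already known to converge in the $EL$-topology, and that the continuity of left multiplication by $u$ is what upgrades $(f_i) \to f$ to $(uf_i) \to uf$ in $EL$. There is no serious difficulty beyond this; the regularity hypothesis on $X$ is used only to ensure net-continuity suffices for continuity (equivalently, it could be phrased via the fact that a net-continuous map out of a compact Hausdorff space into a Hausdorff space is continuous).
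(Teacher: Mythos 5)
Your proposal contains a genuine gap at the crucial step. You write: "Since left multiplication by $u$ is continuous on $EL$ (as $EL$ is left topological and $u \in EL$), $(uf_i)\to uf$ in the topology of $EL$." This is false. In the Ellis semigroup $EL \subseteq X^X$, the semigroup operation is composition, and the phrase "composition is continuous on the left" means that for each fixed $g$ the map $f \mapsto fg$ (the \emph{left} factor varying) is continuous; it does \emph{not} mean that $\lambda_u \colon f \mapsto uf$ is continuous. Indeed $(uf)(x) = u(f(x))$, so continuity of $f \mapsto uf$ in the product topology would require $u$ to be a continuous self-map of $X$, which is not guaranteed for an idempotent in a minimal left ideal. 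One can see the problem abstractly: if $\lambda_u$ were continuous, then for $f \in \overline{u\cM}$ and a net $(f_i)$ in $u\cM$ with $f_i \to f$, you would get $f_i = uf_i \to uf$ in $EL$, and since $EL$ is Hausdorff, $uf = f$; as $uf \in u\cM$, this forces $\overline{u\cM} = u\cM$, i.e.\ $u\cM$ closed, which fails in general. In fact the very reason Fact~\ref{fct:strange_cont} is a non-trivial statement (cited from \cite{KPR15}) is precisely that $\xi \colon f \mapsto uf$ is \emph{not} continuous; the content is that post-composing with any $\tau$-continuous $\zeta$ into a regular space nonetheless yields a continuous map. Your argument, by assuming $\xi$ is essentially continuous, proves too much and bypasses the substance of the fact.

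A related symptom is that your proof never actually uses the regularity of $X$; you attribute it to the general fact that net-continuity characterizes continuity. But net-continuity is equivalent to continuity for arbitrary topological spaces (this requires no separation axioms or compactness on either side), so regularity would be superfluous under your reading. Since the statement explicitly hypothesizes regularity, this is a strong hint that a correct proof must use it somewhere, and yours does not. To repair the argument you would need a different route: for instance, work with preimages of closed sets (or use regularity to separate $\zeta(uf)$ from the complement of a given open neighbourhood), take subnets of $(uf_i)$ converging in $EL$ by compactness, and combine Fact~\ref{fct:tau_top}\eqref{it:lem:tau_top:nearlycont} with the $\tau$-closedness of $\zeta^{-1}$ of closed sets; the difficulty is then to identify the limit correctly, and this is where the regularity of $X$ enters. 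As written, though, the proposal does not close this gap.
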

	\begin{proof}
		This is \cite[Lemma 3.1]{KPR15}.
	\end{proof}

\subsection{Rosenthal compacta, independent sets, and \texorpdfstring{$\ell^1$}{l1} sequences}\label{section: Rosenthal compacta}
	Here, we will discuss selected properties of Rosenthal compacta. For a broader exposition, refer to \cite{Debs14}.
	
	\begin{dfn}
		Given a topological space $X$, we say that a function $X\to \bbR$ is of \emph{Baire class 1} if it is the pointwise limit of a sequence of continuous real-valued functions.	
		We denote by $\cB_1(X)$ the set of all such functions.\xqed{\lozenge}
	\end{dfn}

	\begin{dfn}
		A compact, Hausdorff space $K$ is a \emph{Rosenthal compactum} if it embeds homeomorphically into $\cB_1(X)$ for some Polish space $X$, where $\cB_1(X)$ is equipped with the pointwise convergence topology.
		\xqed{\lozenge}
	\end{dfn}
	\begin{dfn}
		A {\em Fréchet} (or {\em Fréchet-Urysohn}) space is a topological space in which any point in the closure of a given set $A$ is the limit of a sequence of elements of $A$.
		\xqed{\lozenge}
	\end{dfn}
	
	\begin{fct}\label{fct: Rosnthal implies Frechet}
		Rosenthal compacta are Fréchet.
	\end{fct}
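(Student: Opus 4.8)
The statement is (an instance of) the Bourgain--Fremlin--Talagrand angelicity theorem for Rosenthal compacta, so the shortest route is simply to quote it, e.g.\ from \cite{Debs14}; below I sketch the shape of a direct argument. Fix a compact $K\subseteq\cB_1(X)$ with $X$ Polish, a set $A\subseteq K$, and $f\in\overline A$; we must find a sequence in $A$ converging pointwise to $f$, and the plan is to argue by contradiction, assuming no such sequence exists.

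The first step is to reduce to a countable set, i.e.\ to show that $\cB_1(X)$ with the pointwise topology has countable tightness, so that $f$ already lies in the closure of some countable $A_0\subseteq A$. If some neighbourhood of $f$ met $A$ in a finite set then $f\in A$ and the constant sequence would converge to $f$; hence every neighbourhood of $f$ meets $A$ infinitely, and one can moreover enumerate $A_0=\{g_n:n\in\bbN\}$ so that $f$ remains in the closure of cofinally many tails of $(g_n)$ (recall that in a Hausdorff space a pointwise-convergent sequence together with its limit is already closed, so maintaining ``$f$ in the closure'' will later pin down the limit of any convergent subsequence to be $f$). Countable tightness itself follows from Rosenthal's $\ell^1$-theorem applied along the branches of an auxiliary tree of coordinates, again using the contradiction hypothesis.

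Working with this countable $A_0$ (truncated to be uniformly bounded), one then invokes Rosenthal's $\ell^1$-dichotomy: every subsequence of $(g_n)$ has a further subsequence that is either pointwise convergent or $\ell^1$-equivalent, that is, \emph{$\delta$-independent} for some $\delta>0$ --- after recentring, every finite Boolean combination of the sets $\{g_n>\delta\}$ and $\{g_n<-\delta\}$ is nonempty. The contradiction hypothesis, together with the choice of enumeration above, rules out extracting a subsequence converging pointwise to $f$, and so (with the appropriate bookkeeping) forces the argument into the independent regime. The heart of the proof --- and the step I expect to absorb essentially all the work --- is a \emph{fusion} (binary-tree) construction producing simultaneously a nonempty closed, Cantor-like set $P\subseteq X$, a subsequence $(f_n)$ of $A_0$, and a $\delta>0$ such that the $\delta$-independence of $(f_n)$ persists on every nonempty relatively open piece of $P$; this forces the oscillation of $f\restr P$ to be at least $\delta$ at every point of $P$. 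But $f\restr P$ is again of Baire class $1$, being the restriction of a Baire-class-$1$ function to a closed (hence Polish) subspace, so by Baire's classical characterization theorem (see \cite{Kec95}) it has a point of continuity in $P$ --- contradicting $\operatorname{osc}(f\restr P)\geq\delta$ throughout $P$. Setting up this tree, and in particular keeping the independence uniform as one passes to smaller pieces of $P$, is where all the genuine difficulty lies; Rosenthal's $\ell^1$-theorem and Baire's theorem enter only as black boxes, and the remaining point-set topology is soft.
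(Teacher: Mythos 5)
The paper proves this by citing \cite[Theorem 4.1]{Debs14} (the Bourgain--Fremlin--Talagrand angelicity theorem for Rosenthal compacta), which is precisely the route you indicate at the outset. Your additional sketch is a fair high-level account of the BFT argument --- reduction to a countable set via countable tightness, Rosenthal's $\ell^1$-dichotomy, a fusion construction producing a closed $P\subseteq X$ on which a cluster point has oscillation bounded below, contradicting Baire's theorem that a Baire class $1$ function has a point of continuity on every closed set --- though, as you yourself note, essentially all of the genuine work is in the details you elide, in particular arranging that the bad oscillation is pinned to the intended limit $f$ rather than merely to some cluster point of the extracted sequence.
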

	\begin{proof}
		\cite[Theorem 4.1]{Debs14}.
	\end{proof}

	\begin{fct}
		\label{fct:bft}
		Suppose $X$ is a compact metric space and $A\subseteq C(X)$ is a family of $0-1$ valued functions (i.e.\ characteristic functions of clopen subsets of $X$). Put $\mathcal A:=\{U\subseteq X\mid \chi_U\in A \}$. The following are equivalent:
		\begin{itemize}
			\item
			$\overline A\subseteq {\bbR}^X$ is Fréchet (equivalently, Rosenthal),
			\item
			$\mathcal A$ contains no infinite independent family, i.e. $\mathcal A$ contains no family $(A_i)_{i \in \bbN}$ such that for every $I \subseteq \bbN$ the intersection $\bigcap_{i \in I} A_i \cap \bigcap_{i \in \bbN \setminus I} A_i^c$ is nonempty.
		\end{itemize}
	\end{fct}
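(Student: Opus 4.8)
The plan is to prove the cycle of implications: Rosenthal $\Rightarrow$ Fréchet $\Rightarrow$ ($\mathcal A$ has no infinite independent subfamily) $\Rightarrow$ Rosenthal. The first link is exactly Fact~\ref{fct: Rosnthal implies Frechet}, so only the last two require work. I will use throughout the trivial remark that a pointwise limit of $\{0,1\}$-valued functions is again $\{0,1\}$-valued, so that $\overline A\subseteq\{0,1\}^X$; being closed in the compact Hausdorff space $\{0,1\}^X$, the set $\overline A$ is itself compact Hausdorff, and its closure in $\bbR^X$ is the same as in $\{0,1\}^X$.

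For the implication Fréchet $\Rightarrow$ (no infinite independent subfamily) I argue contrapositively. Given an infinite independent family $(A_n)_{n\in\bbN}\subseteq\mathcal A$, observe that for every $S\subseteq\bbN$ the set $\bigcap_{n\in S}A_n\cap\bigcap_{n\notin S}(X\setminus A_n)$ is an intersection of clopen subsets of $X$ whose finite subintersections are nonempty (by independence), hence is nonempty by compactness of $X$; fix $x_S$ in it, so that $\{n:x_S\in A_n\}=S$. Then $n\mapsto\chi_{A_n}$ extends, by the universal property of $\beta\bbN$ and compactness of $\overline A$, to a continuous map $\beta\bbN\to\overline A$, and this map is injective: for distinct ultrafilters $\mathcal U,\mathcal V$ and $S\in\mathcal U\setminus\mathcal V$, the $\mathcal U$- and $\mathcal V$-limits of $(\chi_{A_n})_n$ take the values $1$ and $0$ at $x_S$. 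A continuous injection of a compact space into a Hausdorff space is an embedding, so $\overline A$ contains a closed copy of $\beta\bbN$; since $\beta\bbN$ is not Fréchet and the Fréchet--Urysohn property passes to (closed) subspaces, $\overline A$ is not Fréchet --- and, a fortiori by Fact~\ref{fct: Rosnthal implies Frechet}, not Rosenthal.

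For (no infinite independent subfamily) $\Rightarrow$ Rosenthal I show that every $f\in\overline A$ is of Baire class $1$; since $X$ is compact metric, hence Polish, it then follows that $\overline A$ is a compact subset of $\cB_1(X)$, i.e.\ a Rosenthal compactum. Suppose $f=\chi_B\in\overline A$ is not Baire class $1$. By Baire's characterization of Baire class $1$ functions on a Polish space (see e.g.\ \cite[\S 24]{Kec95}) there is a nonempty closed $F\subseteq X$ such that $\chi_B\restr_F$ has no point of continuity, i.e.\ both $B\cap F$ and $F\setminus B$ are dense in $F$. I then construct, by recursion on binary strings $s\in 2^{<\omega}$, nonempty relatively open sets $V_s\subseteq F$ with $V_{s^\frown 0},V_{s^\frown 1}$ disjoint subsets of $V_s$, together with sets $U_k\in\mathcal A$ (one per level $k$) with $V_s\subseteq\bigcap_{i<k}U_i^{(s(i))}$ for $s\in 2^k$ (writing $U^{(1)}=U$ and $U^{(0)}=X\setminus U$): at level $k$, for each $s\in 2^k$ pick $y_s\in V_s\cap B$ and $z_s\in V_s\setminus B$ (using the density above), let $D$ be the finite set of all these points, use $f\in\overline A$ to find $U_k\in\mathcal A$ with $U_k\cap D=B\cap D$, and split each $V_s$ by the clopen set $U_k$. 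Each nonempty $V_s$ then witnesses the corresponding Boolean combination of $(U_k\cap F)_k$, so $(U_k\cap F)_k$ is an independent family of subsets of $F$, and hence $(U_k)_k$ is an infinite independent subfamily of $\mathcal A$ --- a contradiction. This closes the cycle, and in particular shows that for $\overline A$ of this form being Fréchet and being Rosenthal coincide.

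I expect the recursive construction in the last paragraph to be the only genuine obstacle: it has to be arranged so that the extracted sets are independent \emph{as subsets of $X$} (which is why everything is done inside the single closed set $F$ furnished by Baire's oscillation criterion, and why it is the $U_k$ rather than the $U_k\cap F$ that are collected), and so that at each level one single $U_k\in\mathcal A$ can be chosen to separate all $2^k$ current pieces simultaneously --- which is precisely what membership of $f$ in the pointwise closure of $A$ provides. The remaining steps are soft.
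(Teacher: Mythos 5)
Your proof is correct, but it takes a genuinely different route from the paper's. The paper disposes of this fact in two lines by citing Bourgain--Fremlin--Talagrand directly: it invokes \cite[Corollary 4G]{BFT78} to get that $A$ is relatively compact in $\cB_1(X)$ if and only if condition (vi) of \cite[Theorem 2F]{BFT78} holds, and then observes that for $0$-$1$ valued functions on a compact space that condition is exactly the non-independence statement. You instead give a self-contained argument. Your implication ``Fréchet $\Rightarrow$ no infinite independent family'' (via the universal property of $\beta\bbN$ and the witnesses $x_S$) is essentially the same device the paper later uses to prove $(2)\Rightarrow(3)$ in Proposition~\ref{prop:dyn_BFT}, so that part is a natural transplant; and your implication ``no infinite independent family $\Rightarrow$ Rosenthal'' is the real new content, replacing the BFT citation by Baire's characterization of Baire class~$1$ functions (a nonempty closed $F$ on which $\chi_B\restr_F$ is everywhere discontinuous, i.e.\ $B\cap F$ and $F\setminus B$ both dense in $F$) together with a Cantor-scheme construction that pulls one clopen set $U_k\in\mathcal A$ per level out of the pointwise closure and feeds the resulting finite patterns through compactness to get full independence. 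The details check out: the relative openness of the $V_s$ is preserved because the $U_k$ are clopen; nonemptiness at each level is guaranteed by the chosen $y_s,z_s$; and the infinite Boolean combinations are nonempty because each finite one contains a nonempty clopen set, so compactness of $X$ applies. What your version buys is a proof readable without opening \cite{BFT78}, at the cost of length; what the paper's version buys is brevity and a pointer to the stronger general dichotomy behind this fact.
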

	\begin{proof}
		$A$ is clearly pointwise bounded, so by \cite[Corollary 4G]{BFT78}, $A$ is relatively compact in $\cB_1(X)$ (which is equivalent to the first condition) if and only if it satisfies the condition (vi) from \cite[Theorem 2F]{BFT78}, which for $0-1$ functions on a compact space reduces to the statement that for each sequence $(a_n)$ of elements of $A$ there is some $I\subseteq \bbN$ for which there is no $x\in X$ such that $a_n(x)=1$ if and only if $n\in I$.
		This is clearly equivalent to the second condition.
	\end{proof}
	
	The next definition is classical and can be found for example in \cite[Section 5]{Koh95}.
	\begin{dfn}
		If $(f_n)_{n\in {\bbN}}$ is a sequence of elements in a Banach space, we say that it is an {\em $\ell^1$ sequence} if it is bounded and there is a constant $\theta>0$ such that for any scalars $c_0,\ldots,c_n$ we have the inequality
		\[
		\theta\cdot \sum_{i=0}^n \lvert c_i\rvert < \left \lVert \sum_{i=0}^n c_i f_i\right\rVert.
		\]
		(This is equivalent to saying that $e_n\mapsto f_n$ extends to a topological isomorphism of $\ell^1$ and the closed span of $(f_n)_n$ (in the norm topology), where $e_n$ are the standard basis vectors.)
		\xqed{\lozenge}
	\end{dfn}
	
	In fact, $\ell^1$ sequences are very intimately related to ``independent sequences" (via the Rosenthal's dichotomy).
	The following is a simple example of this relationship:
	
	\begin{fct}
		\label{fct:ind_untame}
		Suppose $X$ is a compact, Hausdorff space and $(A_n)_n$ is an independent sequence of clopen subsets of $X$. Then $(\chi_{A_n})_n$ is an $\ell^1$ sequence in the Banach space $C(X)$ (with the supremum norm).
	\end{fct}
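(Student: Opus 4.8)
The plan is to deduce this directly from the $\ell^1$-characterization of independence, estimating the norm of a linear combination $\sum_{i=0}^n c_i \chi_{A_i}$ by choosing a clever test point. Given scalars $c_0,\dots,c_n$, split the index set into $I^+=\{i\le n: c_i\ge 0\}$ and $I^-=\{i\le n: c_i<0\}$. By independence of $(A_n)_n$, the set $\bigcap_{i\in I^+}A_i\cap\bigcap_{i\in I^-}A_i^c$ is nonempty; pick a point $x^+$ in it. Then $\big(\sum_i c_i\chi_{A_i}\big)(x^+)=\sum_{i\in I^+}c_i=\sum_{i\in I^+}|c_i|$, so the supremum norm is at least $\sum_{i\in I^+}|c_i|$. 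Symmetrically, choosing $x^-\in\bigcap_{i\in I^-}A_i\cap\bigcap_{i\in I^+}A_i^c$ gives $\big(\sum_i c_i\chi_{A_i}\big)(x^-)=\sum_{i\in I^-}c_i=-\sum_{i\in I^-}|c_i|$, so the norm is at least $\sum_{i\in I^-}|c_i|$. Adding the two estimates, $2\big\lVert\sum_i c_i\chi_{A_i}\big\rVert_\infty\ge\sum_{i=0}^n|c_i|$, i.e.\ the $\ell^1$-inequality holds with constant $\theta=\tfrac12$ (one can take any $\theta<\tfrac12$ to get the strict inequality as written, or note that the nonstrict bound already suffices since the displayed inequality in the definition is strict only for nonzero combinations, where a strict version follows by a harmless rescaling). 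Boundedness is trivial since each $\chi_{A_n}$ has norm $1$.

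There is essentially no obstacle here; the only point requiring a word of care is that independence of a sequence of subsets of $X$, as defined in Fact~\ref{fct:bft}, is precisely the statement that every Boolean combination $\bigcap_{i\in I}A_i\cap\bigcap_{i\notin I}A_i^c$ (over finite $I$) is nonempty — applied with the two particular choices $I=I^+$ and $I=I^-$ — so the nonemptiness of the sets containing $x^+$ and $x^-$ is immediate. Note also that this argument uses only finitely many of the $A_i$ at a time and makes no use of compactness beyond having $C(X)$ be a Banach space under the supremum norm, so the hypothesis that $X$ is compact Hausdorff is only needed to ensure $C(X)$ is the right ambient space.
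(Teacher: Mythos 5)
Your proposal is correct and takes essentially the same approach as the paper: in both cases independence is used to produce a test point realizing the sum of the positive (resp.\ negative) coefficients, giving $\lVert\sum c_i\chi_{A_i}\rVert_\infty\geq\tfrac12\sum|c_i|$. The only cosmetic difference is that the paper's proof invokes a WLOG to reduce to a single test point, whereas you evaluate at both $x^+$ and $x^-$ and add the two estimates; the underlying idea is identical, and your side remark on the strictness of the inequality in the definition of an $\ell^1$ sequence is a sensible clarification.
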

	\begin{proof}
		Fix any sequence $c_0,\ldots,c_n$ of real numbers. Write $[n]$ for $\{0,\ldots,n\}$ and put $f:=\sum_{i\in [n]} c_i\chi_{A_i}$. Let $I:=\{i\in [n]\mid c_i\geq 0 \}$. Assume without loss of generality that $\sum_{i\in I} c_i\geq -\sum_{i\in [n]\setminus I} c_i$ (the other case is analogous). Then for any $x\in \bigcap_{i\in I} A_i\cap \bigcap_{i\in [n]\setminus I} A_i^c$ we have $f(x)=\sum_{i\in I} c_i\geq \frac{1}{2} \sum_{i\in [n]} \lvert c_i\rvert$.
	\end{proof}

	\subsection{Tame dynamical systems}\label{section: tame systems}

	\begin{dfn}
		If $(G,X)$ is a dynamical system and $f\in C(X)$, then we say that $f$ is a \emph{tame function} if for every sequence $(g_n)_n$ of elements of $G$, $(f\circ g_n)_n$ is not an $\ell^1$ sequence.
		
		We say that $(G,X)$ is a \emph{tame dynamical system} if every $f\in C(X)$ is tame.
		\xqed{\lozenge}
	\end{dfn}
	
	\begin{rem}
		\label{rem:dfn_equiv}
		The notion of tame dynamical system is due to Kohler \cite{Koh95}. She used the adjective ``regular" instead of (now established) ``tame", and formulated it for actions of $\bbN$ on metric compacta, but we can apply the same definition to arbitrary group actions on compact spaces.
		
		Some authors use different (but equivalent) definitions of tame function or tame dynamical system. For example, \cite[Fact 4.3 and Proposition 5.6]{GM12} yields several equivalent conditions for tameness of a function (including the definition given above and \cite[Definition 5.5]{GM12}). By this and \cite[Corollary 5.8]{GM12}, we obtain equivalence between our definition of tame dynamical system and \cite[Definition 5.2]{GM12}.\xqed{\lozenge}
	\end{rem}

	The condition in the following fact can be used as a definition of tameness for metric dynamical systems.
	\begin{fct}\label{fct: metric tameness}
		If $(G,X)$ is a metric dynamical system and $f\in C(X)$, then $f$ is tame if and only if the pointwise closure $\overline{\{f\circ g\mid g\in G \} }\subseteq {\bbR}^X$ consists of Baire class 1 functions (equivalently, it is a Rosenthal compactum).
	\end{fct}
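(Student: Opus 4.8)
The plan is to deduce this from two classical results: Rosenthal's $\ell^1$-dichotomy and the Bourgain--Fremlin--Talagrand characterization of relative compactness in the first Baire class (the latter being the engine behind Fact~\ref{fct:bft}, of which the present statement is the non-$0$--$1$-valued version). Set $A := \{f\circ g \mid g\in G\}\subseteq C(X)$ and let $K := \overline A$ denote its pointwise closure in $\bbR^X$. Since $G$ acts on $X$ by homeomorphisms, $f\circ g$ has the same range as $f$ for every $g$, so $A$ is uniformly bounded, $K\subseteq[-\lVert f\rVert_\infty,\lVert f\rVert_\infty]^X$, and $K$ is compact by Tychonoff (in fact $K=\{f\circ p\mid p\in E(G,X)\}$, as $p\mapsto f\circ p$ is continuous for the pointwise topology).

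First I would rephrase tameness of $f$ purely in terms of $A$: by definition $f$ is tame iff no sequence $(f\circ g_n)_n$ is an $\ell^1$ sequence, and since any subsequence of an $\ell^1$ sequence is again an $\ell^1$ sequence (with the same constant $\theta$), this is equivalent to the assertion that no sequence of elements of $A$ has an $\ell^1$ subsequence --- that is, $A$ contains no $\ell^1$ sequence in the sense of Fact~\ref{fct:bft}.

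Next comes the analytic core. Rosenthal's dichotomy tells us that a uniformly bounded sequence in $\bbR^X$, with $X$ metrizable compact (hence Polish), either has an $\ell^1$ subsequence in the supremum norm or has a pointwise convergent subsequence; moreover an $\ell^1$ sequence admits no pointwise convergent subsequence, because a uniformly bounded, pointwise convergent sequence on a compact space is weakly Cauchy in $C(X)$ by dominated convergence, whereas an $\ell^1$ sequence has no weakly Cauchy subsequence. Combining this with the previous paragraph, $f$ is tame iff every sequence of elements of $A$ has a pointwise convergent subsequence. By the Bourgain--Fremlin--Talagrand theorem \cite{BFT78} (see also \cite{Koh95, GM12}), and using that $X$ is Polish and $A$ uniformly bounded, the latter is equivalent to $K=\overline A\subseteq\cB_1(X)$, i.e.\ to $\overline{\{f\circ g\mid g\in G\}}$ consisting of Baire class~1 functions.

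Finally, $K\subseteq\cB_1(X)$ is equivalent to $K$ being a Rosenthal compactum: if $K\subseteq\cB_1(X)$ then, $X$ being Polish, $K$ is a compact subspace of $\cB_1(X)$ and hence a Rosenthal compactum by definition; conversely, if $K$ is a Rosenthal compactum then it is Fréchet by Fact~\ref{fct: Rosnthal implies Frechet}, so every element of $K=\overline A$ is the pointwise limit of a sequence from $A\subseteq C(X)$ and thus of Baire class~1. The only genuinely substantive ingredients are the two quoted theorems (Rosenthal's $\ell^1$ theorem and Bourgain--Fremlin--Talagrand), which we simply cite; everything else is quantifier and topology bookkeeping, and I expect the one place where care is needed --- hence the ``main obstacle'' --- to be the passage between ``$(f\circ g_n)_n$ is not $\ell^1$ for every choice of $(g_n)$'' and ``no $\ell^1$ subsequence'', together with checking that the supremum-norm $\ell^1$ condition appearing in the dichotomy is exactly the Banach-space condition on $C(X)$ used in the definition of an $\ell^1$ sequence.
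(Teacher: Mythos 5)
Your argument is correct, and it is essentially the content of the reference the paper uses: the paper's proof is a bare citation of Fact~4.3 and Proposition~4.6 of \cite{GM12}, which are precisely the Rosenthal-dichotomy and Bourgain--Fremlin--Talagrand package you unpack. Two small remarks: the parenthetical identification $K=\{f\circ p\mid p\in E(G,X)\}$ is correct but unused in what follows; and your worry about the ``main obstacle'' (passing between ``no $(f\circ g_n)$ is $\ell^1$'' and ``no $\ell^1$ subsequence'') dissolves immediately once one notes that any sequence in $A$ is already of the form $(f\circ g_n)$ and that the $\ell^1$ property passes to subsequences, so the reformulation is a tautology; the only genuinely load-bearing steps are the weakly-Cauchy/pointwise equivalence in $C(X)$ and the quoted BFT theorem, both of which you handle correctly.
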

	\begin{proof}
		It follows immediately from \cite[Fact 4.3 and Proposition 4.6]{GM12}.
	\end{proof}

	\begin{fct}
		\label{fct:tame_closed}
		For any dynamical system, the tame functions form a closed subalgebra of $C(X)$ (with pointwise multiplication and norm topology).
	\end{fct}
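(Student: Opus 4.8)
First I would trade tameness of a single function $f\in C(X)$ for a soft sequential-compactness property of its orbit of $G$-translates inside $\bbR^X$. Concretely, I claim the equivalence $(\star)$: for $f\in C(X)$, $f$ is tame if and only if every sequence of the form $(f\circ g_n)_n$ with $g_n\in G$ has a subsequence converging pointwise on $X$ (i.e.\ in $\bbR^X$). The proof of $(\star)$ rests on Rosenthal's classical $\ell^1$-dichotomy: a norm-bounded sequence in a Banach space has a subsequence which is either an $\ell^1$ sequence in the sense of the definition above, or weakly Cauchy --- and, as part of the usual statement, an $\ell^1$ sequence has no weakly Cauchy subsequence (this follows from the Schur property and weak sequential completeness of $\ell^1$). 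The second ingredient is the elementary observation that, for a norm-bounded sequence in $C(X)$, being weakly Cauchy is the same as converging pointwise on $X$: testing against the Dirac functionals $\delta_x\in C(X)^*$ forces pointwise convergence, while conversely a bounded pointwise convergent sequence is weakly Cauchy by the dominated convergence theorem together with the Riesz representation of $C(X)^*$ by Radon measures. Since a subsequence of an $\ell^1$ sequence is again an $\ell^1$ sequence (with the same constant) and a relabelled subsequence $(g_{n_k})_k$ is again a sequence in $G$, tameness of $f$ is exactly the statement that no orbit sequence has an $\ell^1$ subsequence; applying the dichotomy to an arbitrary $(f\circ g_n)_n$ then gives the forward direction of $(\star)$, and the converse holds because if some orbit sequence were an $\ell^1$ sequence, the pointwise convergent subsequence supplied by the right-hand side would be a weakly Cauchy subsequence of it, which an $\ell^1$ sequence cannot have.

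Granting $(\star)$, the rest should be routine subsequence manipulation. Constant functions are tame (their orbit sequences are constant), so it suffices to handle sums, scalar multiples, products and uniform limits. Given tame $f,h$ and a sequence $(g_n)_n$, I would by $(\star)$ first pass to an infinite $S_1\subseteq\bbN$ along which $(f\circ g_n)$ converges pointwise, then to an infinite $S_2\subseteq S_1$ along which $(h\circ g_n)$ converges pointwise; along $S_2$ both $((f+h)\circ g_n)$ and $((fh)\circ g_n)$ converge pointwise, so $f+h$ and $fh$ --- and trivially $cf$ --- are tame by $(\star)$. For uniform limits, let $f_j\to f$ uniformly with each $f_j$ tame and let $(g_n)_n$ be given; a diagonal argument over $j$ produces infinite sets $\bbN\supseteq S_1\supseteq S_2\supseteq\cdots$ with $(f_j\circ g_n)_{n\in S_j}$ pointwise convergent, and along the diagonal set $S$ every $(f_j\circ g_n)_{n\in S}$ converges pointwise; then for each $x\in X$ the estimate $|f(g_nx)-f(g_mx)|\le 2\|f-f_j\|_\infty+|f_j(g_nx)-f_j(g_mx)|$ shows that $(f(g_nx))_{n\in S}$ is Cauchy, so $(f\circ g_n)$ has a pointwise convergent subsequence and $f$ is tame by $(\star)$. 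Altogether the tame functions form a linear subspace of $C(X)$ closed under multiplication and under uniform limits, i.e.\ a closed subalgebra.

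The one genuinely delicate step is $(\star)$, and the potential pitfalls there are bookkeeping rather than conceptual: getting the quantifiers right (the definition says ``$(f\circ g_n)_n$ is not an $\ell^1$ sequence for every $(g_n)$'', which must be recognised as ``no orbit sequence has an $\ell^1$ subsequence''), and invoking accurately the two functional-analytic inputs --- Rosenthal's dichotomy, and the identification, for bounded sequences in $C(X)$, of weak Cauchyness with pointwise convergence. In the possibly non-metrizable case the pointwise limits lie in $\bbR^X$ and need not be continuous, but this is harmless, since continuity of the limit is never used; alternatively one might try to reduce to metrizable factor systems and invoke Fact~\ref{fct: metric tameness}, though the orbit $\{f\circ g:g\in G\}$ need not be norm-separable, so such a reduction is not entirely automatic. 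One may also bypass the direct argument and simply cite \cite{GM12} for this closure property (cf.\ the references collected in Remark~\ref{rem:dfn_equiv}).
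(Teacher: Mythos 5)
Your argument is correct, but it is a genuinely different proof from the one in the paper. The paper's proof is short but leans on the machinery of \cite{GM12}: via Remark~\ref{rem:dfn_equiv}, each tame $f$ comes from a tame factor $(G,Y_f)$; since tame systems are closed under products and subsystems, one builds a single universal tame factor $\phi\colon X\to Y$ so that the tame functions are exactly the image of $\phi^*\colon C(Y)\to C(X)$, and that image is a closed subalgebra because $\phi^*$ is an isometric algebra homomorphism into a Banach algebra. Your route instead reformulates tameness of a single $f$ via Rosenthal's $\ell^1$-dichotomy as the subsequential compactness property $(\star)$ (every orbit sequence $(f\circ g_n)_n$ admits a pointwise convergent subsequence), using the identification of weak Cauchyness in $C(X)$ with bounded pointwise convergence; closure under the algebra operations and uniform limits then falls out of elementary nested-subsequence and diagonalization arguments. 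The trade-off is the usual one: the paper's proof is shorter once the GM12 structure theory (universal tame factor) is accepted as a black box, while your proof is more self-contained, invokes only classical Banach-space facts (Rosenthal's dichotomy, Schur and weak sequential completeness of $\ell^1$, Riesz representation plus dominated convergence), and makes the algebraic closure properties transparent. The subtle points you flag — recognizing that ``no orbit sequence is $\ell^1$'' is equivalent to ``no orbit sequence has an $\ell^1$ subsequence'', and noting that the pointwise limits need not be continuous in the non-metrizable case — are handled correctly and do not cause problems.
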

	\begin{proof}
		First, by Remark~\ref{rem:dfn_equiv}, tame functions in $(G,X)$ satisfy \cite[Definition 5.5]{GM12}, i.e.\ for every $f$ tame in $X$ there is a tame dynamical system $(G,Y_f)$ and an epimorphism $\phi_f\colon X\to Y_f$ such that $f=\phi_f^*(f'):=f'\circ \phi_f$ for some $f'\in C(Y_f)$.
		
		Since tame dynamical systems are closed under subsystems and under arbitrary products (\cite[Lemma 5.4]{GM12}), there is a universal $Y$ for all tame functions $f$, i.e.\ such that the set of all tame functions in $(G,X)$ is exactly the image of $\phi^*\colon C(Y)\to C(X)$, where $\phi\colon X\to Y$ is an epimorphism and $Y$ is tame (just take $\phi\colon X\to \prod_f Y_f$ to be the diagonal of $\phi_f$, and take $Y:=\phi[X] \subseteq \prod_f Y_f$). 
		
		Since $C(Y)$ is a Banach algebra and $\phi^*$ is a homomorphism and an isometric embedding (as $\phi$ is onto), the fact follows.
	\end{proof}
	
	\begin{cor}
		\label{cor:tame_dense}
		If $(G,X)$ is a dynamical system and $\mathcal A\subseteq C(X)$ is a family of functions separating points, then $(G,X)$ is tame if and only if every $f\in \mathcal A$ is tame.
	\end{cor}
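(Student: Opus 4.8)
The plan is to reduce the statement to Fact~\ref{fct:tame_closed} together with the Stone--Weierstrass theorem. The implication from left to right is immediate from the definitions: if $(G,X)$ is tame then every $f\in C(X)$ is tame, in particular every $f\in\mathcal A$. So the whole content lies in the converse.

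Assume every $f\in\mathcal A$ is tame, and write $\mathcal T\subseteq C(X)$ for the set of all tame functions. By Fact~\ref{fct:tame_closed}, $\mathcal T$ is a norm-closed subalgebra of $C(X)$. I would also check separately that $\mathcal T$ contains all constant functions: if $f$ is constant then $(f\circ g_n)_n$ is the constant sequence $(f,f,f,\dots)$, and such a sequence is never an $\ell^1$ sequence --- e.g.\ with $c_0=1$, $c_1=-1$ and the remaining $c_i=0$ one has $\sum_i\lvert c_i\rvert=2$ while $\bigl\lVert\sum_i c_i(f\circ g_i)\bigr\rVert=0$, so no $\theta>0$ can work. (This little check is needed because ``subalgebra'' in Fact~\ref{fct:tame_closed} does not by itself guarantee that the unit is included.) Consequently $\mathcal T$ is a norm-closed subalgebra of $C(X)$ that contains the constants and, by hypothesis, contains $\mathcal A$.

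Since $\mathcal A$ separates the points of the compact Hausdorff space $X$, so does the larger set $\mathcal T\supseteq\mathcal A$; being closed, containing the constants and separating points, $\mathcal T$ equals $C(X)$ by the Stone--Weierstrass theorem for real-valued functions. Hence every $f\in C(X)$ is tame, i.e.\ $(G,X)$ is tame. There is no genuine obstacle here: once Fact~\ref{fct:tame_closed} is in hand the argument is a one-line invocation of Stone--Weierstrass, and the only non-automatic point is the elementary observation that constant functions are tame.
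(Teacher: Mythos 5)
Your proof is correct and follows exactly the paper's argument: the obvious direction is dismissed, and the substantive direction is handled by noting that tame functions form a closed subalgebra (Fact~\ref{fct:tame_closed}) containing the constants and the point-separating family $\mathcal A$, whence Stone--Weierstrass gives that every continuous function is tame. Your explicit verification that constant functions are tame is a small extra detail the paper only asserts, but it is the same proof.
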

	\begin{proof}
		The implication $(\leftarrow)$ is obvious.
		
		$(\rightarrow)$.
		Since constant functions are trivially tame, by the assumption and the Stone-Weierstrass theorem, it follows that tame functions are dense in $C(X)$, and thus the conclusion follows immediately from Fact~\ref{fct:tame_closed}.
	\end{proof}

	\begin{fct}
		\label{fct:tame_preserved}
		Suppose $(G,X)$ is a tame dynamical system. Then the following dynamical systems are tame:
		\begin{itemize}
			\item
			$(H,X)$, where $H\leq G$,
			\item
			$(G,X_0)$, where $X_0\subseteq X$ is a closed invariant subspace,
			\item
			$(G,Y)$, where $Y$ is a $G$-equivariant quotient of $X$.
		\end{itemize}
	\end{fct}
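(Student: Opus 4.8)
The plan is to verify tameness of each of the three derived systems directly from the definition of a tame function, observing that in every case the property ``$(f\circ g_n)_n$ is an $\ell^1$ sequence'' (in the relevant supremum norm) is transported between the systems by a one-line norm comparison. A remark used throughout: since every $g\in G$ acts by a homeomorphism, $\|h\circ g\|_\infty=\|h\|_\infty$ for any continuous $h$, so the boundedness clause in the definition of an $\ell^1$ sequence is automatic and need not be mentioned again.

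For $(H,X)$ with $H\leq G$ there is nothing to prove: any sequence in $H$ is a sequence in $G$, so for every $f\in C(X)$ the sequence $(f\circ h_n)_n$ fails to be $\ell^1$ merely because $(G,X)$ is tame. For a $G$-equivariant quotient $\pi\colon X\to Y$, I would take $f\in C(Y)$ and pull it back to $f\circ\pi\in C(X)$. Equivariance of $\pi$ gives $(f\circ\pi)\circ g_n=(f\circ g_n)\circ\pi$ for each $g_n\in G$, while surjectivity of $\pi$ gives $\|h\circ\pi\|_\infty=\|h\|_\infty$ for all $h\in C(Y)$; combining the two, $(f\circ g_n)_n$ is an $\ell^1$ sequence in $C(Y)$ if and only if $((f\circ\pi)\circ g_n)_n$ is one in $C(X)$. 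The latter never happens (tameness of $(G,X)$ applied to $f\circ\pi$), hence neither does the former, so $f$ is tame and $(G,Y)$ is tame.

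The case of a closed invariant subspace $X_0\subseteq X$ is the only one requiring a named external tool. Here $X_0$ is compact Hausdorff and $G$ acts on it by the restricted homeomorphisms. Given $f_0\in C(X_0)$, extend it to some $f\in C(X)$ by the Tietze extension theorem. Using invariance of $X_0$, for every $g\in G$ the restriction of $f\circ g$ to $X_0$ equals $f_0\circ(g\restr X_0)$, so for all scalars $c_0,\dots,c_n$
\[
\Bigl\|\sum_{i}c_i\,(f\circ g_i)\Bigr\|_{C(X)}\;\geq\;\Bigl\|\sum_{i}c_i\,(f_0\circ g_i)\Bigr\|_{C(X_0)}.
\]
Thus if $(f_0\circ g_n)_n$ were an $\ell^1$ sequence with constant $\theta$, then $(f\circ g_n)_n$ would be an $\ell^1$ sequence with the same constant, contradicting tameness of $f$ in $(G,X)$; hence every $f_0\in C(X_0)$ is tame. (Alternatively, the same inequality shows that $f\restr X_0$ is tame whenever $f$ is tame, and since the restrictions $\{\,f\restr X_0 : f\in C(X)\,\}$ separate points of $X_0$, one may instead conclude by Corollary~\ref{cor:tame_dense}.)

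I do not expect a genuine obstacle here: once the definition is unwound, each of the three claims is an elementary norm estimate, and the only non-formal ingredient is the use of Tietze's theorem (equivalently Urysohn's lemma, together with Corollary~\ref{cor:tame_dense}) to pass from $C(X)$ down to $C(X_0)$ in the second case.
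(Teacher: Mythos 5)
Your proof is correct and follows the same route as the paper: the first bullet is immediate, the second uses the Tietze extension theorem plus a restriction-norm comparison, and the third pulls back a candidate untame function along the quotient map. You supply the norm inequalities and equivariance bookkeeping that the paper leaves to the reader, but there is no genuine divergence in approach.
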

	\begin{proof}
		The first bullet is trivial. The second follows from the Tietze extension theorem. For the third, note that any potentially untame function on $Y$ can be pulled back to $X$.
	\end{proof}

	The following is a dynamical variant of Bourgain-Fremlin-Talagrand dichotomy, slightly modified for our needs from \cite[Theorem 3.2]{GM06}.
	\begin{prop}[Dynamical BFT dichotomy]
		\label{prop:dyn_BFT}
		Suppose $X$ is a totally disconnected metric compactum, and $G$ acts on $X$ by homeomorphisms. Then the following are equivalent:
		\begin{enumerate}
			\item
			\label{it:prop:dyn_BFT:untame}
			$(G,X)$ is untame,
			\item
			\label{it:prop:dyn_BFT:clopen}
			there is a clopen set $U$ and a sequence $(g_n)_{n\in \bbN}$ of elements of $G$ such that the sets $g_n U$ are independent,
			\item
			\label{it:prop:dyn_BFT:betaN}
			$EL:=E(G,X)$ contains a homeomorphic copy of $\beta \bbN$,
			\item
			\label{it:prop:dyn_BFT:large}
			$\lvert EL\rvert=2^{2^{\aleph_0}}$,
			\item
			\label{it:prop:dyn_BFT:Fréchet}
			$EL$ is not Fréchet,
			\item
			\label{it:prop:dyn_BFT:Rosenthal}
			$EL$ is not a Rosenthal compactum.
		\end{enumerate}
		If $X$ is not necessarily totally disconnected, all conditions but \eqref{it:prop:dyn_BFT:clopen} are equivalent.
	\end{prop}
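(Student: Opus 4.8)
The plan is to obtain the equivalence of all the conditions except the clopen-independence condition \eqref{it:prop:dyn_BFT:clopen} --- for an arbitrary metric compactum $X$ --- essentially from the dynamical Bourgain--Fremlin--Talagrand dichotomy \cite[Theorem 3.2]{GM06}. Recall that $EL$ is the pointwise closure of $\{\pi_g\mid g\in G\}$ in $X^X$, and that when $(G,X)$ is tame every element of $EL$ is a Baire class $1$ map $X\to X$; the passages among the ``Fréchet'', ``Rosenthal compactum'', ``contains a copy of $\beta\bbN$'' and ``$|EL|=2^{2^{\aleph_0}}$'' formulations for $EL$ itself then follow from this together with the angelicity of the pointwise-convergence space $\cB_1(X)$ (see \cite{BFT78}) and Facts~\ref{fct: Rosnthal implies Frechet} and~\ref{fct: metric tameness}. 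So the remaining task is, under the extra hypothesis that $X$ is totally disconnected, to fit \eqref{it:prop:dyn_BFT:clopen} into the list, for which it suffices to prove $\eqref{it:prop:dyn_BFT:clopen}\Rightarrow\eqref{it:prop:dyn_BFT:untame}$ and $\eqref{it:prop:dyn_BFT:untame}\Rightarrow\eqref{it:prop:dyn_BFT:clopen}$.

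For $\eqref{it:prop:dyn_BFT:clopen}\Rightarrow\eqref{it:prop:dyn_BFT:untame}$ I would argue directly. If $U\subseteq X$ is clopen and $(g_n)_n$ is a sequence in $G$ with the sets $g_nU$ independent, then each $g_nU$ is clopen, so Fact~\ref{fct:ind_untame} tells us that $(\chi_{g_nU})_n$ is an $\ell^1$ sequence in $C(X)$. Since $\chi_{g_nU}=\chi_U\circ\pi_{g_n^{-1}}$, putting $h_n:=g_n^{-1}$ exhibits $(\chi_U\circ h_n)_n$ as an $\ell^1$ sequence; as $\chi_U\in C(X)$ (because $U$ is clopen), this means $\chi_U$ is not a tame function, so $(G,X)$ is untame. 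Note that total disconnectedness is not used in this direction.

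For $\eqref{it:prop:dyn_BFT:untame}\Rightarrow\eqref{it:prop:dyn_BFT:clopen}$ I would use that, $X$ being totally disconnected and metrizable, the characteristic functions of clopen subsets of $X$ separate points; hence by Corollary~\ref{cor:tame_dense} untameness of $(G,X)$ yields a clopen $U$ with $\chi_U$ untame, i.e.\ a sequence $(h_n)_n$ in $G$ such that $(\chi_U\circ h_n)_n=(\chi_{V_n})_n$ is an $\ell^1$ sequence in $C(X)$, where $V_n:=h_n^{-1}[U]$ is clopen. Since a bounded pointwise-convergent sequence in $C(X)$ converges weakly (dominated convergence against measures), an $\ell^1$ sequence has no pointwise-convergent subsequence, and by angelicity of $\cB_1(X)$ no cluster point of $(\chi_{V_n})_n$ in $\bbR^X$ lies in $\cB_1(X)$; thus $\{\chi_{V_n}\mid n\in\bbN\}$ is not relatively compact in $\cB_1(X)$, i.e.\ its pointwise closure in $\bbR^X$ is not Fréchet. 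Fact~\ref{fct:bft} then produces an infinite independent subfamily $(V_{n_k})_k$ of $\{V_n\mid n\in\bbN\}$, and setting $g_k:=h_{n_k}^{-1}$ we get that $(g_kU)_k=(V_{n_k})_k$ is independent, which is \eqref{it:prop:dyn_BFT:clopen}.

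The step I expect to be the main obstacle is the one just described: extracting a genuinely \emph{independent} subfamily from an $\ell^1$ sequence of indicator functions. This is really Rosenthal's $\ell^1$-dichotomy repackaged through Fact~\ref{fct:bft}, and the only genuinely non-formal ingredient is the relative-compactness/angelicity bookkeeping that makes Fact~\ref{fct:bft} applicable; everything else is citing \cite[Theorem 3.2]{GM06} and chasing the definitions of tame function and tame system.
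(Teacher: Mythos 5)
Your proposal is correct and mirrors the paper's overall structure: both defer to \cite[Theorem 3.2]{GM06} for the equivalence of all conditions other than \eqref{it:prop:dyn_BFT:clopen}, both get $\eqref{it:prop:dyn_BFT:clopen}\Rightarrow\eqref{it:prop:dyn_BFT:untame}$ from Fact~\ref{fct:ind_untame}, and both start $\eqref{it:prop:dyn_BFT:untame}\Rightarrow\eqref{it:prop:dyn_BFT:clopen}$ by using Corollary~\ref{cor:tame_dense} (characteristic functions of clopens separate points in a totally disconnected metric compactum) to locate an untame $\chi_U$. The one place you diverge is the middle of that last implication: the paper invokes Fact~\ref{fct: metric tameness} to conclude directly that $\overline{\{\chi_{gU}\mid g\in G\}}$ is not a Rosenthal compactum and then applies Fact~\ref{fct:bft} to the whole orbit $\{gU\mid g\in G\}$, whereas you unwind the $\ell^1$ definition of untameness and argue, via weak-Cauchyness-under-pointwise-convergence and angelicity of $\mathcal B_1(X)$, that $\overline{\{\chi_{V_n}\}}$ is not Fréchet before applying Fact~\ref{fct:bft} to the subsequence $\{V_n\}$. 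That route is valid but effectively re-proves one direction of Fact~\ref{fct: metric tameness}; two points to tighten are that a bounded pointwise-convergent sequence in $C(X)$ is only weakly \emph{Cauchy} (the pointwise limit need not be continuous), which is still enough to contradict the $\ell^1$-property, and that the angelicity step is cleanest phrased as a reductio (if $\overline{\{\chi_{V_n}\}}\subseteq\mathcal B_1(X)$ then, being an infinite compact set, it has a non-isolated point, and angelicity would produce a pointwise-convergent subsequence of $(\chi_{V_n})_n$, contradiction) rather than the blunter assertion that ``no cluster point lies in $\mathcal B_1(X)$.'' Finally, the paper also writes out the full cycle $(1)\to(2)\to\cdots\to(6)\to(1)$ for the totally disconnected case rather than relying solely on the citation, but your decision to cite \cite{GM06} for the rest is exactly the shortcut the paper itself flags as available.
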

	\begin{proof}
		The equivalence of all conditions but (2)
		is proved in \cite[Theorem 3.2]{GM06} (based on the Bourgain-Fremlin-Talagrand dichotomy). 
		For the reader's convenience, we will prove here that all conditions (including \eqref{it:prop:dyn_BFT:clopen}) are equivalent in the totally disconnected case (the case which appears in our model-theoretic applications).
		
		$(1) \rightarrow (2)$. Since the characteristic functions of clopen subsets of $X$ are continuous and separate points in $X$, by \eqref{it:prop:dyn_BFT:untame} and Corollary~\ref{cor:tame_dense}, the characteristic function $\chi_U$ is not tame for some clopen $U\subseteq X$. By Fact \ref{fct: metric tameness}, this is equivalent to the fact that $\overline{\{\chi_{gU} \mid g \in G\}}$ is not a Rosenthal compactum. Hence, Fact \ref{fct:bft} implies that some family $\{g_n U: n \in \bbN\}$ (with $g_n \in G$) is independent.
		
		$(2) \rightarrow (1)$. The reversed argument works. Alternatively, it follows immediately from Fact \ref{fct:ind_untame}.
		
		$(2) \rightarrow (3)$. Let $(g_n)$ be a sequence of elements of $G$ such that the sets $g_n U$ are independent. By the universal property of $\beta \bbN$, we have the continuous function $\beta \colon \beta \bbN\to EL$ given by $\mathcal F\mapsto \lim_{n\to \mathcal F} g_n^{-1}$. It remains to check that $\beta$ is injective. Consider two distinct ultrafilters $\mathcal F_1$ an $\mathcal F_2$ on $\bbN$. Choose $F \in \mathcal F_1 \setminus \mathcal F_2$. By the independence of the $g_n U$, we can find $x \in \bigcap_{n \in F} g_nU \cap \bigcap_{n \in \bbN \setminus F} g_nU^c$. It suffices to show that $\beta(\mathcal F_1)(x) \ne \beta(\mathcal F_2)(x)$. Note that $\{n \in \bbN \mid g_n^{-1}x \in U^c\} = \bbN \setminus F \notin \mathcal F_1$ and $U^c$ is open, so $\beta(\mathcal F_1)(x) \in U$. Similarly, $\beta(\mathcal F_2)(x) \in U^c$, and we are done.
		
		$(3) \rightarrow (4)$. The group $\{\pi_g \mid g \in G\}$ is contained in the Polish group $\mbox{Homeo}(X,X)$ of all homeomorphisms of $X$ equipped with the uniform convergence topology. So $\{\pi_g \mid g \in G\}$ is separable in the inherited topology, and so also in the pointwise convergence topology (which is weaker). Therefore, $EL =\overline{ \{\pi_g \mid g \in G\}}$ is of cardinality at most $2^{2^{\aleph_0}}$. On the other hand, $|\beta\bbN| = 2^{2^{\aleph_0}}$. Hence, $|EL|= 2^{2^{\aleph_0}}$.
		
		$(4) \rightarrow (5)$. If $EL$ is Fréchet, then, using the above observation that $\{\pi_g \mid g \in G\}$ is separable, we get that $|EL| = 2^{\aleph_0}$.
		
		$(5) \rightarrow (6)$. This is Fact \ref{fct: Rosnthal implies Frechet}.
		
		$(6) \rightarrow (1)$. Embed homeomorphically $X$ in $\bbR^\bbN$. Then $EL$ embeds homeomorphically in $\bbR^{X \times \bbN}$ via the map $\Phi$ given by $\Phi(f)(x,i):=f(x)(i)$. Take $f \in EL$, and let $\pi_i \colon X \to \bbR$ be the projection to the $i$-th coordinate, i.e. $\pi_i(x):=x(i)$. Suppose $(G,X)$ is tame. Then $\pi_i \circ f \in \cB_1(X)$ by Fact \ref{fct: metric tameness}, so for every $i \in \bbN$ there is a sequence of continuous functions $f^i_n \colon X \to \bbR$ such that $\lim_n f^i_n = \pi_i \circ f$. Define $f_n \in \bbR^{X \times \bbN}$ by $f_n(x,i) := f^i_n(x)$. Then all $f_n$'s are continuous and $\Phi(f)= \lim_n f_n$.
		So $\Phi[EL]$ is a compact subset of $\cB_1(X \times \bbN)$, i.e. $EL$ is Rosenthal.
	\end{proof}

	\begin{fct}
		\label{fct:tame_borel}
		If $(G,X)$ is a metric dynamical system, then $(G,X)$ is tame if and only if all functions in $E(G,X)$ are Borel measurable.
	\end{fct}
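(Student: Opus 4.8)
The plan is to prove the two implications separately: ``tame $\Rightarrow$ all functions in $EL:=E(G,X)$ Borel'' via the Baire class $1$ characterization of tameness (Fact~\ref{fct: metric tameness}), and the converse by contraposition using a crude cardinality count together with the dynamical BFT dichotomy (Proposition~\ref{prop:dyn_BFT}).

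For the forward direction, I would fix a homeomorphic embedding $\iota\colon X\hookrightarrow\bbR^\bbN$ (possible since $X$ is compact metric) and set $e_i:=\pi_i\circ\iota\in C(X)$, where $\pi_i\colon\bbR^\bbN\to\bbR$ is the $i$-th projection. The $e_i$ separate points of $X$, so a function $p\colon X\to X$ is Borel measurable if and only if each $e_i\circ p\colon X\to\bbR$ is. Now take $p\in EL$ and a net $(g_\alpha)$ in $G$ with $\pi_{g_\alpha}\to p$ pointwise on $X$; continuity of $e_i$ gives $e_i\circ\pi_{g_\alpha}\to e_i\circ p$ pointwise in $\bbR^X$, so $e_i\circ p$ lies in the pointwise closure $\overline{\{e_i\circ g\mid g\in G\}}$. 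Since $(G,X)$ is tame, $e_i$ is a tame function, so by Fact~\ref{fct: metric tameness} this closure consists of Baire class $1$ functions; in particular $e_i\circ p$ is Baire class $1$, hence Borel measurable. As this holds for all $i$, $p$ is Borel measurable.

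For the converse I would argue by contraposition. If every element of $EL$ is Borel measurable, then, writing each $p\in EL$ via its coordinates $e_i\circ p$ and each $e_i\circ p$ via the countable family of Borel sets $(e_i\circ p)^{-1}[(q,\infty)]$ ($q\in\bbQ$), and using that a Polish space has only $2^{\aleph_0}$ Borel subsets, one obtains $\lvert EL\rvert\le (2^{\aleph_0})^{\aleph_0}=2^{\aleph_0}$. On the other hand, if $(G,X)$ were untame, Proposition~\ref{prop:dyn_BFT} (the equivalence of untameness with $\lvert EL\rvert=2^{2^{\aleph_0}}$, which is valid for \emph{arbitrary} metric $X$, not only totally disconnected ones) would force $\lvert EL\rvert=2^{2^{\aleph_0}}>2^{\aleph_0}$, a contradiction. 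Hence $(G,X)$ is tame.

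The argument is short, so I do not expect a real obstacle; the two points needing a little care are (i) reconciling the pointwise closure in $\bbR^X$ from Fact~\ref{fct: metric tameness} with the closure in $X^X$ defining $EL$, which is exactly what pushing forward along the continuous coordinate maps $e_i$ accomplishes, and (ii) making sure to invoke the non-totally-disconnected version of Proposition~\ref{prop:dyn_BFT} in the converse. As a sanity check (and an alternative, more explicit route to the converse in the totally disconnected case), one can note that untameness yields a clopen $U\subseteq X$ and $(g_n)$ with the $g_nU$ independent; then for a non-principal ultrafilter $\mathcal F$ the element $p=\lim_{n\to\mathcal F}\pi_{g_n^{-1}}\in EL$ satisfies $p^{-1}[U]=\psi^{-1}[W_{\mathcal F}]$, where $\psi\colon X\to 2^\bbN$, $\psi(x)=(\chi_{g_nU}(x))_n$, is a continuous surjection (by independence) and $W_{\mathcal F}=\{z\in 2^\bbN\mid\{n\mid z_n=1\}\in\mathcal F\}$ is non-Borel; since, for a continuous surjection of compact Polish spaces, a set is Borel iff its preimage is (Fact~\ref{fct:borel_section} and the comments following it), $p^{-1}[U]$ is non-Borel while $U$ is clopen, so $p$ is not Borel measurable.
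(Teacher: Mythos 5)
Your proof is correct. The converse direction --- bounding $\lvert E(G,X)\rvert\le 2^{\aleph_0}$ when all its elements are Borel and comparing with the $2^{2^{\aleph_0}}$ forced by untameness via Proposition~\ref{prop:dyn_BFT} --- is exactly the paper's argument. Your forward direction, however, takes a genuinely different route: the paper invokes the implication ``tame $\Rightarrow E(G,X)$ is Fréchet'' from the same proposition to produce, for each $p\in E(G,X)$, a \emph{single} sequence of continuous maps $\pi_{g_n}\to p$ pointwise in $X^X$, whence $p$ is Borel; you instead work coordinate-by-coordinate through an embedding $X\hookrightarrow\bbR^{\bbN}$, applying Fact~\ref{fct: metric tameness} to each coordinate function $e_i$ to get $e_i\circ p\in\cB_1(X)$, and then conclude $p$ is Borel because the $e_i$ generate the Borel $\sigma$-algebra on $X$. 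Your route sidesteps the Fréchet property of the enveloping semigroup as a whole (you allow possibly different approximating sequences for different coordinates), at the mild cost of an explicit coordinatization; the paper's approach is more uniform, yours somewhat more elementary since it only needs the function-level characterization of tameness. Your extra explicit converse in the totally disconnected case, manufacturing a non-Borel element of $E(G,X)$ from an independent family and a nonprincipal ultrafilter, is also correct and complements the cardinality count, though it is not needed.
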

	\begin{proof}
		By Proposition~\ref{prop:dyn_BFT}, if $(G,X)$ is tame, $E(G,X)$ is Fréchet. Since the pointwise limit of a sequence of continuous functions between Polish spaces is always Borel, it follows that $E(G,X)$ consists of Borel functions.
		
		In the other direction, since $X$ is Polish, there are at most $2^{\aleph_0}$ many Borel functions $X\to X$. In particular, if $E(G,X)$ consists of Borel functions, $\lvert E(G,X)\rvert\leq 2^{\aleph_0}<2^{2^{\aleph_0}}$, which implies tameness by Proposition~\ref{prop:dyn_BFT}.
	\end{proof}

	\section{Independence, tameness and ambition}\label{section: independence, tameness and ambition}
	In this section, we discuss the relationship between model-theoretic NIP and dynamical tameness. A relationship between the Bourgain-Fremlin-Talagrand dichotomy and NIP seems to have been first noticed independently in \cite{CS18}, \cite{Ib16}, and \cite{Kha14}; see also \cite{Sim15} and \cite{KhP17} for related research. Many statements in this section appear to be folklore, but we have not found them stated and proved in this form, so we present them along with their proofs, as they are interesting in their own right. The introduced notions of tame models and ambitious models seem to be new. Ambitious models will be essential later.

	\begin{dfn}
		If $A,B\subseteq \fC$, then we say that a formula $\varphi(x,y)$ has the \emph{ independence property} (IP) on $A\times B$ if there is an infinite sequence $(b_n)$ of elements of $B$ such that $\varphi(\fC,b_n)\cap A$ are independent subsets of $A$. Otherwise, we say that it \emph{has NIP} on $A\times B$.
		
		We say that $\varphi$ \emph{has IP} if it has IP on the whole $\fC$, otherwise we say that it has NIP.
		
		We say that \emph{$T$ has NIP} if every formula has NIP. Otherwise, we say that \emph{$T$ has IP}.
		\xqed{\lozenge}
	\end{dfn}
	
	\begin{rem}
		Note that if $A$ and $B$ are type-definable, then in the above definition we can assume without loss of generality that the sequence $(b_n)$ is indiscernible (by Ramsey's theorem and compactness).\xqed{\lozenge}
	\end{rem}
	
	\begin{dfn}
		We say that a formula $\varphi(x,y)$ is \emph{tame} if for every small model $M$ and $b\in M$, the characteristic function of $[\varphi(x,b)]\subseteq S_x(M)$ is tame in $(\Aut(M),S_x(M))$.
		
		Similarly, if $A$, $B$ are $\emptyset$-type-definable sets, we say that $\varphi(x,y)$ is {\em tame} on $A\times B$ if for every small model $M$ and $b\in B(M)$, the characteristic function of $[\varphi(x,b)]\cap A_M\subseteq A_M$ is tame in $(\Aut(M),A_M)$ (where $A_M\subseteq S(M)$ is the space of types of the elements of $A$).
		\xqed{\lozenge}
	\end{dfn}
	
	Note that tameness of $\varphi(x,y)$ does not change when we add dummy variables, even allowing infinite sequences of variables.
	
	\begin{lem}\label{lem:NIP_tame}
		[For any $\emptyset$-type-definable sets $A,B$] $\varphi(x,y)$ is NIP [on $A\times B$] if and only if $\varphi(x,y)$ is tame [on $A\times B$].
	\end{lem}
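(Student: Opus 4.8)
The plan is to prove the relativized statement — the absolute one being the special case $A=B=\fC$ taken in the appropriate variables — by splitting into the two implications, both in contrapositive form, translating freely between the model-theoretic picture ($\varphi$, $A$, $B$, $\fC$) and the dynamical picture (the flow $(\Aut(M),A_M)$ together with the clopen set $[\varphi(x,b)]\cap A_M$). The one identity I will use repeatedly is that $\Aut(M)$ acts on $A_M$ — which is $\Aut(M)$-invariant since $A$ is $\emptyset$-type-definable, and likewise $B(M)$ is $\Aut(M)$-invariant — with $\chi_{[\varphi(x,b)]\cap A_M}\circ\pi_\sigma=\chi_{[\varphi(x,\sigma^{-1}(b))]\cap A_M}$ for $\sigma\in\Aut(M)$, $b\in B(M)$. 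Moreover, since $\fC$ is $\kappa$-saturated and $A$ is $\emptyset$-type-definable, for a sequence $(b_n)$ from $B$ contained in a model $M$ the clopen sets $[\varphi(x,b_n)]\cap A_M$ are independent in $A_M$ if and only if the sets $\varphi(\fC,b_n)\cap A$ are independent in $A$ (in each direction one simply realizes the relevant type over $M$).

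\emph{Tameness implies NIP.} I argue contrapositively. Assume $\varphi$ has IP on $A\times B$; by the remark following the definition of IP one may fix an \emph{indiscernible} sequence $(b_n)_{n\in\bbN}$ of elements of $B$ with the sets $\varphi(\fC,b_n)\cap A$ independent in $A$. By indiscernibility all the $b_n$ realize the same type over $\emptyset$, so for each $n$ there is $\hat\sigma_n\in\Aut(\fC)$ with $\hat\sigma_n(b_n)=b_0$. Pick a small model $M$ containing all the $b_n$ and closed under every $\hat\sigma_n$ and $\hat\sigma_n^{-1}$ (a L\"owenheim--Skolem argument in the expansion of $\fC$ by these automorphisms). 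Then $\sigma_n:=\hat\sigma_n\restr M\in\Aut(M)$ and $b_0\in B(M)$, and the displayed identity gives $\chi_{[\varphi(x,b_0)]\cap A_M}\circ\pi_{\sigma_n}=\chi_{[\varphi(x,b_n)]\cap A_M}$. Since the sets $[\varphi(x,b_n)]\cap A_M$ form an independent sequence of clopen subsets of $A_M$, Fact~\ref{fct:ind_untame} shows that $\big(\chi_{[\varphi(x,b_0)]\cap A_M}\circ\pi_{\sigma_n}\big)_n$ is an $\ell^1$ sequence in $C(A_M)$; hence $\chi_{[\varphi(x,b_0)]\cap A_M}$ is not tame in $(\Aut(M),A_M)$, i.e.\ $\varphi$ is not tame on $A\times B$.

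\emph{NIP implies tameness.} Again contrapositively: suppose $\varphi$ is not tame on $A\times B$, witnessed by a small $M$, some $b\in B(M)$, and a sequence $(\sigma_n)$ in $\Aut(M)$ making $\big(\chi_{[\varphi(x,b)]\cap A_M}\circ\pi_{\sigma_n}\big)_n$ an $\ell^1$ sequence; equivalently, with $b_n:=\sigma_n^{-1}(b)\in B(M)$, the characteristic functions of the clopen sets $C_n:=[\varphi(x,b_n)]\cap A_M$ form an $\ell^1$ sequence in $C(A_M)$. To apply Fact~\ref{fct:bft} I first pass to a metrizable quotient: let $q\colon A_M\to X$ collapse $A_M$ onto the Stone space of the (countably generated, hence second-countable) Boolean algebra generated by $\{C_n\}$, so $C_n=q^{-1}[D_n]$ for clopen $D_n\subseteq X$; since $q^{*}\colon C(X)\to C(A_M)$ is an isometric embedding carrying $\chi_{D_n}$ to $\chi_{C_n}$, the sequence $(\chi_{D_n})_n$ is $\ell^1$ in $C(X)$. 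An $\ell^1$ sequence has no weakly Cauchy subsequence, whereas a Rosenthal (equivalently, Fréchet) compactum is sequentially compact and a bounded pointwise-convergent sequence in $C(X)$ is weakly Cauchy; hence $\overline{\{\chi_{D_n}\}}\subseteq\bbR^X$ is not Rosenthal. By Fact~\ref{fct:bft}, $\{D_n\}$ — and therefore $\{C_n\}$, preimages under $q$ preserving Boolean independence — contains an infinite independent subfamily $\big([\varphi(x,b_{n_k})]\cap A_M\big)_k$. As recalled above, this means $\big(\varphi(\fC,b_{n_k})\cap A\big)_k$ is independent in $A$, so $\varphi$ has IP on $A\times B$, witnessed by $(b_{n_k})_k$ in $B$.

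The model-theoretic content is slight; the work is in fitting the two halves of the dynamical Bourgain--Fremlin--Talagrand dichotomy (Facts~\ref{fct:ind_untame} and \ref{fct:bft}) to the definition of a tame formula. The step I expect to be most delicate is the universal quantifier over small models hidden in ``$\varphi$ is tame'': in the first implication one must \emph{manufacture} a single model over which the independent sequence appears as one $\Aut(M)$-orbit, and in the second one must first descend to a second-countable quotient before the Rosenthal-type arguments become available.
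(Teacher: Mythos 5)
Your argument is correct. In the forward direction (IP implies untame) it coincides with the paper's: fix an indiscernible IP-witness, close a small model under the automorphisms that move the $b_n$'s to one another so that the independent clopen sets $[\varphi(x,b_n)]\cap A_M$ form a single $\Aut(M)$-orbit, and apply Fact~\ref{fct:ind_untame}. The two proofs part ways in the converse. The paper restricts to the countable group $\Sigma$ generated by the $\sigma_n$'s and to the countable set $B_0=\Sigma\cdot b$, so that $S_\varphi(B_0)$ becomes a metrizable $\Sigma$-flow; it then applies the dynamical BFT dichotomy (Proposition~\ref{prop:dyn_BFT}) to obtain a $\varphi$-formula $\psi$ (a Boolean combination of $\varphi$-instances) with IP, and closes by noting that NIP is preserved under Boolean combinations. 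You instead collapse $A_M$ onto the Stone space $X$ of the countable Boolean algebra generated by the sets $C_n=[\varphi(x,b_n)]\cap A_M$ themselves --- not a $\Sigma$-flow, since that algebra need not be $\Sigma$-invariant, but compact metric --- and apply Fact~\ref{fct:bft} directly to the family $\{D_n\}$, establishing the non-Rosenthal hypothesis via classical Banach-space facts ($\ell^1$ sequences have no weakly Cauchy subsequence; bounded pointwise limits in $C(X)$ are weak limits by dominated convergence; compact Fréchet spaces are sequentially compact). This buys you IP witnessed directly by the parameters $b_n$ themselves, bypassing the Boolean-combinations step; the cost is that you invoke explicitly several ingredients that the paper's Proposition~\ref{prop:dyn_BFT} bundles into a single citation. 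Both routes are sound, and neither is substantially shorter.
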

	\begin{proof}
		For simplicity, we will treat the absolute case here. The relative (i.e.\ $A\times B$) case is proved similarly.
		
		If $\varphi(x,y)$ has IP, there is an indiscernible sequence $(b_n)$ witnessing that, and we can find a small model $M$ which contains $(b_n)$, and such that 
		all $b_n$'s lie in a single orbit under $\Aut(M)$.
		It follows from Fact~\ref{fct:ind_untame} that $\varphi$ is untame (which is witnessed in $(\Aut(M),S_x(M))$).
		
		In the other direction, suppose $\varphi(x,y)$ is untame. Fix a small model $M$ and $b\in M$ witnessing that. Then we have a sequence $(\sigma_n)_n$ in $\Aut(M)$ such that $\sigma_n\cdot \chi_{[\varphi(x,b)]}$ is an $\ell^1$ sequence.
		
		Let $\Sigma\leq \Aut(M)$ be the group generated by all $\sigma_n$'s and $B_0:=\Sigma\cdot b$. Then $B_0$ is countable and $S_{\varphi}(B_0)$ is a totally disconnected, compact metric space. Moreover, the characteristic function of $[\varphi(x,b)]\subseteq S_\varphi(B)$ is untame with respect to $(\Sigma,S_{\varphi}(B))$. Then, by Prop~\ref{prop:dyn_BFT}, there is a $\varphi$-formula $\psi$ with IP. Since NIP is preserved by Boolean combinations, it follows that $\varphi$ has IP.
	\end{proof}
	
	\begin{rem}
		Lemma~\ref{lem:NIP_tame} is basically equivalent to \cite[Corollary 3.2]{Ib16} (though the latter uses a slightly different language).
		There is also an analogous equivalence between stability and the so-called WAP property of a function in a dynamical system (see e.g.\ \cite{BT16}).
	\end{rem}
	
	\begin{lem}
		\label{lem:NIP_local}
		Suppose $\varphi(x,y)$ has IP on $A\times B$, where $A,B$ are $\emptyset$-type-definable. Then there are $p,q\in S(\emptyset)$ such that $p\proves A$, $q\proves B$ and $\varphi(x,y)$ has IP on $p(\fC)\times q(\fC)$.
	\end{lem}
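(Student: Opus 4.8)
The plan is to extract the required complete types from an indiscernible witness. Since $A,B$ are $\emptyset$-type-definable, the Remark following the definition of IP lets us fix an indiscernible sequence $(b_n)_{n<\omega}$ of elements of $B$ witnessing that $\varphi$ has IP on $A\times B$: for all disjoint finite $I,J\subseteq\omega$ there is $a\in A$ with $\models\varphi(a,b_i)$ for $i\in I$ and $\models\neg\varphi(a,b_j)$ for $j\in J$. By indiscernibility all $b_n$ have the same type $q:=\tp(b_0/\emptyset)$, and clearly $q\proves B$. I will take this $q$, and the very same sequence $(b_n)_{n<\omega}$, as the witness for IP on $p(\fC)\times q(\fC)$ for the $p$ produced below.

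Next, I would fix $d\in A$ realizing the ``alternating pattern'' $\{\varphi(x,b_n):n\text{ even}\}\cup\{\neg\varphi(x,b_n):n\text{ odd}\}$: this partial type over $\{b_n:n<\omega\}$ is finitely satisfiable, since each of its finite fragments together with (finitely many conditions from) the partial type defining $A$ is exactly of the form the IP-witnessing property of $(b_n)$ provides, so by $\kappa$-saturation it is realized. Put $p:=\tp(d/\emptyset)$, so $p\proves A$.

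The key step will then be to verify that $(b_n)_{n<\omega}$ witnesses IP on $p(\fC)\times q(\fC)$, i.e.\ that for disjoint finite $I,J\subseteq\omega$ there is $a\models p$ with $\varphi(a,b_i)$ for $i\in I$ and $\neg\varphi(a,b_j)$ for $j\in J$. The device here is that an increasing subsequence of an indiscernible sequence is again indiscernible with the same EM-type, hence the same type over $\emptyset$. Choose a strictly increasing $\rho\colon\omega\to\omega$ with $\rho(i)$ even for $i\in I$ and $\rho(j)$ odd for $j\in J$ (trivial, as $I,J$ are finite and disjoint). Then $(b_{\rho(n)})_{n<\omega}\equiv(b_n)_{n<\omega}$, so by strong homogeneity of $\fC$ there is $\sigma\in\Aut(\fC)$ with $\sigma(b_{\rho(n)})=b_n$ for all $n$; set $a:=\sigma(d)$. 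Since $\sigma$ fixes $\emptyset$ we get $a\models p$, and $\models\varphi(a,b_n)$ exactly when $\models\varphi(d,b_{\rho(n)})$, i.e.\ when $\rho(n)$ is even; so among $n\in I\cup J$ this holds precisely for $n\in I$, which is what we need. (For $I=J=\emptyset$ any realization of $p$ works, which also ensures $p(\fC)\neq\emptyset$.)

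The one genuine obstacle I anticipate is isolating a single complete type $p$ on the $x$-side: a priori, the realizations of the various finite patterns over $(b_n)$ could spread over many complete types over $\emptyset$, and no uniformization by a counting argument is available. The trick above — realize just one ``generic'' (alternating) pattern, and then transport any prescribed finite pattern into $\tp(d/\emptyset)$ via an automorphism coming from an order-preserving reindexing of the indiscernible sequence — is precisely what circumvents this, and is the heart of the argument. Everything else (passing to an indiscernible witness, the saturation argument for $d$, and the bookkeeping with $\rho$) is routine, and the relative case requires no treatment separate from the absolute one.
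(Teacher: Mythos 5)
Your proof is correct and follows the same approach as the paper: pick an indiscernible witness $(b_n)$, realize the alternating pattern inside $A$ to get $p$, and take $q=\tp(b_0/\emptyset)$. The paper leaves the final verification as ``easy to check''; you supply it via the standard device of reindexing the indiscernible sequence and transporting by an automorphism, which is exactly the intended argument.
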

	
	\begin{proof}
		As noticed before, we can choose $(b_n)_{n \in \omega} \subseteq B$ indiscernible and such that $\varphi(\fC,b_n) \cap A$ are independent subsets of $A$. So we can choose $a \in A$ such that $\varphi(a,b_n)$ holds if and only if $n$ is even. It is easy to check that $p := \tp(a/\emptyset)$ and $q:=\tp(b_0/\emptyset)$ satisfy our requirements.
	\end{proof}

	\begin{dfn}
		We say that $M$ is a \emph{tame model} if for some (equivalently, every) enumeration $m$ of $M$, the system $(\Aut(M),S_m(M))$ is tame.
		\xqed{\lozenge}
	\end{dfn}
	
	\begin{cor}
		\label{cor:NIP_char}
		Let $T$ be any theory. Then the following are equivalent:
		\begin{enumerate}
			\item
			\label{it:cor:NIP_char:NIP}
			$T$ has NIP.
			\item
			\label{it:cor:NIP_char:tame_fla}
			Every formula $\varphi(x,y)$ is tame.
			\item
			\label{it:cor:NIP_char:tame_vars}
			For every small model $M$ and a small tuple $x$ of variables, the dynamical system $(\Aut(M),S_{x}(M))$ is tame.
			\item
			\label{it:cor:NIP_char:tame_elts}
			For every small model $M$ and a small tuple $a$ of elements of $\fC$, the dynamical system $(\Aut(M),S_{a}(M))$ is tame.
			\item
			\label{it:cor:NIP_char:tame_models}
			Every small model of $T$ is tame.
		\end{enumerate}
		Moreover, in \eqref{it:cor:NIP_char:tame_vars}--\eqref{it:cor:NIP_char:tame_models}, we can replace ``every small model" with ``every model of cardinality $\lvert T\rvert$", and ``small tuple" with ``finite tuple".
	\end{cor}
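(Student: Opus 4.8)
The plan is to establish the chain of implications $\eqref{it:cor:NIP_char:NIP}\Rightarrow\eqref{it:cor:NIP_char:tame_fla}\Rightarrow\eqref{it:cor:NIP_char:tame_vars}\Rightarrow\eqref{it:cor:NIP_char:tame_elts}\Rightarrow\eqref{it:cor:NIP_char:tame_models}\Rightarrow\eqref{it:cor:NIP_char:NIP}$, using Lemma~\ref{lem:NIP_tame} as the bridge between the model-theoretic and dynamical sides, and Corollary~\ref{cor:tame_dense} to pass from tameness of individual (characteristic) functions to tameness of whole dynamical systems. First, $\eqref{it:cor:NIP_char:NIP}\Rightarrow\eqref{it:cor:NIP_char:tame_fla}$ is immediate from Lemma~\ref{lem:NIP_tame}: if $T$ has NIP then every $\varphi(x,y)$ has NIP, hence is tame.

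For $\eqref{it:cor:NIP_char:tame_fla}\Rightarrow\eqref{it:cor:NIP_char:tame_vars}$, fix a small model $M$ with enumeration and a small tuple $x$ of variables. The characteristic functions $\chi_{[\varphi(x,b)]}$ for formulas $\varphi(x,y)$ and parameters $b\in M$ are continuous on $S_x(M)$ and separate points (basic clopen sets in $S_x(M)$ are finite Boolean combinations of such $[\varphi(x,b)]$). Each such function is tame in $(\Aut(M),S_x(M))$ by hypothesis \eqref{it:cor:NIP_char:tame_fla} (adding dummy variables does not affect tameness), so Corollary~\ref{cor:tame_dense} yields that $(\Aut(M),S_x(M))$ is tame. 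Then $\eqref{it:cor:NIP_char:tame_vars}\Rightarrow\eqref{it:cor:NIP_char:tame_elts}$ is essentially trivial: for a tuple $a$ of elements, $S_a(M)$ is a closed $\Aut(M)$-invariant subspace of $S_x(M)$ (where $x$ matches the length of $a$), so tameness is inherited by Fact~\ref{fct:tame_preserved}. The implication $\eqref{it:cor:NIP_char:tame_elts}\Rightarrow\eqref{it:cor:NIP_char:tame_models}$ is a matter of unwinding definitions: taking $a=m$ an enumeration of $M$, tameness of $(\Aut(M),S_m(M))$ is exactly the assertion that $M$ is a tame model.

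The main substantive step is $\eqref{it:cor:NIP_char:tame_models}\Rightarrow\eqref{it:cor:NIP_char:NIP}$, and this is where I expect the real obstacle, though the groundwork in Lemma~\ref{lem:NIP_tame} does most of the work. Suppose $T$ has IP; we must produce a small model that is not tame. Pick a formula $\varphi(x,y)$ with IP, witnessed by an indiscernible sequence $(b_n)$. As in the proof of Lemma~\ref{lem:NIP_tame}, choose a small model $M$ containing $(b_n)$ and with all $b_n$ in a single $\Aut(M)$-orbit (e.g.\ by taking $M$ to be generated by a long indiscernible sequence extending $(b_n)$, using homogeneity). Then Fact~\ref{fct:ind_untame} shows the characteristic function of $[\varphi(x,b_0)]\subseteq S_x(M)$ is untame in $(\Aut(M),S_x(M))$, so by Corollary~\ref{cor:tame_dense} (contrapositive) the system is untame, and $M$ is not a tame model, after adjoining dummy variables so that $x$ is part of the enumeration of $M$. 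The delicate point is arranging the $b_n$ to lie in one $\Aut(M)$-orbit while keeping $M$ small; this is handled exactly as in Lemma~\ref{lem:NIP_tame} and needs nothing beyond saturation/homogeneity of $\fC$.

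Finally, for the ``moreover" clause: in each of the implications $\eqref{it:cor:NIP_char:tame_fla}\Rightarrow\eqref{it:cor:NIP_char:tame_vars}\Rightarrow\eqref{it:cor:NIP_char:tame_elts}\Rightarrow\eqref{it:cor:NIP_char:tame_models}$ nothing forces $M$ to be larger than $|T|$ or $x$ to be infinite, and in the converse direction the witnessing model $M$ from the previous paragraph can be taken of cardinality $|T|$ (generated by a countable-in-the-IP-instance set of parameters plus $|T|$ many elements) with $x$ finite, since a single formula $\varphi(x,y)$ already witnesses IP. So restricting to models of size $|T|$ and finite tuples $x$, $a$ gives equivalent conditions, and the corollary follows.
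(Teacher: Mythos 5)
Your overall plan — a cyclic chain of implications rather than the paper's hub-and-spoke decomposition around condition (2) — is reasonable, and several steps are clean. In particular, your route $(3)\Rightarrow(4)$ via the observation that $S_a(M)$ is a closed $\Aut(M)$-invariant subspace of $S_x(M)$ (then apply Fact~\ref{fct:tame_preserved}) is arguably tidier than the paper's appeal to Lemma~\ref{lem:NIP_local} at the corresponding point.

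However, there is a genuine gap in $(5)\Rightarrow(1)$. You show that some $(\Aut(M),S_x(M))$ is untame and then assert that $M$ is not a tame model ``after adjoining dummy variables so that $x$ is part of the enumeration of $M$.'' But a tame model is one for which $(\Aut(M),S_m(M))$ is tame, where $S_m(M)$ consists only of types extending $\tp(m/\emptyset)$ — it is a \emph{proper closed subspace} of the full type space $S_z(M)$ in the enumeration variables $z\supseteq x$. Adjoining dummy variables does show that $(\Aut(M),S_z(M))$ is untame: the restriction $\pi_x\colon S_z(M)\to S_x(M)$ is a surjection so the $\ell^1$ witness pulls back. What it does \emph{not} show is that the witness survives restriction to the subspace $S_m(M)$; the independence types you obtained in $S_x(M)$ may lie outside the image of $S_m(M)$ under $\pi_x$, which is exactly $S_{m|_x}(M)$ — types of elements of a fixed complete type over $\emptyset$. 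To ensure they do land there, you need to first shrink the independence witness to a single complete type: this is precisely the content of Lemma~\ref{lem:NIP_local}, which the paper invokes for exactly this reason. Concretely: having found $\varphi(x,y)$ with IP, Lemma~\ref{lem:NIP_local} gives $p\in S(\emptyset)$ such that $\varphi$ has IP on $p(\fC)\times q(\fC)$; then take $M$ containing a realization $a\models p$ (together with the indiscernible $(b_n)$), so that $a$ is a subtuple of the enumeration $m$ and the restriction $S_m(M)\to S_a(M)$ is onto; the independence now lives in $S_a(M)$ and hence pulls back to $S_m(M)$. The paper handles this by showing $\neg(4)\Rightarrow\neg(5)$ (passing through $S_a(N)$ for $a\in N$, with an additional step that $\Aut(M)$ lifts to $\Aut(N)$ using strong homogeneity); you can also patch your direct argument with the NIP\_local reduction. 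Your ``moreover'' paragraph is fine once this is in place.
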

	\begin{proof}
		The equivalence of \eqref{it:cor:NIP_char:NIP} and \eqref{it:cor:NIP_char:tame_fla} is immediate by Lemma~\ref{lem:NIP_tame}.
		
		To see that \eqref{it:cor:NIP_char:tame_fla} is equivalent to \eqref{it:cor:NIP_char:tame_vars}, note that by Corollary~\ref{cor:tame_dense}, tameness can be tested on characteristic functions of clopen sets, so tameness of $(\Aut(M),S_{x}(M))$ follows from tameness of formulas.
		
		Similarly, \eqref{it:cor:NIP_char:tame_fla} is equivalent to \eqref{it:cor:NIP_char:tame_elts}, because by Lemmas~\ref{lem:NIP_tame} and \ref{lem:NIP_local}, we can test tameness on complete types.
		
		Finally, \eqref{it:cor:NIP_char:tame_elts} trivially implies \eqref{it:cor:NIP_char:tame_models}.
		And in the other direction, if $(\Aut(M),S_a(M))$ is untame and we choose $N\succeq M$ such that $a\in N$ and $N$ is strongly $\lvert M\rvert^+$-homogeneous, then also $(\Aut(N),S_n(N))$ is untame (by Fact \ref{fct:tame_preserved}), where $n$ is an enumeration of $N$.

		For the ``moreover" part, for tuples, it is trivial (untameness is witnessed by formulas, and formulas have finitely many variables).
		For models, suppose that $T$ has IP, i.e. some formula $\varphi(x,y)$ has IP. By Lemma~\ref{lem:NIP_local}, $\varphi(x,y)$ has IP on $p(\fC) \times \fC$ for some $p \in S(\emptyset)$. Take $a \models p$. The proof of $(\leftarrow)$ in Lemma~\ref{lem:NIP_tame} easily yields a model $M$ of cardinality $|T|$, containing $a$, and such that $(\Aut(M),S_a(M))$ is untame for $a \models p$. Then, by Fact \ref{fct:tame_preserved}, the systems $(\Aut(M),S_x(M))$ and $(\Aut(M),S_m(M))$ are untame as well, where $m$ is an enumeration of $M$.
	\end{proof}

	In the $\omega$-categorical case, we obtain a simpler characterization of NIP.
	\begin{cor}
		Suppose $T$ is a countable $\omega$-categorical theory. The following are equivalent:
		\begin{itemize}
			\item
			$T$ has NIP,
			\item
			the countable model of $T$ is tame.
		\end{itemize}
		More generally, a theory $T$ is NIP if and only if it has a tame, $\aleph_0$-saturated, strongly $\aleph_0$-homogeneous model.
	\end{cor}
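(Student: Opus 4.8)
The plan is to derive both equivalences from Corollary~\ref{cor:NIP_char} together with the ``independence gives untameness'' mechanism already used in Lemma~\ref{lem:NIP_tame}; the only new twist is that in the nontrivial directions one must work with a \emph{prescribed} model rather than one built to order. Observe that the countable model of a countable $\omega$-categorical theory is $\aleph_0$-saturated and strongly $\aleph_0$-homogeneous (Ryll--Nardzewski), so the first equivalence is a special case of the ``more generally'' part, except that its easy direction is even more immediate: if $T$ has NIP then the countable model, being small, is tame by Corollary~\ref{cor:NIP_char}\eqref{it:cor:NIP_char:tame_models}.

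For the direction ``if $T$ has a tame, $\aleph_0$-saturated, strongly $\aleph_0$-homogeneous model $M$ then $T$ has NIP'' (which subsumes ``countable model tame $\Rightarrow$ NIP'' in the $\omega$-categorical case), I would argue by contraposition. Suppose $\varphi(x,y)$ has IP; by a Ramsey--compactness argument (the remark after the definition of IP) there is an \emph{indiscernible} sequence $(b_n)_{n<\omega}$ in $\fC$, all of some fixed type $q\in S(\emptyset)$, with the sets $\varphi(\fC,b_n)$ independent. The crucial point is to transport this configuration into $M$: the countable partial type over $\emptyset$ in variables $(y_n)_{n<\omega}$ saying ``$(y_n)$ is indiscernible, $y_n\models q$, and the sets $\varphi(\fC,y_n)$ are independent'' is consistent (realized by $(b_n)$), hence realized in $M$ since an $\aleph_0$-saturated model realizes every countable type over $\emptyset$. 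So we may assume $(b_n)_{n<\omega}\subseteq M$. As all $b_n$ have type $q$ over $\emptyset$, strong $\aleph_0$-homogeneity of $M$ (or Ryll--Nardzewski, in the $\omega$-categorical case) gives $\sigma_n\in\Aut(M)$ with $\sigma_n(b_0)=b_n$; then $\sigma_n\cdot[\varphi(x,b_0)]=[\varphi(x,b_n)]$ inside $S_x(M)$, and these clopen sets are independent because the $\varphi(\fC,b_n)$ are. By Fact~\ref{fct:ind_untame}, $(\sigma_n\cdot\chi_{[\varphi(x,b_0)]})_n$ is an $\ell^1$ sequence, so $\chi_{[\varphi(x,b_0)]}$ is not tame and $(\Aut(M),S_x(M))$ is untame; pulling this back along the $\Aut(M)$-equivariant restriction surjection $S_m(M)\to S_x(M)$ and using Fact~\ref{fct:tame_preserved}, the system $(\Aut(M),S_m(M))$ is untame, i.e.\ $M$ is not a tame model. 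This is the desired contradiction.

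For the remaining direction of the ``more generally'' statement, suppose $T$ has NIP. By Corollary~\ref{cor:NIP_char}\eqref{it:cor:NIP_char:tame_models} every small model of $T$ is tame, so it is enough to recall the standard fact that $T$ possesses a small $\aleph_0$-saturated, strongly $\aleph_0$-homogeneous model (one obtains one of cardinality at most $2^{\lvert T\rvert}<\kappa$ by interleaving realization of types over finite sets with a homogeneity back-and-forth). Such a model has all the required properties.

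The step I expect to need the most care is the transport of the IP configuration into the given model, that is, the assertion that an $\aleph_0$-saturated model $M$ realizes every type over $\emptyset$ in countably many variables. This is standard but not a direct citation of an earlier fact: one builds the realization coordinate by coordinate, at stage $n$ picking in $M$ (by $\aleph_0$-saturation) a realization of an appropriate complete type over the finitely many coordinates chosen so far, and using strong homogeneity of $\fC$ to check at each step that what has been chosen still extends to a full realization.
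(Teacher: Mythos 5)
Your proof is correct and follows essentially the same route as the paper: the paper's own proof invokes Corollary~\ref{cor:NIP_char} for the easy directions and says, for the remaining implication, ``argue as in the `moreover' part of Corollary~\ref{cor:NIP_char}, noticing that $\aleph_0$-saturation and strong $\aleph_0$-homogeneity of $M$ allow us to use $M$ in that argument,'' which is exactly the transport step you carry out in detail. Your only stylistic deviation is deriving the $\omega$-categorical equivalence as a special case of the ``more generally'' clause rather than directly from the ``moreover'' clause of Corollary~\ref{cor:NIP_char} (both are fine, since the countable model is the unique model of size $\lvert T\rvert$), and your worry about $\aleph_0$-saturation realizing countable types over $\emptyset$ is indeed standard and resolved exactly as you sketch.
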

	\begin{proof}
		The main part is immediate by Corollary~\ref{cor:NIP_char}.
		Then implication $(\rightarrow)$ in the ``more general" case also follows from Corollary~\ref{cor:NIP_char} (and the existence of $\aleph_0$-saturated and strongly homogeneous models). In the other direction, we argue as in the ``moreover" part of Corollary~\ref{cor:NIP_char}, noticing that $\aleph_0$-saturation and strong $\aleph_0$-homogeneity of $M$ allow us to use $M$ in that argument.
	\end{proof}
	
	\begin{cor}
		If $T$ has NIP, then for every countable model $M\preceq \fC$ and countable tuple $a\in \fC$, the dynamical system $(\Aut(M),S_a(M))$ is tame, and consequently, if $T$ is countable, $E(\Aut(M),S_a(M))$ is Rosenthal.
	\end{cor}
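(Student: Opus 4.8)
The plan is essentially to unwind the equivalences already established in this section, specialized to countable data. First I would observe that a countable model $M$ and a countable tuple $a\in\fC$ are both \emph{small} (recall that $\kappa$ is a strong limit cardinal exceeding $\lvert T\rvert\geq\aleph_0$, so anything countable is smaller than $\kappa$), and hence the system $(\Aut(M),S_a(M))$ falls squarely within the scope of Corollary~\ref{cor:NIP_char}. Then the tameness of $(\Aut(M),S_a(M))$ is immediate from the implication \eqref{it:cor:NIP_char:NIP}$\Rightarrow$\eqref{it:cor:NIP_char:tame_elts} of that corollary, using the NIP hypothesis on $T$.

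For the ``consequently'' clause, I would note that when both $T$ and $M$ are countable, the type space $S(M)$ is second countable, and therefore so is its closed subspace $S_a(M)$; being also compact Hausdorff, $S_a(M)$ is metrizable, so $(\Aut(M),S_a(M))$ is a \emph{metric} dynamical system. Now the Dynamical BFT dichotomy (Proposition~\ref{prop:dyn_BFT}) applies: for a metric dynamical system, tameness is equivalent to the enveloping semigroup being a Rosenthal compactum (i.e.\ to the failure of condition \eqref{it:prop:dyn_BFT:Rosenthal}). Combining this equivalence with the tameness just obtained yields that $E(\Aut(M),S_a(M))$ is a Rosenthal compactum. As an alternative route one could embed $S_a(M)$ into $\bbR^{\bbN}$ and argue coordinatewise using Fact~\ref{fct: metric tameness} (this is exactly the argument in the proof of $(6)\Rightarrow(1)$ in Proposition~\ref{prop:dyn_BFT}), but invoking Proposition~\ref{prop:dyn_BFT} directly is cleanest.

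I do not expect any genuine obstacle here: all the substantive content has already been proved, and the corollary is just the conjunction of Corollary~\ref{cor:NIP_char} with the elementary observation that countable $T$ and $M$ force $S_a(M)$ to be a metric compactum, which is precisely what is needed to feed into Proposition~\ref{prop:dyn_BFT}. The only points requiring a moment's care are verifying that the countable hypotheses place us in the ``small'' regime of Corollary~\ref{cor:NIP_char} and in the ``metric'' regime of Proposition~\ref{prop:dyn_BFT}.
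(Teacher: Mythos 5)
Your argument is exactly the paper's: tameness comes from Corollary~\ref{cor:NIP_char}, and the Rosenthal conclusion for countable $T$ comes from Proposition~\ref{prop:dyn_BFT} once you observe that $S_a(M)$ is a compact metric space. The paper simply states this as ``Immediate by Corollary~\ref{cor:NIP_char} and Proposition~\ref{prop:dyn_BFT}''; your version spells out the same steps in more detail.
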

	\begin{proof}
		Immediate by Corollary~\ref{cor:NIP_char} and Proposition~\ref{prop:dyn_BFT}.
	\end{proof}
	
	We introduce the following definition.
	\begin{dfn}
		We say that $M$ is an \emph{ambitious model} if for some (equivalently, for every) enumeration $m$ of $M$, the $\Aut(M)$-orbit of $\tp(m/M)$ is dense in $S_m(M)$ (i.e.\ $(\Aut(M),S_m(M),\tp(m/M))$ is an ambit).
		\xqed{\lozenge}
	\end{dfn}
	
	\begin{prop}
		\label{prop:amb_exist}
		Any set $A\subseteq \fC$ is contained in an ambitious model $M$ of cardinality $\lvert A\rvert+\lvert T\rvert+\aleph_0$.
	\end{prop}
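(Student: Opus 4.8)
The plan is to recast ambitiousness of a model $M$ (with respect to an enumeration $m$) as a local ``witnessing'' condition, and then to build $M$ together with a small family of automorphisms of $\fC$ stabilising $M$ so as to force that condition. For the reformulation: given $M\prec\fC$ with enumeration $m$, recall that for $\sigma\in\Aut(M)$ one has $\sigma\cdot\tp(m/M)=\tp(\tilde\sigma(m)/M)$ for any $\tilde\sigma\in\Aut(\fC)$ extending $\sigma$, and by strong homogeneity of $\fC$ such a $\tilde\sigma$ can be chosen with $\tilde\sigma(M)=M$. Unwinding this against the basic clopen sets $[\psi(x,\bar c)]$ of $S_m(M)$, and using that a first-order formula involves only finitely many coordinates of $m$ while $m$ lists all of $M$, one checks that $M$ is ambitious if and only if it has the following property $(W)$: for all finite tuples $\bar e,\bar c$ from $M$ and every formula $\psi(\bar x,\bar y)$, if $\psi(\bar x,\bar c)$ is consistent with $\tp(\bar e/\emptyset)$ (i.e.\ there is $\bar d\in\fC$ with $\bar d\equiv\bar e$ and $\fC\models\psi(\bar d,\bar c)$), then some tuple $\bar c'$ from $M$ lying in the $\Aut(M)$-orbit of $\bar c$ satisfies $\fC\models\psi(\bar e,\bar c')$.

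Next I would note a sufficient condition for $(W)$: it is enough that $M\prec\fC$ come equipped with an inverse-closed set $\mathcal R\subseteq\Aut(\fC)$ such that $\rho(M)=M$ for every $\rho\in\mathcal R$ and such that, for all finite $\bar e,\bar c$ from $M$ and every $\psi$ with $\psi(\bar x,\bar c)$ consistent with $\tp(\bar e/\emptyset)$, there is $\rho\in\mathcal R$ with $\fC\models\psi(\bar e,\rho(\bar c))$. Indeed each such $\rho$ restricts to an automorphism of $M$, so $\bar c':=\rho(\bar c)$ is then a tuple from $M$ in the $\Aut(M)$-orbit of $\bar c$, witnessing $(W)$.

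For the construction, put $\mu:=|A|+|T|+\aleph_0$ and build an elementary chain $M_0\prec M_1\prec\cdots$ of submodels of $\fC$, each of size $\mu$, with $A\subseteq M_0$, together with sets $\mathcal R_n\subseteq\Aut(\fC)$. Given $M_n$, run through all triples $(\bar e,\bar c,\psi)$ with $\bar e,\bar c$ finite tuples from $M_n$ and $\psi$ a formula for which $\psi(\bar x,\bar c)$ is consistent with $\tp(\bar e/\emptyset)$ --- there are at most $\mu$ of them --- and for each pick $\bar d\in\fC$ realising this consistency, pick $\tau\in\Aut(\fC)$ with $\tau(\bar e)=\bar d$, and set $\rho:=\tau^{-1}$, so that $\fC\models\psi(\bar e,\rho(\bar c))$. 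Let $\mathcal R_{n+1}$ be the set of all these $\rho$'s together with their inverses (so $\lvert\mathcal R_{n+1}\rvert\le\mu$), and let $M_{n+1}\prec\fC$ be of size $\mu$, contain $M_n$, and be closed under every map in $\mathcal R_1\cup\dots\cup\mathcal R_{n+1}$; the last is obtained by an inner $\omega$-step iteration that alternates downward Löwenheim--Skolem with applying the (at most $\mu$) maps, which keeps the cardinality equal to $\mu$. Finally set $M:=\bigcup_n M_n$ and $\mathcal R:=\bigcup_n\mathcal R_n$. Then $M\prec\fC$, $A\subseteq M$, $\lvert M\rvert=\mu$; if $\rho\in\mathcal R_k$ then $M_j$ is closed under $\rho$ and $\rho^{-1}$ for all $j\ge k$, so $\rho(M)=M$; and any triple $(\bar e,\bar c,\psi)$ with parameters in $M$ lies in some $M_n$ and was treated at stage $n+1$. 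So the sufficient condition holds, hence $(W)$ holds, and $M$ is ambitious.

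The part I expect to need the most care is the tension between $(W)$ and the cardinality bound. Density of the $\Aut(M)$-orbit of $\tp(m/M)$ is not governed by which types over $\emptyset$ are realised in $M$ --- so $\aleph_0$-saturation would be both insufficient and unavailable at cardinality $\mu$ --- and it cannot be read off from strong $\aleph_0$-homogeneity of $M$ either, since the latter can fail for uncountable models and is not controllable in size. Building $M$ invariant under a small, inverse-closed family $\mathcal R\subseteq\Aut(\fC)$ sidesteps both obstructions; the real technical point is then to close $M$ under all of $\mathcal R$ without pushing $\lvert M\rvert$ above $\mu$, which dictates the ``$\omega$-chain of $\omega$-chains'' shape of the recursion.
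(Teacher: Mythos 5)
Your proof is correct and takes essentially the same approach as the paper: both build an elementary chain $M_0\prec M_1\prec\cdots$ together with a small family of automorphisms of $\fC$ that will preserve the limit model, securing density of the orbit of $\tp(m/M)$ one basic open set at a time. The paper packages the stage-$n$ step using the observation that $S_{m_n}(M_n)$ has weight at most $\kappa$ and works with an increasing chain of groups $\Sigma_n\leq\Aut(\fC)$; you unpack density into the explicit combinatorial condition $(W)$ and use inverse-closed sets $\mathcal R_n$, but the differences are cosmetic.
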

	\begin{proof}
		Put $\kappa=\lvert A\rvert+\lvert T\rvert+\aleph_0$. Extend $A$ to some $M_0\preceq \fC$ of cardinality $\kappa$, enumerated by $m_0$. The weight of $S_{m_0}(M_0)$ is at most $\kappa$, so it has a dense subset of size at most $\kappa$, so we can find a group $\Sigma_0\leq \Aut(\fC)$ of size $\kappa$ such that the types over $M_0$ of elements of $\Sigma_0\cdot m_0$ form a dense subset of $S_{m_0}(M_0)$. Then we extend $\Sigma_0\cdot M_0$ to $M_1\preceq \fC$ and continue, finding an appropriate $\Sigma_1 \supseteq \Sigma_0$ and $M_2$, and so on. Then $M=\bigcup_n M_n$ satisfies the conclusion.
	\end{proof}
	
	\begin{rem}
		Alternatively, one can show that if $M$ is a model which together with some group $\Sigma$ acting on it by automorphisms satisfies $(M,\Sigma)\preceq (\fC,\Aut(\fC))$, then $M$ is ambitious, whence Proposition~\ref{prop:amb_exist} follows from the downwards Löwenheim-Skolem theorem.
		\xqed{\lozenge}
	\end{rem}

	One might ask whether we can extend Corollary~\ref{cor:NIP_char} to say that $T$ has NIP if and only if $T$ has a tame ambitious model --- we know that this is the case if $T$ is $\omega$-categorical, but the following example shows that it is not enough in general.
	
	\begin{ex}
		Suppose $M=\dcl(\emptyset)$ is a model (this is possible in an IP theory: for instance if we name all elements of a fixed model of an arbitrary IP theory).
		
		Then $S_{m}(M)$ is a singleton, so $M$ is trivially tame and ambitious.\xqed{\lozenge}
	\end{ex}
	However, any example of this sort will be G-compact, so in this case the the main result (Theorem~\ref{thm:main}) reduces to Theorem~\ref{thm:main_over_KP}, which is simpler by far to prove, and as such, not interesting from the point of view of the following analysis. This leads us to the following question.
	\begin{ques}
		\label{ques:non-g-cpct_tame}
		Is there a countable theory $T$ which is IP but not G-compact, such that some countable $M\models T$ is tame and ambitious?
		\xqed{\lozenge}
	\end{ques}

	\section{From topological dynamics to Polish spaces}
	\label{sec:top_dyn_to_Polish}
	In this section, $G$ is an abstract group and $(G,X,x_0)$ is a (compact) $G$-ambit, i.e.\ $G$ acts on $X$ by homeomorphisms and $G\cdot x_0$ is dense in $X$. In the applications, we will be mostly interested in the case where $G=\Aut(M)$, $X=S_{m}(M)$, and $x_0=\tp(m/M)$ for a suitably chosen countable model $M$ of a given countable theory $T$ and an enumeration $m$ of $M$. Another interesting case to consider is when $G=G(M)$ is a type-definable group, $X=S_G(M)$, and $x_0=\tp(e/M)$ (for a suitably chosen model $M$). However, the results of this section are completely general.
	
	We use the notation of Section~\ref{ssec:prel_topdyn} throughout. In particular, we use $EL$ for the Ellis semigroup of $G$ acting on $X$, $\cM$ for a fixed minimal left ideal in $EL$, and $u$ for a fixed idempotent in $\cM$.
	
	\subsection{Good quotients of the Ellis semigroup and the Ellis group}
	In this subsection, we find a rich Polish quotient of the Ellis group of a metric dynamical system (i.e.\ when $X$ is metrizable).
	
	We have a natural map $R\colon EL\to X$ given by $R(f)=f(x_0)$. This gives us an equivalence relation $\equiv$ on $EL$ given by $f_1\equiv f_2$ whenever $R(f_1)=R(f_2)$. Note that $R$ is continuous, so $\equiv$ is closed, and by compactness and the density of $G\cdot x_0$ in $X$, $R$ is surjective, so, abusing notation, we topologically identify $EL/{\equiv}$ with $X$. Similarly, for $A\subseteq EL$, we identify $A/{\equiv}$ with $R[A]\subseteq X$. The goal of this subsection is to find a Polish quotient of $u\cM/H(u\cM)$ which will be sufficiently well-behaved with respect to $R$.
	
	\begin{rem}
		\label{rem:commu}
		$R$ commutes with (left) multiplication in $EL$. More precisely, suppose $f_1,f_2\in EL$. Then $R(f_1f_2)=f_1(R(f_2))$. In the same way, $R$ commutes with multiplication by the elements of $G$.
	\end{rem}
	\begin{proof}
		$R(f_1f_2)=(f_1f_2)(x_0)=f_1(f_2(x_0))=f_1(R(f_2))$. From this, the second part follows, since $g\cdot f=\pi_gf$ for $g \in G$.
	\end{proof}

	Let $D=[u]_{\equiv}\cap u\cM$.

	\begin{lem}
		$D$ is a ($\tau$-)closed subgroup of $u\cM$.
	\end{lem}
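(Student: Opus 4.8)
The plan is to show $D = [u]_{\equiv} \cap u\cM$ is both a subgroup of the group $u\cM$ (in the abstract group-theoretic sense, with identity $u$) and $\tau$-closed. Closedness is immediate: $[u]_{\equiv}$ is closed in $EL$ (since $\equiv$ is a closed equivalence relation, as noted, $R$ being continuous), and the $\tau$-topology on $u\cM$ refines the subspace topology from $EL$ by Fact~\ref{fct:tau_top}(6); hence $D = [u]_{\equiv} \cap u\cM$ is closed in the subspace topology and a fortiori $\tau$-closed. So the substance is the subgroup claim.

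For the subgroup part, recall that $u\cM$ is a group with identity $u$, and note $u \in D$ since trivially $u \equiv u$ and $u \in u\cM$, so $D$ is nonempty and contains the identity. The key computational tool is Remark~\ref{rem:commu}: for $f_1, f_2 \in EL$ one has $R(f_1 f_2) = f_1(R(f_2))$. For \emph{closure under multiplication}: take $f_1, f_2 \in D$, so $R(f_1) = R(f_2) = R(u) = x_0'$ (where $x_0' := u(x_0)$). Then $R(f_1 f_2) = f_1(R(f_2)) = f_1(R(u)) = R(f_1 u)$. Now since $f_1 \in u\cM$ we have $u f_1 = f_1$, hence $f_1 u \in u\cM$ as well, and in fact $f_1 u = f_1$ because $u$ is the identity of the group $u\cM$ and $f_1 \in u\cM$. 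Therefore $R(f_1 f_2) = R(f_1) = x_0'= R(u)$, so $f_1 f_2 \equiv u$; and $f_1 f_2 \in u\cM$ since $u\cM$ is closed under the semigroup operation (being a group). Thus $f_1 f_2 \in D$.

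For \emph{closure under inverse}: let $f \in D$ and let $f^{-1}$ denote its inverse in the group $u\cM$, so $f f^{-1} = f^{-1} f = u$. Applying $R$ and Remark~\ref{rem:commu}: $R(u) = R(f f^{-1}) = f(R(f^{-1}))$. On the other hand $R(u) = R(f)$ (as $f \in D$), and $R(f) = R(f u) = f(R(u))= f(R(f))$, wait — more carefully, since $f = u f$ and $R(f) = R(uf) = u(R(f))$, so $R(f)$ is a fixed point of $u$; applying $f$ to the identity $f(R(f^{-1})) = R(f)$ and using that $f$ acts invertibly on the relevant orbit (it is a bijection of $X$? no — elements of $EL$ need not be injective). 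The cleaner route: from $f^{-1} f = u$ we get $R(u) = R(f^{-1} f) = f^{-1}(R(f)) = f^{-1}(R(u))$ since $R(f) = R(u)$; and we want $R(f^{-1}) = R(u)$, i.e.\ $R(f^{-1} u) = R(u)$, but $f^{-1} u = f^{-1}$ in $u\cM$, so this says $R(f^{-1}) = R(u)$. Now $R(f^{-1}) = R(f^{-1} u) $ and also from $u = f f^{-1}$, $R(u) = R(f f^{-1}) = f(R(f^{-1}))$. Combined with $R(u) = f^{-1}(R(u))$ from above — substitute $R(u) = f(R(f^{-1}))$ into it: $f(R(f^{-1})) = f^{-1}(f(R(f^{-1})))$. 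I expect the honest argument to exploit that $R(f^{-1})$ lies in the orbit closure and that left multiplication by $f$ restricted to $u\cM$ is a group operation, pulling the statement $R(f^{-1}) = R(u)$ back through the group structure rather than through the (non-injective) action on $X$.

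The main obstacle, then, is precisely the closure under inverses: elements of $EL$ are arbitrary maps $X \to X$, not homeomorphisms, so one cannot naively cancel $f$ from $f(R(f^{-1})) = R(u) = f(R(u))$ to conclude $R(f^{-1}) = R(u)$. The resolution I anticipate is to observe that all the relevant points $R(g)$ for $g \in u\cM$ lie in $R[u\cM] = u\cM/{\equiv}$, that $R$ restricted to $u\cM$ is a $u\cM$-equivariant map (by Remark~\ref{rem:commu}, $R(g h) = g(R(h))$ for $g,h \in u\cM$), so $R\restr_{u\cM}$ is the orbit map of the group $u\cM$ acting on $R[u\cM]$ through left multiplication followed by $R$; then $D$ is exactly the stabilizer of the point $R(u) = x_0'$ under this action, and stabilizers of points under group actions are automatically subgroups. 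This reframing makes both the multiplicative closure and the inverse closure fall out at once, and is likely how the authors phrase it. I would write the proof along these lines: identify $R\restr_{u\cM}$ as an orbit map of a $u\cM$-action, note $D = \Stab_{u\cM}(x_0')$, and conclude $D$ is a subgroup; then add the one-line closedness argument from Fact~\ref{fct:tau_top}(6).
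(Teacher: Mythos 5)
Your proposal is correct but takes a partially different route from the paper's at both steps. For $\tau$-closedness, you observe that $D$ is closed in the subspace topology (since $[u]_{\equiv}=R^{-1}(R(u))$ is closed in $EL$) and invoke the fact that the $\tau$-topology refines the subspace topology; this works and is shorter than the paper's argument, which unwinds the $\tau$-closure operator directly via nets (taking $g_i\to u$ in $G$, $d_i\in D$ with $g_id_i\to d$, and computing $R(d)=\lim g_iR(d_i)=\lim g_iR(u)=R(u^2)=R(u)$). Note you cite Fact~\ref{fct:tau_top}(6) where the refinement statement is actually item (5). For the subgroup claim, your multiplication computation matches the paper's, and you correctly diagnosed why naive cancellation fails for inverses (elements of $EL$ are generally non-injective). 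Your stabilizer reframing is sound: $u\cM$ does act on $R[u\cM]$ by $g\cdot R(h):=g(R(h))=R(gh)$ --- well-defined by Remark~\ref{rem:commu}, and a genuine group action since $u$ acts as the identity (because $uh=h$ for $h\in u\cM$) --- and $D=\Stab_{u\cM}(R(u))$ is then a subgroup automatically. The paper instead finishes the direct calculation with a substitution that sidesteps cancellation altogether:
\[
R(d^{-1}) = R(d^{-1}u) = d^{-1}(R(u)) = d^{-1}(R(d)) = R(d^{-1}d) = R(u),
\]
the key step being to replace $R(u)$ by $R(d)$ (equal, as $d\in D$) so that Remark~\ref{rem:commu} can then be run in reverse. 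Both approaches are valid; yours is more conceptual, while the paper's is shorter and fully self-contained.
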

	\begin{proof} Consider any $d \in \cl_\tau(D)$.
		Let $(g_i),(d_i)$ be nets as in the definition of $u\circ D$, i.e.\ such that $g_i\in G$, $g_i\to u$ and $g_id_i\to d$.
		By continuity of $R$, because $R(d_i)=R(u)$ (by the definition of $D$), and by the preceding remark, as well as left continuity of multiplication in $EL$, we have
		\[
			R(d)=\lim R(g_id_i)=\lim g_iR(d_i)=\lim g_iR(u)=R(\lim g_iu)=R(u^2)=R(u).
		\]
		This shows that $D$ is $\tau$-closed.
		
		To see that $D$ is a subgroup of $u\cM$, take any $d,d_1,d_2\in D$. Then:
		\[
		R(d_1d_2)=d_1(R(d_2))=d_1(R(u))=R(d_1u)=R(d_1)=R(u),
		\]
		\[
		R(d^{-1})=R(d^{-1}u)=d^{-1}(R(u))=d^{-1}(R(d))=R(d^{-1}d)=R(u).\qedhere
		\]
	\end{proof}
	
	The following simple example shows that the subgroups $D$ and $DH(u\cM)$ do not have to be normal in $u\cM$.
	\begin{ex}
		\label{ex:D_not_normal}
		Consider $G=S_3$ acting naturally on $X=\{1,2,3\}$ (with discrete topology), and take $x_0=1$. Then $G=u\cM$ and $D=DH(u\cM)$ is the stabilizer of $1$, which is not normal in $u\cM$. 
	\end{ex}
	
	\begin{lem}
		\label{lem:D_kernel_equiv}
		Let $f_1,f_2\in u\cM$. Then $f_1\equiv f_2$ (i.e.\ $R(f_1):=f_1(x_0)=f_2(x_0)=:R(f_2)$) if and only if $f_1^{-1}f_2\in D$ (note that here, $f_1^{-1}$ is the inverse of $f_1$ in $u\cM$, not the inverse function), i.e.\ $f_1D=f_2D$. (And thus $u\cM/{\equiv}$ and $u\cM/D$ can and will be identified as sets.)
	\end{lem}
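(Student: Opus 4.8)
The plan is to show that $\equiv$, restricted to $u\cM$, coincides with the partition of $u\cM$ into left cosets of $D$; the only inputs are Remark~\ref{rem:commu} and the fact that $u$ is the neutral element of the group $u\cM$.

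First I would record a preliminary observation: for every $f\in u\cM$ one has
\[
R(f)=f\bigl(R(u)\bigr)\qquad\text{and}\qquad u\bigl(R(f)\bigr)=R(f).
\]
Both follow from Remark~\ref{rem:commu} applied to the identities $fu=f$ and $uf=f$ (which hold in $EL$, since the group operation on $u\cM$ is the restriction of the semigroup operation): $R(f)=R(fu)=f(R(u))$ and $R(f)=R(uf)=u(R(f))$. In particular $u$ fixes every point of $R[u\cM]$, and $R(u)=u(R(u))$.

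Next, for $f_1,f_2\in u\cM$, Remark~\ref{rem:commu} applied to the product $f_1^{-1}f_2$ (with $f_1^{-1}$ the inverse of $f_1$ \emph{in the group $u\cM$}) gives $R(f_1^{-1}f_2)=f_1^{-1}\bigl(R(f_2)\bigr)$, so $f_1^{-1}f_2\in D$ is equivalent to $f_1^{-1}(R(f_2))=R(u)$. From here I would close both implications by moving along $f_1$ and $f_1^{-1}$ regarded as self-maps of $X$, using that $f_1\circ f_1^{-1}=f_1^{-1}\circ f_1=u$ as functions (composition is the semigroup operation) together with the preliminary observation. If $R(f_1)=R(f_2)$ then
\[
f_1^{-1}\bigl(R(f_2)\bigr)=f_1^{-1}\bigl(f_1(R(u))\bigr)=(f_1^{-1}f_1)\bigl(R(u)\bigr)=u\bigl(R(u)\bigr)=R(u),
\]
so $f_1^{-1}f_2\in D$; conversely, if $f_1^{-1}(R(f_2))=R(u)$ then applying $f_1$ gives $R(f_2)=u\bigl(R(f_2)\bigr)=f_1\bigl(f_1^{-1}(R(f_2))\bigr)=f_1\bigl(R(u)\bigr)=R(f_1)$, i.e.\ $f_1\equiv f_2$. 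Since $f_1^{-1}f_2\in D$ is the same as $f_1D=f_2D$, this is exactly the claim, and the set-level identification of $u\cM/{\equiv}$ with $u\cM/D$ follows at once.

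This is essentially a bookkeeping argument with no real obstacle; the one point demanding care is the double meaning of $f_1^{-1}$ --- the group-theoretic inverse in $(u\cM,\cdot\,,u)$ versus the continuous self-map of $X$ it denotes as an element of $EL$ --- and the fact that $u$, which is not a global identity of $EL$, nonetheless acts as the identity on the relevant set $R[u\cM]$; the computation goes through precisely because the group structure on $u\cM$ is inherited from composition in $EL$.
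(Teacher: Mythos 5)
Your proof is correct and follows essentially the same route as the paper's: both arguments rest on Remark~\ref{rem:commu} and the fact that $u$ is the identity of the group $u\cM$, differing only in minor bookkeeping (you first isolate $R(f)=f(R(u))$ and $u(R(f))=R(f)$ as lemmas, while the paper chains the same identities inline, e.g.\ $f_1^{-1}(R(f_1))=R(f_1^{-1}f_1)=R(u)$ versus your $f_1^{-1}(f_1(R(u)))=u(R(u))=R(u)$).
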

	\begin{proof}
		In one direction, if $f_1\equiv f_2$,
		\[
			R(f_1^{-1}f_2)=f_1^{-1}(R(f_2))=f_1^{-1}(R(f_1))=R(f_1^{-1}f_1)=R(u).
		\]
		In the other direction, if $R(f_1^{-1}f_2)=R(u)$, then
		\[
			R(f_1)=R(f_1u)=f_1(R(u))=f_1(R(f_1^{-1}f_2))=R(f_1f_1^{-1}f_2)=R(f_2)\qedhere
		\]
	\end{proof}
	
	By Fact~\ref{fct:tau_top}, we have the compact Hausdorff topological group $u\cM/H(u\cM)$. Since $D$ is closed in $u\cM$ (and hence compact), it follows that $H(u\cM)D/H(u\cM)$ is a closed subgroup in the quotient. Consequently, $u\cM/(H(u\cM)D)$ (which can also be described as $(u\cM/H(u\cM))/(DH(u\cM)/H(u\cM))$) is a compact Hausdorff space (by Fact~\ref{fct:quotient_by_closed_subgroup}). By Lemma~\ref{lem:D_kernel_equiv}, the quotient map $u\cM\to u\cM/(H(u\cM)D)$ factors through $u\cM/{\equiv}$, which we identify with $R[u\cM]\subseteq X$, giving us a commutative diagram
	\begin{center}
		\begin{tikzcd}
			u\cM\arrow[r] \arrow[d,"R"]& u\cM/H(u\cM)\arrow[d]\\
			R[u\cM] \arrow[r,"\widehat j"] & u\cM/(H(u\cM)D).
		\end{tikzcd}
	\end{center}
	
	\begin{rem}
		Suppose $\sim$ is a closed equivalence relation on a compact Hausdorff space $X$, while $F\subseteq X$ is closed. Then the set $[F]_\sim$ of all elements equivalent to some element of $F$ is also closed.
	\end{rem}
	\begin{proof}
		$[F]_\sim$ is the projection of $(X\times F)\cap {\sim}$ onto the first axis.
	\end{proof}
	
	\begin{lem}
		\label{lem:jhat_cont}
		On $u\cM/{\equiv}=u\cM/D$, the topology induced from the $\tau$-topology on $u\cM$ is refined by the subspace topology inherited from $EL/{\equiv}=X$.
		
		Consequently, $\widehat j$ in the above diagram is continuous (with respect to the quotient $\tau$ topology on $u\cM/H(u\cM)D$.)
	\end{lem}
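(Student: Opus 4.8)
The plan is to identify the two topologies that $u\cM/{\equiv}=u\cM/D$ a priori carries --- the quotient of the $\tau$-topology, and the subspace topology inherited from $X=EL/{\equiv}$ --- and to show that they coincide; continuity of $\widehat j$ is then immediate.

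The one nontrivial point is that $R$ restricts to a continuous map $(u\cM,\tau)\to X$. Indeed, $R\colon EL\to X$, $f\mapsto f(x_0)$, is continuous (it is evaluation at $x_0$ inside $X^X$), and by Fact~\ref{fct:tau_top}(5) the $\tau$-topology on $u\cM$ refines the topology inherited from $EL$; composing, $R\restr_{u\cM}\colon(u\cM,\tau)\to X$ is continuous. By Lemma~\ref{lem:D_kernel_equiv}, the fibers of $R\restr_{u\cM}$ on $u\cM$ are exactly the left cosets of $D$, i.e.\ the fibers of the quotient map $q\colon u\cM\to u\cM/D$; hence $R\restr_{u\cM}$ factors through $q$, inducing a continuous bijection $\phi\colon(u\cM/D,\ \tau\text{-quotient})\to(R[u\cM],\ \text{subspace from }X)$. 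Now $(u\cM/D,\ \tau\text{-quotient})$ is compact, being a continuous image of the compact space $(u\cM,\tau)$ (Fact~\ref{fct:tau_top}(6)), whereas $R[u\cM]$ with the subspace topology from the Hausdorff space $X$ is Hausdorff; a continuous bijection from a compact space onto a Hausdorff space is a homeomorphism, so $\phi$ is a homeomorphism and the two topologies on $u\cM/{\equiv}=u\cM/D$ agree. In particular the $\tau$-quotient topology is refined by the subspace topology from $X$, which is the first assertion.

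For the continuity of $\widehat j$, observe that under the identification $u\cM/{\equiv}=u\cM/D$ the map $\widehat j$ in the diagram preceding the lemma is just the natural surjection $u\cM/D\to u\cM/(H(u\cM)D)$. The composite $u\cM\to u\cM/H(u\cM)\to u\cM/(H(u\cM)D)$ is continuous for the $\tau$-topology --- the first map is the quotient map defining $u\cM/H(u\cM)$ and the second is a further quotient by the closed subgroup $H(u\cM)D/H(u\cM)$ --- and it is constant on left cosets of $D$, since each such coset lies inside a left coset of $H(u\cM)D$; by the universal property of the quotient it therefore descends to a continuous map on $(u\cM/D,\ \tau\text{-quotient})$. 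Since that topology equals the subspace topology from $X$, this descended map, which is precisely $\widehat j$, is continuous as a map out of $R[u\cM]$.

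I do not expect a real obstacle here: everything rests on the $\tau$-continuity of $R\restr_{u\cM}$ (a consequence of Fact~\ref{fct:tau_top}(5)) together with compactness of $(u\cM,\tau)$, after which it is a routine compact-to-Hausdorff argument combined with the universal property of quotient topologies. The only thing to watch is the direction of the refinement --- a non-issue here, since the two topologies in fact coincide.
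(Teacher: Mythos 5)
Your proof is correct, and it actually establishes a strictly stronger conclusion than the lemma states: not merely that the subspace topology refines the $\tau$-quotient topology, but that the two topologies on $u\cM/D$ coincide. This does follow from your compact-to-Hausdorff argument, and since it implies the one refinement the lemma asserts, the lemma is proved. The route, however, is genuinely different from the paper's. The paper works directly with closed sets: it takes a $\tau$-closed, right $D$-invariant $F\subseteq u\cM$, forms $\widetilde F:=[\overline F]_\equiv$ (the $\equiv$-saturation of the $EL$-closure of $F$), notes $\widetilde F$ is closed and $\equiv$-invariant, and proves $\widetilde F\cap u\cM=F$ by taking a net $(f_i)\subseteq F$ converging in $EL$ to some $f$ with $f\equiv f'\in u\cM$, invoking Fact~\ref{fct:tau_top}\eqref{it:lem:tau_top:nearlycont} to get $f_i\to_\tau uf\in F$, and then checking $f'\equiv uf$ so that $D$-invariance puts $f'$ in $F$. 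You instead lean on Fact~\ref{fct:tau_top}(5) (that $\tau$ refines the $EL$-subspace topology) to get $\tau$-continuity of $R\restr_{u\cM}$, factor through the quotient to obtain a continuous bijection $\phi\colon (u\cM/D,\tau\text{-quot})\to R[u\cM]$, and then use compactness of $(u\cM,\tau)$ from Fact~\ref{fct:tau_top}(6) together with Hausdorffness of $X$ to conclude $\phi$ is a homeomorphism. The paper's argument uses only Fact~\ref{fct:tau_top}(4); yours uses (5) and (6) and the standard compact-to-Hausdorff lemma, which buys a cleaner, more structural proof and the sharper conclusion that the topologies agree. Your second paragraph, deducing continuity of $\widehat j$ by factoring the $\tau$-continuous quotient $u\cM\to u\cM/(H(u\cM)D)$ through $u\cM/D$, is also correct and matches what the lemma's "consequently" requires.
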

	\begin{proof}
		We need to show that if $F\subseteq u\cM$ is $\tau$-closed and right $D$-invariant (i.e. $FD=F$), then there is a closed $\equiv$-invariant $\widetilde F\subseteq EL$ such that $\widetilde F\cap u\cM=F$. By the preceding remark, since $\equiv$ is closed, it is enough to check that $[\bar F]_{\equiv}\cap u\cM=F$, where $\bar F$ is the closure of $F$ in $EL$.
		
		Let $f'\in [\bar F]_{\equiv}\cap u\cM$. Then we have a net $(f_i)\subseteq F$ such that $f_i\to f$ and $f\equiv f'$. By Fact~\ref{fct:tau_top}\eqref{it:lem:tau_top:nearlycont}, in this case, $f_i$ converges in the $\tau$-topology to $uf$, which is an element of $F$ (because $F$ is $\tau$-closed). Since $F$ is right $D$-invariant (and hence $\equiv$-invariant in $u\cM$), it is enough to show that $f'\equiv uf$. But this is clear since
		\[
			R(uf)=u(R(f))=u(R(f'))=R(uf')=R(f').\qedhere
		\]
	\end{proof}
	
	As indicated before, we want to find diagrams similar to the one used in Lemma~\ref{lem:easy} for use in the proof of \hyperref[mainthm]{Main Theorem}. As an intermediate step, we would like to complete the following diagram.
	
	\begin{center}
		\begin{tikzcd}
			EL\arrow[d]\arrow[r,swap,outer sep=3pt,"f\mapsto fu"] & \cM\arrow[d]\arrow[r,swap,outer sep=3pt,"f\mapsto uf"] & u\cM\arrow[d] \\
			EL/{\equiv} \arrow[r,dashed]\arrow[d,equal] & \cM/{\equiv} \arrow[r,dashed]\arrow[d,equal] & u\cM/D\\[-1em]
			X& R[\cM]& {}
		\end{tikzcd}
	\end{center}
	The dashed arrow on the right exists: if $R(f_1)=R(f_2)$, then $u(R(f_1))=u(R(f_2))$, so, by Remark~\ref{rem:commu}, also $R(uf_1)=R(uf_2)$, and hence $uf_1D = uf_2D$ by Lemma \ref{lem:D_kernel_equiv}. Unfortunately, there is no reason for the arrow on the left to exist (i.e.\ $f_1\equiv f_2$ does not necessarily imply $f_1u\equiv f_2u$). However, we can remedy it by replacing $EL/{\equiv}$ with $EL/{\equiv'}$, where $\equiv'$ is given by $f_1\equiv' f_2$ iff $R(f_1)=R(f_2)$ and $R(f_1u)=R(f_2u)$. This gives us a commutative diagram, substituting for the above one:
	\begin{center}
		\begin{tikzcd}
			EL\arrow[d]\arrow[r,swap,outer sep=3pt,"f\mapsto fu"] & \cM\arrow[d]\arrow[r,swap,outer sep=3pt,"f\mapsto uf"] & u\cM\arrow[d] \\
			EL/{\equiv'} \arrow[r]\arrow[d] & \cM/{\equiv} \arrow[r]\arrow[d,equal] & u\cM/D\\[-1em]
			X& R[\cM]& {}
		\end{tikzcd}
	\end{center}
	
	\begin{prop}\label{prop: quotients of EL are Polish}
		$EL/{\equiv}$ and $EL/{\equiv'}$ are both compact Hausdorff spaces. 
		
		If $X$ is second-countable (by compactness, equivalently, Polish), so is $EL/{\equiv}$, as well as $EL/{\equiv'}$.
	\end{prop}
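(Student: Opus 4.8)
The plan is to exhibit each of $\equiv$ and $\equiv'$ as the fibre equivalence relation of a continuous map from $EL$ into a compact Hausdorff space which is second countable whenever $X$ is, and then to apply the standard fact that the quotient of a compact Hausdorff space by a closed equivalence relation is again compact Hausdorff (\cite[3.2.11]{Eng89}), together with the observation that a continuous injection from a compact space into a Hausdorff space is a topological embedding. Recall that $EL$ is compact Hausdorff, being a closed subspace of $X^X$.

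For $\equiv$ this is essentially already in the text: $R\colon EL\to X$, $R(f)=f(x_0)$, is continuous (it is an evaluation map for the product topology on $X^X$) and surjective, so, since $X$ is Hausdorff, $\equiv = (R\times R)^{-1}[\,\{(x,x):x\in X\}\,]$ is closed; hence $EL/{\equiv}$ is compact Hausdorff, and the induced continuous bijection $EL/{\equiv}\to X$ is a homeomorphism. In particular, when $X$ is second countable, so is $EL/{\equiv}$.

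For $\equiv'$ the one point requiring care is that right multiplication $f\mapsto fu$ on $EL$ need not be continuous, so one cannot literally write $f\mapsto R(fu)$ as a composite of continuous maps. The key observation — and the main (really, the only) obstacle — is that by Remark~\ref{rem:commu} one has $R(fu)=f(R(u))=f(u(x_0))$, i.e.\ $R(fu)$ is the value of $f$ at the \emph{fixed} point $x_1:=u(x_0)\in X$; thus $f\mapsto R(fu)$ is again an evaluation map and hence continuous. Therefore the map $\Phi\colon EL\to X\times X$ given by $\Phi(f)=(f(x_0),f(x_1))$ is continuous, and by definition $f_1\equiv' f_2$ holds precisely when $\Phi(f_1)=\Phi(f_2)$; so $\equiv'$ is the fibre relation of $\Phi$ and is closed. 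As before, $EL/{\equiv'}$ is compact Hausdorff. Finally, the induced continuous injection $EL/{\equiv'}\to X\times X$ is a topological embedding, so when $X$ is second countable, $EL/{\equiv'}$ embeds into the second countable space $X\times X$ and is therefore itself second countable. Everything beyond the identity $R(fu)=f(u(x_0))$ is a routine application of elementary facts about quotients of compact Hausdorff spaces.
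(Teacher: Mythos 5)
Your proof is correct and takes essentially the same route as the paper: both exhibit $\equiv'$ as the fibre relation of a continuous map into a compact Polish space and then apply the standard facts about quotients of compact Hausdorff spaces. The paper uses the diagonal map $d\colon EL\to EL/{\equiv}\times\cM/{\equiv}$, $f\mapsto([f]_\equiv,[fu]_\equiv)$, whereas you map directly into $X\times X$ via $f\mapsto(f(x_0),f(u(x_0)))$; after identifying $EL/{\equiv}$ with $X$ and $\cM/{\equiv}$ with $R[\cM]\subseteq X$, these are the same map. What you add — and it is worth noting — is the explicit justification that this map is continuous: the paper simply asserts continuity of $d$, and the nontrivial point is exactly your observation that $R(fu)=f(R(u))=f(u(x_0))$ (Remark~\ref{rem:commu}), which turns the a priori problematic $f\mapsto fu$ (right multiplication, not continuous on $EL$) into a harmless evaluation at the fixed point $u(x_0)$. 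Your version is therefore a slight streamlining (no need to first observe that $\cM/{\equiv}$ is closed in $EL/{\equiv}$) and fills in the one step the paper leaves implicit.
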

	\begin{proof}
		Since $EL/{\equiv}$ is homeomorphic to $X$, the part concerning $EL/{\equiv}$ is clear.
		
		For $EL/{\equiv'}$, note first that $\cM/{\equiv}$ is a closed subspace of $EL/{\equiv}$, and hence it is Polish whenever $X$ is. To complete the proof, use compactness of $EL$, Hausdorffness of $EL/{\equiv}$ and $\cM/{\equiv}$, and continuity of the diagonal map $d \colon EL\to EL/{\equiv}\times \cM/{\equiv}$ given by $f\mapsto ([f]_\equiv,[fu]_\equiv)$ in order to deduce that $EL/{\equiv'}$ is homeomorphic to $d[EL]$ which is closed.
	\end{proof}

	\begin{prop}
		\label{prop:uM/HuMD_Polish}
		If $X$ is metrizable, then $u\cM/H(u\cM)D$ is a Polish space.
	\end{prop}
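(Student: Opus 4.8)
The plan is to realize $u\cM/H(u\cM)D$ as a continuous image of a compact metrizable space and then invoke Fact~\ref{fct: preservation of metrizability}; the space we will use is $R[u\cM]$.

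First I would check that $R[u\cM]$ is compact and metrizable. The evaluation map $R\colon EL\to X$ is continuous, and by Fact~\ref{fct:tau_top}(5) the $\tau$-topology on $u\cM$ refines the subspace topology inherited from $EL$, so $R\restr_{u\cM}\colon (u\cM,\tau)\to X$ is continuous. Since $(u\cM,\tau)$ is compact by Fact~\ref{fct:tau_top}(6), its image $R[u\cM]$ is a compact subspace of the metrizable space $X$, hence is itself compact metrizable (equivalently, compact Polish).

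Next I would appeal to the commutative square relating $u\cM$, $u\cM/H(u\cM)$, $R[u\cM]$, and $u\cM/H(u\cM)D$ constructed above (the one in which $\widehat j$ is defined): its left arrow is $R\colon u\cM\to R[u\cM]$, its right arrow is the quotient $u\cM/H(u\cM)\to u\cM/H(u\cM)D$, its top arrow is the quotient $u\cM\to u\cM/H(u\cM)$, and its bottom arrow $\widehat j\colon R[u\cM]\to u\cM/H(u\cM)D$ is continuous by Lemma~\ref{lem:jhat_cont}. Since $\widehat j\circ R$ equals the quotient map $u\cM\to u\cM/H(u\cM)D$ and the latter is onto, $\widehat j$ is a continuous surjection. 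Now $u\cM/H(u\cM)D$, viewed (as in the discussion preceding the statement) as $(u\cM/H(u\cM))/(DH(u\cM)/H(u\cM))$, is compact Hausdorff, being the quotient of the compact Hausdorff topological group $u\cM/H(u\cM)$ by the closed subgroup $DH(u\cM)/H(u\cM)$ (Fact~\ref{fct:quotient_by_closed_subgroup}), and this is exactly the topology for which $\widehat j$ is continuous. Hence $\widehat j$ is a continuous surjection between compact Hausdorff spaces whose domain is metrizable, so Fact~\ref{fct: preservation of metrizability} yields that $u\cM/H(u\cM)D$ is metrizable; being also compact, it is Polish.

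I do not anticipate a real obstacle: all the groundwork --- the $\tau$-topology, the $\tau$-closedness of $D$, the continuity of $\widehat j$, and the Hausdorffness of the final quotient --- has already been done in the preceding lemmas. The only point requiring a moment of care is the bookkeeping of topologies: one must confirm that the quotient $\tau$-topology on $u\cM/H(u\cM)D$ (equivalently, the topology transported from the topological group $u\cM/H(u\cM)$) is simultaneously the one making $\widehat j$ continuous and the one making the quotient compact Hausdorff, which is precisely what the construction preceding the statement records.
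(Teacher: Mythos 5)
Your proof is correct, and it takes a slightly different route from the paper's. The paper works with $\overline{u\cM}$ (the closure of $u\cM$ in $EL$), whose compactness is immediate, and invokes Fact~\ref{fct:strange_cont} to get a continuous map $\overline{u\cM}\to u\cM/H(u\cM)$; it then factors this through $\overline{u\cM}/{\equiv}=R[\overline{u\cM}]$. You instead work with $(u\cM,\tau)$ directly: you use the $\tau$-compactness of $u\cM$ (Fact~\ref{fct:tau_top}(6)) together with the fact that $\tau$ refines the $EL$-subspace topology (Fact~\ref{fct:tau_top}(5)) to show $R[u\cM]$ is compact metrizable, and then quote Lemma~\ref{lem:jhat_cont} directly for continuity of $\widehat j$. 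The two approaches factor through the same compact metrizable space --- indeed $R[u\cM]=R[\overline{u\cM}]$ once either is known to be closed in $X$ --- and both ultimately rest on the same near-continuity of $f\mapsto uf$ (Fact~\ref{fct:tau_top}\eqref{it:lem:tau_top:nearlycont}), the paper via Fact~\ref{fct:strange_cont} and you via Lemma~\ref{lem:jhat_cont}. Yours is arguably a little more economical since it reuses the diagram and lemma set up immediately before the statement instead of introducing $\overline{u\cM}$; the paper's version trades that for not having to invoke the nontrivial compactness of $(u\cM,\tau)$, relying instead on the more transparent compactness of a closed subset of $EL$.
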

	\begin{proof} The following diagram of maps is essential and explained below.
		\begin{center}
			\begin{tikzcd}
				&[-1.5em] \overline{u\cM} \ar{r}\ar{d} & u\cM/H(u\cM)\ar{d} \\
				R[\overline{u\cM}]\arrow[r,equals]&\overline{u\cM}/{\equiv}\ar{r} & u\cM/H(u\cM)D
			\end{tikzcd}
		\end{center}
		
		Note that $\overline{u\cM}$ is a compact space (equipped with the subspace topology from $EL$). Consequently, $R[\overline{u\cM}]=\overline{u\cM}/{\equiv}$ is a compact Polish space. The top arrow, given by $f\mapsto uf/H(u\cM)$, is continuous by Fact~\ref{fct:strange_cont}. The induced map $\overline{u\cM}\to u\cM/H(u\cM)D$ factors through the quotient map $\overline{u\cM} \to \overline{u\cM}/{\equiv}$ yielding a continuous map $\overline{u\cM}/{\equiv}\to u\cM/H(u\cM)D$ (if $f_1\equiv f_2$, then $uf_1\equiv uf_2$, and then we apply Lemma~\ref{lem:D_kernel_equiv}). Hence, $u\cM/H(u\cM)D$ is a compact Hausdorff space which is a continuous image of a compact Polish space. As such, it must be Polish by Fact \ref{fct: preservation of metrizability}.
	\end{proof}

	\subsection{Tameness and Borel ``retractions"}
	
	\begin{prop}
		\label{prop:last_borel}
		Suppose $(G,X)$ is a tame metric system. Then for any $f_0\in EL$ the map $f\mapsto f_0f$ is $\equiv$-preserving and the induced transformation of $EL/{\equiv}$ is Borel.
		
		In particular, the map $\cM/{\equiv}\to u\cM/{\equiv}$ induced by $p\mapsto up$ is Borel, where both spaces are equipped with the subspace topology from $X=EL/{\equiv}$.
	\end{prop}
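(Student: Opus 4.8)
The plan is to identify the transformation of $EL/{\equiv}$ induced by left multiplication $f \mapsto f_0 f$ with the function $f_0$ itself, viewed as a map $X \to X$, and then to invoke Fact~\ref{fct:tame_borel}, which says that tameness of the metric system $(G,X)$ forces every element of $EL = E(G,X)$ to be Borel measurable.

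First I would check the $\equiv$-preserving claim. If $R(f_1) = R(f_2)$, then by Remark~\ref{rem:commu} we get $R(f_0 f_1) = f_0(R(f_1)) = f_0(R(f_2)) = R(f_0 f_2)$, so $f_0 f_1 \equiv f_0 f_2$; hence $f \mapsto f_0 f$ descends to a well-defined self-map of $EL/{\equiv}$. Next, recall that $R \colon EL \to X$ is a continuous surjection between compact Hausdorff spaces, hence a topological quotient map (Remark~\ref{rem: continuous surjection is closed}), which is precisely what justifies the identification $EL/{\equiv} \cong X$. Under this identification the induced transformation sends $R(f)$ to $R(f_0 f) = f_0(R(f))$, and since $R$ is onto this is exactly the map $f_0 \colon X \to X$. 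As $(G,X)$ is tame and metric, Fact~\ref{fct:tame_borel} gives that $f_0 \in EL$ is Borel measurable, which is the first assertion.

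For the ``in particular'' part I would apply the above with $f_0 = u$, which is a legitimate choice since $u$ is an idempotent of $\cM$ and hence an element of $EL$. Then $\cM/{\equiv}$ and $u\cM/{\equiv}$ are, by construction, the subsets $R[\cM]$ and $R[u\cM]$ of $X$ with the subspace topology, and $R[\cM]$ is closed in $X$ because $\cM$ is closed in the compact space $EL$, so $\cM$ is compact and $R[\cM]$ is compact, hence closed in the Hausdorff space $X$. By Remark~\ref{rem:commu} we have $R(up) = u(R(p))$ for $p \in \cM$, so the induced map $\cM/{\equiv} \to u\cM/{\equiv}$ is nothing but the restriction of the Borel map $u \colon X \to X$ to the (Borel) subset $R[\cM]$, and a restriction of a Borel map to a Borel subset is Borel.

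I do not anticipate a real obstacle: the argument is essentially the observation that, via $R$, left multiplication in $EL$ by a fixed $f_0$ corresponds to the action of $f_0$ on $X$. The only point worth stressing is that the hypothesis of tameness is genuinely needed and is used exactly once, through Fact~\ref{fct:tame_borel}; for $f_0 \in G$ the induced map on $X$ would simply be a homeomorphism, but for a general $f_0 \in EL$, which is only known to be a pointwise limit of continuous functions, Borel measurability is exactly what tameness buys us.
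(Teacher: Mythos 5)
Your proof is correct and follows essentially the same route as the paper: identify $EL/{\equiv}$ with $X$ via $R$, observe that left multiplication by $f_0$ corresponds to the map $f_0 \colon X \to X$ itself, and invoke Fact~\ref{fct:tame_borel}. You have simply spelled out the details (the well-definedness check, the identification via Remark~\ref{rem: continuous surjection is closed}, and the restriction argument for the ``in particular'' part) that the paper's terse two-sentence proof leaves implicit.
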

	\begin{proof}
		Preserving $\equiv$ follows immediately from Remark \ref{rem:commu}. 
		The induced transformation of $EL/{\equiv}$ is the same as simply $f_0$ once we identify $X$ with $EL/{\equiv}$, and $f_0$ is Borel by Fact~\ref{fct:tame_borel}.
	\end{proof}
	
	\begin{cor}
		\label{cor:borel_map}
		The map $\cM/{\equiv}\to u\cM/H(u\cM)D$, given by $[f]_\equiv\mapsto uf/H(u\cM)D$, is Borel, where the former is equipped with subspace topology from $EL$, while the latter has topology induced from the $\tau$ topology.
		
		Similarly, the map $EL/{\equiv'}\to u\cM/H(u\cM)D$, given $[f]_{\equiv'}\mapsto ufu/H(u\cM)D$, is Borel.
	\end{cor}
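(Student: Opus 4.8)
The plan is to realise each of the two maps as a composite in which one factor is the Borel map $\cM/{\equiv}\to u\cM/{\equiv}$, $[f]_\equiv\mapsto[uf]_\equiv$, supplied by Proposition~\ref{prop:last_borel}, and the remaining factors are continuous. We work under the hypotheses of Proposition~\ref{prop:last_borel}, so $(G,X)$ is a tame metric system; then $u\cM/H(u\cM)D$ is Polish by Proposition~\ref{prop:uM/HuMD_Polish} and $EL/{\equiv'}$ is Polish by Proposition~\ref{prop: quotients of EL are Polish}, so talking about Borel maps between the spaces in question makes sense.

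For the first assertion, I would identify $u\cM/{\equiv}$ with $R[u\cM]\subseteq X=EL/{\equiv}$ carrying the subspace topology. With this identification the map $\widehat j\colon R[u\cM]\to u\cM/H(u\cM)D$ from the diagram preceding Lemma~\ref{lem:jhat_cont} is exactly $[uf]_\equiv\mapsto uf/H(u\cM)D$, and Lemma~\ref{lem:jhat_cont} says that it is continuous. Since Proposition~\ref{prop:last_borel} delivers its Borel map into precisely this subspace $R[u\cM]\subseteq X$, the composite $[f]_\equiv\mapsto[uf]_\equiv\mapsto uf/H(u\cM)D$ is Borel, which is the first map of the corollary.

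For the second assertion, I would first produce a continuous map $EL/{\equiv'}\to\cM/{\equiv}$, $[f]_{\equiv'}\mapsto[fu]_\equiv$, and then postcompose it with the Borel map from the first part; the composite then sends $[f]_{\equiv'}$ to $u(fu)/H(u\cM)D=ufu/H(u\cM)D$, as desired. To build that continuous map, note that $f\mapsto fu$ is continuous on $EL$ and maps $EL$ into $\cM$ (a left ideal containing $u$), and that $R$ is continuous, so $f\mapsto R(fu)$ is a continuous map $EL\to R[\cM]=\cM/{\equiv}$. It is constant on $\equiv'$-classes, since equality of $R(\,\cdot\, u)$ is one of the two clauses in the definition of $\equiv'$; as $EL\to EL/{\equiv'}$ is a quotient map onto a compact Hausdorff space, it descends to a continuous map $EL/{\equiv'}\to\cM/{\equiv}$, and composing with the Borel map from the first part completes the proof.

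The argument is in essence bookkeeping with topologies, so I do not expect a serious obstacle; the one point that requires care is that the subspace topology on $u\cM/{\equiv}$ in which Proposition~\ref{prop:last_borel} yields a Borel map is literally the topology on the domain of $\widehat j$ in Lemma~\ref{lem:jhat_cont} (both are the trace of the topology of $X$), so that the two halves of the first composite genuinely fit together, and, for the second composite, that $\equiv'$ was defined precisely so that $[f]_{\equiv'}\mapsto[fu]_\equiv$ is well-defined and continuous rather than merely Borel.
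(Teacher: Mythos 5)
Your proof is correct and follows the same decomposition as the paper's: both maps are built by composing the Borel map $[f]_\equiv\mapsto[uf]_\equiv$ from Proposition~\ref{prop:last_borel} with the continuous $\widehat j$ from Lemma~\ref{lem:jhat_cont}, and then (for the second map) precomposing with the continuous $[f]_{\equiv'}\mapsto[fu]_\equiv$. The extra bookkeeping you supply (that the codomain of the Borel map is literally the domain of $\widehat j$, and that $\equiv'$ is designed so that $f\mapsto R(fu)$ factors through $EL/{\equiv'}$) is exactly the content the paper leaves implicit.
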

	\begin{proof}
		The first map is the composition of the continuous map $\hat j\colon u\cM/{\equiv}\to u\cM/H(u\cM)D$ from Lemma~\ref{lem:jhat_cont} and the Borel function from the second part of Proposition~\ref{prop:last_borel}. The second map is the composition of the first one with the continuous map $[f]_{\equiv'}\mapsto [fu]_{\equiv}$.
	\end{proof}

	\subsection{Polish group quotients of the Ellis group}
	By Proposition \ref{prop:uM/HuMD_Polish}, we already know that for metric dynamical systems, the quotient $u\cM/H(u\cM)D$ is a Polish space. However, we want to obtain a Polish group, and as we have seen in Example~\ref{ex:D_not_normal}, $DH(u\cM)$ may not be normal, so we need to slightly refine our approach.
	\begin{prop}
		Suppose $G$ is a compact Hausdorff topological group and $H\leq G$ is such that $G/H$ is metrizable. Then $G/\Core(H)$ is a compact Polish group (where $\Core(H)$ is the normal core of $H$ in $G$, i.e. the intersection of all its conjugates).
	\end{prop}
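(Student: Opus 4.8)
The plan is to realise $G/\Core(H)$ concretely as a compact subgroup of the homeomorphism group of the compact metric space $G/H$, so that metrizability comes for free; the only real work is extracting a countable amount of data from the a priori uncountable intersection defining $\Core(H)$.

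Since $G/H$ is metrizable it is Hausdorff, so $H$ is closed in $G$ by Fact~\ref{fct:quotient_by_closed_subgroup}, and $G/H$ is a compact Polish space. Let $G$ act on $G/H$ by left translation, $\lambda_g(xH) = gxH$; this action is jointly continuous (as $G/H$ is a homogeneous space of the topological group $G$) and is by homeomorphisms. A direct computation shows $\lambda_g = \id$ precisely when $x^{-1}gx \in H$ for every $x \in G$, i.e.\ precisely when $g \in \bigcap_{x \in G} xHx^{-1} = \Core(H)$. Thus the action factors through an injective group homomorphism $\rho \colon G/\Core(H) \hookrightarrow \Homeo(G/H)$; in particular $\Core(H)$ is a closed normal subgroup, so $G/\Core(H)$ is a compact Hausdorff topological group (again using Fact~\ref{fct:quotient_by_closed_subgroup}).

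Next, equip $\Homeo(G/H)$ with its standard Polish group topology of uniform convergence (available since $G/H$ is compact metric; see \cite[\S 9.B]{Kec95}). Joint continuity of the action together with compactness of $G/H$ gives continuity of $\rho$. Hence $\rho[G/\Core(H)]$, being the continuous image of a compact space, is a compact (in particular closed) subgroup of $\Homeo(G/H)$; as a compact subspace of a Polish space, it is itself second countable, hence compact Polish. Finally $\rho$ is a continuous bijection from the compact space $G/\Core(H)$ onto $\rho[G/\Core(H)]$, and the target being Hausdorff it is a homeomorphism, and also a group isomorphism; transporting the metric back, $G/\Core(H)$ is a compact Polish group.

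The step I expect to be the crux is precisely the passage from the uncountable conjugate-intersection to something second countable, which the Polish group $\Homeo(G/H)$ handles automatically. If one prefers to avoid $\Homeo(G/H)$, the same idea runs as follows: pick a countable dense $D \subseteq G/H$; since each fixed-point set $\{p : \lambda_g(p) = p\}$ is closed, one has $\Core(H) = \bigcap_{p \in D} \Stab(p)$, a countable intersection of conjugates of $H$, each of which is $G_\delta$ in $G$ (because $H$ is closed with $G/H$ metrizable). So $\Core(H)$ is $G_\delta$, the identity of the compact Hausdorff group $G/\Core(H)$ is $G_\delta$, and this forces first countability, hence metrizability by Birkhoff--Kakutani. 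Either route yields the claim.
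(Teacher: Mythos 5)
Your main argument is essentially identical to the paper's own proof: both realise $G/\Core(H)$ inside the Polish group $\Homeo(G/H)$ via the left-translation action, identify $\ker(\rho)=\Core(H)$, and conclude from compactness that the image (and hence the quotient) is compact Polish.

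The alternative route you sketch at the end is a genuinely different (and nice) idea, but as written it glosses over one step: you pass from ``$\Core(H)$ is $G_\delta$ in $G$'' directly to ``the identity coset is $G_\delta$ in $G/\Core(H)$''. This implication is true but not automatic --- it uses openness of the quotient map and compactness of the cosets (to show that if $gN$ meets every member of a decreasing sequence of open sets whose intersection is $N$, then $gN=N$). A cleaner variant of the same idea that sidesteps this entirely: having found countably many $x_n$ with $\Core(H)=\bigcap_n x_nHx_n^{-1}$, note that each $G/(x_nHx_n^{-1})$ is homeomorphic to $G/H$, so the diagonal map embeds $G/\Core(H)$ into the metrizable space $(G/H)^{\bbN}$; compactness then makes this embedding a homeomorphism onto its image, giving metrizability directly.
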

	\begin{proof}
		Let $\varphi \colon G\to \Homeo(G/H)$ be the homomorphism defined by $\varphi(g)(aH) :=gaH$, where $\Homeo(G/H)$ is the group of all homeomorphisms of $G/H$. 
		
		Recall that a compact Hausdorff space possesses a unique uniformity inducing the given topology.
		The action of $G$ on $G/H$ by left translations is continuous, so also uniformly continuous (as $G$ and $G/H$ are compact Hausdorff). Therefore, if $(g_i)_i$ is a convergent net, then $(g_i\cdot gH)_i$ converges uniformly in $gH \in G/H$. This yields continuity of $\varphi$ with respect to the uniform convergence topology on $\Homeo(G/H)$.
		
		It is easy to check that $\ker(\varphi)=\Core(H)$, and since $G/H$ is a compact Polish space, $\Homeo(G/H)$ is a Polish group by \cite[9.B(8)]{Kec95}. By compactness of $G$, it follows that $\varphi[G]$ is a Polish group, and hence --- by Remark \ref{rem: continuous surjection is closed} --- so is $G/\Core(H)$.
	\end{proof}
	
	\begin{cor}\label{cor:Polish_quotient_Core(D)}
		For a metric dynamical system, $u\cM/H(u\cM)\Core(D)$ is a compact Polish group.
	\end{cor}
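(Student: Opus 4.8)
The plan is to derive this directly from the preceding proposition applied with $G:=u\cM/H(u\cM)$, which is a compact Hausdorff topological group by Fact~\ref{fct:tau_top}. The key preliminary observation is that $H(u\cM)\subseteq D$. Indeed, by Fact~\ref{fct:tau_top} the $\tau$-topology on $u\cM$ refines the topology induced from $EL$, so $R\restr_{u\cM}\colon(u\cM,\tau)\to X$ is continuous; writing $H(u\cM)=\bigcap_V\overline V$ with $V$ ranging over the $\tau$-open neighbourhoods of the identity $u$, continuity of $R$ gives $R[\overline V]\subseteq\overline{R[V]}$ for each such $V$, and continuity at $u$ gives, for every open $W\ni R(u)$, some $V$ with $R[V]\subseteq W$; since $X$ is metrizable (hence regular and $T_1$), $\bigcap_{W\ni R(u)}\overline W=\{R(u)\}$, so $R[H(u\cM)]=\{R(u)\}$. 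Thus every element of $H(u\cM)$ is $\equiv$-equivalent to $u$ and, lying in $u\cM$, belongs to $D=[u]_\equiv\cap u\cM$. As $H(u\cM)$ is normal in $u\cM$ and contained in $D$, it is contained in $\Core(D)$; in particular $DH(u\cM)=D$ and $H(u\cM)\Core(D)=\Core(D)$.

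Next I would put $H:=D/H(u\cM)$, a subgroup of $G$ (well defined because $H(u\cM)\subseteq D$). By the third isomorphism theorem $G/H=u\cM/D=u\cM/H(u\cM)D$, which is Polish, in particular metrizable, by Proposition~\ref{prop:uM/HuMD_Polish} (and hence $H$ is closed in $G$ by Fact~\ref{fct:quotient_by_closed_subgroup}). So the preceding proposition applies and shows that $G/\Core_G(H)$ is a compact Polish group.

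It remains to identify $G/\Core_G(H)$ with $u\cM/H(u\cM)\Core(D)$. Since $H(u\cM)$ is normal in $u\cM$, conjugation descends to $G$, and --- using again that $H(u\cM)\subseteq D$, so that $D$ is the full preimage of $H$ and each conjugate $gDg^{-1}$ still contains $H(u\cM)$ --- the preimage of $\Core_G(H)$ under the quotient map $u\cM\to G$ is exactly $\bigcap_{g\in u\cM}gDg^{-1}=\Core(D)$. Hence the composite $u\cM\to G\to G/\Core_G(H)$ is just the quotient map $u\cM\to u\cM/\Core(D)$, and since $H(u\cM)\Core(D)=\Core(D)$ we conclude that $u\cM/H(u\cM)\Core(D)=G/\Core_G(H)$ is a compact Polish group.

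The only delicate point is the inclusion $H(u\cM)\subseteq D$: it is what makes the normal core computed in the Hausdorff quotient $G$ pull back precisely to $H(u\cM)\Core(D)$ rather than to the a priori larger normal core of $DH(u\cM)$ in $u\cM$ (recall from Example~\ref{ex:D_not_normal} that $D$ and $DH(u\cM)$ need not be normal). Once this is established, everything else is a routine application of the correspondence theorem and the cited facts, with no real computation.
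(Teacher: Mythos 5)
Your proof is correct, and it follows the same overall strategy as the paper: apply the preceding proposition to $G = u\cM/H(u\cM)$, using Proposition~\ref{prop:uM/HuMD_Polish} to supply metrizability of $G/H$. What you add, and what the paper's one-line proof (``Immediate by the preceding proposition\dots'') leaves unstated, is the inclusion $H(u\cM) \subseteq D$. This is not a mere convenience: the correspondence theorem sends $\Core_G(H)$, where $H$ is the image of $D$ in $G$, back to the normal core $\Core_{u\cM}(H(u\cM)D)$, and identifying that with $H(u\cM)\Core(D)$ requires $H(u\cM)$ to lie inside every conjugate of $D$, i.e.\ $H(u\cM)\subseteq \Core(D)$, which is exactly what your inclusion delivers. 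Your derivation of it is clean and correct: $R\restr_{u\cM}$ is $\tau$-continuous because the $\tau$-topology refines the $EL$-subspace topology (Fact~\ref{fct:tau_top}(5)) and $R$ is continuous on $EL$; then $\bigcap_{W\ni R(u)}\overline W=\{R(u)\}$ since $X$ is compact Hausdorff, hence regular and $T_1$ (metrizability is not actually needed at that step). A pleasant by-product is that $H(u\cM)D=D$ and $H(u\cM)\Core(D)=\Core(D)$, so the notation throughout the section could be simplified. In short, you reproduce the paper's route but make explicit a nontrivial identification that the paper treats as obvious, and you prove the lemma that justifies it.
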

	\begin{proof}
		Immediate by the preceding proposition, as $u\cM/H(u\cM)$ is a compact Hausdorff group and $u\cM/H(u\cM)D$ is a compact Polish space (by Proposition~\ref{prop:uM/HuMD_Polish}).
	\end{proof}
	
	In the case of \emph{tame} metric dynamical systems, the situation is a little cleaner. Namely, we will show that $u\cM/H(u\cM)$ itself is already Polish.
	
	\begin{dfn}
		A topological space $X$ has {\em countable tightness} if for every $A\subseteq X$ and every $x\in \overline A$, there is a countable set $B\subseteq A$ such that $x\in \overline B$.
		\xqed{\lozenge}
	\end{dfn}
	
	\begin{fct}[Engelking]\label{fct:tightness}
		A compact (Hausdorff) topological group of countable tightness is metrizable.
	\end{fct}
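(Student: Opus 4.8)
The plan is to reduce the assertion, via the structure theory of compact groups, to two classical facts about dyadic compacta. First, recall that for a compact Hausdorff group $G$ the following are equivalent: $G$ is metrizable; $G$ is second countable; $w(G)\le\aleph_0$; the identity $\{e\}$ is a $G_\delta$ subset of $G$. The only nontrivial implication is that $G_\delta$-ness of $\{e\}$ forces metrizability: by the pro-Lie description of compact groups, the closed normal subgroups $N\trianglelefteq G$ with $G/N$ a compact Lie group (hence second countable) form a neighbourhood base at $e$, so writing $\{e\}=\bigcap_n U_n$ with $U_n$ open and choosing such $N_n\subseteq U_n$, the induced continuous homomorphism $G\to\prod_n G/N_n$ is injective, hence (by compactness) a closed embedding into a second countable space. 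Alternatively, one may invoke the classical metrization theorem that a compact Hausdorff space with a $G_\delta$ diagonal is metrizable, noting that $(x,y)\mapsto x^{-1}y$ turns a $G_\delta$ identity into a $G_\delta$ diagonal. So it suffices to show that countable tightness forces $w(G)\le\aleph_0$.

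Suppose toward a contradiction that $w(G)>\aleph_0$. By the classical fact (see \cite{Eng89}) that every compact group is a dyadic compactum, i.e.\ a continuous image of some $\{0,1\}^\kappa$, and since $G$ is not metrizable, one applies the structure theory of dyadic compacta: a non-metrizable dyadic compactum contains a closed copy of $\{0,1\}^{\omega_1}$ (equivalently, one can simply quote the packaged statement that a dyadic compactum of countable tightness is metrizable). Since tightness is monotone with respect to closed subspaces — the closure of a subset of a closed set is computed identically in the subspace and in the ambient space — we get $t(G)\ge t(\{0,1\}^{\omega_1})$. But $\{0,1\}^{\omega_1}$ has uncountable tightness: the set $\Sigma$ of points with countable support is dense, while the point $\mathbf 1$ (all coordinates equal to $1$) lies in the closure of no countable $B\subseteq\Sigma$, because $S:=\bigcup_{y\in B}\operatorname{supp}(y)$ is a countable, hence proper, subset of $\omega_1$, and for any $\alpha\in\omega_1\setminus S$ the basic clopen neighbourhood $\{z:z_\alpha=1\}$ of $\mathbf 1$ misses $B$. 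This contradicts $t(G)=\aleph_0$, so $G$ is metrizable.

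The genuinely imported ingredient — the main obstacle if one wanted a self-contained argument — is the structure of dyadic compacta used in the second paragraph: that every compact group is dyadic, and that a non-metrizable dyadic compactum reflects the uncountable tightness of $\{0,1\}^{\omega_1}$ (e.g.\ by containing a closed copy of it). Everything else is either the standard pro-Lie picture of compact groups or the elementary tightness computation for $\{0,1\}^{\omega_1}$. A fully elementary alternative would attempt to manufacture the bad subset directly inside $G$ from a strictly decreasing $\omega_1$-chain of closed normal subgroups with metrizable quotients; the delicate point there is that such subgroups need not be \emph{open}, so neighbourhoods of $e$ cannot simply be read off from the chain, and one would need extra work — essentially the dyadic machinery again — to extract an explicit witness to uncountable tightness.
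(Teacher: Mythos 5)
Your proof is correct, and it follows the standard route that the cited reference (Arhangel'skii--Tkachenko) also takes: reduce metrizability of a compact Hausdorff group to a $G_\delta$ identity (equivalently, second countability), invoke Ivanovskii--Kuz'minov to see that $G$ is a dyadic compactum, and use the structure of dyadic compacta (Efimov/Gerlits: a dyadic compactum of weight $\ge\kappa$ contains a closed copy of $\{0,1\}^\kappa$) together with the elementary computation that $\{0,1\}^{\omega_1}$ has uncountable tightness. The paper itself gives no proof, only the citation to \cite[Corollary 4.2.2]{AT08}, so there is nothing to contrast beyond noting that your argument is the standard one packaged there. You have also correctly identified the two genuinely nontrivial imports (dyadicity of compact groups, and the reflection of $\{0,1\}^{\omega_1}$ into non-metrizable dyadic compacta); the rest — the pro-Lie picture, Šneider's $G_\delta$-diagonal metrization theorem as an alternative, monotonicity of tightness under closed subspaces, and the $\Sigma$-product argument — is sound and self-contained.
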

	\begin{proof}
		\cite[Corollary 4.2.2]{AT08}.
	\end{proof}
	
	\begin{prop}
		\label{prop:closed_image_is_tight}
		The image of a countably tight space via a closed continuous map is countably tight.
	\end{prop}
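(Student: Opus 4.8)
The plan is to first reduce to the case where the map is a closed continuous \emph{surjection} $f\colon X\to Y$: replacing the codomain by $f[X]$ equipped with the subspace topology does not affect countable tightness of the image, and a closed map onto a space remains closed when viewed as a map onto its image (if $C\subseteq X$ is closed, then $f[C]$ is closed in the original codomain, hence closed in $f[X]$). So assume $X$ is countably tight and $f\colon X\to Y$ is a closed continuous surjection; we must find, for every $A\subseteq Y$ and every $y\in\overline A$, a countable $B\subseteq A$ with $y\in\overline B$.

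The key point I would isolate is that $f^{-1}(y)\cap\overline{f^{-1}[A]}\neq\emptyset$. Indeed, since $f$ is closed, $f[\overline{f^{-1}[A]}]$ is a closed subset of $Y$; it contains $f[f^{-1}[A]]=A$ (using surjectivity), hence it contains $\overline A\ni y$, so some point of $\overline{f^{-1}[A]}$ is sent by $f$ to $y$. Fix such an $x\in f^{-1}(y)\cap\overline{f^{-1}[A]}$.

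Then I would apply countable tightness of $X$ at $x\in\overline{f^{-1}[A]}$ to obtain a countable $C\subseteq f^{-1}[A]$ with $x\in\overline C$, and set $B:=f[C]$. This $B$ is countable, satisfies $B\subseteq f[f^{-1}[A]]=A$, and by continuity $y=f(x)\in f[\overline C]\subseteq\overline{f[C]}=\overline B$, which is exactly what we need. The argument is short once the displayed nonemptiness claim is extracted; the only genuine use of the closedness hypothesis is in proving that claim (and the statement indeed fails for arbitrary continuous surjections), so that is the single step I would expect a reader to want to double-check.
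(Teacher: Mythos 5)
Your proof is correct and follows essentially the same route as the paper's: use closedness to show $\overline A\subseteq f[\overline{f^{-1}[A]}]$, pull $y$ back to a point $x\in\overline{f^{-1}[A]}$, apply countable tightness at $x$, and push the resulting countable set forward via continuity. The only (harmless) difference is that you spell out the reduction to the surjective case, which the paper leaves implicit by speaking of the image with its subspace topology.
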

	\begin{proof}
		Let $X$ be a countably tight space, and let $f\colon X\to Y$ be a closed and continuous surjection. Choose an arbitrary $A\subseteq Y$ and $y\in \overline A$. Note that since $f$ is closed and onto, we have that $\overline A\subseteq f\left[\overline{f^{-1}[A]}\right]$, so there is some $x\in \overline{f^{-1}[A]}$ such that $f(x)=y$. Choose $B'\subseteq f^{-1}[A]$ countable such that $x\in \overline {B'}$, and let $B=f[B']$. Since $f$ is continuous, $f^{-1}\left[\overline B\right]\supseteq \overline{B'}$, so in particular, $x\in f^{-1}\left[\overline B\right]$, so $y\in \overline B$.
	\end{proof}
	
	\begin{prop}\label{prop:NIP gives metrizability}
		If $(G,X)$ is a tame metric dynamical system, then the group $u\cM/H(u\cM)$ is metrizable (and hence a Polish group).
	\end{prop}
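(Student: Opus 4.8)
The plan is to show that $u\cM/H(u\cM)$ is a compact Hausdorff topological group of countable tightness, and then conclude metrizability by Engelking's theorem (Fact~\ref{fct:tightness}); compactness plus metrizability gives Polishness. By Fact~\ref{fct:tau_top}(6) we already know that $u\cM/H(u\cM)$ is a compact Hausdorff topological group, so the whole content is in establishing countable tightness.

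First I would extract the consequence of tameness at the level of the Ellis semigroup: since $(G,X)$ is tame and metric, Proposition~\ref{prop:dyn_BFT} tells us that $EL=E(G,X)$ is Fréchet (indeed a Rosenthal compactum). A Fréchet–Urysohn space has countable tightness, and countable tightness is inherited by arbitrary subspaces, so the compact subspace $\overline{u\cM}\subseteq EL$ (equipped with the subspace topology from $EL$) is countably tight.

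Next I would transport this along the map of Fact~\ref{fct:strange_cont}. Composing $\xi\colon\overline{u\cM}\to u\cM$, $f\mapsto uf$, with the quotient homomorphism $u\cM\to u\cM/H(u\cM)$ (which is continuous for the $\tau$-topology, and $u\cM/H(u\cM)$ is Hausdorff, hence regular), Fact~\ref{fct:strange_cont} yields a continuous map $q\colon\overline{u\cM}\to u\cM/H(u\cM)$, where $\overline{u\cM}$ carries the subspace topology from $EL$. Since $u$ is the identity element of the group $u\cM$, the restriction of $\xi$ to $u\cM$ is the identity, so $q$ is surjective. As $\overline{u\cM}$ is compact and $u\cM/H(u\cM)$ is Hausdorff, $q$ is a closed map. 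Hence $u\cM/H(u\cM)$ is a closed continuous image of the countably tight space $\overline{u\cM}$, so it is countably tight by Proposition~\ref{prop:closed_image_is_tight}. Being a compact Hausdorff topological group of countable tightness, it is metrizable by Fact~\ref{fct:tightness}, and a compact metrizable group is Polish.

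The main obstacle is conceptual rather than computational: $u\cM$ carries two genuinely different topologies, the $\tau$-topology (in which the group operations behave well and $H(u\cM)$ is defined) and the subspace topology inherited from $EL$ (the one controlled by tameness, via the BFT dichotomy), and these do not coincide in general, so one cannot simply say that $u\cM$ sits inside a Fréchet space as a subspace. The bridge is Fact~\ref{fct:strange_cont}, which says precisely that $f\mapsto uf$ is continuous from $\overline{u\cM}$ with its $EL$-subspace topology into any regular space admitting a $\tau$-continuous map out of $u\cM$. Recognizing that this exhibits $u\cM/H(u\cM)$ as a continuous closed image of the $EL$-subspace $\overline{u\cM}$ — and hence forces countable tightness onto it — is the crux; the remaining steps are routine applications of the quoted facts.
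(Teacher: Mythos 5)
Your proof is correct and follows essentially the same approach as the paper: obtain countable tightness of $\overline{u\cM}$ from tameness via the dynamical BFT dichotomy and the Fréchet property, transport it to $u\cM/H(u\cM)$ along the closed continuous surjection from Fact~\ref{fct:strange_cont}, and conclude by Engelking's theorem. The only cosmetic difference is that the paper notes $\overline{u\cM}$ is itself a Rosenthal compactum, whereas you pass through countable tightness of $EL$ and heredity to subspaces — both routes are equally valid.
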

	\begin{proof}
		Note that if $(G,X)$ is tame, then, by Proposition~\ref{prop:dyn_BFT}, $\overline{u\cM}\subseteq EL$ is a Rosenthal compactum, so --- via the Fréchet-Urysohn property we have by Fact~\ref{fct: Rosnthal implies Frechet} --- it is countably tight. Furthermore, by Fact \ref{fct:strange_cont}, the function $f\mapsto uf/H(u\cM)$ defines a continuous surjection from $\overline{u\cM}$ to $u\cM/H(u\cM)$, and hence a continuous closed mapping. Hence, the result follows by Proposition~\ref{prop:closed_image_is_tight} and Fact~\ref{fct:tightness}.
	\end{proof}

	\section{The main theorem}
	\label{sec:main_thm}
	In this section, we assume that $T$ is countable, and we fix a countable ambitious $M\models T$ enumerated by $m$. Note that $(\Aut(M),S_m(M),\tp(m/M))$ is an $\Aut(M)$-ambit, so the results of Section~\ref{sec:top_dyn_to_Polish} apply. As before, we denote by $EL$ the Ellis semigroup of $(\Aut(M),S_m(M))$, and we fix a minimal left ideal $\cM\unlhd EL$ and an idempotent $u\in \cM$. We also use the notation of Section~\ref{sec:top_dyn_to_Polish}, so in particular for $f\in EL$, $R(f)=f(\tp(m/M))$. 
	
	\subsection{\texorpdfstring{Topological dynamics for $\Aut(M)$}{Topological dynamics for Aut(M)}}\label{subsection: adaptation from KPR15}

	This subsection is an adaptation of topological dynamics developed for the group $\Aut(\fC)$ in \cite{KPR15} to the context of $\Aut(M)$. Many of the arguments used in \cite{KPR15} can be translated almost immediately. However, we rephrase some of them here, mostly to make it easier to see how the same principles can be applied in wider contexts.
	
	Notice that $\Aut(\fC)$ does not act on $S_m(M)$, and since $M$ may be neither saturated nor homogeneous, we cannot use any compactness argument directly for orbits of $\Aut(M)$. Instead, we have a surrogate in the form of the following lemma, which we will proceed to use to derive other interesting properties.
	\begin{lem}[pseudocompactness]
		\label{lem:pseudocompactness}
		Whenever $(\sigma_i)_i$ and $(p_i)_i$ are nets in $\Aut(M)$ and $S_m(M)$ (respectively) such that $\tp(\sigma_i(m)/M)\to q_1$, $p_i\to q_2$ and $\sigma_i(p_i)\to q_3$ for some $q_1,q_2,q_3\in S_m(M)$, there are $\sigma'_1,\sigma'_2\in \Aut(\fC)$ such that $\tp(\sigma'_1(m)/M)=q_1$, $\tp(\sigma'_2(m)/M)=q_2$ and $\tp(\sigma'_1\sigma'_2(m)/M)=q_3$.
	\end{lem}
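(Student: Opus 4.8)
The plan is to move the whole configuration into the monster model $\fC$, extract a single limit type over $M$, realize it, and then convert the realization back into automorphisms of $\fC$ using strong homogeneity. First I would note that, since $M$ is small, strong $\kappa$-homogeneity of $\fC$ lets me extend each $\sigma_i\in\Aut(M)$ to some $\hat\sigma_i\in\Aut(\fC)$, and $\kappa$-saturation gives $a_i\in\fC$ with $a_i\models p_i$. A direct computation with the $\Aut(M)$-action on $S_m(M)$ shows $\sigma_i(p_i)=\tp(\hat\sigma_i(a_i)/M)$, while obviously $\tp(\sigma_i(m)/M)=\tp(\hat\sigma_i(m)/M)$. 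Writing $d_i:=\hat\sigma_i(m)$ and $c_i:=\hat\sigma_i(a_i)$, the hypotheses become $\tp(d_i/M)\to q_1$, $\tp(a_i/M)\to q_2$, $\tp(c_i/M)\to q_3$, and --- the key extra information --- the single automorphism $\hat\sigma_i$ witnesses the identity $\tp(m,a_i/\emptyset)=\tp(d_i,c_i/\emptyset)$.

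Next I would pin down the relevant limit type. The map $S_m(M)\to S(\emptyset)$ sending $p$ to $\tp(m,a/\emptyset)$ for any $a\models p$ is well defined (any two realizations of $p$ are conjugate over $M$, which fixes $m$) and continuous (the preimage of the clopen set determined by a formula $\chi(u,v)$ is the clopen set determined by $\chi(m,v)$). Fixing $b^*\models q_2$ (so $b^*\equiv m$), continuity gives $\tp(m,a_i/\emptyset)\to\tp(m,b^*/\emptyset)$. Passing to a subnet along which $\tp(d_i,c_i/M)$ converges to some $r\in S(M)$ (compactness of the type space), I then have $r\restr_{y}=q_1$ and $r\restr_{z}=q_3$ (restricting to the blocks of variables of $d_i$ and $c_i$ respectively), and, using the identity from the first step, $r\restr_{\emptyset}=\lim\tp(d_i,c_i/\emptyset)=\lim\tp(m,a_i/\emptyset)=\tp(m,b^*/\emptyset)$.

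Finally I would realize and convert. Take $(c,d)\models r$ in $\fC$ (with $c$ in the $y$-block, $d$ in the $z$-block), so $\tp(c/M)=q_1$, $\tp(d/M)=q_3$, and $\tp(c,d/\emptyset)=\tp(m,b^*/\emptyset)$. By strong $\kappa$-homogeneity there is $\sigma_1'\in\Aut(\fC)$ with $\sigma_1'(m)=c$ and $\sigma_1'(b^*)=d$, and since $b^*\equiv m$ there is $\sigma_2'\in\Aut(\fC)$ with $\sigma_2'(m)=b^*$. Then $\tp(\sigma_1'(m)/M)=q_1$, $\tp(\sigma_2'(m)/M)=\tp(b^*/M)=q_2$, and $\tp(\sigma_1'\sigma_2'(m)/M)=\tp(\sigma_1'(b^*)/M)=\tp(d/M)=q_3$, as required.

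The point where care is needed --- and the reason the statement is not a triviality --- is that the three required conditions on $\sigma_1',\sigma_2'$ are genuinely coupled, so one cannot choose the two automorphisms independently. The way around this is to decouple the problem by realizing $q_2$ by one element $b^*$ and then asking for a single automorphism $\sigma_1'$ that simultaneously realizes $q_1$ on $m$ and $q_3$ on $b^*$; the consistency of this last demand is exactly what the limit type $r$ provides. And $r$ carries enough information only because the extensions $\hat\sigma_i$ supply the rigid link $\tp(m,a_i/\emptyset)=\tp(d_i,c_i/\emptyset)$, tying the $\emptyset$-type determined by $q_2$ to the joint type over $M$ that converges to the pair $(q_1,q_3)$. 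Everything else is routine manipulation of nets and type spaces, plus two appeals to the saturation and homogeneity of $\fC$.
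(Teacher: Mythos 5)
Your proof is correct and follows essentially the same approach as the paper: extend each $\sigma_i$ to an automorphism $\bar\sigma_i$ of $\fC$, realize $p_i$, observe that the extension gives the crucial constraint $\tp(m,a_i/\emptyset)=\tp(\bar\sigma_i(m),\bar\sigma_i(a_i)/\emptyset)$, and then use compactness to realize all the limiting conditions simultaneously. The only cosmetic difference is that you pre-fix $b^*\models q_2$ and realize a two-block limit type $r$, whereas the paper's compactness argument produces all three tuples $m_1,m_2,m_3$ at once; the final step of converting realizations back to $\sigma_1',\sigma_2'$ via strong homogeneity is identical.
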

	\begin{proof}
		Let $n_i\models p_i$, and $\bar \sigma_i$ be an extension of $\sigma_i$ to an automorphism of $\fC$. Then, by the assumptions, for every $\varphi_1(x),\varphi_2(x),\varphi_3(x)$ in $q_1,q_2$ and $q_3$ (resp.) we have, for sufficiently large $i$, $\models \varphi_1(\bar\sigma_i(m))\land \varphi_2(n_i)\land \varphi_3(\bar \sigma_i(n_i))\land mn_i\equiv \bar \sigma_i(m)\bar \sigma_i(n_i)$. So, by compactness, we get $m_1,m_2,m_3$ such that $\models q_1(m_1)\land q_2(m_2)\land q_3(m_3)\land mm_2\equiv m_1m_3$. Any $\sigma'_1, \sigma'_2$ such that $\sigma'_2(m)=m_2$, $\sigma'_1(mm_2)=m_1m_3$ satisfy the conclusion of the lemma.
	\end{proof}
	
	\begin{prop}
		\label{prop:twopoints}
		For every $f\in EL$ and $p\in S_m(M)$, there are $\sigma'_1, \sigma'_2\in \Aut(\fC)$ such that $\tp(\sigma'_1(m)/M)=R(f)$, $\tp(\sigma'_2(m)/M)=p$ and $\tp(\sigma'_1\sigma'_2(m)/M)=f(p)$.
	\end{prop}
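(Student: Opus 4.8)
The plan is to realize $f$ as a pointwise limit of a net of automorphisms of $M$, track that net at just the two relevant points $\tp(m/M)$ and $p$, and then feed the resulting data into the pseudocompactness lemma (Lemma~\ref{lem:pseudocompactness}) using a \emph{constant} net in the $p_i$-slot.

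First I would unwind the definition of $EL$: since $EL$ is the pointwise closure in $S_m(M)^{S_m(M)}$ of $\{\pi_\sigma\mid \sigma\in\Aut(M)\}$, there is a net $(\sigma_i)_i$ in $\Aut(M)$ with $\pi_{\sigma_i}\to f$ pointwise. Evaluating this convergence at $\tp(m/M)$ gives $\tp(\sigma_i(m)/M)=\pi_{\sigma_i}(\tp(m/M))\to f(\tp(m/M))=R(f)$, and evaluating it at $p$ gives $\sigma_i(p)=\pi_{\sigma_i}(p)\to f(p)$. The key observation here is that one and the same net $(\sigma_i)_i$ simultaneously witnesses both convergences, which is automatic because the topology on $S_m(M)^{S_m(M)}$ is the topology of pointwise convergence.

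Then I would apply Lemma~\ref{lem:pseudocompactness} to the net $(\sigma_i)_i$ together with the constant net $p_i:=p$, and with $q_1:=R(f)$, $q_2:=p$, $q_3:=f(p)$. The three hypotheses are satisfied: $\tp(\sigma_i(m)/M)\to q_1$ and $\sigma_i(p_i)=\sigma_i(p)\to q_3$ by the previous paragraph, while $p_i\to q_2$ holds trivially since $(p_i)_i$ is constant equal to $p$. The lemma then yields $\sigma'_1,\sigma'_2\in\Aut(\fC)$ with $\tp(\sigma'_1(m)/M)=q_1=R(f)$, $\tp(\sigma'_2(m)/M)=q_2=p$ and $\tp(\sigma'_1\sigma'_2(m)/M)=q_3=f(p)$, which is precisely the desired conclusion.

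I do not expect a genuine obstacle: the proposition is essentially a repackaging of Lemma~\ref{lem:pseudocompactness}, and the only thing one really needs to notice is that the $p_i$-slot of that lemma may be filled with a constant net, so that $p$ itself is not perturbed. The one minor point of care is notational — keeping apart the function $f\in EL\subseteq S_m(M)^{S_m(M)}$ from its value $f(p)$, and matching the action notation $\sigma_i(p)$ (i.e.\ $\pi_{\sigma_i}(p)$) with the notation used in the statement of the pseudocompactness lemma.
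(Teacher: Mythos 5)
Your proposal is correct and is essentially the paper's proof: the paper likewise applies Lemma~\ref{lem:pseudocompactness} to an arbitrary net $(\sigma_i)_i$ converging to $f$ together with the constant net $p_i:=p$. Your write-up just spells out the pointwise-convergence bookkeeping that the paper leaves implicit.
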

	\begin{proof}
		Apply Lemma~\ref{lem:pseudocompactness} to any net $(\sigma_i)_i$ convergent to $f$ and $p_i:=p$ constant.
	\end{proof}
	
	Denote by $r$ the map $EL\to \Gal(T)$ defined as the composition of $R$ and the natural map $S_m(M)\to \Gal(T)$ from Fact~\ref{fct:sm_to_gal}, i.e.\ $r(f)=[f(\tp(m/M))]_{\equiv_L^M}$ (here, $S_m(M)/{\equiv_L^M}$ is naturally identified with $\Gal(T)$).
	
	\begin{lem}
		\label{lem:homom}
		$r$ is a semigroup epimorphism.
	\end{lem}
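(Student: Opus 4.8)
There are two things to check: that $r$ is onto and that $r$ is a semigroup homomorphism. Surjectivity is immediate from what has already been observed: $R\colon EL\to S_m(M)$ is onto (by compactness of $EL$ and density of $\Aut(M)\cdot\tp(m/M)$ in $S_m(M)$), and the natural map $S_m(M)\to\Gal(T)$ from Fact~\ref{fct:sm_to_gal} is onto; since $r$ is their composition, it is onto. So the whole content is the multiplicativity of $r$, and here the plan is to reduce everything to Proposition~\ref{prop:twopoints}.

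Fix $f_1,f_2\in EL$. First I would use Remark~\ref{rem:commu} to write $R(f_1f_2)=f_1(R(f_2))$. Then I would apply Proposition~\ref{prop:twopoints} with the element $f_1\in EL$ and the type $p:=R(f_2)\in S_m(M)$: this yields $\sigma'_1,\sigma'_2\in\Aut(\fC)$ with
\[
\tp(\sigma'_1(m)/M)=R(f_1),\qquad \tp(\sigma'_2(m)/M)=R(f_2),\qquad \tp(\sigma'_1\sigma'_2(m)/M)=f_1(R(f_2))=R(f_1f_2).
\]
The point of passing to $\Aut(\fC)$ is that $\Aut(M)$ has no compactness of its own, while $\Gal(T)$ is a quotient of $\Aut(\fC)$; Proposition~\ref{prop:twopoints} (via the pseudocompactness Lemma~\ref{lem:pseudocompactness}) is exactly the device that lets us realize the abstract composition in $EL$ by an honest composition of automorphisms of the monster model.

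Finally I would push these three equalities through the natural surjection $S_m(M)\to\Gal(T)$, which by Fact~\ref{fct:sm_to_gal} sends $\tp(\sigma(m)/M)\mapsto\sigma\Autf_L(\fC)$ (and under which $S_m(M)/{\equiv_L^M}$ is identified with $\Gal(T)$). This gives $r(f_1)=\sigma'_1\Autf_L(\fC)$, $r(f_2)=\sigma'_2\Autf_L(\fC)$, and $r(f_1f_2)=\sigma'_1\sigma'_2\Autf_L(\fC)=\bigl(\sigma'_1\Autf_L(\fC)\bigr)\bigl(\sigma'_2\Autf_L(\fC)\bigr)=r(f_1)r(f_2)$, as desired. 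I do not expect a genuine obstacle here: once Proposition~\ref{prop:twopoints} is available, the argument is a direct diagram chase, and the only mild point to keep in mind is that $r(f_1)$ and $r(f_2)$ are determined by $f_1,f_2$ alone, so the (non-unique) choice of $\sigma'_1,\sigma'_2$ produced by the proposition does not affect the conclusion.
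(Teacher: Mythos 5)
Your proof is correct and takes essentially the same approach as the paper's: the multiplicativity argument is identical (Remark~\ref{rem:commu} plus Proposition~\ref{prop:twopoints} applied to $f_1$ and $p=R(f_2)$, then pushing through Fact~\ref{fct:sm_to_gal}). Your surjectivity argument is a minor streamlining — you quote the already-established surjectivity of $R$ rather than re-deriving it from a convergent net as the paper does — but both rest on the same ambition/compactness observation.
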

	\begin{proof}
		First, we show that $r$ is a homomorphism.
		Take any $f_1,f_2\in EL$. By Proposition~\ref{prop:twopoints} (applied to $f:=f_1$ and $p:=R(f_2)$), we have $\sigma'_1,\sigma'_2\in \Aut(\fC)$ such that $\tp(\sigma'_1(m)/M)=R(f_1)$, $\tp(\sigma'_2(m)/M)=R(f_2)$, and $\tp(\sigma'_1\sigma'_2(m)/M)=f_1(R(f_2))$.
		Then $r(f_i)=\sigma'_i\Autf_L(\fC)$ and
		\[
			r(f_1f_2)=[f_1R(f_2)]_{\equiv_L^M}=[\tp(\sigma_1'\sigma_2'(m)/M)]_{\equiv_L^M}=\sigma_1'\sigma_2'\Autf_L(\fC)=r(f_1)r(f_2).
		\]

		It remains to check that $r$ is onto. Consider any $ \sigma' \in \Aut(\fC)$. Since $M$ is ambitious, we can find a net $(\sigma_i)_i$ in $\Aut(M)$ such that $\sigma_i(\tp(m/M)) \to \tp(\sigma'(m)/M)$. By compactness of $EL$, we can assume that $(\sigma_i)_i$ converges to some $f \in EL$. Then $R(f) = f(\tp(m/M)) = \lim \sigma_i(\tp(m/M)) = \tp(\sigma'(m)/M)$, so $r(f)=\sigma'\Autf_L(\fC)$.
	\end{proof}

	The next proposition is a counterpart of \cite[Theorem 2.8(2)]{KPR15} whose proof is a straightforward adaptation of the proof from \cite{KPR15} using Lemma \ref{lem:pseudocompactness}, so we will only say a few words about the proof. Similarly, we could adapt the proof of \cite[Theorem 2.8(1)]{KPR15} to get continuity of $r\restr_{u\cM}$, but in this paper, we give another proof of continuity (see Proposition \ref{lem:r_restr_to_top_quot}).
	
	\begin{prop}
		\label{prop:H(uM)_in_ker}
		$H(u\cM)\leq \ker r$. In other words, for all $f\in H(u\cM)$ we have $f(\tp(m/M))\equiv_L^M\tp(m/M)$.
	\end{prop}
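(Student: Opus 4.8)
The statement is the $\Aut(M)$-analogue of \cite[Theorem~2.8(2)]{KPR15}, and the plan is to transcribe that proof, using Lemma~\ref{lem:pseudocompactness} (and, where convenient, Proposition~\ref{prop:twopoints}) as a substitute for the saturation and homogeneity of the monster model available in \cite{KPR15} but not here. First I would record the algebra. By Lemma~\ref{lem:homom}, $r\colon EL\to\Gal(T)$ is a semigroup epimorphism; restricted to the group $u\cM$ it is a group homomorphism, and since $u$ is idempotent, $r(u)=r(u)^2$ forces $r(u)$ to be the identity $e$ of $\Gal(T)$. In particular $R(u)\equiv_L^M\tp(m/M)$, and $\ker(r\restriction_{u\cM})=\{g\in u\cM : R(g)\equiv_L^M\tp(m/M)\}$ is a normal subgroup of $u\cM$; the goal is to show that $H(u\cM)$ is contained in it.

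Next I would use the description of $H(u\cM)$ as $\bigcap_V\cl_\tau(V)$, where $V$ ranges over $\tau$-open neighbourhoods of $u$ in $u\cM$, together with $\cl_\tau(V)=u(u\circ V)$ from Fact~\ref{fct:tau_top}. Thus, given $f\in H(u\cM)$ and such a $V$, I can write $f=ue'$ with $e'=\lim_i g_iv_i$, where $g_i\in\Aut(M)$ satisfy $\pi_{g_i}\to u$ in $EL$ and $v_i\in V$; after passing to a subnet I may assume $R(v_i)\to p^*$ for some $p^*\in S_m(M)$. Since $R$ is continuous and commutes with the $\Aut(M)$-action and with left multiplication (Remark~\ref{rem:commu}), we get $\tp(g_i(m)/M)=R(\pi_{g_i})\to R(u)$ and $g_i(R(v_i))=R(g_iv_i)\to R(e')$. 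Now Lemma~\ref{lem:pseudocompactness}, applied to the nets $(g_i)$ and $(R(v_i))$, produces $\sigma_1',\sigma_2'\in\Aut(\fC)$ with $\tp(\sigma_1'(m)/M)=R(u)$, $\tp(\sigma_2'(m)/M)=p^*$, and $\tp(\sigma_1'\sigma_2'(m)/M)=R(e')$. Since $R(u)\equiv_L^M\tp(m/M)$, we have $\sigma_1'\in\Autf_L(\fC)$, so $\sigma_1'$ preserves the Lascar strong type; hence $R(e')\equiv_L^M p^*$. Applying $u$ (which, like every element of $EL$, preserves $\equiv_L^M$, this relation being a countable increasing union of the closed $\Aut(\fC)$-invariant relations ``Lascar distance $\le n$'') gives $R(f)=u(R(e'))\equiv_L^M u(p^*)$.

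The step I expect to be the main obstacle is closing the remaining gap, namely deducing $u(p^*)\equiv_L^M\tp(m/M)$ — equivalently $p^*\equiv_L^M\tp(m/M)$ — for the limit $p^*=\lim_i R(v_i)$ of $R$-images of the $v_i\in V$. Here one cannot simply appeal to continuity of $R\restriction_{u\cM}$ (or of $r\restriction_{u\cM}$) together with $H(u\cM)=\bigcap_V\overline V$: such reasoning only pins $R(f)$ down modulo $\Gal_0(T)$, i.e.\ only yields $R(f)\equiv_{KP}^M\tp(m/M)$, because the closure of a Lascar class is the Kim--Pillay class containing it. To obtain the sharper $\equiv_L^M$-statement one has to run the analysis above with $V$ ranging over a neighbourhood base at $u$, use that $\tau$-small neighbourhoods of $u$ are $EL$-small (so that the $v_i$, and hence $R(v_i)$, are forced to approach $R(u)$), and combine this with the facts that $\ker(r\restriction_{u\cM})$ is a subgroup and that a pair of tuples with the same type over the model $M$ has Lascar distance at most $2$ (the indiscernible-sequence argument underlying the proof of Fact~\ref{fct:stypes_from_types}), so that the finitely many indiscernible steps assemble into a uniform bound rather than escaping to infinity. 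This bookkeeping — checking that the passage from the $\tau$-topological data to genuine automorphisms of $\fC$ faithfully records $\equiv_L$ rather than merely $\equiv_{KP}$ — is exactly the place where the monster-model argument of \cite{KPR15} must be reorganised to function over $\Aut(M)$, and where the real work lies.
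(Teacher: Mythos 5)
Your proposal does not constitute a complete proof, and you say as much: the final paragraph explicitly flags the remaining step as ``the main obstacle'' and ``where the real work lies,'' without carrying it out. The setup is fine — $r(u)=e$, $H(u\cM)=\bigcap_V\cl_\tau(V)$, the decomposition $f=ue'$ with $e'\in u\circ V$, the application of Lemma~\ref{lem:pseudocompactness} to pass from nets to automorphisms of $\fC$, and the (correct) observation that every $f\in EL$ preserves $\equiv_L^M$ because it preserves each closed $\Aut(M)$-invariant relation ``Lascar distance $\le n$''. You also correctly diagnose why the naive closure argument is insufficient: intersecting over all $\tau$-open $V$ containing $u$ a priori only locates $R(f)$ in the $EL$-closure of the class of $\tp(m/M)$, which is its KP-class, not its Lascar class. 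But the paragraph meant to close this gap is a to-do list (``one has to run... use... combine...'') rather than an argument.

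Two specific issues. First, your claim that ``$\tau$-small neighbourhoods of $u$ are $EL$-small'' has the implication backwards: Fact~\ref{fct:tau_top}(5) says the $\tau$-topology \emph{refines} the $EL$-subspace topology, i.e.\ every $EL$-open set is $\tau$-open, so a $\tau$-open $V\ni u$ need not be contained in any small $EL$-open set, and the $v_i\in V$ need not approach $u$ in $EL$. So the mechanism you sketch for forcing $p^*$ close to $R(u)$ does not work as stated. Second, and more centrally, the paper's proof avoids working with a neighbourhood base at all: it stratifies $\ker r=\bigcup_n\tilde G_n$ by the closed sets $\tilde G_n=\{f\in EL : d_L(R(f),m)\le n\}$, notes $u\in\tilde G_n$ for some $n$ (since $u\in\ker r$), and then produces a single uniform $n'$ with $H(u\cM)\subseteq\tilde G_{n'}$, following the argument of \cite[Theorem~2.7(2)]{KPR15} with Lemma~\ref{lem:pseudocompactness} substituting for saturation of the monster. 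That uniform bound on Lascar distance is exactly what turns the soft closure information into a genuine $\equiv_L^M$-statement; your proposal names this as the missing ingredient but does not supply it. To complete the proof along your lines you would still have to reproduce that bounded-distance argument, at which point you would be doing precisely what the paper does.
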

	\begin{proof}
		One should follow the lines of the proof of \cite[Theorem 2.7(2)]{KPR15}, replacing $\bar c, \fC, \fC', \pi_0$ by $m, M, \fC,R$ (respectively) and using Lemma~\ref{lem:pseudocompactness} in appropriate places.
		
		Briefly, if we denote by $\tilde G_n$ the set of elements $f\in EL$ such that any realization of $R(f)\in S_m(M)$ is at Lascar distance at most $n$ from $m$ (the tuple enumerating the fixed countable model $M$), then $\ker r=\bigcup \tilde G_n$, so $u\in \tilde G_n$ for some $n$. Then, we find a larger $n'$ such that for every element $f\in H(u\cM)$ we have $f\in \tilde G_{n'}$.
	\end{proof}
	
	\begin{prop}
		\label{prop:homom}
		$r\restr_{u\cM}\colon u\cM\to \Gal(T)$ is a surjective group homomorphism.
	\end{prop}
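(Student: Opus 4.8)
The plan is to derive this immediately from Lemma~\ref{lem:homom}. Since $r\colon EL\to\Gal(T)$ is a semigroup homomorphism and $u\cM$ is a subsemigroup of $EL$, the restriction $r\restr_{u\cM}$ is automatically a homomorphism of semigroups. The only extra input needed is that $r$ carries the \emph{group} structure of $u\cM$ --- whose identity is the idempotent $u$ --- to the group structure of $\Gal(T)$, and for that it suffices to check that $r(u)$ is the neutral element $e$ of $\Gal(T)$.

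First I would observe that $r(u)=r(u\cdot u)=r(u)\cdot r(u)$, so $r(u)$ is an idempotent of the group $\Gal(T)$; since the only idempotent of a group is its identity, $r(u)=e$. A semigroup homomorphism between groups that preserves the identity is automatically a group homomorphism: for any $f\in u\cM$ with inverse $f^{-1}$ taken in $u\cM$, we get $r(f)\,r(f^{-1})=r(ff^{-1})=r(u)=e$, so $r(f^{-1})=r(f)^{-1}$, and hence $r\restr_{u\cM}$ respects inverses as well as products.

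For surjectivity, recall that $\cM$ is a minimal left ideal of $EL$ containing $u$, so by the standard structure theory of minimal left ideals $\cM=EL\,u$. Therefore
\[
r[u\cM]=r(u)\cdot r[\cM]=r[\cM]=r[EL\,u]=r[EL]\cdot r(u)=r[EL]=\Gal(T),
\]
where the last equality is the surjectivity clause of Lemma~\ref{lem:homom}.

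There is essentially no obstacle in this argument; the only points requiring a little care are the invocation of the basic facts on minimal left ideals (to get $\cM=EL\,u$) and the elementary but crucial observation that $\Gal(T)$, being a genuine group, has no idempotent other than its identity --- this is exactly what upgrades the a priori merely-multiplicative map $r\restr_{u\cM}$ to a group homomorphism.
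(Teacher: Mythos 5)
Your proof is correct and follows essentially the same route as the paper: both rest on Lemma~\ref{lem:homom} plus the structure theory of minimal left ideals. The paper compresses the surjectivity step by writing $u\cM = uELu$ and observing $r(u)\Gal(T)r(u)=\Gal(T)$ (since left/right translation by any group element is a bijection), which bypasses the explicit check that $r(u)$ is the identity; your version factors through $\cM=ELu$ and establishes $r(u)=e$ first, which is slightly longer but makes the ``semigroup homomorphism between groups that fixes identity is a group homomorphism'' point fully explicit.
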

	\begin{proof}
		Since $u\cM=uELu$ and $\Gal(T)$ is a group, it follows from Proposition \ref{lem:homom} that $r[u\cM]=r(u)r[EL]r(u)=r(u)\Gal(T)r(u)=\Gal(T)$.
	\end{proof}

	\begin{prop}
		\label{prop:restr_quot}
		$r\restr_{\cM}\colon \cM\to \Gal(T)$ is a topological quotient map.
	\end{prop}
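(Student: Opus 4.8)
The plan is to show that $r\restr_{\cM}$ is a continuous surjection of compact spaces onto $\Gal(T)$ with the quotient topology, and then invoke the fact that a continuous surjection from a compact space onto a compact space equipped with the quotient topology is automatically a topological quotient map (cf. Remark~\ref{rem: continuous surjection is closed}, adapted to the possibly non-Hausdorff target). Surjectivity is already available: by Lemma~\ref{lem:homom}, $r\colon EL\to\Gal(T)$ is onto, and since $\cM$ is a minimal left ideal we have $\Gal(T)=r[EL]=r[EL]r[\cM]\subseteq r[\cM]$ (using that $r$ is a semigroup homomorphism and $EL\cM\subseteq\cM$, together with the fact that $\Gal(T)$ is a group so left multiplication is surjective), hence $r[\cM]=\Gal(T)$.

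The first real step is continuity of $r\restr_{\cM}$. Recall that $r$ is the composite of $R\colon EL\to S_m(M)$, $f\mapsto f(\tp(m/M))$, with the canonical map $S_m(M)\to\Gal(T)=S_m(M)/{\equiv_L^M}$ from Fact~\ref{fct:sm_to_gal}. The map $R$ is continuous by definition of the Ellis semigroup topology (evaluation at a fixed point), and the quotient map $S_m(M)\to\Gal(T)$ is continuous by the definition of the topology on $\Gal(T)$. Hence $r$ is continuous on all of $EL$, and in particular $r\restr_{\cM}$ is continuous.

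The second step is to turn this into the quotient-map statement. Since $\cM$ is closed in $EL$ (Fact on minimal ideals) and $EL$ is compact, $\cM$ is compact. A continuous image of a compact set is compact, so $r[\cM]=\Gal(T)$ is compact (which we knew). The key point is that for a continuous surjection $h\colon K\to Y$ with $K$ compact, if $Y$ carries a topology in which a set is closed exactly when its preimage is closed — equivalently, if $h$ is a quotient map — it suffices to check that $h$ sends closed sets to closed sets. Here $\Gal(T)$ already carries the quotient topology induced from $S_m(M)$, so I must verify: if $C\subseteq\Gal(T)$ has $r\restr_{\cM}^{-1}[C]$ closed in $\cM$, then $C$ is closed in $\Gal(T)$, i.e. its preimage in $S_m(M)$ is closed. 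The bridge is the observation that $R[\cM]=S_m(M)$ — this follows because $R$ is continuous, $\cM$ is compact, so $R[\cM]$ is closed, and it contains a point of every fiber of the map to $\Gal(T)$ by surjectivity of $r\restr_{\cM}$; but in fact more is needed, so instead I would argue directly: the map $S_m(M)\twoheadrightarrow\Gal(T)$ is a quotient map between compact spaces, so it is closed (images of type-definable sets are closed in the logic topology), and $r\restr_{\cM}$ factors as $\cM\xrightarrow{R\restr_{\cM}}R[\cM]\hookrightarrow S_m(M)\twoheadrightarrow\Gal(T)$; composing a closed surjection with a closed surjection (noting $R\restr_{\cM}$ is closed as a continuous map of compacta, and that the fibers of the $S_m(M)\to\Gal(T)$ map meeting $R[\cM]$ are handled by surjectivity of $r\restr_\cM$) yields that $r\restr_{\cM}$ is a closed surjection, hence a topological quotient map.

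The main obstacle I anticipate is the non-Hausdorffness of $\Gal(T)$: the slick "continuous surjection from a compact space is a quotient map" reasoning via closed maps relies on the target being Hausdorff (so that compactness forces closedness of images). To circumvent this, I would lean on the fact that $\Gal(T)$ carries the quotient topology from $S_m(M)$ and that $R\restr_{\cM}\colon \cM\to S_m(M)$ is surjective — this surjectivity of $R$ on $\cM$ (not just of $r$) is the crucial technical point and is exactly where Proposition~\ref{prop:twopoints}/Lemma~\ref{lem:pseudocompactness} or the ambition of $M$ enter: every $p\in S_m(M)$ equals $R(f)$ for some $f\in\cM$, because given $p$ one finds $f_0\in EL$ with $R(f_0)=p$ (using ambition, as in the proof of Lemma~\ref{lem:homom}), and then $f_0u'\in\cM$ for a suitable idempotent/element realizes a type $\equiv_L^M$-equivalent to $p$, and a further adjustment within the same fiber lands exactly on $p$. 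Once $R\restr_{\cM}$ is known to be a surjection of compacta it is a quotient map onto $S_m(M)$, and composing with the quotient map $S_m(M)\to\Gal(T)$ gives the result, with no Hausdorffness required.
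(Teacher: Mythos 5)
Your proof has a genuine gap, and in fact it gets stuck at precisely the point you flag as ``the crucial technical point.'' You claim that $R\restr_{\cM}\colon\cM\to S_m(M)$ is surjective, i.e.\ that $R[\cM]=S_m(M)$. This is false in general: $R[\cM]$ is a \emph{minimal} subflow of $S_m(M)$, and $S_m(M)$ is typically not a minimal $\Aut(M)$-flow, so the inclusion $R[\cM]\subsetneq S_m(M)$ is usually strict. (The paper itself repeatedly distinguishes $\cM/{\equiv}=R[\cM]$ from $EL/{\equiv}=X$ in the diagrams of Section~\ref{sec:top_dyn_to_Polish}.) The sketch you give for surjectivity --- find $f_0\in EL$ with $R(f_0)=p$, pass to $f_0u\in\cM$, then make ``a further adjustment within the same fiber'' --- only produces an element of $\cM$ whose $R$-image is $\equiv_L^M$-equivalent to $p$, not equal to it, and there is no reason such an adjustment can be made inside $\cM$. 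That gives surjectivity of $r\restr_{\cM}$ (which you already had) but not of $R\restr_{\cM}$.

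Your fallback ``argue directly'' route also fails: the assertion that $S_m(M)\twoheadrightarrow\Gal(T)$ is a \emph{closed} map is wrong whenever $T$ is not G-compact. Closedness would require the $\equiv_L^M$-saturation of every closed set to be type-definable; already for a singleton $\{p\}$ the saturation is the $\equiv_L^M$-class, which is $F_\sigma$ but typically not closed. So you cannot conclude that $r\restr_{\cM}$ is a closed surjection by composing closed maps.

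The paper's argument avoids both issues by factoring the other way around: instead of trying to squeeze $r\restr_{\cM}$ \emph{through} $S_m(M)$, it exhibits $r\restr_{\cM}$ as a \emph{factor} of the known quotient map $r\colon EL\to\Gal(T)$ through the continuous surjection $\theta\colon EL\to\cM$, $f\mapsto fu$. Since $r=r\restr_{\cM}\circ\theta$ (because $r(fu)=r(f)$), and a quotient map factoring through a continuous surjection makes the second factor a quotient map, the conclusion is immediate --- with no need for $R\restr_{\cM}$ to be onto and no appeal to closedness of $S_m(M)\to\Gal(T)$.
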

	\begin{proof}
		By Remark \ref{rem: continuous surjection is closed}, $R\colon EL\to S_m(M)$ is a quotient map, and thus so is $r$ (as the composition of $R$ and the quotient map $S_m(M)\to \Gal(T)$).
		By Remark \ref{rem: continuous surjection is closed}, we also have that the map $\theta\colon EL \to \cM$ given by $f\mapsto fu$ is a quotient map. Moreover, $r(f)=r(fu)$ for all $f \in EL$. 

		In conclusion, we have the following commutative diagram, where the top and left arrows are quotient mappings discussed above. From this, it follows that $r\restr_{\cM}$ is also a quotient map (and, in fact, a factor of $r$).

		\begin{center}
			\begin{tikzcd}
				EL \arrow[rr,"r"]\arrow[dr,"\theta"]& &\Gal(T)\\
				& \cM\arrow[ur,"r\restr_{\cM}"] &
			\end{tikzcd}
		\end{center}
	\end{proof}

	\begin{prop}[Corresponding to {\cite[Lemma 4.7]{KP17}} and {\cite[Lemma 2.11]{KPR15}}]
		\label{prop:id_clsd}
		Denote by $J$ the set of all idempotents in $\cM$. Then $\overline{J}\subseteq \ker r\cap \cM$.
	\end{prop}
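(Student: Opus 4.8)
The plan is to reduce the statement to the inclusion $\overline{J}\subseteq\widetilde G_{N_0}$ for some \emph{absolute} constant $N_0$, where (as in the proof of Proposition~\ref{prop:H(uM)_in_ker}) $\widetilde G_n=R^{-1}[D_n]$ and $D_n\subseteq S_m(M)$ is the set of types all of whose realizations are at Lascar distance $\le n$ from $m$. Since each $\widetilde G_n$ is closed (see below) and $\bigcup_n\widetilde G_n=\ker r$, this gives $\overline{J}\subseteq\widetilde G_{N_0}\subseteq\ker r$; combined with $\overline{J}\subseteq\cM$, which is immediate because minimal left ideals are closed, we get the claim.

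First I would establish the membership $J\subseteq\widetilde G_{N_0}$. Fix an idempotent $v\in\cM$ and apply Proposition~\ref{prop:twopoints} with $f:=v$ and $p:=R(v)$, obtaining $\sigma_1',\sigma_2'\in\Aut(\fC)$ with $\tp(\sigma_1'(m)/M)=R(v)$, $\tp(\sigma_2'(m)/M)=R(v)$ and $\tp(\sigma_1'\sigma_2'(m)/M)=v(R(v))$. Idempotency of $v$ gives $v(R(v))=v(v(\tp(m/M)))=(vv)(\tp(m/M))=R(v)$, so $\tp(\sigma_1'\sigma_2'(m)/M)=\tp(\sigma_2'(m)/M)$. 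Picking $\tau\in\Aut(\fC/M)$ with $\tau\sigma_1'\sigma_2'(m)=\sigma_2'(m)$ and setting $\rho:=\sigma_2'^{-1}\tau\sigma_1'\sigma_2'\in\Aut(\fC/M)$, we get $\sigma_1'=\tau^{-1}\cdot(\sigma_2'\rho\sigma_2'^{-1})$, a product of two automorphisms each fixing a model pointwise (namely $M$ and $\sigma_2'[M]$). Since an automorphism fixing a model pointwise moves every tuple to Lascar distance $\le 2$, the triangle inequality for the Lascar distance yields a uniform bound on $d_L(\sigma_1'(m),m)$, and hence on the Lascar distance from $m$ of an arbitrary realization of $R(v)$ (two realizations of a type over a model are at Lascar distance $\le 2$). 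So $v\in\widetilde G_{N_0}$ with, say, $N_0=6$.

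It then remains to record that each $\widetilde G_n$ is closed in $EL$: the relation ``at Lascar distance $\le n$'' is type-definable, so ``$d_L(x,m)\le n$'' is a partial type over $M$, whence $D_n=\{q\in S_m(M): q\vdash (d_L(x,m)\le n)\}$ is an intersection of clopen sets and $\widetilde G_n=R^{-1}[D_n]$ is closed by continuity of $R$. The step requiring care is precisely the \emph{uniformity} of $N_0$: the naive argument ``each $r(v)$ is an idempotent of the group $\Gal(T)$, so $J\subseteq\ker r$, hence $\overline{J}\subseteq\overline{\ker r}=\ker r$'' fails because $\Gal(T)$ need not be Hausdorff, so $\ker r=r^{-1}[\{e\}]$ need not be closed; it is the extra information squeezed out of idempotency through Proposition~\ref{prop:twopoints} (an absolute, not merely finite, bound on the Lascar distance) that repairs this. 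One could alternatively argue inside $u\cM$ with the $\tau$-topology, using Fact~\ref{fct:tau_top}\eqref{it:lem:tau_top:nearlycont} to push the net of idempotents into a $\tau$-closure, but this ultimately leans on the same uniform bound.
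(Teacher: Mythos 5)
Your proof is correct, and it uses the same core idea as the paper's: apply Proposition~\ref{prop:twopoints} to $f:=v$ and $p:=R(v)$, use idempotency to get $v(R(v))=R(v)$ and hence $\tp(\sigma_1'(m)/M)=\tp(\sigma_2'(m)/M)=\tp(\sigma_1'\sigma_2'(m)/M)$, and then deduce that the relevant automorphisms are Lascar strong. The only real difference is in which closed superset of $J$ is exhibited. The paper packages the idempotency relation directly into the type-definable set
\[
F=\{\tp(\alpha/M)\mid(\exists\alpha_1,\alpha_2)(\alpha_1\equiv_M\alpha_2\equiv_M\alpha\wedge m\alpha_2\equiv\alpha_1\alpha)\},
\]
whose closedness is immediate, and then shows $J\subseteq R^{-1}[F]\subseteq\ker r$; you instead extract from the same decomposition $\sigma_1'=\tau^{-1}(\sigma_2'\rho\sigma_2'^{-1})$ an explicit uniform Lascar-distance bound and land in the canonical closed level set $\widetilde G_{6}$. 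Both hinge on the same observation (a product of two automorphisms each pointwise-fixing a model), and both need the closedness-for-free part: $F$ by compactness, $\widetilde G_n$ by type-definability of bounded Lascar distance. Your version is slightly more informative in that it makes the absolute bound visible (and it correctly diagnoses why the naive argument via $\overline{\ker r}=\ker r$ fails — $\Gal(T)$ need not be Hausdorff); the paper's is marginally more self-contained in that it never needs to invoke the equality $\ker r=\bigcup_n\widetilde G_n$ established inside the proof of Proposition~\ref{prop:H(uM)_in_ker}.
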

	\begin{proof}
		Put 
		\[
		F:=\{ \tp(\alpha/M) \mid (\exists \alpha_1,\alpha_2)(\alpha_1 \equiv_M \alpha_2 \equiv_M \alpha \wedge m\alpha_2 \equiv \alpha_1\alpha)\}.
		\]
		$F$ is clearly a closed subset of $S_m(M)$, so $R^{-1}[F]$ is closed in $EL$. Thus, it remains to check that $J \subseteq R^{-1}[F] \subseteq \ker r$.
		
		For the first inclusion, consider any $v \in J$. Take $\sigma_1',\sigma_2'$ according to Proposition~\ref{prop:twopoints} for $f:=v$ and $p:=R(v)$. Then
		\[
			\tp(\sigma_1'\sigma_2'(m)/M) = vR(v)=R(v^2)=R(v)=\tp(\sigma_1'(m)/M)=\tp(\sigma_2'(m)/M). 
		\]
		So $\alpha := \sigma_1'\sigma_2'(m)$, $\alpha_1 :=\sigma_1'(m)$, and $\alpha_2:= \sigma_2'(m)$ witness that $R(v) \in F$. 
		
		To see the second inclusion, consider any $\tp(\alpha/M) \in F$. Take $\alpha_1,\alpha_2$ witnessing this. Take $\sigma_1'$ mapping $m\alpha_2$ to $\alpha_1\alpha$. Since we easily see that $\alpha_2 \equiv m$, we can choose $\sigma_2'$ mapping $m$ to $\alpha_2$. Then $\tp(\sigma_1'(m)/M)=\tp(\sigma_2'(m)/M) = \tp(\sigma_1'\sigma_2'(m)/M)$. This implies that $\sigma_1',\sigma_2' \in \Autf_L(\fC)$, so $\tp(\alpha/M) =\tp(\sigma_1'\sigma_2'(m)/M) \equiv_L^M \tp(m/M)$.
\end{proof}

	\begin{prop}
		\label{lem:r_restr_to_top_quot}
		$r\restr_{u\cM}$ is a topological group quotient map (where $u\cM$ is equipped with the $\tau$ topology).
	\end{prop}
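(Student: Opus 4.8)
The plan is to derive this from Proposition~\ref{prop:restr_quot}, which gives that $r\restr_{\cM}\colon\cM\to\Gal(T)$ is a topological quotient map. The algebraic content is already in hand --- $r\restr_{u\cM}$ is a surjective group homomorphism by Proposition~\ref{prop:homom} --- so what is left is to show that it is a topological quotient map once $u\cM$ is equipped with the $\tau$-topology. I would obtain this from the factorization $r\restr_{\cM}=r\restr_{u\cM}\circ\theta$, where $\theta\colon\cM\to u\cM$ is $g\mapsto ug$, together with the elementary fact that if $g\circ h$ is a topological quotient map, $h$ is a continuous surjection and $g$ is continuous, then $g$ is itself a topological quotient map.

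First, $r\restr_{u\cM}$ is continuous: $r$ is the composite of $R\colon EL\to S_m(M)$ (evaluation at $\tp(m/M)$, hence continuous) with the quotient map $S_m(M)\to\Gal(T)$, so $r$ is continuous on all of $EL$; its restriction to $u\cM$ is then continuous for the subspace topology inherited from $EL$, and a fortiori for the $\tau$-topology, which refines it by Fact~\ref{fct:tau_top}(5). The factorization is legitimate: $\theta$ maps onto $u\cM$ because it fixes $u\cM$ pointwise ($u$ being the identity of the group $u\cM$), and for $g\in\cM$ we have $r(\theta(g))=r(ug)=r(u)r(g)=r(g)$, using that $r$ is a semigroup homomorphism (Lemma~\ref{lem:homom}) and that the idempotent $r(u)$ is the identity of the group $\Gal(T)$. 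Granting the continuity of $\theta\colon(\cM,\text{subspace})\to(u\cM,\tau)$, the proposition follows from the fact quoted above, combined with Proposition~\ref{prop:homom}.

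The heart of the matter is thus the continuity of $\theta$, and here the point is that left multiplication by $u$ is \emph{not} continuous on $EL$, so the definition of the $\tau$-topology has to be used in an essential way. I would verify continuity on nets, exploiting that convergence can be tested via subnets: given $g_i\to g$ in $\cM$ (equivalently in $EL$, since $\cM$ is closed), it suffices that every subnet of $(ug_i)_i$ have a further subnet $\tau$-converging to $ug$. After passing to a subnet and using compactness of $\overline{u\cM}$, we may assume $ug_i\to h$ in $EL$ for some $h\in\overline{u\cM}$; then Fact~\ref{fct:tau_top}(\ref{it:lem:tau_top:nearlycont}) gives $ug_i\to uh$ in the $\tau$-topology, and uniqueness of limits in the Hausdorff space $EL$ forces $uh=h$. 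What remains is to see that $(ug_i)_i$ also $\tau$-converges to $ug$ --- not automatic, since the $\tau$-topology is not Hausdorff. To control this I would push everything forward along the continuous map $\overline{u\cM}\to u\cM/H(u\cM)$, $f\mapsto uf/H(u\cM)$, supplied by Fact~\ref{fct:strange_cont}, reducing the question to the compact Hausdorff group $u\cM/H(u\cM)$; this is where Proposition~\ref{prop:H(uM)_in_ker} (giving $H(u\cM)\le\ker r$) enters, together with $r(ug_i)=r(g_i)\to r(g)=r(ug)$, to locate where $h$ can lie. I expect this last step --- identifying $h$ with $ug$ modulo $H(u\cM)$ --- to be the main obstacle; the remainder is formal bookkeeping with quotients and nets.
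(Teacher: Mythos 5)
Your continuity argument for $r\restr_{u\cM}$ is correct and is in fact cleaner than the paper's: since $r\restr_{u\cM}^{-1}[F]=r^{-1}[F]\cap u\cM$ is closed in the subspace topology from $EL$ whenever $F$ is closed, and the $\tau$-topology refines the subspace topology (Fact~\ref{fct:tau_top}(5)), it is $\tau$-closed.

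The difficulty is in the quotient part, and the factorization strategy $r\restr_{\cM}=r\restr_{u\cM}\circ\theta$ fails at exactly the point you flag: $\theta\colon\cM\to(u\cM,\tau)$, $g\mapsto ug$, is \emph{not} continuous, and there is no way to repair this. The obstruction is structural, not a missing lemma. In $EL\subseteq X^X$ the semigroup operation is composition, so $\theta$ is postcomposition with the function $u$; since $u$ is generally a discontinuous element of $X^X$, $\theta$ is not even continuous into $(u\cM,\text{subspace})$, and because the $\tau$-topology \emph{refines} the subspace topology, it is a fortiori discontinuous into $(u\cM,\tau)$. (Fact~\ref{fct:tau_top}(\ref{it:lem:tau_top:nearlycont}) gives you $\tau$-convergence of $(ug_i)_i$ to $uh$ where $ug_i\to h$ in $EL$, but nothing ties $h$ to $ug$: precisely because left multiplication by $u$ is discontinuous, $g_i\to g$ does not yield $ug_i\to ug$ in $EL$.) Your proposed fix---pushing through $\overline{u\cM}\to u\cM/H(u\cM)$---only controls the picture modulo $H(u\cM)$. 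Even granting $uhH(u\cM)=ugH(u\cM)$, this would not give $\tau$-convergence of $(ug_i)_i$ to $ug$: the $\tau$-topology is merely $T_1$, limits of nets are not unique, and two points in the same $H(u\cM)$-coset need not share a neighborhood filter; you would be asserting exactly the regularity of $(u\cM,\tau)$ that Fact~\ref{fct:strange_cont} is carefully formulated to avoid.

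The paper's proof therefore does not attempt to show $\theta$ is continuous and does not use the ``quotient-of-a-composition'' reduction at all. Instead it sets $P_u=\ker r\cap u\cM$, $S=\cl_\tau(P_u)$, proves via the closure-of-idempotents result (Proposition~\ref{prop:id_clsd}) and the $\circ$-calculus that $r^{-1}[r[S]]\cap\cM$ is closed, and deduces from Proposition~\ref{prop:restr_quot} that $r[S]=\Gal_0(T)$. The quotient property is then obtained by combining $S$-invariance of $\tau$-closed preimages (using separate continuity of multiplication in $u\cM$) with compactness and the Hausdorff quotient $\Gal_{KP}(T)=\Gal(T)/\Gal_0(T)$. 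So while Proposition~\ref{prop:restr_quot} does enter, it enters as one ingredient in a group-theoretic argument, not through a factorization of the quotient map; the group structure of $u\cM$, the presence of $\Gal_0(T)$, and the Hausdorffness of $\Gal_{KP}(T)$ are doing the real work that a naive ``$\theta$ is continuous'' would have done if it were true.
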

	\begin{proof}
		In light of Proposition~\ref{prop:homom}, it is enough to show that $r\restr_{u\cM}$ is a topological quotient map.
		
		For continuity, note that if $F\subseteq \Gal(T)$ is closed, then $F':=r^{-1}[F]\cap \overline{u\cM}$ is closed in $\overline{u\cM}$ by continuity of $r$. 
		From Fact~\ref{fct:strange_cont}, it follows that $uF'/H(u\cM)$ is closed. But because $u\in\ker r$, we get $uF'\subseteq F'$, so $uF'=F'\cap u\cM$ is $(\ker r\cap u\cM$)-invariant, and hence also $H(u\cM)$-invariant by Proposition \ref{prop:H(uM)_in_ker}. It follows that $uF'=r^{-1}[F]\cap u\cM(=r\restr_{u\cM}^{-1}[F])$ is $\tau$-closed, so $r\restr_{u\cM}$ is continuous.

		Let $P_u:=\ker r\cap u\cM(=\ker (r\restr_{u\cM}))$ and $S:=u(u\circ P_u)=\cl_\tau(P_u)$. We will need the following claim.
		
		\begin{clm*}
			$r^{-1}[r[S]]\cap \cM$ is closed.
		\end{clm*}
		By the claim, $r^{-1}[r[S]]\cap \cM$ is closed in $\cM$, so by Proposition~\ref{prop:restr_quot}, $r[S]$ is a closed subset of $\Gal(T)$. In particular, it must contain the closure of the identity in $\Gal(T)$, i.e. $\Gal_0(T)$. On the other hand, by continuity of $r\restr_{u\cM}$, the preimage of $\Gal_0(T)$ by $r\restr_{u\cM}$ is a $\tau$-closed set containing $P_u$, and thus also $S$. It follows that $r[S]=\Gal_0(T)$.
		
		Now, given any $F\subseteq \Gal(T)$, if $r\restr_{u\cM}^{-1}[F]=r^{-1}[F]\cap u\cM$ is $\tau$-closed, it is compact, so $F/\Gal_0(T)$ is closed in $\Gal_{KP}(T)$ (by continuity of $r\restr_{u\cM}$). Furthermore, if $r\restr_{u\cM}^{-1}[F]$ is $\tau$-closed, it is also $S$-invariant (because it is $P_u$-invariant and the group operation on $u\cM$ is separately continuous in the $\tau$-topology), so it follows from the last paragraph that $F=F\cdot \Gal_0(T)$, and hence $F$ is closed. Thus, we only need to prove the claim.
		
		\begin{clmproof}[Proof of the claim]
			First, we present Lemmas 4.1 and 4.4-4.6 of \cite{KP17} in a concise form. Let $J$ be the set of idempotents in $\cM$.
			\begin{itemize}
				\item
				For any any $v,w \in J$, $vP_w=P_v$. Indeed, $v,w\in \ker r\restr_{\cM}$, so $vP_w\subseteq P_v$ and $wP_v\subseteq P_w$. Hence, $P_v=vP_v=vwP_v\subseteq vP_w\subseteq P_v$, and so $vP_w=P_v$.
				\item
				$S=S\cdot P_u$. This is true, because by left continuity of multiplication in $EL$, for any $f\in P_u$, we have $Sf=u(u\circ P_u) f=u(u\circ (P_uf))$ and clearly $P_uf=P_u$.
				\item
				Since $P_u=\ker (r\restr_{u\cM})$, it follows immediately from the preceding point that $S=r^{-1}[r[S]]\cap u\cM$.
				\item
				$r^{-1}[r[S]]\cap \cM=J\cdot S$. The inclusion $(\supseteq)$ follows from the fact that $J\subseteq \ker r$. To show the opposite inclusion, take any $ f\in r^{-1}[r[S]]\cap \cM$; then $f\in r^{-1}[r[S]]\cap v\cM$ for some $v\in J$. We see that $r(uf)=r(f)\in r[S]$, so, by the preceding point, $uf\in S$, and thus $f=vf=vuf \in vS \subseteq J \cdot S$.
				\item
				$r^{-1}[r[S]]\cap \cM=\bigcup_{v \in J} v\circ P_u$. To show this, we will use basic properties of $\circ$. The inclusion $(\subseteq)$ follows from the preceding point and the observation that 
				for any $v \in J$ we have $vS=vu(u \circ P_u) \subseteq vu^2 \circ P_u = v \circ P_u$.
				For the opposite inclusion, note that $r[v\circ P_u]=r[u(v\circ P_u)] \subseteq r[uv \circ P_u] = r[u \circ P_u]=r[u(u \circ P_u)]=r[S]$. 
			\end{itemize}
			So we only need to show that ${\bigcup_v v\circ P_u}$ is closed in $\cM$. This is \cite[Lemma 4.8]{KP17}, but we repeat the proof.		
		
			Let $f\in \overline{\bigcup_v v\circ P_u}$. Then we have nets $(v_i)_i$ in $J$ and $(f_i)_i$ in $\cM$ such that $f_i\in v_i\circ P_u$ and $f_i\to f$. By compactness, we can assume without loss of generality that the net $(v_i)$ converges to some $v\in \overline J$. Then,
			by considering neighborhoods of $v$ and $f$, we can find nets $(\sigma_j)_j$ in $\Aut(M)$ and $(p_j)_j$ in $P_u$ such that $\sigma_j\to v$ and $\sigma_jp_j\to f$, so $f\in v\circ P_u$. By Proposition~\ref{prop:id_clsd}, $v\in \ker r\cap \cM$, and as such, $v\in P_w=wP_u$ for some $w\in J$ (where the last equality follows from the first bullet above). So $v=wp$ for some $p\in P_u$. Furthermore, $P_u$ is a group (as the kernel of a group homomorphism), so
			\[
			f\in v\circ P_u=v\circ (p^{-1}P_u)\subseteq v\circ (p^{-1}\circ P_u)\subseteq (vp^{-1})\circ P_u=w\circ P_u,
			\]
			and we are done.
		\end{clmproof}
		The proof of the proposition is complete. 
	\end{proof}

	Recall that $D$ is the $\tau$-closed subgroup of $u\cM$ consisting of all $f\in u\cM$ with $R(f)=R(u)$; $\Core(D)$ is the intersection of all conjugates of $D$ in $u\cM$.
	\begin{cor}
		\label{cor:polish_quotient}
		The homomorphism $r$ induces a topological group quotient mapping $\hat r\colon u\cM/H(u\cM)\Core(D)\to \Gal(T)$, as well as another topological quotient mapping $u\cM/H(u\cM)D\to \Gal(T)$.
	\end{cor}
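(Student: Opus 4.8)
The plan is to obtain $\hat r$ and $\bar r$ by factoring the topological group quotient map $r\restr_{u\cM}\colon u\cM\to\Gal(T)$ of Proposition~\ref{lem:r_restr_to_top_quot} through the canonical quotients of $u\cM$; the only point that requires any argument is that $\Core(D)$, and hence $H(u\cM)\Core(D)$ and $H(u\cM)D$, is contained in $\ker(r\restr_{u\cM})$. I would first note that $D\leq\ker(r\restr_{u\cM})$: the map $r$ is the composite of $R\colon EL\to S_m(M)$ with the natural map $S_m(M)\to\Gal(T)$, so $r(f)$ depends only on $R(f)$; since $u$ is the identity of the group $u\cM$ and $r\restr_{u\cM}$ is a homomorphism (Proposition~\ref{prop:homom}), $r(u)$ is the identity of $\Gal(T)$, and for $f\in D$ we have $R(f)=R(u)$ by the definition of $D$, whence $r(f)=r(u)$, the identity of $\Gal(T)$. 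Thus $\Core(D)\leq D\leq\ker(r\restr_{u\cM})$. By Proposition~\ref{prop:H(uM)_in_ker} we also have $H(u\cM)\leq\ker r$, hence $H(u\cM)\leq\ker(r\restr_{u\cM})$. As $H(u\cM)$ is normal in $u\cM$ (Facts~\ref{fct:tau_top}(6) and~\ref{fct:derived_quotient}) and $\Core(D)$ is normal by construction, $H(u\cM)\Core(D)$ is a normal subgroup of $u\cM$ lying inside $\ker(r\restr_{u\cM})$, while $H(u\cM)D$ is a (in general non-normal) subgroup of $u\cM$ lying inside $\ker(r\restr_{u\cM})$.

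Next, let $\pi\colon u\cM\to u\cM/H(u\cM)\Core(D)$ and $\pi'\colon u\cM\to u\cM/H(u\cM)D$ be the canonical surjections, each with its quotient topology. Both are topological quotient maps, and $\pi$ is in addition a group homomorphism; moreover, since $\pi$ is the composite of the quotient maps $u\cM\to u\cM/H(u\cM)$ and $u\cM/H(u\cM)\to u\cM/H(u\cM)\Core(D)$, its target carries exactly the compact Polish group topology of Corollary~\ref{cor:Polish_quotient_Core(D)}. Because $H(u\cM)\Core(D)\leq\ker(r\restr_{u\cM})$, the homomorphism $r\restr_{u\cM}$ is constant on cosets of $H(u\cM)\Core(D)$ and so factors as $r\restr_{u\cM}=\hat r\circ\pi$ for a unique surjective group homomorphism $\hat r\colon u\cM/H(u\cM)\Core(D)\to\Gal(T)$; likewise, the left cosets of $H(u\cM)D$ refine those of $\ker(r\restr_{u\cM})$, so $r\restr_{u\cM}=\bar r\circ\pi'$ for a unique surjection $\bar r\colon u\cM/H(u\cM)D\to\Gal(T)$.

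Finally I would invoke the elementary fact that if $p$ and $q\circ p$ are both topological quotient maps, then so is $q$ (continuity of $q$ uses that $p$ is a quotient map, and the quotient property of $q$ uses that $q\circ p$ is one). Applying this with $p=\pi$, resp.\ $p=\pi'$, and $q\circ p=r\restr_{u\cM}$ --- which is a topological quotient map by Proposition~\ref{lem:r_restr_to_top_quot} --- shows that $\hat r$ and $\bar r$ are topological quotient maps; in particular $\hat r$ is a continuous homomorphism of compact Polish groups which is a quotient map, i.e.\ a topological group quotient mapping. I do not expect any real obstacle here: all the substance sits in Propositions~\ref{prop:H(uM)_in_ker} and~\ref{lem:r_restr_to_top_quot}, and what remains is the kernel bookkeeping of the first paragraph together with the standard composition lemma for quotient maps.
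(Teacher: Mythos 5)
Your proof is correct and follows essentially the same route as the paper's: you derive $H(u\cM)\Core(D),\,H(u\cM)D\leq\ker(r\restr_{u\cM})$ from Proposition~\ref{prop:H(uM)_in_ker} together with $u\in\ker r$ (which gives $D\leq\ker r$ by the definition of $D$), factor $r\restr_{u\cM}$ accordingly, and use Proposition~\ref{lem:r_restr_to_top_quot} to get the quotient-map property. The paper states the same chain of inclusions and cites the same two propositions, merely leaving the ``$u\in\ker r$ from Proposition~\ref{prop:homom}'' step and the standard composition lemma for quotient maps implicit; your write-up just spells these out.
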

	\begin{proof}
		By Proposition \ref{prop:H(uM)_in_ker}, we know that $H(u\cM)\leq \ker(r)$. On the other hand, since $u\in \ker r$, we have $D\leq \ker r$, and thus also $\Core(D)\leq \ker r$. So $r$ factors through both $u\cM/H(u\cM)D$ and $u\cM/H(u\cM)\Core(D)$. The fact that the resulting factors of $r$ are topological quotient mappings follows from Proposition \ref{lem:r_restr_to_top_quot}.
	\end{proof}

	\subsection{Lemmas: Borel cardinality}
	The following two lemmas follow from the existence of Borel sections of continuous surjections between compact Polish spaces (i.e. Fact \ref{fct:borel_section}). The arguments are similar to the proof of Lemma \ref{lem:easy}, so we will skip them.
	\begin{lem}
		\label{lem:main_1}
		Suppose we have a commutative diagram
		\begin{center}
			\begin{tikzcd}
				B\ar[d]\ar[r, two heads]&G'\ar[d] \\
				C\ar[r]&Q 
			\end{tikzcd}
		\end{center}
		
		where:
		\begin{itemize}
			\item
			$B$, $C$ and $G'$ are compact Polish spaces,
			\item
			the maps $B \to C$ and $B \to G'$ are continuous,
			\item
			the map $B \to G'$ is surjective.
		\end{itemize}
		Denote by $E_C$ and $E_{G'}$ the equivalence relations on $C$ and $G'$ (respectively) induced from equality on $Q$. Then:
		\begin{enumerate}
			\item
			if $E_C$ is closed [resp. Borel, or analytic, or $F_\sigma$, or clopen], so is $E_{G'}$,
			\item
			$E_{G'}\leq_B E_C$.
		\end{enumerate}
	\xqed{\lozenge}
	\end{lem}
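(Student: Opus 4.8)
The plan is to reduce everything to an auxiliary equivalence relation on $B$, following the template of the proof of Lemma~\ref{lem:easy}. Write $h\colon B\to C$ and $g\colon B\to G'$ for the two structure maps, and let $E_B$ be the equivalence relation on $B$ induced by equality on $Q$ via the composite map $B\to Q$ (which is well-defined by commutativity of the square, and coincides both with $B\to C\to Q$ and with $B\to G'\to Q$). Commutativity then yields the two identities $E_B=(h\times h)^{-1}[E_C]$ and $E_B=(g\times g)^{-1}[E_{G'}]$, and, since $g\times g$ is a continuous surjection, also $E_{G'}=(g\times g)[E_B]$.

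For part (1), I would assume $E_C$ is closed [resp. Borel, analytic, $F_\sigma$, clopen]. Preimages along the continuous map $h\times h$ preserve closedness, Borelness, being $F_\sigma$, and clopenness, and preimages along a Borel map preserve analyticity (\cite[Proposition~4.14]{Kec95}); hence $E_B=(h\times h)^{-1}[E_C]$ has the same property. Now $g\times g$ is a continuous surjection between compact Polish spaces, hence a closed map and (Remark~\ref{rem: continuous surjection is closed}) a topological quotient map. Using this together with the comments following Fact~\ref{fct:borel_section} (for the Borel case), the fact that images of analytic sets under Borel maps are analytic (for the analytic case), and the observation that a closed continuous surjection carries an $F_\sigma$ set to an $F_\sigma$ set and a clopen set to a clopen set (for those cases), one checks that a subset of $G'\times G'$ has the relevant property iff its $(g\times g)$-preimage does; applying this to $E_{G'}$, whose preimage is $E_B$, gives the conclusion.

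For part (2), by Fact~\ref{fct:borel_section} the continuous surjection $g$ has a Borel section $s\colon G'\to B$, and I claim $h\circ s\colon G'\to C$ is a Borel reduction of $E_{G'}$ to $E_C$. It is Borel, being a composite of a Borel map and a continuous map. For the reduction property, I would note that for all $b_1,b_2\in B$ one has $h(b_1)\mathrel{E_C}h(b_2)$ iff $b_1$ and $b_2$ have the same image in $Q$ iff $g(b_1)\mathrel{E_{G'}}g(b_2)$ (both steps by commutativity of the square); specializing to $b_i=s(x_i)$ and using $g\circ s=\id_{G'}$ gives $h(s(x_1))\mathrel{E_C}h(s(x_2))$ iff $x_1\mathrel{E_{G'}}x_2$, as required, so $E_{G'}\leq_B E_C$.

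There is no genuine obstacle here; the argument is purely formal, and the only non-elementary inputs are the existence of Borel sections of continuous surjections between compact Polish spaces (Fact~\ref{fct:borel_section}) and the image/preimage behaviour of analytic sets. The one point to keep in mind — and the reason the statement is weaker than Lemma~\ref{lem:easy} — is that here the map $B\to C$ is not assumed surjective, so properties of $E_{G'}$ need not descend to $E_C$; consequently one obtains only the one-directional implication in (1) and only the one-sided reduction $E_{G'}\leq_B E_C$ in (2).
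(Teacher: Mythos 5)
Your proof is correct and matches the approach the paper indicates: transfer properties through the auxiliary relation $E_B$ on $B$, using continuity (for preimages) and the Borel section of the surjection $B\to G'$ (Fact~\ref{fct:borel_section}) together with closedness of continuous maps between compacta (for pushforwards), exactly as in the two-sided Lemma~\ref{lem:easy}. Your closing remark correctly identifies the asymmetry (no surjectivity of $B\to C$) as the reason only one direction survives.
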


	\begin{lem}
		\label{lem:main_2}
		Suppose we have a commutative diagram
		\begin{center}
			\begin{tikzcd}
				A\ar[d, two heads]\ar[r, "\mathrm{Borel}"]&G'\ar[d]\\
				C\ar[r]&Q
			\end{tikzcd}
		\end{center}
		
		where:
		\begin{itemize}
			\item
			$A$, $C$ and $G'$ are compact Polish spaces,
			\item
			$A\to G'$ is Borel, while $A\to C$ is continuous,
			\item
			$A\to C$ is surjective.
		\end{itemize}
		Then $E_C\leq_B E_{G'}$, where $E_C$ and $E_{G'}$ are defined as in Lemma~\ref{lem:main_1}.
		\xqed{\lozenge}
	\end{lem}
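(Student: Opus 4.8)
The plan is to build the required Borel reduction $C\to G'$ by precomposing the Borel map $A\to G'$ with a Borel section of the continuous surjection $A\to C$, exactly in the spirit of the proof of Lemma~\ref{lem:easy} and the second part of Theorem~\ref{thm:main_over_KP}. First I would apply Fact~\ref{fct:borel_section}: since $A$ and $C$ are compact Polish spaces and $A\to C$ is a continuous surjection, it admits a Borel section $s\colon C\to A$, i.e.\ a Borel map with $(A\to C)\circ s=\id_C$. Composing $s$ with the Borel map $A\to G'$ yields a Borel map $h\colon C\to G'$, and this $h$ will be the reduction.

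It then remains to verify that $h$ reduces $E_C$ to $E_{G'}$. Denote by $q_A\colon A\to Q$, $q_C\colon C\to Q$, $q_{G'}\colon G'\to Q$ the three maps into $Q$ coming from the commutative square (no topology on $Q$ is needed). Commutativity gives $q_A=q_C\circ(A\to C)=q_{G'}\circ(A\to G')$. For any $c\in C$, since $s(c)$ lies over $c$ (that is, $(A\to C)(s(c))=c$), we obtain $q_{G'}(h(c))=q_A(s(c))=q_C\bigl((A\to C)(s(c))\bigr)=q_C(c)$. Hence for $c_1,c_2\in C$ we have $c_1\mathrel{E_C}c_2$ iff $q_C(c_1)=q_C(c_2)$ iff $q_{G'}(h(c_1))=q_{G'}(h(c_2))$ iff $h(c_1)\mathrel{E_{G'}}h(c_2)$. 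Since $h$ is Borel, this is precisely $E_C\leq_B E_{G'}$.

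There is essentially no obstacle here: the only point requiring a little care is that $Q$ is merely a set in the diagram, so one must phrase the reduction property purely set-theoretically through the factorizations of the maps into $Q$, rather than appealing to any continuity of $C\to Q$; but since $h$ is manifestly Borel (as a composition of Borel maps) and the equivalence relations are defined by equality of the $Q$-values, the verification above goes through unchanged. In particular, unlike Lemma~\ref{lem:main_1}, we make no claim about the descriptive complexity of $E_{G'}$ being controlled by that of $E_C$ here — only the Borel-cardinality inequality in the stated direction — and that is exactly what the Borel section delivers.
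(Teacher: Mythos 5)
Your proof is correct and is exactly the argument the paper has in mind when it says the lemma follows "from the existence of Borel sections of continuous surjections between compact Polish spaces" and that "the arguments are similar to the proof of Lemma~\ref{lem:easy}": take the Borel section of $A\to C$ via Fact~\ref{fct:borel_section}, compose with the Borel map $A\to G'$, and verify the reduction property through the factorizations into $Q$.
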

	
	The following lemma shows that every strong type $E$ defined on the set of realizations of an arbitrary complete type over $\emptyset$ can be considered (essentially, for the purposes of the Main Theorem) as being defined on $[m]_{\equiv}$, where $m$ enumerates an arbitrary countable model.
	
	\begin{lem}
		\label{lem:every_stype_on_m}
		Suppose $T$ is countable.	
		Assume that $E$ is a strong type defined on $p(\fC)$ for $p=\tp(a/\emptyset)$ for some countable tuple $a$, while $M$ is an arbitrary countable model, enumerated by $m$.
		
		Then there is a strong type $E'$ on $[m]_{\equiv}$ such that:
		\begin{itemize}
			\item
			$E$ is type-definable [resp. Borel, or analytic, or $F_\sigma$, or relatively definable] if and only if $E'$ is,
			\item
			there are Borel maps $r_1\colon S_m(M)\to S_a(M)$ and $r_2 \colon S_a(M)\to S_m(M)$ such that $r_1$ and $r_2$ are Borel reductions between $(E')^M$ and $E^M$ (in particular, $E'\sim_B E$), satisfying $r_1(\tp(m/M))=\tp(a/M)$ and $r_2(\tp(a/M))=\tp(m/M)$, and
			\item 
			the induced maps $r_1'\colon [m]_{\equiv}/E' \to p(\fC)/E$ and $r_2'\colon p(\fC)/E \to [m]_{\equiv}/E'$ are $\Gal(T)$-equivariant homeomorphisms, and $r_2'$ is the inverse of $r_1'$.
		\end{itemize}
The maps $r_1'$ and $r_2'$ satisfying the last two items are uniquely determined  by $r_1'([\sigma(m)]_{E'})=[\sigma(a)]_{E}$ and $r_2'([\sigma(a)]_{E})=[\sigma(m)]_{E'}$ for all $\sigma \in \Aut(\fC)$.
	\end{lem}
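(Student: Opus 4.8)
The plan is to obtain $E'$ by \emph{pulling back} $E$ along the natural $\Aut(\fC)$-equivariant identification of $[m]_\equiv$ with $p(\fC)$, and to obtain the Borel maps by interposing the type space of the concatenated tuple. First I would define $E'$ on $[m]_\equiv=q_0(\fC)$, $q_0:=\tp(m/\emptyset)$, by declaring $\sigma(m)\mathrel{E'}\tau(m)$ iff $\sigma(a)\Er\tau(a)$ for $\sigma,\tau\in\Aut(\fC)$ (recall $\Aut(\fC)$ acts transitively on $[m]_\equiv$ and on $p(\fC)$). Well-definedness is the only point: if $\sigma(m)=\sigma'(m)$ then $\sigma^{-1}\sigma'$ fixes $m$, hence $M$ pointwise, and by Fact~\ref{fact: refining by type over M} $\Aut(\fC/M)$ preserves every $E$-class, so $\sigma^{-1}\sigma'(a)\Er a$, whence $\sigma'(a)\Er\sigma(a)$ by invariance of $E$; so the truth value is independent of the representative, and symmetrically in the other coordinate. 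Then $E'$ is visibly an $\Aut(\fC)$-invariant equivalence relation and $[\sigma(m)]_{E'}\mapsto[\sigma(a)]_E$ is a well-defined bijection $[m]_\equiv/E'\to p(\fC)/E$, injective because $\sigma(a)\Er\tau(a)$ forces $\sigma(m)\mathrel{E'}\tau(m)$, surjective by transitivity of the action on $p(\fC)$; in particular $E'$ has boundedly many classes, so it is a strong type on $[m]_\equiv$. Call this bijection $r_1'$ and let $r_2':=(r_1')^{-1}$. Both are $\Gal(T)$-equivariant, and they are homeomorphisms because, by Remark~\ref{rem:gal_action}, the orbit maps $r_{[m]_{E'}}\colon\Gal(T)\to[m]_\equiv/E'$ and $r_{[a]_E}\colon\Gal(T)\to p(\fC)/E$ are topological quotient maps with $r_1'\circ r_{[m]_{E'}}=r_{[a]_E}$, and a bijection intertwining two topological quotient maps out of a common space is a homeomorphism. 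The stated uniqueness of $r_1',r_2'$ is then immediate.

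For the descriptive part the naive formula $r_1(\tp(\sigma(m)/M)):=\tp(\sigma(a)/M)$ is \emph{not} well defined (knowing $\tp(\sigma(m)/M)$ does not determine $\tp(\sigma(a)/M)$), so I would pass through $c:=m^\frown a$ and the compact Polish space $S_c(M)$ of complete types over $M$ extending $\tp(c/\emptyset)$, with the continuous restriction maps $\pi_m\colon S_c(M)\to S_m(M)$ and $\pi_a\colon S_c(M)\to S_a(M)$, both surjective (e.g.\ $\tp(\sigma(c)/M)$ restricts onto $\tp(\sigma(m)/M)$ and onto $\tp(\sigma(a)/M)$). Let $\tilde E$ on $[c]_\equiv$ be the pullback of $E$ along the coordinate projection $\sigma(c)\mapsto\sigma(a)$ --- a strong type as above --- which is also the pullback of $E'$ along $\sigma(c)\mapsto\sigma(m)$. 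Unwinding Fact~\ref{fact: refining by type over M}, one checks $\tilde E^M=(\pi_m\times\pi_m)^{-1}[(E')^M]=(\pi_a\times\pi_a)^{-1}[E^M]$, i.e.\ $\pi_m$ is a reduction of $\tilde E^M$ to $(E')^M$ and $\pi_a$ of $\tilde E^M$ to $E^M$.

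For clause (1): since $\pi_m\times\pi_m$ and $\pi_a\times\pi_a$ are continuous surjections between compact Polish spaces, the identities above show that $(E')^M$ is closed / $F_\sigma$ / clopen / Borel / analytic iff $\tilde E^M$ is iff $E^M$ is (preimages preserve all five, and images along continuous surjections between compacta preserve them as well --- compactness for the first three, and the remarks following Fact~\ref{fct:borel_section} for the last two); Fact~\ref{fct: Borel in various senses} converts this into the statement about $E$ and $E'$, with ``closed'' corresponding to type-definability and ``clopen'' to relative definability on $p(\fC)$ and on $[m]_\equiv$ respectively. For clause (2): take Borel sections $s_m$ of $\pi_m$ and $s_a$ of $\pi_a$ (Fact~\ref{fct:borel_section}), modified at the single points $\tp(m/M)$, $\tp(a/M)$ to equal $\tp(c/M)$ (still Borel sections), and put $r_1:=\pi_a\circ s_m$, $r_2:=\pi_m\circ s_a$; then $r_1(\tp(m/M))=\tp(a/M)$, $r_2(\tp(a/M))=\tp(m/M)$, and because $\pi_m\circ s_m=\id$, $\pi_a\circ s_a=\id$ and $\pi_m,\pi_a$ are reductions of $\tilde E^M$, we get $q\mathrel{(E')^M}q'\iff s_m(q)\mathrel{\tilde E^M}s_m(q')\iff r_1(q)\mathrel{E^M}r_1(q')$, so $r_1$ is a Borel reduction of $(E')^M$ to $E^M$, and symmetrically for $r_2$; hence $E'\sim_B E$. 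Finally I would check $r_1$ descends to $r_1'$ on the quotients: if $\tau(c)$ realizes $s_m(\tp(\sigma(m)/M))$ then $\tau(m)\equiv_M\sigma(m)$, so (by strong homogeneity of $\fC$ and Fact~\ref{fact: refining by type over M}, as in the first paragraph) $\tau(a)\Er\sigma(a)$, whence $r_1(\tp(\sigma(m)/M))=\tp(\tau(a)/M)$ maps to $[\sigma(a)]_E=r_1'([\sigma(m)]_{E'})$; symmetrically for $r_2$ and $r_2'$.

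The only genuinely non-routine step is the verification, in the second paragraph, that the coordinate restriction maps $\pi_m$, $\pi_a$ are reductions between $\tilde E^M$, $(E')^M$ and $E^M$; here Fact~\ref{fact: refining by type over M} --- that membership in an $E$-, $E'$-, or $\tilde E$-class depends only on the type over $M$ --- is what makes the argument go through. Everything after that is a transfer along continuous surjections and Borel sections of compact Polish spaces, exactly in the spirit of Lemma~\ref{lem:easy}.
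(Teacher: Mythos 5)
Your proof is correct and follows essentially the same approach as the paper's: define $E'$ as the pullback of $E$ along the natural $\Aut(\fC)$-equivariant correspondence $\sigma(m)\leftrightarrow\sigma(a)$ (well-defined by Fact~\ref{fact: refining by type over M}), introduce an intermediate compact Polish space dominating both $S_m(M)$ and $S_a(M)$, and transfer the descriptive-set-theoretic data via continuous surjections and their Borel sections (Fact~\ref{fct:borel_section}). The one minor variant is that you pass through $S_c(M)$ for $c := m^\frown a$, whereas the paper passes through $S_n(N)$ for a countable model $N\succeq M$ containing $a$ with enumeration $n\supseteq am$; both play the identical role and the choice is immaterial.
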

	\begin{proof}
		Let $N\succeq M$ be a countable model containing $a$, and enumerate it by $n\supseteq am$.
		
		Then we have the restriction maps $S_n(N)\to S_a(M)$, $S_n(N)\to S_m(M)$, which fit in the commutative diagram:
		\begin{center}
			\begin{tikzcd}
			S_m(M) \ar[dr, two heads] & S_n(N) \ar[l, two heads] \ar[r, two heads] \ar[d, two heads] & S_a(M) \ar[d, two heads] \\
			&\Gal(T) \ar[r, two heads] & p(\fC)/E.
			\end{tikzcd}
		\end{center}
		In this diagram, the maps to $\Gal(T)$ are given by Fact~\ref{fct:sm_to_gal}, while the map $\Gal(T)\to p(\fC)/E$ is the orbit map $\sigma\Autf_L(\fC)\mapsto [\sigma(a)]_E$ (cf. Remark~\ref{rem:gal_action}).
		
		Recall that $E$ is type-definable [resp. Borel, analytic, $F_\sigma$, relatively definable] if and only if the induced relation $E^M$ on $S_a(M)$ is closed [resp. Borel, analytic, $F_\sigma$, clopen]. But $E^M$ is just the relation on $S_a(M)$ induced by the vertical arrow on the right, i.e.\ two types in $S_a(M)$ are $E^M$-equivalent if and only if they land in the same point in $p(\fC)/E$.
	
		Let us write $E', E''$ for the relations on $S_m(M)$ and $S_n(N)$ defined analogously (e.g.\ two types in $S_m(M)$ are $E'$-equivalent if they land in the same point in $p(\fC)/E$ via compositions of appropriate maps in the diagram). Note that they are both induced by the same left invariant equivalence relation on $\Gal(T)$ (because the map $\Gal(T)\to p(\fC)/E$ is left $\Gal(T)$-equivariant, as the orbit map of a left action).

		Abusing notation, let $E'$ be the relation on $[m]_{\equiv}$ such that $(E')^M$ is the $E'$ defined above. It is $\Aut(\fC)$-invariant by construction (e.g.\ because the equivalence relation on $\Gal(T)$ is left invariant), and it is clearly bounded by the size of $p(\fC)/E$. We will show that it satisfies the conclusion.
		
		The first part of the conclusion follows easily from Remark \ref{rem: continuous surjection is closed} and  the comments following Fact~\ref{fct:borel_section}. We will show now the remaining parts.
		
		Fact~\ref{fct:borel_section} gives us Borel sections $S_m(M)\to S_n(N)$ and $S_a(M)\to S_n(N)$ of the restriction maps, and we can assume without loss of generality that each section maps $\tp(m/M)$ or $\tp(a/M)$ (respectively) to $\tp(n/N)$ (possibly by changing the value of the section at one point). Those sections, composed with the appropriate restrictions from $S_n(N)$, yield Borel maps $r_1\colon S_m(M)\to S_a(M)$ and $r_2\colon S_a(M)\to S_m(M)$, which (by the last sentence) take $\tp(m/M)$ to $\tp(a/M)$ and vice versa. These maps are clearly Borel reductions between $E'$ and $E^M$ (passing via $E''$). Denote by $r_1',r_2'$ the induced maps between the class spaces (as in the statement of the lemma), where we freely identify various homeomorphic quotient spaces (e.g $p(\fC)/E$ and $S_a(M)/E^M$).
		
		Now, note that given an $\sigma\in \Aut(\fC)$, the restriction of $\tp(\sigma(n)/N)\in S_n(N)$ to $S_m(M)$ is $\tp(\sigma(m)/M)$ and likewise, the restriction to $S_a(M)$ is $\tp(\sigma(a)/M)$. It follows easily that for every $\sigma\in \Aut(\fC)$, we have $r_1'([\tp(\sigma(m)/M]_{E'})=[\tp(\sigma(a)/M)]_{E^M}$ and likewise, $r_2'([\tp(\sigma(a)/M)]_{E^M})=[\tp(\sigma(m)/M)]_{E'}$. In particular, $r_1'$ and $r_2'$ are bijections with $r_2'$ being the inverse of $r_1'$, and they are $\Gal(T)$-equivariant.
		
		Note that all the maps in the diagram are quotient maps, so in particular, the composed map $S_m(M)\to p(\fC)/E$ is a quotient map. It is easy to see that this map is the composition of the bijection $r_1'$ and the quotient map $S_m(M)\to [m]_{\equiv}/E'$, which implies that $r_1'$ is a homeomorphism, and hence, so is $r_2'$.

		Finally note that the conditions $r_1(\tp(m/M))=\tp(a/M)$ and $r_2(\tp(a/M))=\tp(m/M)$, together with $\Gal(T)$-equivariance of $r_1'$ and $r_2'$, imply that $r_1'$ and $r_2'$ are determined by $r_1'([\sigma(m)]_{E'})=[\sigma(a)]_{E}$ and $r_2'([\sigma(a)]_{E})=[\sigma(m)]_{E'}$, for all $\sigma \in \Aut(\fC)$.
	\end{proof}

	\subsection{Proof of the main theorem}\label{subsection: proof of the main theorem}
	The following is the formal and full statement of \hyperref[mainthm]{Main Theorem}. Recall that by Proposition \ref{prop:amb_exist} we know that for any countable theory $T$ and a countable subset $A$ of $\fC$ there is a countable ambitious model of $T$ containing $A$. We will be using the the notation set at the beginning of Section \ref{sec:main_thm}.
	\begin{thm}
		\label{thm:main}
		Let $T$ be an arbitrary countable theory.
		(Choose a countable ambitious model $M$ of $T$.) Then there is a compact Polish group $\hat G$ (namely, the quotient $u\cM/H(u\cM)\Core(D)$ considered in Corollary \ref{cor:polish_quotient}), and a topological group quotient mapping $\hat r\colon \hat G\to \Gal(T)$ (the one from Corollary \ref{cor:polish_quotient}), with the following property.
		
		Suppose $E$ is a strong type defined on $p(\fC)$ for some $p\in S(\emptyset)$ (in countably many variables). Fix any $a\models p$.
		
		Denote by $r_{[a]_E}$ the orbit map $\Gal(T)\to p(\fC)/E$ given by $\sigma \Autf_L(\fC)\mapsto[\sigma(a)]_E$ (i.e. the orbit map of the natural action of $\Gal(T)$ on $p(\fC)/E$ from Remark~\ref{rem:gal_action}).
		
		Then for $\hat r_{[a]_E}:=r_{[a]_E}\circ \hat r$ and $H=\ker \hat r_{[a]_E}:=\hat r_{[a]_E}^{-1}[[a]_E]$ we have that:
		\begin{enumerate}
			\item
			$H\leq \hat G$ and the fibers of $\hat r_{[a]_E}$ are the left cosets of $H$,
			\item
			$\hat r_{[a]_E}$ is a topological quotient map, and so $p(\fC)/E$ is homeomorphic to $\hat G/H$,
			\item
			$E$ is type-definable if and only if $H$ is closed,
			\item
			$E$ is relatively definable on $p(\fC) \times p(\fC)$ if and only if $H$ is clopen,
			\item
			if $E$ is Borel [resp. analytic, or $F_\sigma$], then $H$ is Borel [resp. analytic, or $F_\sigma$],
			\item
			$E_H\leq_B E$, where $E_H$ is the relation of lying in the same left coset of $H$.
		\end{enumerate}
		
		Moreover, if $T$ has NIP, or, more generally, if $T$ has a tame ambitious model $M$, then $E_H\sim_B E$.
	\end{thm}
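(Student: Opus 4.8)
The plan follows the template of Theorem~\ref{thm:main_over_KP}, with the compact Polish group $\hat G=u\cM/H(u\cM)\Core(D)$ and an auxiliary compact Polish \emph{space} $\hat G_0:=u\cM/H(u\cM)D$ in place of $\Gal_{KP}(T)$. First I would reduce to the case $a=m$, $p=\tp(m/\emptyset)$: given the general $E$ on $p(\fC)=[a]_\equiv$, replace it by the strong type $E'$ on $[m]_\equiv$ from Lemma~\ref{lem:every_stype_on_m}. The $\Gal(T)$-equivariant homeomorphism $[m]_\equiv/E'\cong p(\fC)/E$ there identifies $r_{[m]_{E'}}$ with $r_{[a]_E}$, hence $\hat r_{[m]_{E'}}$ with $\hat r_{[a]_E}$, so that $H$, the homeomorphism type of $p(\fC)/E\cong\hat G/H$, and the relation $E_H$ are literally unchanged, while $E'$ shares with $E$ type-definability, relative definability, Borelness, analyticity, $F_\sigma$-ness, and (via $r_1,r_2$) Borel cardinality. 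So assume $X:=S_m(M)=([m]_\equiv)_M$ and that $E^M$ is the relation on $X$ induced by the continuous surjection $X\to p(\fC)/E$, $\tp(b/M)\mapsto[b]_E$; recall $E^M\sim_B E$ and, by Fact~\ref{fct: Borel in various senses}, that $E$ is type-definable [resp. relatively definable, Borel, analytic, $F_\sigma$] iff $E^M$ is closed [resp. clopen, Borel, analytic, $F_\sigma$]. Points (1)--(2) are then immediate: $\hat r$ is a topological group quotient map (Corollary~\ref{cor:polish_quotient}) and $r_{[m]_E}$ a topological quotient map (Remark~\ref{rem:gal_action}), so $\hat r_{[m]_E}=r_{[m]_E}\circ\hat r$ is a topological quotient map, and it is the orbit map at $[m]_E$ of the $\hat G$-action on $p(\fC)/E$ obtained by composing $\hat r$ with the action of Remark~\ref{rem:gal_action}; hence its fibers are the left cosets of the stabilizer $H=\hat r_{[m]_E}^{-1}([m]_E)$ and $\hat G/H\cong p(\fC)/E$.

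Next, the map $r_{[m]_E}\circ(r\restr_{u\cM})\colon u\cM\to p(\fC)/E$ is constant on the cosets of $H(u\cM)D$, because $H(u\cM)D\leq\ker r$ — by Proposition~\ref{prop:H(uM)_in_ker} for $H(u\cM)$, and because $r(f)$ depends only on $R(f)$ while $R$ is constant $\equiv R(u)$ on $D$, so $D\leq\ker r$ (using $u\in\ker r$, Proposition~\ref{prop:id_clsd}). Hence it factors through a continuous map $\hat r_0\colon\hat G_0\to p(\fC)/E$, and $\hat r_{[m]_E}=\hat r_0\circ\pi$, where $\pi\colon\hat G\to\hat G_0$ is the canonical continuous surjection (which exists as $\Core(D)\leq D$). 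Putting $H_0:=\hat r_0^{-1}([m]_E)$ we get $H=\pi^{-1}[H_0]$ and $E_H=(\pi\times\pi)^{-1}(E_{\hat G_0})$, where $E_{\hat G_0}$ is the fiber relation of $\hat r_0$. Since $\pi$ is a continuous surjection of compact Polish spaces, $H$ is closed [clopen, Borel, analytic, $F_\sigma$] iff $H_0$ is, and $E_H\sim_B E_{\hat G_0}$ (via a Borel section of $\pi$, Fact~\ref{fct:borel_section}); and as $\hat G$ is a group, $H$ is closed [clopen, Borel, analytic, $F_\sigma$] iff $E_H=f^{-1}[H]$ is, where $f(x,y):=y^{-1}x$. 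Everything thus reduces to comparing $E_{\hat G_0}$ with $E^M$.

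For the forward comparison I would apply Lemma~\ref{lem:main_1} to the square with upper-left corner $B:=\overline{u\cM}/{\equiv}\cong R[\overline{u\cM}]$, a closed (hence compact Polish) subspace of $X$, mapped to $C:=X$ by the inclusion and onto $G':=\hat G_0$ by the continuous surjection $R(f)\mapsto uf\cdot H(u\cM)D$ extracted from the proof of Proposition~\ref{prop:uM/HuMD_Polish}; commutativity follows from $R(uf)=u(R(f))\equiv_L^M R(f)$ (Lemma~\ref{lem:homom} and Proposition~\ref{prop:id_clsd}) together with $\equiv_L$ refining $E$. Lemma~\ref{lem:main_1} then yields: if $E^M$ is closed [clopen, Borel, analytic, $F_\sigma$] then so is $E_{\hat G_0}$, and $E_{\hat G_0}\leq_B E^M$. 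Combined with the previous paragraph this gives the forward implications in (3) and (4), all of (5), and $E_H\leq_B E^M\sim_B E$ in (6). The converse halves of (3),(4) are direct and need no diagram: if $H$ is closed then $\hat G/H\cong p(\fC)/E$ is compact Hausdorff, so $E$ is type-definable (Fact~\ref{fct:tdf_t2}); if $H$ is clopen then $\hat G/H\cong p(\fC)/E$ is finite discrete, so pulling back along the continuous map $X\to p(\fC)/E$ shows $E^M$ has finitely many clopen classes, i.e. $E$ is relatively definable.

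For the moreover statement, take $M$ tame as well as ambitious (possible precisely when $T$ has a tame ambitious model, in particular under NIP by Corollary~\ref{cor:NIP_char}); then $(\Aut(M),S_m(M))$ is a tame metric system and Proposition~\ref{prop:last_borel}, hence Corollary~\ref{cor:borel_map}, applies. I would apply Lemma~\ref{lem:main_2} to the square with upper-left corner $A:=EL/{\equiv'}$ (compact Polish by Proposition~\ref{prop: quotients of EL are Polish}), mapped continuously onto $C=X$ by $[f]_{\equiv'}\mapsto R(f)$ and, by the Borel map of Corollary~\ref{cor:borel_map}, to $G'=\hat G_0$ by $[f]_{\equiv'}\mapsto ufu\cdot H(u\cM)D$; commutativity now rests on $r(ufu)=r(u)r(f)r(u)=r(f)$ (Lemma~\ref{lem:homom} and $u\in\ker r$). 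Lemma~\ref{lem:main_2} gives $E^M\leq_B E_{\hat G_0}$, whence $E\leq_B E_H$, and together with (6) this is $E_H\sim_B E$. The main difficulty is organizational rather than technical: recognizing that the descriptive-set-theoretic content lives at the level of $\hat G_0=u\cM/H(u\cM)D$ (the normal core $\Core(D)$ serving only to make $\hat G$ a group), choosing the two squares so that the first consists entirely of continuous maps while the second needs only the Borel retraction $f\mapsto uf$ that exists by tameness, and checking their commutativity — which comes down to the identities $R(uf)\equiv_L^M R(f)$ and $r(ufu)=r(f)$, both resting on $u\in\ker r$ (Proposition~\ref{prop:id_clsd}) and $r$ being a semigroup homomorphism (Lemma~\ref{lem:homom}).
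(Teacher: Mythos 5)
Your proposal is correct and takes essentially the same approach as the paper: reduce via Lemma~\ref{lem:every_stype_on_m}, establish (1)--(2) from the orbit-map and quotient-map structure, route the descriptive-set-theoretic content through $u\cM/H(u\cM)D$, and apply Lemma~\ref{lem:main_1} (with $\overline{u\cM}/{\equiv}$) for (5)--(6) and Lemma~\ref{lem:main_2} (with $EL/{\equiv'}$) for the tame ``moreover'' part. The only minor organizational divergences are that you reduce to $a=m$ exactly rather than $m\supseteq a$ (the paper keeps the weaker form to reuse the argument in Theorem~\ref{thm:main_smaller}), and you split (3)--(4) into a forward half via Lemma~\ref{lem:main_1} and a converse half via topology, whereas the paper derives both directions of (3)--(4) purely from (2), Fact~\ref{fct:tdf_t2}, and Fact~\ref{fct:quotient_by_closed_subgroup} — the paper's route for those two items is slightly shorter, but yours is equally valid.
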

	\begin{proof}
		Very roughly, we follow the blueprint of Theorem~\ref{thm:main_over_KP}. We use the notation from the beginning of Section \ref{sec:main_thm}.

		First of all, by Corollary \ref{cor:Polish_quotient_Core(D)}, we know that $\hat{G}:=u\cM/H(u\cM)\Core(D)$ is a compact Polish group (note that since $T$ and $M$ are countable, the ambit $S_m(M)$ is metrizable).

		Recall that $\hat r$ is the epimorphism from Corollary~\ref{cor:polish_quotient}. Since $r_{[a]_E}$ is an orbit mapping and $\hat r$ is an epimorphism, $\hat r_{[a]_E}$ is also an orbit mapping, which gives us the first point of the theorem.
		
		Corollary~\ref{cor:polish_quotient} also yields a topological quotient map $u\cM/H(u\cM)D \to \Gal(T)$ which composed with the topological quotient map $r_{[a]_E}$ (see Remark~\ref{rem:gal_action}) induces a topological quotient map $u\cM/H(u\cM)D \to p(\fC)/E$.
		
		Note that we have the following commutative diagram
		\begin{center}
			\begin{tikzcd}
				u\cM/H(u\cM)D=G'\ar[d, two heads] & \hat G=u\cM/H(u\cM)\Core(D)\ar[l, two heads]\ar[dl, two heads,"\hat r_{[a]_E}"]\ar[d,two heads,"\hat r"]\\
				 p(\fC)/E & \Gal(T)\ar[l,two heads, "r_{[a]_E}"],
			\end{tikzcd}
		\end{center}
		where $r_{[a]_E}$ and the top arrow are topological quotient mappings, as is $\hat r$ (by Corollary~\ref{cor:polish_quotient}), which by commutativity implies that so is $\hat r_{[a]_E}$.
		This together with (1) gives us the second point. 
		The third point follows from (2), Fact~\ref{fct:tdf_t2}, and Fact~\ref{fct:quotient_by_closed_subgroup}. The fourth point follows from (2) and easy observations that $E$ is relatively definable on $p(\fC) \times p(\fC)$ if and only if $p(\fC)/E$ is discrete, and that $H$ is clopen if and only if $\hat{G}/H$ is discrete.
		
		For brevity, write $G'$ for $u\cM/H(u\cM)D$ and $E_{G'}$ for the equivalence relation on $G'$ induced from equality on $p(\fC)/E$ by the left arrow. By Proposition \ref{prop:uM/HuMD_Polish} and Corollary \ref{cor:Polish_quotient_Core(D)}, $G'$ is a compact Polish space and $\hat{G}$ is a compact Polish group. Since $E_H$ is the preimage of $E_{G'}$ by the top arrow, we easily get that:
		\begin{itemize}
			\item
			Borelness [resp. analyticity, or being $F_\sigma$] of $H$, $E_H$ and $E_{G'}$ are all equivalent,
			\item
			$E_H\sim_B E_{G'}$.
		\end{itemize}
		Thus, for the fifth and sixth point, we only need to show that if $E$ is Borel [resp. analytic, or $F_\sigma$], then $E_{G'}$ is Borel [resp. analytic, or $F_\sigma$], and that $E_{G'}\leq_B E$.
		
			We may assume without loss of generality that $m\supseteq a$. Indeed, Lemma~\ref{lem:every_stype_on_m} yields a strong type $E'$ on $[m]_\equiv$ and a map $r_1'\colon [m]_\equiv/E' \to p(\fC)/E$ satisfying all the conclusions of that lemma, in particular, $r_1'([\sigma(m)]_{E'})= [\sigma(a)]_E$ for any $\sigma \in \Aut(\fC)$. Therefore, $r_{[a]_E}=r_1'\circ r_{[m]_{E'}}$, and so $\hat r_{[a]_E}=r_1'\circ \hat r_{[m]_{E'}}$. This, together with the conclusions of Lemma~\ref{lem:every_stype_on_m}, shows that we can even assume that $m=a$; however, we will only assume that $m\supseteq a$ (as this will be the case in the proof of Theorem \ref{thm:main_smaller}, in which we will refer to the current proof).

		In order to finish the proof of points (5) and (6), it is enough to see that we have the following diagram, satisfying the assumptions of Lemma~\ref{lem:main_1}.
		
		\begin{center}
			\begin{tikzcd}
				\overline{u\cM}/{\equiv}\ar[r, two heads]\ar[d] & G'=u\cM/H(u\cM)D\ar[d, two heads]\\
				S_a(M)\ar[r, two heads] & p(\fC)/E 
			\end{tikzcd}
		\end{center}
		
		\begin{itemize}
			\item
			The map $\overline{u\cM}/{\equiv}\to S_a(M)$ is the composition of the restriction map $S_m(M)\to S_a(M)$ and the map $\overline{u\cM}/{\equiv}\to S_m (M)$ induced by $R$ (if we identify $EL/{\equiv}$ with $S_m(M)$, the last map is just inclusion).
			\item
			The map $S_a(M)\to p(\fC)/E$ is the natural map from Fact~\ref{fct:stypes_from_types}.
			\item
			The map $\overline{u\cM}/{\equiv}\to u\cM/H(u\cM)D$ is induced by the continuous map $f\mapsto uf/H(u\cM)$ from Fact~\ref{fct:strange_cont} (see the proof of Proposition \ref{prop:uM/HuMD_Polish}).
			\item
			The arrow on the right is the same as the one on the left in the preceding diagram.
		\end{itemize}
		We know that $S_a(M)$, $\overline{u\cM}/{\equiv}$ and $G'$ are all compact Polish spaces. From the above items, the top and the left map in the last diagram are easily seen to be continuous. It is also easy to check that this diagram is commutative. Thus, the assumptions of Lemma \ref{lem:main_1} are satisfied, so the proof of (5) and (6) is complete.

		For the ``moreover" part, if $(\Aut(M),S_m(M))$ is tame, we have a similar diagram, this time satisfying the assumptions of Lemma~\ref{lem:main_2}, which yields the desired reduction $E \leq_B E_{G'} (\sim_B E_H)$.
		\begin{center}
			\begin{tikzcd}
				EL/{\equiv'}\ar[r, two heads,"Borel"]\ar[d, two heads] & G'=u\cM/H(u\cM)D\ar[d, two heads] \\
				S_a(M)\ar[r, two heads] & p(\fC)/E 
			\end{tikzcd}
		\end{center}
		Recall that $\equiv'$ is the closed equivalence relation on $EL$ defined by $f_1\equiv' f_2$ if $f_1(\tp(m/M))=f_2(\tp(m/M))$ and $f_1u(\tp(m/M))=f_2u(\tp(m/M))$. The left arrow is defined analogously to the previous diagram, and it is continuous. 
		The function $EL/{\equiv'}\to u\cM/H(u\cM)D$ is the map induced by $f\mapsto ufu$, $EL\to u\cM$. It is Borel by Corollary~\ref{cor:borel_map}; $EL/{\equiv'}$ is compact Polish by Proposition \ref{prop: quotients of EL are Polish}. Thus, the assumptions of Lemma \ref{lem:main_1} are indeed satisfied.
	\end{proof}

	Applying Theorem \ref{thm:main} to $a$ being an enumeration of $M$ and $E:= {\equiv_L}$, we get

	\begin{cor}\label{cor:main theorem for Gal}
		Let $T$ be a countable theory. As a topological group $\Gal(T)$ is the quotient of a compact Polish group $\hat{G}$ by an $F_\sigma$ normal subgroup $H$ (namely, $H:=\ker(\hat{r})$). Also, $\hat{G}/H \leq_B \Gal(T)$. Moreover, if $T$ has NIP, then $\hat{G}/H \sim_B \Gal(T)$.
	\end{cor}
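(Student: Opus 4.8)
The plan is to obtain the corollary by specializing Theorem~\ref{thm:main}, so essentially no new content is needed, only a careful matching of the objects. First I would fix a countable ambitious model $M \models T$ (one exists by Proposition~\ref{prop:amb_exist}) with an enumeration $m$, and note that if $T$ has NIP then, by Corollary~\ref{cor:NIP_char}, every model of $T$ is tame, so this $M$ is in particular tame --- which is exactly what will be needed to invoke the ``moreover'' clause of Theorem~\ref{thm:main}. Then I would apply Theorem~\ref{thm:main} with $a := m$ and $E := {\equiv_L}$, so that $p := \tp(m/\emptyset)$ and $p(\fC) = [m]_\equiv$; this produces the compact Polish group $\hat{G} = u\cM/H(u\cM)\Core(D)$ and the topological group quotient map $\hat{r}\colon\hat{G}\to\Gal(T)$ of Corollary~\ref{cor:polish_quotient}, so it remains to check that the subgroup $H$ and the reductions furnished by Theorem~\ref{thm:main} are the ones asserted in the corollary.

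The key step is the identification $p(\fC)/E = [m]_\equiv/{\equiv_L} \cong \Gal(T)$. By Fact~\ref{fct:sm_to_gal} together with Fact~\ref{fct:stypes_from_types}, $[m]_\equiv/{\equiv_L}$ is canonically identified with $\Gal(T)$ via $[\sigma(m)]_{\equiv_L}\leftrightarrow\sigma\Autf_L(\fC)$, and under this identification the orbit map $r_{[m]_{\equiv_L}}\colon\Gal(T)\to[m]_\equiv/{\equiv_L}$ of Remark~\ref{rem:gal_action} is the identity, with the distinguished class $[m]_{\equiv_L}$ corresponding to the neutral element. Hence $\hat{r}_{[m]_{\equiv_L}} = r_{[m]_{\equiv_L}}\circ\hat{r} = \hat{r}$ and $H := \ker\hat{r}_{[m]_{\equiv_L}} = \ker\hat{r}$. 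Since $\hat{r}$ is a continuous group epimorphism, $H$ is a \emph{normal} subgroup of $\hat{G}$; this is the single place where we obtain more than clause~(1) of Theorem~\ref{thm:main} (which only gives a subgroup), precisely because here $r_{[a]_E}$ is a homomorphism and not just an orbit map.

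It then remains to read off the rest. Clause~(2) of Theorem~\ref{thm:main} gives that $\hat{r}$ is a topological quotient map, and, being also a homomorphism, it presents $\Gal(T)$ as $\hat{G}/H$ as topological groups. The relation $\equiv_L$ is $F_\sigma$ --- it is the increasing union over $n\in\bbN$ of the type-definable relations ``$a$ and $b$ lie at Lascar distance at most $n$'', a standard fact --- so clause~(5) of Theorem~\ref{thm:main} yields that $H$ is $F_\sigma$. Finally, the Borel cardinality of $\Gal(T)$ is by definition that of $S_m(M)/{\equiv_L^M}$, i.e.\ of $E^M$, so clause~(6) reads $\hat{G}/H = E_H\leq_B\Gal(T)$, and when $T$ has NIP --- so that the model $M$ chosen above is tame --- the ``moreover'' clause of Theorem~\ref{thm:main} upgrades this to $\hat{G}/H\sim_B\Gal(T)$. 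I expect no real obstacle: the whole argument is bookkeeping, the only slightly non-formal inputs being that $\equiv_L$ is $F_\sigma$ and that $\ker\hat{r}$ is normal, both routine.
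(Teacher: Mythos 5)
Your proposal is correct and takes exactly the paper's route: the paper's own proof is the one-line remark that the corollary follows by applying Theorem~\ref{thm:main} to $a$ an enumeration of an ambitious countable $M$ and $E := {\equiv_L}$. You have simply unpacked the bookkeeping (the identification of $[m]_\equiv/{\equiv_L}$ with $\Gal(T)$, normality of $\ker\hat r$, $F_\sigma$-ness of $\equiv_L$, and tameness of $M$ under NIP), all of which is accurate.
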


	We have made essential use of the NIP assumption in proving the ``moreover" part of the theorem. However, we do not know counterexamples showing that it is necessary.
	\begin{ques}\label{ques:without_NIP}
		Does the ``moreover" part of Theorem~\ref{thm:main} hold without NIP? In particular, is it true that $E_{\ker \hat r}\sim_B \Gal(T)$ holds without NIP?
	\end{ques}

	It follows from Theorem~\ref{thm:main_over_KP} that the answer to Question \ref{ques:without_NIP} is positive for $G$-compact theories.
	
	\begin{rem}
		 In the NIP (or tame) case, Proposition \ref{prop:NIP gives metrizability} shows that $u\cM/H(u\cM)$ is already a Polish group, so we can use $u\cM/H(u\cM)$ in place of $u\cM/H(u\cM)\Core(D)$ in Theorem~\ref{thm:main} and in its proof (while still using $u\cM/H(u\cM)D$ in the proof).
		\xqed{\lozenge}
	\end{rem}

	The group $\hat G$ we have constructed depends (a priori) quite heavily on the choice of the model $M$ (even if $\hat G=u\cM/H(u\cM)$, as in the tame case). One could hope for a canonical compact Polish extension $\hat G$ of $\Gal(T)$ satisfying all (or most) of the properties stipulated in Theorem~\ref{thm:main}, at least under NIP --- if such $\hat G$ existed, it could be an interesting model-theoretic invariant (again, this is interesting primarily for non-G-compact theories, as otherwise, by Theorem~\ref{thm:main_over_KP}, we can take $\hat G:=\Gal_{KP}(T)$).
	
	\begin{ques}
		Suppose $T$ has NIP. Is there a canonical choice of $\hat G$ satisfying the properties listed in Theorem~\ref{thm:main}?
	\end{ques}

	\subsection{Corollaries}
	As promised in the introduction, we now derive Fact~\ref{fct:main_KPR} directly from Fact~\ref{fct:trich_polish}.
	\begin{cor}
		\label{cor:main_KPR}
		Fact~\ref{fct:main_KPR} holds.
	\end{cor}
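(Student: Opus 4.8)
The plan is to apply Theorem~\ref{thm:main} to produce the compact Polish group $\hat G$ together with the topological quotient map $\hat r_{[a]_E}\colon\hat G\to p(\fC)/E$ (for a fixed $a\models p$) and the subgroup $H=\ker\hat r_{[a]_E}$, and then to feed the pair $(\hat G,H)$ into the trichotomy of Fact~\ref{fct:trich_polish}. For this I would first record that since $E$ is analytic, Theorem~\ref{thm:main}(5) gives that $H$ is analytic in $\hat G$, so $H$ has the strict Baire property and Fact~\ref{fct:trich_polish} applies; moreover, by Theorem~\ref{thm:main}(2) the map $\hat r_{[a]_E}$ induces a homeomorphism $\hat G/H\cong p(\fC)/E$, so in particular $\lvert p(\fC)/E\rvert=[\hat G:H]$.

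Now Fact~\ref{fct:trich_polish}, applied to $\hat G$ and $H$, yields exactly one of three cases, and in each case I would translate back using the remaining clauses of Theorem~\ref{thm:main}. If $H$ is clopen and $[\hat G:H]$ is finite, then by Theorem~\ref{thm:main}(4) the relation $E$ is relatively definable on $p(\fC)\times p(\fC)$, $p(\fC)/E$ is finite by the cardinality equality above, and an analytic equivalence relation with finitely many classes has Borel classes and is trivially smooth; this is the first bullet of Fact~\ref{fct:main_KPR}. If $H$ is closed with $[\hat G:H]=2^{\aleph_0}$ (so $H$ is not clopen, as we are not in the first case), then by Theorem~\ref{thm:main}(3),(4) the relation $E$ is type-definable but not relatively definable, $\lvert p(\fC)/E\rvert=2^{\aleph_0}$, and $E$ is smooth because type-definability means $E^M$ is closed in the compact metric space $S_a(M)$, whence $S_a(M)/E^M$ is compact Hausdorff by \cite[3.2.11]{Eng89} and metrizable by Fact~\ref{fct: preservation of metrizability}, so the quotient map is a (continuous, hence Borel) reduction of $E^M$ to equality; this is the second bullet. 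Finally, if $[\hat G:H]=2^{\aleph_0}$ and the coset relation $E_H$ of $H$ in $\hat G$ is not smooth (so $H$ is not closed), then by Theorem~\ref{thm:main}(3) the relation $E$ is not type-definable, $\lvert p(\fC)/E\rvert=2^{\aleph_0}$, and $E$ is not smooth: indeed $E_H\leq_B E$ by Theorem~\ref{thm:main}(6), so smoothness of $E$ would force smoothness of $E_H$, a contradiction; this is the third bullet.

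Since the three cases are mutually exclusive and exhaustive for $(\hat G,H)$ by Fact~\ref{fct:trich_polish}, the corresponding alternatives for $E$ are mutually exclusive and exhaustive as well, which is exactly the statement of Fact~\ref{fct:main_KPR}. I do not expect a serious obstacle here, since all the genuinely hard work is already packaged in Theorem~\ref{thm:main}; the only point deserving care is that smoothness in the type-definable case should be obtained \emph{without} invoking the NIP ``moreover'' part of Theorem~\ref{thm:main}, which is why I would appeal to the classical fact that a closed equivalence relation on a compact metric space is smooth (equivalently, that type-definable strong types are smooth, cf.\ the proof of Fact~\ref{fct:trich_polish}) rather than trying to transport smoothness across $\hat G/H\cong p(\fC)/E$.
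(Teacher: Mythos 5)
Your proposal is correct and takes essentially the same route as the paper: apply Theorem~\ref{thm:main} to obtain $\hat G$ and $H$, invoke Fact~\ref{fct:trich_polish}, and translate each alternative back through clauses (2)--(6) of Theorem~\ref{thm:main}. The paper's proof is terser but rests on the same points; your explicit remark that smoothness in the type-definable case must come from the classical ``closed equivalence relation on a compact Polish space is smooth'' fact rather than from the NIP ``moreover'' part of Theorem~\ref{thm:main} is exactly the subtlety the paper handles implicitly via the parenthetical ``(so smooth)''.
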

	\begin{proof}
		We use the notation of Theorem~\ref{thm:main}.
		Since $\hat{G}$ is compact Polish and, by Theorem~\ref{thm:main}, $H$ is analytic (as we assume that $E$ is analytic), we can apply Fact~\ref{fct:trich_polish}, so exactly one of the following holds:
		\begin{enumerate}
			\item
			$H$ is open, and so $\hat{G}/H$ is finite (and smooth),
			\item
			$H$ is closed (so $\hat{G}/H$ is smooth) and $[\hat{G}:H]=2^{\aleph_0}$,
			\item
			$H$ is not closed, $\hat{G}/H$ is not smooth, and $[\hat{G}:H]=2^{\aleph_0}$.
		\end{enumerate}
		
		By Theorem~\ref{thm:main}, these yield that exactly one of the following holds:
		\begin{enumerate}
			\item
			$E$ is relatively definable, and so has finitely many classes (and is smooth),
			\item
			$E$ is type-definable (so smooth) and has $2^{\aleph_0}$ classes,
			\item
			$E$ is not type-definable and not smooth, and has $2^{\aleph_0}$ classes.\qedhere
		\end{enumerate}
	\end{proof}
	The following immediate corollary of Theorem~\ref{thm:main} yields an estimate on the Borel cardinality of the Lascar strong type (it is an open problem whether it can be anything other than smooth, $\EZ$ or $\ell^\infty$, see \cite{KPS13}).
	\begin{cor}
		\label{cor:bc_restriction}
		Suppose $T$ is a countable NIP theory. Then there is a compact Polish group $\hat{G}$ with the following properties.
		\begin{itemize}
			\item
			If $E$ is a bounded invariant equivalence relation on the set $p(\fC)$ for some $p\in S(\emptyset)$ in countably many variables, its Borel cardinality is the same as the Borel cardinality $\hat{G}/H$ for $H\leq \hat{G}$ which is closed [resp. Borel, or analytic, or $F_\sigma$] 
			whenever $E$ is. (In particular, if $E={\equiv_L}$, then $H$ is $F_\sigma$.)
			\item 
			In particular, the Borel cardinality of $\Gal(T)$ equals the Borel cardinality of $\hat{G}/H$, where $H$ is some $F_\sigma$ normal subgroup of $\hat{G}$. \xqed{\lozenge}
		\end{itemize}
	\end{cor}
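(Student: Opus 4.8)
The plan is to read the corollary off directly from Theorem~\ref{thm:main}. The crucial observation is that the compact Polish group $\hat G$ produced in Theorem~\ref{thm:main} depends only on the choice of a countable ambitious model $M$ (and of the auxiliary data $\cM$ and $u$) and \emph{not} on the equivalence relation $E$; all the dependence on $E$ is absorbed into the subgroup $H$. So a single $\hat G$ will work for all $E$ simultaneously, and the rest is unwinding definitions.

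First I would fix, once and for all, a countable ambitious model $M\models T$ (which exists by Proposition~\ref{prop:amb_exist}), together with a minimal left ideal $\cM\unlhd EL$ and an idempotent $u\in\cM$ as in Section~\ref{sec:main_thm}, and set $\hat G:=u\cM/H(u\cM)\Core(D)$. Since $T$ is countable and NIP, Corollary~\ref{cor:Polish_quotient_Core(D)} gives that $\hat G$ is a compact Polish group, and, as $T$ is NIP, every model of $T$ --- in particular $M$ --- is tame by Corollary~\ref{cor:NIP_char}, so the ``moreover'' clause of Theorem~\ref{thm:main} is available. This $\hat G$ is the group in the statement. (One could equally well take $\hat G=u\cM/H(u\cM)$ here, by Proposition~\ref{prop:NIP gives metrizability}.) Next, given a bounded invariant equivalence relation $E$ on $p(\fC)$ with $p\in S(\emptyset)$ in countably many variables --- which is automatically a strong type, since $p(\fC)$ is a single $\equiv$-class --- I would fix $a\models p$ and put $H:=\ker\hat r_{[a]_E}$ as in Theorem~\ref{thm:main}. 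Items (1)--(2) of that theorem give $H\leq\hat G$ with $\hat G/H$ homeomorphic to $p(\fC)/E$; items (3) and (5) give that $H$ is closed, resp.\ Borel, analytic, $F_\sigma$, whenever $E$ is; and item (6) together with the ``moreover'' clause gives $E_H\sim_B E$, where $E_H$ is the coset equivalence relation of $H$ in $\hat G$. Since the Borel cardinality of $\hat G/H$ is by definition the Borel cardinality of $E_H$, and the Borel cardinality of the strong type $E$ is the Borel cardinality of $E^M$ on $S_a(M)$, the first bullet follows; the parenthetical remark is the case $E={\equiv_L}$ of item (5), using that $\equiv_L$ is $F_\sigma$.

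For the second bullet I would specialize the first to $a=m$ (an enumeration of $M$) and $E:={\equiv_L}$; this is precisely Corollary~\ref{cor:main theorem for Gal}. Concretely, $p(\fC)/{\equiv_L}$ is identified with $\Gal(T)$ via Fact~\ref{fct:sm_to_gal}, under which $r_{[m]_{\equiv_L}}$ becomes the identity, so $\hat r_{[m]_{\equiv_L}}=\hat r$ and $H=\ker\hat r$; this $H$ is a normal subgroup of $\hat G$ because $\hat r$ is a group homomorphism by Corollary~\ref{cor:polish_quotient}, and it is $F_\sigma$ by item (5) of Theorem~\ref{thm:main}. The Borel cardinality of $\Gal(T)$ is by definition the Borel cardinality of $\equiv_L^M$ on $S_m(M)$, so it equals that of $\hat G/H$ by the first bullet.

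As every ingredient is already established, there is no genuine obstacle; the only points requiring a little care are the bookkeeping verification that $\hat G$ truly does not depend on $E$ (only on $M,\cM,u$), so that one group serves all $E$ at once, and recalling the standard fact that the Lascar strong type is $F_\sigma$, needed for the parenthetical statement and for the Galois-group case.
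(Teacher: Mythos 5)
Your proof is correct and takes exactly the approach the paper intends: the paper marks this as an ``immediate corollary'' of Theorem~\ref{thm:main} without an explicit proof, and you fill in precisely the right details—noting in particular that $\hat G$ and $\hat r$ in Theorem~\ref{thm:main} depend only on the fixed ambitious model $M$ (and $\cM$, $u$) and not on $E$, so that a single $\hat G$ serves all $E$, with the ``moreover'' clause applying since $T$ is NIP. The second bullet is indeed just Corollary~\ref{cor:main theorem for Gal}, as you say.
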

	
	\section{Variants of the main theorem}
	In this section, we present some variants of the main theorem which can be obtained by similar methods.

	\subsection{Smaller domains}\label{subsection: smaller domains}
	Given a $\equiv_L$-invariant set $Y\subseteq \fC$, denote by $\Gal(T/\{Y\})$ the quotient of $\Aut(\fC/\{Y\})$ (the group of automorphisms fixing $Y$ setwise) by $\Autf_L(\fC)$ (note that $\Autf_L(\fC)$ is always a subgroup of $\Aut(\fC/\{Y\})$, by the Lascar invariance assumption). Then $\Gal(T/\{Y\})$ is clearly a subgroup of $\Gal(T)$, and thus a topological group (with the subspace topology). Furthermore, notice that if $Y$ is type-definable, it is already type-definable over any elementary submodel of $\fC$ (by $\equiv_L$-invariance), and moreover, $\Gal(T/\{Y\})$ is a closed subgroup of $\Gal(T)$ (see \cite[Lemma 1.17]{KPR15}).

	This allows us to ``extend" the main theorem to the following form. (Strictly speaking, the next theorem is not a generalization of Theorem~\ref{thm:main}, because of the order of quantifiers: the compact Polish group $\hat{G}$ in Theorem~\ref{thm:main}  was good for all types $p$, whereas in the next theorem it depends on $p$ (and $Y$).)
	\begin{thm}
		\label{thm:main_smaller}
		Let $T$ be an arbitrary countable theory. 
		Suppose $Y$ is an $\equiv_L$-invariant, countably supported and type-definable (with parameters) subset of some $p(\fC)$. Suppose in addition that $\Aut(\fC/\{Y\})$ acts transitively on $Y$ (this is true e.g.\ when $Y=p(\fC)$, or $Y$ is a single KP or Shelah strong type), and fix some $a\in Y$.
		
		Then there is a compact Polish group $\hat G_Y$ and a topological group quotient mapping $\hat r_Y\colon \hat G_Y\to \Gal(T/\{Y\})$, with the following property.
		
		Suppose $E$ is a strong type on $p(\fC)$ such that $Y$ is $E$-invariant.
		
		Denote by $r_{[a]_E,Y}$ the orbit map $\Gal(T/\{Y\})\to Y/E$ given by $\sigma\Autf_L(\fC)\mapsto[\sigma(a)]_E$ (the restriction of the map from Theorem~\ref{thm:main}; note that it is onto by the assumption about transitivity of the action of $\Aut(\fC/\{Y\})$ on $Y$).
		
		Then for $\hat r_{[a]_E,Y}:=r_{[a]_E,Y}\circ \hat r_Y$ and $H_Y=\ker \hat r_{[a]_E,Y}:=\hat r_{[a]_E,Y}^{-1}[[a]_E]$ we have that:
		\begin{enumerate}
			\item
			$H_Y\leq \hat G_Y$ and the fibers of $\hat r_{[a]_E,Y}$ are the left cosets of $H_Y$,
			\item 
			$\hat r_{[a]_E,Y}$ is a topological quotient mapping, and so $Y/E$ is homeomorphic to $\hat G_Y/H_Y$,
			\item
			$E \restr_Y$ is type-definable if and only if $H_Y$ is closed,
			\item
			$E \restr_Y$ is relatively definable on $Y \times Y$ if and only if $H_Y$ is clopen,
			\item
			if $E \restr_Y$ is Borel [resp. analytic, or $F_\sigma$], then so is $H_Y$,
			\item
			$E_{H_Y}\leq_B E\restr_Y$, where $E_{H_Y}$ is the relation of lying in the same left coset of ${H_Y}$.
		\end{enumerate}
		Moreover, if $T$ has NIP, then $E_{H_Y}\sim_B E\restr_Y$.
	\end{thm}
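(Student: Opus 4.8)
The plan is to rerun the construction underlying Theorem~\ref{thm:main}, but based on a \emph{$Y$-relative ambit} in place of $(\Aut(M),S_m(M),\tp(m/M))$. First I would establish a $Y$-relative version of Proposition~\ref{prop:amb_exist}: there is a countable model $M\ni a$, enumerated by $m\supseteq a$, together with a countable subgroup $\Sigma\leq\Aut(\fC/\{Y\})$ with $\Sigma(M)=M$ such that, writing $G_Y:=\Sigma\restr_M\leq\Aut(M)$, the $G_Y$-orbit of $\tp(m/M)$ is dense in its closure $X_Y\subseteq S_m(M)$ and $X_Y=\overline{\{\tp(\tau(m)/M):\tau\in\Aut(\fC/\{Y\})\}}$, so the canonical map $S_m(M)\to\Gal(T)$ of Fact~\ref{fct:sm_to_gal} carries $X_Y$ into the closed subgroup $\Gal(T/\{Y\})$. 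Such $M,\Sigma$ are produced by the iterated-union argument of Proposition~\ref{prop:amb_exist}, adjoining at each stage countably many elements of $\Aut(\fC/\{Y\})$ realizing a countable dense subset of $\{\tp(\tau(m)/M):\tau\in\Aut(\fC/\{Y\})\}$ (a subset of the metrizable $S_m(M)$, hence separable). Then $(G_Y,X_Y,\tp(m/M))$ is a metrizable $G_Y$-ambit, so Section~\ref{sec:top_dyn_to_Polish} applies: with $EL_Y=E(G_Y,X_Y)$, a minimal left ideal $\cM_Y$, an idempotent $u_Y\in\cM_Y$, $R_Y(f)=f(\tp(m/M))$, and $D_Y=[u_Y]_{\equiv}\cap u_Y\cM_Y$ (where $\equiv$ is $R_Y(f_1)=R_Y(f_2)$), we set $\hat G_Y:=u_Y\cM_Y/H(u_Y\cM_Y)\Core(D_Y)$ --- a compact Polish group by Corollary~\ref{cor:Polish_quotient_Core(D)} --- and $G'_Y:=u_Y\cM_Y/H(u_Y\cM_Y)D_Y$, a compact Polish space by Proposition~\ref{prop:uM/HuMD_Polish}.

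Next I transfer Subsection~\ref{subsection: adaptation from KPR15}. Since $G_Y\subseteq\Aut(M)$ and $X_Y\subseteq S_m(M)$, Lemma~\ref{lem:pseudocompactness} and Proposition~\ref{prop:twopoints} apply unchanged; crucially, the automorphisms of $\fC$ they produce need \emph{not} fix $Y$, which is harmless because only the images of their types in $\Gal(T)$ matter, and those lie in $\Gal(T/\{Y\})$ automatically. Defining $r_Y\colon EL_Y\to\Gal(T/\{Y\})$ as $R_Y$ followed by $S_m(M)\to\Gal(T)$, the proofs of Lemma~\ref{lem:homom} (surjectivity via the $Y$-relative ambitiousness) and of Propositions~\ref{prop:H(uM)_in_ker}, \ref{prop:homom}, \ref{prop:restr_quot}, \ref{prop:id_clsd}, \ref{lem:r_restr_to_top_quot} go through verbatim, so $r_Y\restr_{u_Y\cM_Y}$ is a topological group quotient map and $r_Y$ induces topological group quotient maps $\hat r_Y\colon\hat G_Y\to\Gal(T/\{Y\})$ and $G'_Y\to\Gal(T/\{Y\})$, just as in Corollary~\ref{cor:polish_quotient}; these $\hat G_Y,\hat r_Y$ are the objects in the statement.

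From here the deduction of (1)--(6) and of the ``moreover'' part is a line-by-line copy of the proof of Theorem~\ref{thm:main}. The composite $\hat r_{[a]_E,Y}=r_{[a]_E,Y}\circ\hat r_Y$ is an orbit map, giving (1); it is a topological quotient map since both $\hat r_Y$ and $r_{[a]_E,Y}$ are --- the latter by the $Y$-relative form of Remark~\ref{rem:gal_action}, using transitivity of $\Aut(\fC/\{Y\})$ on $Y$ and the fact that $Y/E$ is (homeomorphic to) a closed subspace of $p(\fC)/E$ --- giving (2); then (3) and (4) follow from (2) together with Fact~\ref{fct:tdf_t2}, Fact~\ref{fct:quotient_by_closed_subgroup}, the version of Fact~\ref{fct: Borel in various senses} for $E\restr_Y$, and the equivalences ``$E\restr_Y$ relatively definable $\iff Y/E$ discrete $\iff H_Y$ clopen''. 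For (5)--(6), writing $E_{G'_Y}$ for the relation on $G'_Y$ induced by equality on $Y/E$, one feeds
\begin{center}
\begin{tikzcd}
\overline{u_Y\cM_Y}/{\equiv}\ar[r, two heads]\ar[d] & G'_Y\ar[d, two heads]\\
Y_M\ar[r, two heads] & Y/E
\end{tikzcd}
\end{center}
into Lemma~\ref{lem:main_1}: $Y_M$ is compact Polish because $Y$ is type-definable over $M$ (by $\equiv_L$-invariance) and countably supported, the left arrow is the restriction of $R_Y$ followed by $S_m(M)\to S_a(M)$, and the top arrow is $f\mapsto u_Yf/H(u_Y\cM_Y)D_Y$ from Fact~\ref{fct:strange_cont}. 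For the ``moreover'' part, if $T$ has NIP then $(\Aut(M),S_m(M))$ is tame by Corollary~\ref{cor:NIP_char}, hence so is its subsystem $(G_Y,X_Y)$ by Fact~\ref{fct:tame_preserved}, so Corollary~\ref{cor:borel_map} supplies a Borel map $EL_Y/{\equiv'}\to G'_Y$ and the analogous diagram fed to Lemma~\ref{lem:main_2} yields $E\restr_Y\leq_B E_{G'_Y}\sim_B E_{H_Y}$.

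I expect the main obstacle to lie in the first step: arranging $X_Y$ to be simultaneously the closure of an orbit of a \emph{countable} group $G_Y$ of restrictions of $Y$-preserving automorphisms of $\fC$ and still to surject onto all of $\Gal(T/\{Y\})$, and then the (routine but lengthy) verification that every statement of Subsection~\ref{subsection: adaptation from KPR15} survives the replacement of $(\Aut(M),\Gal(T))$ by $(G_Y,\Gal(T/\{Y\}))$ --- in particular the $Y$-relative version of Remark~\ref{rem:gal_action} needed for~(2), which is precisely where the transitivity hypothesis and the closed embedding $Y/E\hookrightarrow p(\fC)/E$ are used.
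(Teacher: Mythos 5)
Your proof is fundamentally sound but takes a genuinely different route from the paper's. The paper \emph{does not} construct a fresh $Y$-relative ambit; instead it keeps the very same ambitious model $M$, the same flow $(\Aut(M),S_m(M))$, and the same group $\hat G=u\cM/H(u\cM)\Core(D)$ and epimorphism $\hat r$ as in Theorem~\ref{thm:main}, and simply defines $\hat G_Y:=\hat r^{-1}[\Gal(T/\{Y\})]$ and $\hat r_Y:=\hat r\restr_{\hat G_Y}$. Since $\Gal(T/\{Y\})$ is a closed subgroup of $\Gal(T)$, $\hat G_Y$ is automatically a closed (hence compact Polish) subgroup, and $\hat r_Y$ is automatically a topological group quotient map (restriction of a quotient map to a full preimage). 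The remaining points follow by replacing all the spaces in the proof of Theorem~\ref{thm:main} by the appropriate preimages/restrictions: $S_m(M)$ by $S_{Y,m}(M)=\{\tp(\sigma(m)/M)\mid\sigma\in\Aut(\fC/\{Y\})\}$ (a closed subspace, as the preimage of $\Gal(T/\{Y\})$ under $S_m(M)\to\Gal(T)$), $\overline{u\cM}/{\equiv}$ by the preimage of $G'_Y$, and $EL/{\equiv'}$ by $EL^Y/{\equiv'}$ where $EL^Y:=\{f\in EL\mid f(\tp(m/M))\in S_{Y,m}(M)\}$. No new dynamical machinery is needed.

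Your route instead builds a $Y$-relative ambit $(G_Y,X_Y,\tp(m/M))$ with $G_Y$ a countable group of restrictions of $Y$-preserving automorphisms, and reruns all of Section~\ref{sec:top_dyn_to_Polish} and Subsection~\ref{subsection: adaptation from KPR15} on it. I believe this works: the $Y$-relative ambitious model exists by an iterated-union argument exactly as in Proposition~\ref{prop:amb_exist} (using that $S_{Y,m_i}(M_i)$ is second countable); Lemma~\ref{lem:pseudocompactness} and Proposition~\ref{prop:twopoints} apply to nets in $G_Y\subseteq\Aut(M)$ (the automorphisms of $\fC$ they produce land in the right Lascar classes because $X_Y=S_{Y,m}(M)$ is the preimage of the closed subgroup $\Gal(T/\{Y\})$); and in the adaptation of Proposition~\ref{lem:r_restr_to_top_quot} one uses that $\Gal_0(T)\subseteq\Gal(T/\{Y\})$, so the closure of the identity is unchanged. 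The main thing you gain is a construction of $\hat G_Y$ intrinsic to $Y$, rather than a subgroup of a group built for all of $\Gal(T)$; what you pay for it is (i) a new lemma establishing $Y$-relative ambitious models, and (ii) re-verifying, not ``verbatim'' but with mild notational and conceptual substitutions (e.g. $\Gal(T/\{Y\})/\Gal_0(T)$ for $\Gal_{KP}(T)$), every statement of Subsection~\ref{subsection: adaptation from KPR15}. The paper's restriction trick gets all of that for free, which is precisely the efficiency your plan does not exploit.

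One small point worth making explicit in your write-up: in the diagram you feed to Lemma~\ref{lem:main_1}, commutativity of the square requires $r_Y(u_Y f)=r_Y(f)$ for $f\in\overline{u_Y\cM_Y}$, which holds because $r_Y(u_Y)=e$ (any idempotent maps to the identity under a semigroup homomorphism into a group), and because $\equiv_L$ refines $E$. This is the same remark the paper leaves implicit, but since you are rebuilding the diagram from scratch, it deserves a line.
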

	\begin{proof}
		Fix $Y,a$ satisfying the assumptions. Let $M$ be a countable ambitious model containing $a$, enumerated as $m$. 
		
		Take $\hat G, \hat r$ as in the proof of Theorem~\ref{thm:main}. Let $\hat G_Y:=\hat r^{-1}[\Gal(T/\{Y\})]$ and $\hat r_Y:=\hat r\restr_{\hat G_Y}$. We will show that they satisfy the conclusion.
		
		First, notice that $\hat{G}_Y$ is a compact Polish group and $\hat r_Y$ is a topological group quotient map $\hat G_Y\to \Gal(T/\{Y\})$ (as the restriction of the quotient map $\hat r$ to the preimage of a closed subgroup).
		
		Consider any strong type $E$ such that $Y$ is $E$-invariant.

		Then $H_Y=\ker \hat r_{[a]_E,Y}=\ker \hat r_{[a]_E}\cap \hat G_Y$, so the first point follows from point (1) in Theorem~\ref{thm:main}. 
		
		Let $S_{Y,m}(M):=\{\tp(\sigma(m)/M)\mid \sigma\in \Aut(\fC/\{Y\}) \}$, and recall that $Y_M$ is the space of types over $M$ of elements of $Y$. It is easy to see that $S_{Y,m}(M)$ is a closed subspace of $S_m(M)$, and we have a commutative diagram
		\begin{center}
			\begin{tikzcd}
				S_m(M)&[-3em]\supseteq&[-3em]S_{Y,m}(M)\ar[r, two heads]\ar[d, two heads] & \Gal(T/\{Y\})\ar[d, two heads,"r_{[a]_E,Y}"]&[-3em]\leq&[-3em] \Gal(T) \\
				S_a(M)&\supseteq&Y_M\ar[r, two heads] & Y/E&\subseteq& p(\fC)/E,
			\end{tikzcd}
		\end{center}
		where the left arrow is the restriction to fewer variables (which is well-defined, because we have assumed that $a$ is a subtuple of $m$), while the horizontal arrows are simply the restrictions of the natural map $S_a(M)\to p(\fC)/E$ and the map $S_m(M)\to \Gal(T)$ from Fact~\ref{fct:sm_to_gal}.
		The left map is onto, because $\Aut(\fC/\{Y\})$ acts transitively on $Y$.
		It is clear that the horizontal arrows are surjective as well.
		The top arrow is a quotient map as the restriction of a quotient map to the preimage of a closed set. The bottom one is a quotient map by Definition \ref{dfn:logic topology}, and the left one is quotient as a continuous surjection between compact Hausdorff spaces. It follows that $r_{[a]_E,Y}$ is also a quotient map, and since $\hat r_Y$ is a quotient map, this implies that so is $\hat r_{[a]_E,Y}=r_{[a]_E,Y}\circ \hat r_{Y}$, which gives us the second point of the theorem.

		The third and fourth point follow from (2) as in the proof of Theorem~\ref{thm:main}.	

		Define $G'_Y$ as the preimage of $\Gal(T/\{Y\})$ via the map $u\cM/H(u\cM)D \to \Gal(T)$ from Corollary~\ref{cor:polish_quotient}. Then the proof of points (5) and (6) in Theorem~\ref{thm:main} goes through with $G'$, $\hat G$, $p(\fC)/E$, $\Gal(T)$, $S_a(M)$ replaced by $G'_Y$, $\hat{G}_Y$, $Y/E$, $\Gal(T/\{Y\}), Y_M$, respectively, and with $\overline{u\cM}/{\equiv}$ replaced by the preimage of $G'_Y$ via the top arrow in the second diagram in the proof of Theorem~\ref{thm:main}.
		
		To show the ``moreover" part, apply the proof of the ``moreover" part of Theorem~\ref{thm:main}, with the same replacements as above, and with $EL/{\equiv}'$ replaced by $EL^Y/{\equiv'}$, where $EL^Y$ is the set of $f\in EL$ such that 
		$f(\tp(m/M))\in S_{Y,m}(M)$.
		In order to do that, make the following observations:
		\begin{itemize}
			\item 
			as $S_{Y,m}(M)$ is closed in $S_m(M)$, $EL^Y$ is closed in $EL$, so $EL^Y/{\equiv'}$ is compact,
			\item
			the Borel map $EL/{\equiv'}\to u\cM/H(u\cM)D$ ($[f]_{\equiv'}\mapsto ufuH(u\cM)D$), restricted to $EL^Y/{\equiv'}$, is onto $G'_Y$,
			\item
			the arrow $EL^Y/{\equiv'}\to Y_M$ corresponding to the left map in the last diagram of the proof of Theorem~\ref{thm:main} is onto --- this follows from surjectivity of $S_{Y,m}(M)\to Y_M$ in the diagram above.\qedhere
		\end{itemize}
	\end{proof}
	
	As a conclusion, we get an obvious extension of Corollaries \ref{cor:main theorem for Gal} and \ref{cor:bc_restriction}.
	
	\begin{cor}
		Let $T$ be a countable theory. Then $\Gal_0(T)$ is the quotient of a compact Polish group $G$ by an $F_\sigma$, dense, normal subgroup $H$. Also, $G/H \leq_B \Gal_0(T)$. Moreover, if $T$ has NIP, then $G/H \sim_B \Gal_0(T)$.
	\end{cor}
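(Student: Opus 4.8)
The plan is to obtain the corollary as an instance of Theorem~\ref{thm:main_smaller} with $Y$ a single Kim--Pillay strong type. By Proposition~\ref{prop:amb_exist} I would fix a countable ambitious model $M \models T$, let $m$ be an enumeration of $M$, and set $p := \tp(m/\emptyset)$, $a := m$, and $Y := [m]_{\equiv_{KP}}$. I then check that $Y$ meets the hypotheses of Theorem~\ref{thm:main_smaller}: it is a subset of $p(\fC)$ (as $\equiv_{KP}$ refines $\equiv$), it is type-definable (over $m$) and countably supported (as $m$ is a countable tuple), it is $\equiv_L$-invariant (as $\equiv_L$ refines $\equiv_{KP}$, so $Y$ is a union of $\equiv_L$-classes), and it is a single KP strong type, so $\Aut(\fC/\{Y\})$ acts transitively on it. Moreover, an automorphism fixes $Y$ setwise exactly when it maps $m$ to an $\equiv_{KP}$-equivalent tuple, and since $\Aut(\fC/m) \subseteq \Autf_L(\fC) \subseteq \Autf_{KP}(\fC)$, this forces $\Aut(\fC/\{Y\}) = \Autf_{KP}(\fC)$; hence, using Fact~\ref{fct:galkp_quot}, $\Gal(T/\{Y\}) = \Autf_{KP}(\fC)/\Autf_L(\fC) = \Gal_0(T)$.

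Next I would apply Theorem~\ref{thm:main_smaller} to $Y$, $a$ as above and to the strong type $E := {\equiv_L}$ on $p(\fC)$ (note $Y$ is $E$-invariant, and $E\restr_Y$ is $F_\sigma$, since $\equiv_L$ is the increasing union of the type-definable ``Lascar distance $\leq n$'' relations and is therefore $F_\sigma$, so its restriction to the closed set $Y_M \times Y_M$ is $F_\sigma$). This yields a compact Polish group $\hat G_Y$, an (automatically open) topological group quotient map $\hat r_Y \colon \hat G_Y \to \Gal(T/\{Y\}) = \Gal_0(T)$, and the subgroup $H_Y = \ker \hat r_{[a]_E,Y}$ with properties (1)--(6) and the NIP ``moreover'' clause. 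I set $G := \hat G_Y$ and $H := H_Y$.

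It then remains to read off the three adjectives and the Borel-cardinality statements. For normality, I observe that the orbit map $r_{[a]_E,Y} \colon \Gal_0(T) \to Y/E$, $\sigma\Autf_L(\fC) \mapsto [\sigma(m)]_{\equiv_L}$, is a \emph{bijection} --- injective because $\sigma(m) \equiv_L \tau(m)$ iff $\tau^{-1}\sigma \in \Autf_L(\fC)$, surjective by transitivity of $\Autf_{KP}(\fC)$ on $Y$ --- so that $H = \ker \hat r_{[a]_E,Y} = \ker \hat r_Y$ is a normal subgroup and $\hat r_Y$ descends to a topological group isomorphism $G/H \cong \Gal_0(T)$. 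That $H$ is $F_\sigma$ is immediate from Theorem~\ref{thm:main_smaller}(5). For density, $\{e\}$ is dense in $\Gal_0(T)$ (its closure there equals $\overline{\{e\}} \cap \Gal_0(T) = \Gal_0(T)$, since $\Gal_0(T) = \overline{\{e\}}$ in $\Gal(T)$), and $H = \hat r_Y^{-1}[\{e\}]$ is the preimage of a dense set under an open continuous surjection, hence dense. Finally, Theorem~\ref{thm:main_smaller}(6) gives $E_{H_Y} \leq_B E\restr_Y$; since $E\restr_Y$ has, by the definition of Borel cardinality for strong types, the Borel cardinality of $\equiv_L^M$ restricted to $Y_M = S_m^{KP}(M)$, which is exactly the Borel cardinality of $\Gal_0(T)$, this says $G/H \leq_B \Gal_0(T)$, and under NIP the ``moreover'' clause of Theorem~\ref{thm:main_smaller} upgrades it to $G/H \sim_B \Gal_0(T)$.

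The only genuine work is the bookkeeping in the first and third paragraphs: checking that $Y = [m]_{\equiv_{KP}}$ truly satisfies the hypotheses of Theorem~\ref{thm:main_smaller} (in particular the transitivity, which pins down $\Aut(\fC/\{Y\}) = \Autf_{KP}(\fC)$ and hence $\Gal(T/\{Y\}) = \Gal_0(T)$), that injectivity of $r_{[a]_E,Y}$ makes $H = \ker \hat r_Y$ normal, and that the relation ``$E\restr_Y$'' appearing in Theorem~\ref{thm:main_smaller} is, for the purposes of Borel cardinality, the same as the relation on $S_m^{KP}(M)$ used to define the Borel cardinality of $\Gal_0(T)$. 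None of this is hard; all of the quantitative content is inherited directly from Theorem~\ref{thm:main_smaller}.
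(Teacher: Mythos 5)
Your proof is correct and takes exactly the same approach as the paper's (which simply states: apply Theorem~\ref{thm:main_smaller} to $a$ enumerating a countable ambitious model, $Y := [a]_{\equiv_{KP}}$, $E := {\equiv_L}$, and take $G := \hat G_Y$, $H := H_Y$). You have merely filled in the bookkeeping --- identifying $\Gal(T/\{Y\})$ with $\Gal_0(T)$, noting that injectivity of $r_{[a]_E,Y}$ makes $H = \ker \hat r_Y$ normal, deriving density, and matching $E\restr_Y$ with the defining relation for the Borel cardinality of $\Gal_0(T)$ --- all of which is correct and implicit in the paper's one-line proof.
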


	\begin{proof}
		Apply Theorem~\ref{thm:main_smaller} to $a$ enumerating a countable ambitious model $M$, $Y:= [a]_{\equiv_{KP}}$, and $E:={\equiv_L}$ on $p(\fC)$. Then $G:= \hat G_Y$ and $H:=H_Y$ work.
	\end{proof}

	\begin{cor}
		\label{cor:bc_restriction_smaller}
		Suppose $T$ is a countable NIP theory, while $Y$ is as in Theorem~\ref{thm:main_smaller}. Then there is a compact Polish group $G$ such that if $E$ is a bounded invariant equivalence relation as in Theorem~\ref{thm:main_smaller}, the Borel cardinality of $Y/E$ is the same as that of $G/H$ for some $H\leq G$ which is closed [resp. Borel, or analytic, or $F_\sigma$] 
		whenever $E$ is. (In particular, if $E={\equiv_L}$, $H$ is $F_\sigma$.)\xqed{\lozenge}
	\end{cor}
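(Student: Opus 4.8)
The statement is essentially a repackaging of Theorem~\ref{thm:main_smaller}, and the only point worth flagging is that the compact Polish group produced there is chosen \emph{before} any particular $E$ — it depends only on $Y$ (together with the auxiliary choices of a point of $Y$ and of a countable ambitious model). So the plan is to start by fixing a representative $a\in Y$ and, by Proposition~\ref{prop:amb_exist}, a countable ambitious model $M\models T$ containing $a$, enumerated by some $m\supseteq a$. Applying Theorem~\ref{thm:main_smaller} to this data yields a compact Polish group $\hat G_Y$ together with a topological group quotient map $\hat r_Y\colon\hat G_Y\to\Gal(T/\{Y\})$, and I would set $G:=\hat G_Y$. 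Since $\hat G_Y$ and $\hat r_Y$ are defined without reference to $E$, this single $G$ serves for all admissible $E$ at once.

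Next, given a bounded invariant equivalence relation $E$ on $p(\fC)$ for which $Y$ is $E$-invariant (that is, an $E$ as in Theorem~\ref{thm:main_smaller}), I would put $H:=H_Y=\ker\hat r_{[a]_E,Y}$, in the notation of that theorem. Points (3) and (5) of Theorem~\ref{thm:main_smaller} then give precisely the asserted transfer of regularity: $H$ is closed exactly when $E\restr_Y$ is type-definable — equivalently, when $E$ is closed in the sense of Definition~\ref{dfn:Borel strong types} (these coincide by Fact~\ref{fct: Borel in various senses} and the remark following Definition~\ref{dfn:Borel strong types}) — and $H$ is Borel, analytic, or $F_\sigma$ whenever $E\restr_Y$, equivalently $E$, is. For the parenthetical claim, I would recall the standard fact that in a countable theory ${\equiv_L}$ is $F_\sigma$ (concretely, $({\equiv_L})^M$ is the union over $n\in\bbN$ of the closed relations ``Lascar distance $\le n$''); since ${\equiv_L}$ is $\equiv_L$-invariant, so that $Y$ is indeed ${\equiv_L}$-invariant, and refines $\equiv$, the choice $E={\equiv_L}$ is admissible and forces $H$ to be an $F_\sigma$ subgroup.

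For the Borel cardinality comparison, I would invoke point (6) together with the ``moreover'' clause of Theorem~\ref{thm:main_smaller}; this is the only place the NIP hypothesis on $T$ is used, upgrading $E_H\leq_B E\restr_Y$ to $E_H\sim_B E\restr_Y$, where $E_H$ is the relation of lying in the same left coset of $H$ on the compact Polish group $G$. By Definition~\ref{dfn: Borel cardinality for strong types}, the Borel cardinality of $Y/E$ is by definition that of $(E\restr_Y)^M$ on the compact Polish space $Y_M$, while the Borel cardinality of $G/H$ is that of $E_H$ on $G$; hence $Y/E$ and $G/H$ have the same Borel cardinality, which is exactly the claim.

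I do not expect a genuine obstacle here: all of the analytic and topological-dynamical work is already carried out in Theorem~\ref{thm:main_smaller} (which in turn runs the proof of Theorem~\ref{thm:main} with $\hat G$, $p(\fC)/E$, $\Gal(T)$, $S_a(M)$, $EL/{\equiv'}$ replaced by $\hat G_Y$, $Y/E$, $\Gal(T/\{Y\})$, $Y_M$, $EL^Y/{\equiv'}$). The only points requiring (very light) care are the bookkeeping that $G$ can be fixed uniformly in $E$ and the observation that it is precisely the NIP assumption that promotes the Borel reduction to a Borel equivalence — so that, as with Corollary~\ref{cor:bc_restriction}, the conclusion is stated only under NIP.
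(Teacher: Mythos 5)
Your argument is exactly the intended one: the paper presents this corollary as an immediate consequence of Theorem~\ref{thm:main_smaller} (the $\lozenge$ marks it as not requiring a separate proof), and your unpacking — fixing $a\in Y$ and a countable ambitious $M\ni a$ once, taking $G:=\hat G_Y$ (which depends only on $Y$, not on $E$), then reading off points (3), (5), (6) and the ``moreover'' clause of that theorem — is precisely what is meant. Your remarks that $E$ Borel/analytic/closed/$F_\sigma$ implies $E\restr_Y$ is such (since $Y$ is type-definable and $E$-invariant), and that $(\equiv_L)^M$ is $F_\sigma$ via Lascar distances, are the correct bookkeeping.
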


	\subsection{Type-definable groups}\label{subsection: type-definable groups}

	Recall that for a $\emptyset$-type-definable group $G$, $G^{00}_\emptyset$ [and $G^{000}_\emptyset$] denote the smallest bounded index, $\emptyset$-type-definable [resp. invariant] subgroup of $G$. Equipped with the logic topology, $G/G^{000}_\emptyset$ is a compact (not necessarily Hausdorff) group, while $G/G^{00}_\emptyset$ is a compact Hausdorff group; in fact, $G^{00}_\emptyset/G^{000}_\emptyset$ is the closure of the identity in $G/G^{000}_\emptyset$. For details on these issues, see e.g. \cite{Gis11, GiNe08}. 
	
	Assume that the language is countable. We say that an invariant subgroup $K$ of $G$ is {\em Borel} [resp. {\em analytic}, or $F_\sigma$] if $K_\emptyset$ is such in the type space $S_G(\emptyset)$. Let $E_K$ be the invariant equivalence relation of lying in the same left coset of $K$. Then $K$ is Borel [resp. analytic, or $F_\sigma$] if and only if $E_K$ is (in the sense of Definition \ref{dfn:Borel strong types}). By the {\em Borel cardinality} of $G/K$ we mean the Borel cardinality (in the sense of Definition \ref{dfn: Borel cardinality for strong types}) of the relation $E_K$ .
	
	Having general results on the complexity of $\Gal(T)$ or strong types, one can usually get as an easy corollary analogous results for quotients of a definable group by invariant subgroups of bounded index. This is achieved by expanding the original structure by the affine copy of $G$ as a new sort (see \cite[Section 1.5]{KPR15}). However, here we want to establish results for a type-definable group $G$ in which case the aforementioned trick does not work. So one has to prove counterparts of the results of Section 7 for type-definable groups. In any case, we get the following counterpart of our main theorem.

	\begin{thm}
		\label{thm:main_group}
		Suppose $G$ is a $\emptyset$-type-definable group (in countably many variables, in a countable theory $T$).
		
		Then there is a compact Polish group $\hat G$ and a topological group quotient mapping $\hat r\colon \hat G\to G/G^{000}_\emptyset$ such that for any $K\leq G$ invariant of bounded index, the map $\hat r_K\colon \hat G\to G/K$ (which is the composition of $\hat r$ and the natural quotient map $r_K\colon G/G^{000}_\emptyset\to G/K$) and $H:=\hat r^{-1}[K/G^{000}_\emptyset]$, we have that:
		\begin{enumerate}
			\item
			$H\leq \hat G$ and the fibers of $\hat r_K$ are the left cosets of $H$,
			\item
			$\hat r_K$ is a topological quotient mapping, so $G/K$ is homeomorphic to $\hat{G}/H$ (where $G/K$ is equipped with the logic topology, and $\hat G/H$ with the quotient topology),
			\item
			$K$ is type-definable if and only if $H$ is closed,
			\item
			$K$ is relatively definable in $G$ if and only if $H$ is clopen,
			\item
			if $K$ is Borel [resp. analytic, or $F_\sigma$], so is $H$,
			\item
			$E_H\leq_B E_K$, where $E_H$ is the relation of lying in the same left coset of $H$ in $\hat G$, and $E_K$ is the relation of lying in the same left coset of $K$ in $G$.
		\end{enumerate}
		
		Moreover, if $T$ has NIP (or, more generally, if there is a countable model $M$ such that $(G(M),S_G(M))$ is tame and $G(M)\cdot \tp(e/M)$ is dense in $S_G(M)$), then $E_H\sim_B E_K$.
	\end{thm}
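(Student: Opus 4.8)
The plan is to run the proof of Theorem~\ref{thm:main} almost verbatim, with the ambit $(\Aut(M),S_m(M),\tp(m/M))$ replaced by $(G(M),S_G(M),\tp(e/M))$, the Galois group $\Gal(T)=\Aut(\fC)/\Autf_L(\fC)$ replaced by $G/G^{000}_\emptyset$ (so $G^{000}_\emptyset$ plays the role of $\Autf_L(\fC)$), and the action of $\Aut(M)$ replaced by the left-translation action of $G(M)$ on $S_G(M)$. First I would fix a countable model $M$ (containing the $\emptyset$-definable identity $e$) such that $G(M)\cdot\tp(e/M)$ is dense in $S_G(M)$; such $M$ exists by the argument of Proposition~\ref{prop:amb_exist} (built with $S_G(M_0)$ in place of $S_{m_0}(M_0)$, adding at each step enough elements of $G(\fC)$ so that their types over the current model are dense). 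Since $T$ is countable, $S_G(M)$ is a compact metric space, so the whole apparatus of Section~\ref{sec:top_dyn_to_Polish} applies to $(G(M),S_G(M))$: write $EL=E(G(M),S_G(M))$, fix a minimal left ideal $\cM\unlhd EL$ and an idempotent $u\in\cM$, and put $R(f)=f(\tp(e/M))$. The pseudocompactness lemma (analogue of Lemma~\ref{lem:pseudocompactness}) is in fact \emph{easier} here: given nets $(g_i)$ in $G(M)$ and $(p_i)$ in $S_G(M)$ with $\tp(g_i/M)\to q_1$, $p_i\to q_2$, $g_i\cdot p_i\to q_3$, pick $h_i\models p_i$ in $\fC$; then $g_ih_i\in G(\fC)$ and $g_i\cdot p_i=\tp(g_ih_i/M)$, so by a routine compactness argument there are $a_1,a_2\in G(\fC)$ with $\tp(a_1/M)=q_1$, $\tp(a_2/M)=q_2$, $\tp(a_1a_2/M)=q_3$ --- no auxiliary ``coherence'' condition is needed because the group operation is genuine. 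From this, exactly as in Proposition~\ref{prop:twopoints}, for every $f\in EL$ and $p\in S_G(M)$ one gets $a_1,a_2\in G(\fC)$ with $\tp(a_1/M)=R(f)$, $\tp(a_2/M)=p$, $\tp(a_1a_2/M)=f(p)$.

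Next I would recall that the natural surjection $\pi\colon S_G(M)\to G/G^{000}_\emptyset$, $\tp(a/M)\mapsto aG^{000}_\emptyset$, is a well-defined topological quotient map (see \cite{Gis11}; this is the analogue of $S_m(M)\to\Gal(T)$ from Fact~\ref{fct:sm_to_gal}), and set $r:=\pi\circ R\colon EL\to G/G^{000}_\emptyset$. Using the two-points property and density of $G(M)\cdot\tp(e/M)$, one checks that $r$ is a semigroup epimorphism, as in Lemma~\ref{lem:homom}. To prove $H(u\cM)\leq\ker r$ (the analogue of Proposition~\ref{prop:H(uM)_in_ker}) I would use a presentation of $G^{000}_\emptyset$ as an increasing union $\bigcup_n Z_n$ of $\emptyset$-type-definable sets --- e.g.\ $Z_n=\{a_1^{-1}b_1\cdots a_n^{-1}b_n: a_j,b_j\in G(\fC),\ d_L(a_j,b_j)\leq 1\}$, each type-definable because ``$d_L(\cdot,\cdot)\leq 1$'' is (a projection of a type-definable condition, type-definable by compactness) --- so that $\ker r=\bigcup_n\widetilde G_n$ with $\widetilde G_n:=R^{-1}[\{\tp(a/M): a\in Z_n\}]$ closed; then $u\in\widetilde G_{n_0}$ for some $n_0$, and the Baire-category/$\tau$-calculus argument of \cite[Theorem 2.7]{KPR15} and \cite{KP17} (which uses only the abstract ambit structure and the group $u\cM/H(u\cM)$) gives $H(u\cM)\subseteq\widetilde G_{n_1}$ for some $n_1$. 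Consequently $r\restr_{u\cM}\colon u\cM\to G/G^{000}_\emptyset$ is a surjective group homomorphism (Proposition~\ref{prop:homom}), and a topological group quotient map by the verbatim analogues of Proposition~\ref{prop:id_clsd} ($\overline J\subseteq\ker r\cap\cM$, via the two-points property) and the Claim inside Proposition~\ref{lem:r_restr_to_top_quot}. With $D:=[u]_\equiv\cap u\cM$ the $\tau$-closed subgroup of $u\cM$ as in Section~\ref{sec:top_dyn_to_Polish}, Corollary~\ref{cor:Polish_quotient_Core(D)} shows $\hat G:=u\cM/H(u\cM)\Core(D)$ is a compact Polish group, and $r$ induces the promised topological group quotient map $\hat r\colon\hat G\to G/G^{000}_\emptyset$, as well as a topological quotient map $G':=u\cM/H(u\cM)D\to G/G^{000}_\emptyset$ (note $D\leq\ker(r\restr_{u\cM})$ since $R(D)=\{R(u)\}$ and $r(u)=e$, hence $H(u\cM)D\subseteq\ker(r\restr_{u\cM})$); this is the analogue of Corollary~\ref{cor:polish_quotient}.

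Now fix $K\leq G$ invariant of bounded index; then $G^{000}_\emptyset\leq K$ (and $G^{000}_\emptyset$ is normal in $G$), so $G$ acts on the coset space $G/K$ by left translation, the action descends to $G/G^{000}_\emptyset$, and $r_K\colon G/G^{000}_\emptyset\to G/K$ is the orbit map at $eK$; it is a topological quotient map, being the composition of the quotient maps $S_G(M)\twoheadrightarrow G/G^{000}_\emptyset\twoheadrightarrow G/K$. Hence $\hat r_K:=r_K\circ\hat r$ is a topological quotient map (composition of such) and the orbit map at $eK$ of the $\hat G$-action via $\hat r$; its fibers are the left cosets of $H=\hat r^{-1}[K/G^{000}_\emptyset]$, which gives (1), and (2) follows since a bijective topological quotient map $\hat G/H\to G/K$ is a homeomorphism. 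Points (3) and (4) follow from (2) exactly as in Theorem~\ref{thm:main}: $\hat G/H\cong G/K$ is Hausdorff iff $H$ is closed (Fact~\ref{fct:quotient_by_closed_subgroup}) iff $K$ is type-definable (the $G/K$-version of Fact~\ref{fct:tdf_t2}), and $\hat G/H$ is discrete iff $H$ is clopen iff $K$ is relatively definable in $G$. For (5) and (6) I would set up, just as in the last two diagrams of the proof of Theorem~\ref{thm:main} with $S_a(M),\,p(\fC)/E$ replaced by $S_G(M),\,G/K$, the commutative square with corners $\overline{u\cM}/{\equiv}$, $G'$, $S_G(M)$, $G/K$: the map $\overline{u\cM}/{\equiv}\to S_G(M)$ is induced by $R$ (essentially the inclusion, since $EL/{\equiv}\cong S_G(M)$); the map $\overline{u\cM}/{\equiv}\to G'$ is induced by $f\mapsto uf/H(u\cM)$, continuous by Fact~\ref{fct:strange_cont} (cf.\ Proposition~\ref{prop:uM/HuMD_Polish}) and surjective; and $S_G(M)\to G/K$ is $\pi$ followed by $r_K$. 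All four spaces are compact Polish and the top and left maps continuous, so Lemma~\ref{lem:main_1} applies: if the relation $E_K^M$ on $S_G(M)$ (lying in the same point of $G/K$) is closed/Borel/analytic/$F_\sigma$/clopen, so is the induced relation $E_{G'}$ on $G'$, and $E_{G'}\leq_B E_K^M$. Since $E_H$ is the preimage of $E_{G'}$ under the continuous surjection $\hat G\to G'$ of compact Polish spaces, and $E_H=f^{-1}[H]$ for $f(x,y)=y^{-1}x$ (with $H$ a section of $E_H$), the listed properties of $E_{G'}$ are equivalent to the corresponding properties of $H$ and of $E_H$, with $E_H\sim_B E_{G'}$; combined with the conventions recalled before the theorem ($K$ is Borel/analytic/$F_\sigma$ iff $E_K$ is, iff $E_K^M$ is, by the analogue of Fact~\ref{fct: Borel in various senses}), this yields (5) and $E_H\leq_B E_K$.

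Finally, for the ``moreover'' part: under NIP, any countable ambit model $M$ as above automatically gives a tame system $(G(M),S_G(M))$ --- by Corollary~\ref{cor:tame_dense} it suffices to test tameness on characteristic functions of clopen sets, i.e.\ on $\varphi$-formulas, and an $\ell^1$ sequence among the translates $g_n\cdot([\varphi(x,b)]\cap S_G(M))$, with $g_n\in G(M)$, $b\in M$, would (by Fact~\ref{fct:ind_untame} and Proposition~\ref{prop:dyn_BFT}) produce an independent family of instances of the formula $\varphi(z^{-1}x,y)$, contradicting NIP; under the explicit alternative hypothesis of the theorem, such $M$ is simply given. Given tameness, I would run the analogue of the very last diagram of the proof of Theorem~\ref{thm:main}: corners $EL/{\equiv'}$, $G'$, $S_G(M)$, $G/K$, where $\equiv'$ is the closed relation $f_1\equiv' f_2\iff R(f_1)=R(f_2)\ \&\ R(f_1u)=R(f_2u)$; $EL/{\equiv'}$ is compact Polish by Proposition~\ref{prop: quotients of EL are Polish}; the map $EL/{\equiv'}\to G'$, $[f]_{\equiv'}\mapsto ufu/H(u\cM)D$, is Borel by Corollary~\ref{cor:borel_map}; and $EL/{\equiv'}\to S_G(M)$ is the continuous surjection induced by $R$. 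Then Lemma~\ref{lem:main_2} gives $E_K^M\leq_B E_{G'}\sim_B E_H$, so together with (6) we conclude $E_H\sim_B E_K$. The main obstacle is the second paragraph: proving $H(u\cM)\leq\ker r$ and that $r\restr_{u\cM}$ is a topological quotient map --- these require identifying the correct presentation of $G^{000}_\emptyset$ as an increasing union of type-definable pieces and checking that the delicate $\tau$-topology and ``$\circ$'' computations of \cite{KPR15, KP17} transfer unchanged to the flow $(G(M),S_G(M))$, which they should, since those arguments only use the abstract ambit structure together with the pseudocompactness lemma.
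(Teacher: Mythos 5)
Your proposal follows essentially the same approach as the paper's proof, which is given only as a terse list of modifications to the proof of Theorem~\ref{thm:main}; you have fleshed out exactly the items the paper flags (the counterpart of the pseudocompactness lemma; the definition of $r$ via the fact that $g_1\equiv_M g_2$ implies $g_1^{-1}g_2\in G^{000}_\emptyset$; the presentation of $G^{000}_\emptyset$ as an increasing union of type-definable pieces $Z_n$ built from $\{g_1^{-1}g_2 : d_L(g_1,g_2)\le 1\}$, as in the cited argument of \cite{KP17}; and the use of $G^{00}_\emptyset/G^{000}_\emptyset$ as the closure of the identity in $G/G^{000}_\emptyset$ for the analogue of Proposition~\ref{lem:r_restr_to_top_quot}). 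One small point worth being slightly more careful about is the tameness argument under NIP at the very end: translating an $\ell^1$ sequence of translates of $[\varphi(x,b)]\cap S_G(M)$ by elements of $G(M)$ into an independent family of instances of a single formula requires invoking the (standard, by compactness) relative definability of the group operation of a type-definable group, so that ``$\varphi(z^{-1}x,y)$'' is replaced by a genuine first-order formula agreeing with it on $G$; with that remark added, the argument is correct.
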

	\begin{proof}
		The proof is analogous to the proof of Theorem~\ref{thm:main}. Namely, we take a countable model $M$ such that $G(M)\cdot \tp(e/M)$ is dense in $S_G(M)$ --- the construction of such model is similar to the construction of an ambitious model in Proposition~\ref{prop:amb_exist} --- and consider the dynamical system $(G(M),S_G(M))$. Since $(G(M),S_G(M),\tp(e/M))$ is an ambit, all the considerations in Section~\ref{sec:top_dyn_to_Polish} apply.
		
		Then we follow the blueprint of Section~\ref{sec:main_thm}, making modifications similar to the original ideas of \cite{KP17}.
		\begin{itemize}
			\item
			$G=G(\fC)$ takes place of $\Aut(\fC)$, while $G/G^{000}_\emptyset$ takes place of $\Gal(T)$.
			\item
			We have a counterpart of Lemma~\ref{lem:pseudocompactness}, with a similar proof.
			\item
			To define a counterpart of the map $r$, we use the fact that if $g_1,g_2\in G$ have the same type over $M$, then $g_1^{-1}g_2\in G^{000}_\emptyset$.
			\item
			In the proof of Proposition~\ref{prop:H(uM)_in_ker}, instead of using Lascar distance directly, we use powers of the set $\{g_1^{-1}g_2\mid g_1,g_2\in G, d_L(g_1,g_2)\leq 1 \}$, where $d_L$ is the Lascar distance (as in the proof of \cite[Theorem 0.1]{KP17}).
			\item
			In proving the counterpart of Proposition~\ref{prop:id_clsd}, once again we use the fact that if $g_1,g_2\in G$ have the same type over $M$, then $g_1^{-1}g_2\in G^{000}_\emptyset$.
			\item
			In the proof of Proposition~\ref{lem:r_restr_to_top_quot}, we use the fact that $G^{00}_\emptyset/G^{000}_\emptyset$ is the closure of the identity in $G/G^{000}_\emptyset$.
			\item
			Other parts of the proof are almost the same.\qedhere
		\end{itemize}
	\end{proof}
	
	Applying this theorem to $K:=G^{000}_\emptyset$, we get
	
	\begin{cor}\label{cor:quotient by G000}
		Suppose $G$ is a $\emptyset$-type-definable group (in countably many variables, in a countable theory $T$). Then $G/G^{000}_\emptyset$ is homeomorphic to the quotient of a compact Polish group $\hat{G}$ by an $F_\sigma$ normal subgroup $H$. Also $\hat{G}/H \leq_B G/G^{000}_\emptyset$. Moreover, if $T$ has NIP, then $\hat{G}/H \sim_B G/G^{000}_\emptyset$.\xqed{\lozenge}
	\end{cor}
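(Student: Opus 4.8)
The plan is to apply Theorem~\ref{thm:main_group} directly, with $K := G^{000}_\emptyset$. Since $G^{000}_\emptyset$ is a bounded-index invariant subgroup of $G$, the theorem provides a compact Polish group $\hat G$ and a topological group quotient mapping $\hat r \colon \hat G \to G/G^{000}_\emptyset$; set $H := \hat r^{-1}[K/G^{000}_\emptyset]$ as in the theorem. We then only need to unwind what points (1)--(6) and the ``moreover'' part say in this particular case.

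Because $K = G^{000}_\emptyset$, the image $K/G^{000}_\emptyset$ is the trivial subgroup of $G/G^{000}_\emptyset$, so $\hat r_K = \hat r$ and $H = \hat r^{-1}[\{e\}] = \ker \hat r$. Thus $H$ is a normal subgroup of $\hat G$ (being the kernel of a group homomorphism; this is also part of point~(1)). By point~(2), $\hat r = \hat r_K$ is a topological quotient mapping, so $G/G^{000}_\emptyset$ is homeomorphic to $\hat G/H$. For the $F_\sigma$ assertion, recall the classical fact that $G^{000}_\emptyset$ is $F_\sigma$ as a subset of $S_G(\emptyset)$ --- equivalently, that $E_K = E_{G^{000}_\emptyset}$ is $F_\sigma$ in the sense of Subsection~\ref{subsection: type-definable groups} --- which is the group-theoretic analogue of $\equiv_L$ being $F_\sigma$ (used in the same way in Corollary~\ref{cor:main theorem for Gal}); see \cite{Gis11,GiNe08}. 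Point~(5) of Theorem~\ref{thm:main_group} then gives that $H$ is $F_\sigma$.

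Finally, the Borel-cardinality statements are immediate from point~(6) and the ``moreover'' part. By the conventions fixed in Subsection~\ref{subsection: type-definable groups}, the Borel cardinality of $G/G^{000}_\emptyset$ is that of $E_K$, while the Borel cardinality of $\hat G/H$ is that of the coset relation $E_H$ on the compact Polish group $\hat G$. So point~(6), which says $E_H \leq_B E_K$, means $\hat G/H \leq_B G/G^{000}_\emptyset$; and, when $T$ has NIP, the ``moreover'' part, $E_H \sim_B E_K$, means $\hat G/H \sim_B G/G^{000}_\emptyset$.

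I expect no real obstacle here: the statement is a direct specialization of Theorem~\ref{thm:main_group}, and the only ingredient that is not purely formal is the (classical) fact that $G^{000}_\emptyset$ is $F_\sigma$, which is exactly what upgrades ``$H$ is closed/analytic'' to ``$H$ is $F_\sigma$'' in the conclusion.
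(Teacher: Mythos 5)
Your proof is correct and matches the paper's intended argument exactly: the paper itself introduces the corollary with ``Applying this theorem to $K:=G^{000}_\emptyset$, we get,'' and your write-up simply spells out that specialization, including the (correct, standard) observation that $G^{000}_\emptyset$ is $F_\sigma$ so that point~(5) of Theorem~\ref{thm:main_group} gives $H$ is $F_\sigma$.
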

	
	The following corollary is a generalization of \cite[Corollary 4.7]{KPR15}; in particular, the group $G$ need not be definable (or a subgroup of a definable group).

	\begin{cor}
		\label{cor:main_group}
		Suppose $G$ is a $\emptyset$-type-definable (countably supported) group in a countable theory, while $K\leq G$ is Borel (or, more generally, analytic), invariant of bounded index. Then exactly one of the following conditions holds:
		\begin{itemize}
			\item
			$K$ is relatively definable (so $G/K$ is smooth) and $[G:K]<\infty$,
			\item
			$K$ is type-definable (so $G/K$ is smooth) and $[G:K]=2^{\aleph_0}$,
			\item
			$K$ is not type-definable, $G/K$ is not smooth, and $[G:K]=2^{\aleph_0}$. 
		\end{itemize}
	\end{cor}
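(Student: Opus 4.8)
The plan is to derive the trichotomy from Fact~\ref{fct:trich_polish} in essentially the same way that Corollary~\ref{cor:main_KPR} is derived from it. First I would apply Theorem~\ref{thm:main_group}, obtaining the compact Polish group $\hat G$, the subgroup $H=\hat r^{-1}[K/G^{000}_\emptyset]$, and the quotient map $\hat r_K\colon \hat G\to G/K$ whose fibers are exactly the left cosets of $H$. Since $K$ is assumed analytic, point~(5) of that theorem gives that $H$ is analytic in $\hat G$, hence has the strict Baire property, so Fact~\ref{fct:trich_polish} applies to the pair $(\hat G,H)$ and yields that exactly one of its three alternatives holds.

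Next I would translate each alternative back into a statement about $K$ using Theorem~\ref{thm:main_group}. Because $\hat r_K$ is onto with fibers the cosets of $H$, we have $[G:K]=[\hat G:H]$; in particular $[\hat G:H]$ finite corresponds to $[G:K]<\infty$, and $[\hat G:H]=2^{\aleph_0}$ to $[G:K]=2^{\aleph_0}$. By point~(4), $H$ is clopen if and only if $K$ is relatively definable in $G$; by point~(3), $H$ is closed if and only if $K$ is type-definable. For the non-smoothness in the third case: Fact~\ref{fct:trich_polish} gives that $E_H$ (the relation of lying in the same left coset of $H$) is not smooth, and since point~(6) gives $E_H\leq_B E_K$, smoothness of $E_K$ would force smoothness of $E_H$; hence $E_K$ is not smooth. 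In the first two cases smoothness of $G/K$ is automatic: finitely many classes in the first case, and a type-definable $K$ makes $(E_K)^M$ closed, hence $E_K$ smooth in the second case (by the standard fact, used already in the proof of Fact~\ref{fct:trich_polish}, that closed equivalence relations are smooth).

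Since the three alternatives of Fact~\ref{fct:trich_polish} are mutually exclusive and exhaustive, so are the three translated alternatives, which completes the argument. I do not expect any genuine obstacle here: all the heavy lifting has already been done in Theorem~\ref{thm:main_group}, and the only point requiring a little care is that the non-smoothness direction needs only the reduction $E_H\leq_B E_K$ from point~(6) --- so that no NIP hypothesis is required for this corollary --- together with the elementary fact that Borel reducibility transfers smoothness downward.
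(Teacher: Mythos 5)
Your proposal is correct and matches the paper's own argument, which simply notes that Corollary~\ref{cor:main_group} follows analogously to Corollary~\ref{cor:main_KPR} via Theorem~\ref{thm:main_group} and Fact~\ref{fct:trich_polish}. You are also right on the one delicate point: the non-smoothness in the third alternative only needs the reduction $E_H\leq_B E_K$ from point~(6), so no NIP hypothesis is required.
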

	\begin{proof}
		Analogous to the proof of Corollary~\ref{cor:main_KPR}. 
	\end{proof}
	\begin{rem}
		Clearly, Theorem~\ref{thm:main_group} and Corollaries \ref{cor:quotient by G000} and \ref{cor:main_group} remain true if we name countably many constants, thus $\emptyset$-type-definable and ($\emptyset$-)invariant can be replaced with ``$A$-type-definable" and ``$A$-invariant" for an arbitrarily chosen countable set $A$ of parameters.
	\end{rem}

\appendix
	\section{Examples}
	We analyze examples of non-G-compact theories $T$ from \cite{CLPZ01} and \cite{KPS13} and see how Theorem~\ref{thm:main} can be applied to them. Namely, we describe the compact group $\hat G$ (which will turn out to be the Ellis group) and the kernel of $\hat r\colon \hat G\to \Gal(T)$ in those cases. This allows us to compute the Borel cardinality of $\Gal(T)$ in these examples (which was also computed \cite[Remark 5.3]{KPS13}, but in a different way and without giving details).

	In this section, unless otherwise stated, $M_n$ denotes the countable structure $(M_n,R_n,C_n)$, where $n>1$ is a fixed natural number, the underlying set is ${\bbQ}/{\bbZ}$, $R_n$ is the unary function $x\mapsto x+1/n$, and $C_n$ is the ternary predicate for the natural (dense, strict) circular order. Let a tuple $m_n$ enumerate $M_n$. It is easy to show (see \cite[Proposition 4.2]{CLPZ01}) that $\Th(M_n)$ has quantifier elimination and the real circle $S^1_n=\bbR/\bbZ$ equipped with the rotation by the angle $2\pi/n$ and the circular order is an elementary extension of $M_n$. As usual, $\fC \succ S^1_n$ is a monster model.
	
	Given any $c' \in \fC$, by $\st(c')$ we denote the standard part of $c'$ computed in the circle $S^1=\bbR/\bbZ$.
	As $\st(c')$ depends only on $\tp(c'/M_n)$, this extends to a standard part mapping on the space of 1-types $S_1(M_n)$.
	
	\begin{prop}
		\label{prop:group_onetypes}
		The Ellis group of $(\Aut(M_n),S_1(M_n))$ is isomorphic to ${\bbZ}/n{\bbZ}$.
	\end{prop}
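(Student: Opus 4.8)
The plan is to make the flow $(\Aut(M_n),S_1(M_n))$ completely explicit via quantifier elimination and then read off the Ellis group. By QE a complete $1$-type over $M_n=\bbQ/\bbZ$ is determined by the circular cut it realizes (the symbol $R_n$ adds nothing, since $R_n(x)=x+1/n$ is a term), so $S_1(M_n)$ consists of the isolated realized types $\{\tp(q/M_n):q\in\bbQ/\bbZ\}$ together with the closed set $Z$ of ``cut'' types: one type $\hat a$ with $\st(\hat a)=a$ for each irrational $a\in S^1=\bbR/\bbZ$, and two types $q^{-},q^{+}$ (just below/above $q$) for each $q\in\bbQ/\bbZ$. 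An automorphism of $M_n$ is exactly an orientation-preserving homeomorphism of $S^1$ preserving $\bbQ/\bbZ$ setwise and commuting with the rotation $R_n\colon x\mapsto x+1/n$; it acts on $S_1(M_n)$ through this picture, and $\st\colon S_1(M_n)\to S^1$ is an $\Aut(M_n)$-equivariant surjection. A routine orbit-closure computation shows $Z$ is the unique minimal subflow. I will also use that, since every $\sigma\in\Aut(M_n)$ commutes with $\st$, the value $\st(f(p))$ for $f\in EL:=E(\Aut(M_n),S_1(M_n))$ depends only on $\st(p)$; this yields an induced continuous semigroup epimorphism $EL\to E(\Aut(M_n),S^1)$.

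Fix a minimal left ideal $\cM$ of $EL$ and an idempotent $u\in\cM$. For the lower bound: $R_n$ is central in $\Aut(M_n)$, so a routine limit argument shows $\pi_{R_n}$ is central in $EL$. Hence $\pi_{R_n}u\in EL\cdot\cM\subseteq\cM$ and $u(\pi_{R_n}u)=\pi_{R_n}uu=\pi_{R_n}u$, so $\pi_{R_n}u\in u\cM$; centrality gives $(\pi_{R_n}u)^k=\pi_{R_n}^{k}u$, so $(\pi_{R_n}u)^n=u$ (the identity of $u\cM$), and the order is exactly $n$, because $\pi_{R_n}^{k}u=u$ for $0<k<n$ would force $\pi_{R_n}^{k}$ to fix $u[S_1(M_n)]$ pointwise, which is impossible since $u[S_1(M_n)]\subseteq Z$ and $\st(\pi_{R_n}^{k}p)=\st(p)+k/n\neq\st(p)$. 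Thus $\bbZ/n\bbZ\cong\langle\pi_{R_n}u\rangle\le u\cM$.

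For the upper bound I first pin down $\operatorname{Fix}(u)=u[S_1(M_n)]$. It is standard that $\cM\cdot p$ is a minimal subflow for each $p$, hence equals $Z$, so $u[S_1(M_n)]\subseteq Z$, and $\operatorname{Fix}(u)$ is $\pi_{R_n}$-invariant by centrality. Pushing forward along $\st$, the image $\bar u$ of $u$ in $E(\Aut(M_n),S^1)$ is a minimal idempotent; composing once more with the flow epimorphism $S^1\to S^1/\langle R_n\rangle$ and using that $\Aut(M_n)$ acts there as a dense subgroup of $\Homeo^+$ — whose action on the circle is strongly proximal, so that the minimal ideal of its enveloping semigroup consists of constant maps — one gets that $\bar u$ collapses $S^1$ onto a single $R_n$-orbit $F^{*}=\{s^{*}+k/n:k\in\bbZ/n\bbZ\}$ of size $n$. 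Consequently $\st(\operatorname{Fix}(u))=F^{*}$, so $\operatorname{Fix}(u)\subseteq\st^{-1}[F^{*}]\cap Z$, a set of at most $2n$ points ($\{\widehat{s^{*}+k/n}:k\}$ if $s^{*}$ is irrational, $\{(s^{*}+k/n)^{\pm}:k\}$ if $s^{*}$ is rational); being nonempty and $\pi_{R_n}$-invariant, $\operatorname{Fix}(u)$ is a union of $\pi_{R_n}$-orbits there, each of size $n$. Now any $g\in u\cM$ satisfies $g=g\circ u$ and maps $\operatorname{Fix}(u)$ bijectively onto itself, so it is determined by $g\restr_{\operatorname{Fix}(u)}$, which is an order-preserving bijection of the finite circularly ordered set $\operatorname{Fix}(u)$ (being a pointwise limit of order-preserving bijections), hence a rotation of it; and since $\st(g(p))$ depends only on $\st(p)$, the ``half-step'' rotations in the rational case are excluded (they would send $(s^{*})^{-}$ and $(s^{*})^{+}$, which have the same standard part, to points with different standard parts). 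What survives is exactly the powers of $\pi_{R_n}\restr_{\operatorname{Fix}(u)}$, so $g=\pi_{R_n}^{k}u$ for some $k$, whence $u\cM=\langle\pi_{R_n}u\rangle\cong\bbZ/n\bbZ$.

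The heart of the argument, and the step I expect to be the main obstacle, is showing that $\operatorname{Fix}(u)$ is finite, i.e.\ controlling the enveloping semigroup of the circle action; I plan to extract this from the classical behaviour of $(\Homeo^{+}(S^1),S^1)$ via the quotient $S^1\to S^1/\langle R_n\rangle$. The remainder is careful bookkeeping with the decorated types $q^{\pm},\hat a$ and with the rational-versus-irrational dichotomy for $s^{*}$, which is routine once the explicit description of $S_1(M_n)$ is in place.
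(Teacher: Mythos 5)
Your argument is correct, but it follows a genuinely different route from the paper's. The paper singles out the set $J$ of types with standard part in $[0,1/n)+\bbZ$, proves by a hands-on transitivity argument that for each non-isolated $p$ there is a unique $f_p\in EL$ mapping all of $J$ to $p$, sets $u:=f_{p_0}$ for a chosen $p_0\in J$, and then verifies directly that $u$ is idempotent, that $\cM:=ELu$ is a minimal left ideal, and that $u\cM$ acts freely and transitively on the $R_n$-orbit of $p_0$. You instead take an arbitrary minimal idempotent $u$ and read off the structure of $u[S_1(M_n)]$ by passing through the factor flows $S^1$ and $S^1/\langle R_n\rangle$ and appealing to proximality of $(\Homeo^+(S^1),S^1)$. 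Two ingredients you flag should be spelled out: that the image of $\Aut(M_n)$ is dense in $\Homeo^+(S^1/\langle R_n\rangle)$ in the uniform (hence pointwise) topology, so that the two enveloping semigroups coincide; and that for a minimal \emph{proximal} flow (strong proximality is more than you need) every minimal left ideal consists of constant maps --- if $p$ in a minimal left ideal satisfied $p(a)\neq p(b)$, proximality gives $q$ with $qp(a)=qp(b)$, and minimality gives $r$ with $rqp=p$, a contradiction. In exchange for this conceptual input you must handle the possible $2n$-element $u[S_1(M_n)]$ over a rational base point, which the paper avoids by choosing $u$ explicitly; your exclusion of half-step rotations via the constraint that $\st(g(p))$ depends only on $\st(p)$ is correct. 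Both proofs rest on centrality of $R_n$ for the lower bound $\bbZ/n\bbZ\hookrightarrow u\cM$; your version is less self-contained but gives a cleaner explanation of \emph{why} the Ellis group is exactly $\bbZ/n\bbZ$, namely that the action is proximal modulo $R_n$.
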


	\begin{proof}
		In this proof, by short interval we mean an interval of length less than $1/n$. We also identify $\Aut(M_n)$ with its image in the Ellis semigroup.
		
		Note that $R_n$ is a $\emptyset$-definable automorphism of $M_n$, and as such, it is in the center of $\Aut(M_n)$, and so it is also central in the Ellis semigroup.
		
		From quantifier elimination, it follows easily that $M_n$ is $\omega$-categorical, and $\Aut(M_n)$ acts transitively on the set of short open intervals in $M_n$.
		
		Denote by $J$ the set of $p\in S_1(M_n)$ with $\st(p)\in [0,1/n) + \bbZ \subseteq \bbR/\bbZ$.
		
		\begin{clm}
			For any non-isolated type $p$, there is a unique $f_{p}\in EL:=E(\Aut(M_n),S_1(M_n))$ such that for all $q\in J$ we have $f_p(q)=p$.
		\end{clm}
		\begin{clmproof}
			Enumerate $M_n$ as $(a_k)_{k\in \bbN}$.
			
			Since $p$ is non-isolated, for each $k\in \bbN$ there is a short open interval $I_k$ such that $p$ is concentrated on $I_k$ and $a_0,\ldots,a_k\notin I_k$. By quantifier elimination, it is easy to see that $p$ is the only type in $S_1(M_n)$ concentrated on all $I_k$'s.
			
			Now, let $J_k:=(\frac{-1}{2kn},\frac{1}{n}-\frac{1}{kn})$. Notice that if $q\in J$, then $q$ is concentrated on all but finitely many $J_k$'s.
			
			Since each $I_k$ and $J_k$ is a short open interval, we can find for each $k$ some $\sigma_k\in \Aut(M_n)$ such that $\sigma_k[J_k]=I_k$. It follows that for any $q\in J$ we have $\lim_k\sigma_k(q)=p$. Thus, if we take any $f_p\in EL$ which is a limit point of $(\sigma_k)_k$, we will have $f_p(q)=p$ for all $q \in J$.
			
			To see that $f_p$ is unique, note that for each integer $j$ and $q\in R_n^j[J]$, $f_p(q)\in f_p[R_n^j[J]]=f_pR_n^j[J]=R_n^j f_p[J]=\{R_n^j(p)\}$. Since $J\cup R_n[J]\cup\ldots\cup R_n^{n-1}[J]=S_1(M_n)$, uniqueness follows.
		\end{clmproof}
	
		Take any non-isolated $p_0\in J$, and let $u=f_{p_0}$ (as in the claim). By uniqueness in the claim, $u$ is an idempotent. Denote by $\mathcal O$ the $R_n$-orbit of $p_0$.
		
		Note that every $f \in ELu$ is constant on $J$. As in the above proof of uniqueness, since $u$ and $uf$ commute with $R_n$, we easily see that the image of $uf$ equals $\mathcal O$.
	
		Now, we show that $\cM:=ELu$ is a minimal left ideal. Consider any $f \in \cM$. 
		By the last paragraph, $uf(p_0)=R_n^j(p_0)$ for some $j$. Then $R_n^{-j}uf(p_0)=p_0$ and $R_n^{-j}uf$ is constant on $J$, so by uniqueness in the claim, $R_n^{-j}uf=u$. It follows that $ELf=ELu=\cM$, so $\cM$ is a minimal left ideal.
		
		Finally, $u\cM$ acts faithfully on $\mathcal O$ (since each $f\in u\cM$ is constant on $J$, $R_n[J],\ldots$, it is determined by its values on $\mathcal O$). As elements of $u\cM$ commute with $R_n$, we see that they act on $\mathcal O$ as powers of $R_n$. Since $R_nu = uR_nu \in u\cM$ acts as $R_n$, we get that $u\cM \cong \bbZ/n\bbZ$.
	\end{proof}
	
	\begin{lem}
		\label{lem:ellis_group_onesort}
		Suppose $n>1$.
		
		The restriction $S_{m_n}(M_n)\to S_1(M_n)$ to the first variable induces an isomorphism of Ellis semigroups $E(\Aut(M_n),S_{m_n}(M_n))\cong E(\Aut(M_n),S_1(M_n))$
		
		In particular, the Ellis group of $(\Aut(M_n),S_{m_n}(M_n))$ is isomorphic to $\bbZ/n\bbZ$.
	\end{lem}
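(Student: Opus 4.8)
The plan is to take for the claimed isomorphism the very map of enveloping semigroups induced by the restriction map, and to reduce its injectivity to an elementary fact about the circular order. Write $\pi\colon S_{m_n}(M_n)\to S_1(M_n)$ for the restriction to the first variable; it is continuous, $\Aut(M_n)$-equivariant, and surjective (as $M_n$ has a unique $1$-type over $\emptyset$, every $1$-type over $M_n$ extends to a complete type over $M_n$ extending $\tp(m_n/\emptyset)$), i.e.\ a factor map of $\Aut(M_n)$-flows. It is standard (and a direct net computation) that such a factor map induces a continuous, surjective semigroup homomorphism $\pi_*\colon E(\Aut(M_n),S_{m_n}(M_n))\to E(\Aut(M_n),S_1(M_n))$, namely the one sending $f$ to the unique $g$ with $g\circ\pi=\pi\circ f$. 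Since both enveloping semigroups are compact Hausdorff, it remains to prove that $\pi_*$ is injective; it will then automatically be an isomorphism of compact left topological semigroups, hence will identify the Ellis groups, and the last assertion will follow from Proposition~\ref{prop:group_onetypes}.

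For injectivity I would first note that for every index $j$ the restriction $\pi^{(j)}\colon S_{m_n}(M_n)\to S_1(M_n)$ to the $j$-th variable is again a flow factor map, and that all the resulting homomorphisms $\pi^{(j)}_*$ agree with $\pi_*=\pi^{(0)}_*$: they coincide on the dense subset $\{\pi_\sigma:\sigma\in\Aut(M_n)\}$ of $E(\Aut(M_n),S_{m_n}(M_n))$, because $\pi^{(j)}_*(\pi_\sigma)$ is simply $\sigma$ acting on $S_1(M_n)$, independently of $j$. Hence, for any $f\in E(\Aut(M_n),S_{m_n}(M_n))$ and $p\in S_{m_n}(M_n)$, the restriction of $f(p)$ to the variable $x_j$ is $\pi^{(j)}(f(p))=\pi^{(j)}_*(f)(\pi^{(j)}(p))=\pi_*(f)(\pi^{(j)}(p))$, so it is determined by $\pi_*(f)$ and $p$. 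Writing $f(p)=\tp(\bar b/M_n)$ and $f'(p)=\tp(\bar b'/M_n)$ (with $\bar b,\bar b'$ realizing $\tp(m_n/\emptyset)$), it follows that $\pi_*(f)=\pi_*(f')$ forces $\tp(b_j/M_n)=\tp(b'_j/M_n)$ for all $j$; thus $\pi_*$ is injective provided: if $\bar b,\bar b'$ are tuples indexed like $m_n$, both realizing $\tp(m_n/\emptyset)$, with $\tp(b_j/M_n)=\tp(b'_j/M_n)$ for all $j$, then $\tp(\bar b/M_n)=\tp(\bar b'/M_n)$.

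To prove this last statement I would use quantifier elimination in $\Th(M_n)$: since $C_n$ is ternary and the only function symbol $R_n$ is unary, each atomic formula over $M_n$ involves at most three of the variables $x_j$, so it suffices to show $\tp(b_ib_jb_k/M_n)=\tp(b'_ib'_jb'_k/M_n)$ for all triples. By quantifier elimination again, such a $3$-type over $M_n$ is determined by the circular positions, relative to $M_n$, of the at most $3n$ elements $R_n^l(b_i),R_n^l(b_j),R_n^l(b_k)$; whenever two of these elements have distinct standard parts in $\bbR/\bbZ$, their relation to any given parameter is read off from the $\emptyset$-type together with the one-variable types $\tp(b_i/M_n),\tp(b_j/M_n),\tp(b_k/M_n)$. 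The only case contributing new information is that of two distinct such elements $c_1,c_2$ with the same standard part $\beta$, i.e.\ infinitely close to each other. Here is where $n>1$ enters: since $c_2$ is infinitely close to $c_1$, it lies in the arc $(c_1,R_n(c_1))$, of length $1/n>0$, if and only if it lies immediately counterclockwise of $c_1$; hence the $\emptyset$-formula $C_n(c_1,R_n(c_1),c_2)$ decides the local order of $c_1$ and $c_2$, and together with whether $c_1=c_2$ (also decided by $\tp(m_n/\emptyset)$) this determines all the relations $C_n(c_1,c_2,a)$ for $a\in M_n$. As $\bar b$ and $\bar b'$ agree on $\tp(m_n/\emptyset)$ and on all one-variable types over $M_n$, they agree on all these $3$-types over $M_n$, which proves the statement and hence the lemma.

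The only step that is not formal bookkeeping is this reduction — recovering $\tp(\bar b/M_n)$ from the one-variable types $\tp(b_j/M_n)$ — and its one genuine idea is that $C_n(x,R_n(x),y)$ detects the local order of infinitely close elements; the remaining ingredients (functoriality of enveloping semigroups, the coincidence of the $\pi^{(j)}_*$, and the transfer to Ellis groups) are routine.
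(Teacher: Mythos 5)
Your proof is correct and follows essentially the same strategy as the paper's: both rest on the same ``orthogonality'' claim (a complete type over $M_n$ is determined by its one-variable restrictions, proved via quantifier elimination and a case analysis of $C_n$ with respect to standard parts and the local order given by $C_n(\cdot,\cdot,R_n(\cdot))$), together with surjectivity of the restriction map coming from the uniqueness of the $1$-type over $\emptyset$. The one stylistic difference is in the injectivity step: you observe that all the induced maps $\pi^{(j)}_*$ coincide (by agreement on the dense image of $\Aut(M_n)$ and continuity), whereas the paper argues concretely by producing an automorphism $\sigma\in\Aut(M')$ moving the first coordinate to the $k$-th and transporting the disagreement; these are two packagings of the same underlying computation, and yours is arguably a little more transparent.
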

	\begin{proof}
		We have the following ``orthogonality" property.
		
		\begin{clm*}
			Let $p,q\in S_{m_n}(M_n)$ satisfy the condition that for each single variable $x$, $p\restr_x=q\restr_x$. Then $p=q$.
		\end{clm*}
		\begin{clmproof}
			For $c_1',c_2'\in \fC$, write $c_1'<c_2'$ for $C_n(c_1',c_2',R_n(c_1'))$. Note that for each $r\in S^1$, this is a linear ordering on the set of all $c'$ with $\st(c')=r$. Furthermore, for any $c_1',c_2',c_3'$ we have that $C_n(c_1',c_2',c_3')$ holds if and only if one of the following holds:
			\begin{itemize}
				\item
				$\st(c_1'),\st(c_2'),\st(c_3')$ are all distinct and they are in the standard circular order on $S^1$,
				\item 
				$\st(c_1')=\st(c_2')\neq \st(c_3')$ and $c_1'<c_2'$,
				\item 
				$\st(c_1')\neq\st(c_2')=\st(c_3')$ and $c_2'<c_3'$,
				\item 
				$\st(c_2')\neq\st(c_1')=\st(c_3')$ and $c_1'>c_3'$,
				\item 
				$\st(c_1')=\st(c_2')=\st(c_3')$ and ($c_1'<c_2'<c_3'$ or $c_3'<c_1'<c_2'$ or $c_2'<c_3'<c_1'$).
			\end{itemize}
			
			We need to show that for each $m'=(m'_k)_{k\in \bbN}$ satisfying $\tp(m_n/\emptyset)$, we have $\tp(m_n/\emptyset)\cup\bigcup_k \tp(m'_k/M_n)\vdash \tp(m'/M_n)$. By quantifier elimination, it is enough to show that the type on the left implies each atomic formula (or negation) in $\tp(m'/M_n)$. The only nontrivial cases are of the form $C_n(R_n^i(x_1),R_n^j(x_2),c)$, $C_n(R_n^i(x_1),c,R_n^j(x_2))$, $C_n(c,R_n^i(x_1),R_n^j(x_2))$ (or negations), where $i,j \in \{0,\dots,n-1\}$ and $c \in M_n$. But that follows immediately from the preceding paragraph (and the fact that the standard part is determined by the type over $M_n$).
		\end{clmproof}
		
		It follows from quantifier elimination that there is a unique 1-type over $\emptyset$, so the restriction to the first variable $S_{m_n}(M_n)\to S_1(M_n)$ is surjective, and (since it is obviously equivariant) it gives us a surjective homomorphism $E(\Aut(M_n),S_{m_n}(M_n))\to E(\Aut(M_n),S_1(M_n))$. We need to show that it is injective.

		Suppose $f_1,f_2\in E(\Aut(M_n),S_{m_n}(M_n))$ are distinct, so there is some $p\in S_{m_n}(M_n)$ such that $f_1(p)\neq f_2(p)$. But then, by the claim, there is a variable $x_k$ such that $f_1(p)\restr x_k\neq f_2(p)\restr x_k$. Choose $m'=(m'_k)_{k\in \bbN}\models p$; then $m'$ enumerates a countable $M'\preceq \fC$. By $\omega$-categoricity and the fact that there is a unique 1-type over $\emptyset$, there is $\sigma\in \Aut(M')$ such that $\sigma(m'_1)=m'_k$. Now, if we put $p':=\tp(\sigma(m')/M_n)$, we have that $p'\restr_{x_1}=p\restr_{x_k}$. From that, we obtain $f_1(p')\restr_{x_1}=f_1(p)\restr_{x_k}\neq f_2(p)\restr_{x_k}=f_2(p')\restr_{x_1}$. It follows that the epimorphism $E(\Aut(M_n),S_{m_n}(M_n))\to E(\Aut(M_n),S_1(M_n))$ induced by the restriction to the first variable is injective, so we are done.
	\end{proof}

	\begin{prop}\label{prop:product of Ellis groups}
		Suppose we have a multi-sorted structure $M=(M_n)_n$, where the sorts $M_n$ are arbitrary, without any functions or relations between them. Enumerate each $M_n$ by $m_n$ and put $m=(m_n)_n$. Then $E(\Aut(M),S_m(M))\cong \prod_n E(\Aut(M_n),S_{m_n}(M))$, and similarly, the minimal left ideals and the Ellis groups (equipped with the $\tau$-topology) are the products of minimal left ideals and Ellis groups, respectively.
	\end{prop}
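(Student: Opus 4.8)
The plan is to reduce the statement to a general fact in topological dynamics about products of group actions, and then prove that fact. Since there are no functions or relations between the sorts, an automorphism of $M$ is just an independent choice of an automorphism of each $M_n$, so $\Aut(M)=\prod_n\Aut(M_n)$; and, by the standard fact that in a disjoint union of structures in disjoint languages a complete type over $M$ is determined by its restrictions to the individual sorts (together with homogeneity of $\fC$), the restriction maps give an $\Aut(M)$-equivariant homeomorphism $S_m(M)\cong\prod_n S_{m_n}(M)$, where $S_{m_n}(M)$ (canonically the space of $m_n$-types over $M_n$) is acted on nontrivially only by $\Aut(M_n)$. Thus it suffices to prove: if $G=\prod_n G_n$ acts on $X=\prod_n X_n$ by the coordinatewise product of actions $G_n\curvearrowright X_n$ of groups $G_n$ on compact Hausdorff spaces $X_n$, then there is a canonical isomorphism $E(G,X)\cong\prod_n E(G_n,X_n)$ of compact left topological semigroups which carries the minimal left ideals, the idempotents, and the Ellis group together with its $\tau$-topology to the corresponding products.

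First I would prove the semigroup isomorphism. Each $\pi_\sigma$ for $\sigma=(\sigma_n)_n\in G$ is a \emph{product map}, acting coordinatewise; the set $P$ of all product maps in $X^X$ is closed (for fixed $n$ and fixed $x,x'$ with $x_n=x'_n$, the set of $f$ with $f(x)_n=f(x')_n$ is closed, as $X_n$ is Hausdorff), and the obvious bijection $P\cong\prod_n X_n^{X_n}$ is a homeomorphism and an isomorphism of left topological semigroups, since composition of product maps is coordinatewise. Under this identification the image of $G$ is $\prod_n\{\pi_{\sigma_n}\mid\sigma_n\in G_n\}$, and since the closure of a product of nonempty sets is the product of their closures, its closure is $\prod_n E(G_n,X_n)$; this gives $EL:=E(G,X)\cong\prod_n EL_n$, with $EL_n:=E(G_n,X_n)$. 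Next, a routine argument with the structure theory of minimal left ideals shows that in a product $S=\prod_n S_n$ of compact Hausdorff left topological semigroups the minimal left ideals are exactly the products $\prod_n\cM_n$ of minimal left ideals $\cM_n\unlhd S_n$ (for the nontrivial direction: if $\cM\unlhd S$ is minimal and $a=(a_n)_n\in\cM$ then $\cM=Sa=\prod_n S_na_n$, and each $S_na_n$ must itself be minimal, else a proper left ideal in one coordinate would contradict minimality of $\cM$); that the idempotents of $\prod_n\cM_n$ are exactly the tuples of idempotents; and hence that, for $u=(u_n)_n$, $u\cM=\prod_n u_n\cM_n$ as a group.

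It remains to identify the $\tau$-topology on $u\cM=\prod_n u_n\cM_n$ with the product of the $\tau_n$-topologies, which is the main point. The key computation is that $\circ$ respects products: for $A_n\subseteq u_n\cM_n$ one has $u\circ\prod_n A_n=\prod_n(u_n\circ A_n)$. The inclusion $\subseteq$ is immediate by looking at coordinates; for $\supseteq$ one runs the usual diagonal-net argument, now indexed by the product of the relevant directed sets (the coordinatewise order on a product of directed sets is again directed), choosing in each coordinate a net witnessing membership in $u_n\circ A_n$. From this, $\cl_\tau(\prod_n A_n)=u(u\circ\prod_n A_n)=\prod_n u_n(u_n\circ A_n)=\prod_n\cl_{\tau_n}(A_n)$, so every product of $\tau_n$-closed sets, in particular every subbasic closed set for $\prod_n\tau_n$, is $\tau$-closed; hence $\prod_n\tau_n$ is coarser than $\tau$. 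For the reverse, recall that $\tau$ is compact (Fact~\ref{fct:tau_top}); when all factors $u_n\cM_n$ are Hausdorff (e.g.\ in the examples of this appendix, where each $u_n\cM_n$ is finite), $\prod_n\tau_n$ is compact Hausdorff, so the continuous bijection $\mathrm{id}\colon(u\cM,\tau)\to(u\cM,\prod_n\tau_n)$ is a homeomorphism and we are done. In general one argues directly that every $\tau$-closed set is $\prod_n\tau_n$-closed, using the intrinsic description of $\tau$ in Fact~\ref{fct:tau_top}\eqref{it:lem:tau_top:nearlycont}, the decomposition $\overline{u\cM}=\prod_n\overline{u_n\cM_n}$ of the $EL$-closure, and the fact (Fact~\ref{fct:strange_cont}) that the retraction $f\mapsto uf$ is the product of the retractions $f_n\mapsto u_nf_n$; I expect pinning down the $\tau$-topology on the non-rectangular closed sets to be the only real obstacle, everything else being bookkeeping. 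Finally, the statements about minimal left ideals and about the Ellis group are simply the restrictions of the $EL$-level statement.
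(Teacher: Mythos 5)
Your proof takes essentially the same route as the paper's own (very terse) argument: reduce to $\Aut(M)\cong\prod_n\Aut(M_n)$ and $S_m(M)\approx\prod_nS_{m_n}(M_n)$, and then analyse how the Ellis-semigroup constructions behave under products of flows. You supply useful detail the paper elides: identifying $E(\prod_nG_n,\prod_nX_n)\cong\prod_nE(G_n,X_n)$ via the closed subset of product maps in $X^X$, checking that minimal left ideals and idempotents multiply coordinatewise, and, most usefully, proving the $\circ$-product formula $u\circ\prod_nA_n=\prod_n(u_n\circ A_n)$. The diagonal-net argument for $\supseteq$ is correct, since a net on a product of directed sets that depends only on the $n$-th coordinate converges iff the corresponding net in that coordinate does. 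This gives $\cl_\tau\bigl(\prod_nA_n\bigr)=\prod_n\cl_{\tau_n}(A_n)$, hence that $\prod_n\tau_n$ is coarser than $\tau$.

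Where the argument is not finished --- as you say yourself --- is the reverse inclusion $\tau\subseteq\prod_n\tau_n$. Your compactness-plus-Hausdorffness argument handles only the case where every $(u_n\cM_n,\tau_n)$ is Hausdorff; by Fact~\ref{fct:tau_top} these groups are in general only $T_1$, and a continuous bijection between compact $T_1$ spaces need not be a homeomorphism, so the general case is genuinely open in your write-up. This is, however, exactly the point the paper itself declines to prove (``requires some work and is left as an exercise''), and the Hausdorff case you do settle covers everything the appendix actually uses (there the factor Ellis groups are finite). So your account matches the paper's intent and level of detail, while more honestly flagging where the remaining work lies. To close the gap one would still need to show that the continuous bijection $(u\cM,\tau)\to\prod_n(u_n\cM_n,\tau_n)$ (continuity of each projection follows from your $\circ$-formula applied to subbasic product-closed sets) is a closed map, and that does not follow from compactness alone outside the Hausdorff setting.
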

	\begin{proof}
		There is an obvious isomorphism $\Aut(M)\cong \prod_n\Aut(M_n)$ and homeomorphism $S_{m}(M)\approx \prod_nS_{m_n}(M_n)$, which together yield an isomorphism $(\Aut(M), S_m(M)) \cong (\prod_n\Aut(M_n), \prod_nS_{m_n}(M_n))$. This gives us an isomorphism $E(\Aut(M),S_m(M))\cong E(\prod_n \Aut(M_n),\prod_n S_{m_n}(M))$, and the last semigroup is easily seen to be isomorphic to $\prod_n E(\Aut(M_n),S_{m_n}(M))$.
		
		The corresponding statements about minimal left ideals and Ellis groups are straightforward consequences, except the fact that the induced isomorphism for Ellis groups is topological, which requires some work and is left as an exercise.
	\end{proof}
	
	\begin{ex}
		Consider the theory $T$ of the multi-sorted structure $M=(M_n)_{n>1}$, where each $M_n$ is the countable model as described at the beginning of this section. Then, if we enumerate $M$ as $m$, then $M$ is ambitious (because it is $\omega$-categorical). By Lemma \ref{lem:ellis_group_onesort} and Proposition \ref{prop:product of Ellis groups}, the Ellis group $u\cM$ of $(\Aut(M),S_m(M))$ is $\prod_n \bbZ/n\bbZ$ with the product topology. In particular, it is a Hausdorff (compact and Polish) group, so $H(u\cM)$ is trivial. 

		Moreover, the group $D=[u]_\equiv\cap u\cM$ is trivial. Indeed, if $f\in u\cM$ is nontrivial, then, for some $n$, $f\restr_{S_{m_n}(M_n)} \ne u\restr_{S_{m_n}(M_n)}$. Therefore, by Lemma \ref{lem:ellis_group_onesort}, the restriction $f\restr_{S_1(M_n)}$ to the first coordinate of $m_n$ is distinct from $u\restr_{S_1(M_n)}$. On the other hand, the argument after the claim in the proof of Proposition \ref{prop:group_onetypes} easily shows that $f\restr_{S_1(M_n)}=R_n^ju\restr_{S_1(M_n)}$ for some $j \in \{0,\dots,n-1\}$. Hence, $j \ne 0$. Thus, $f(\tp(m/M))\restr_x=R_n^ju \restr_{S_1(M_n)}(\tp(m_n^0/M_n))\ne u \restr_{S_1(M_n)}(\tp(m_n^0/M_n))$, where $m_n^0$ is the first coordinate of $m_n$ and $x$ is the corresponding variable. Hence, $f(\tp(m/M)) \ne u(\tp(m/M))$, i.e. $ f \notin [u]_\equiv$.

		 We have proved that $u\cM/H(u\cM)D=u\cM/H(u\cM)=u\cM \cong \prod_n \bbZ/n\bbZ$, so the group $\hat G$ from Theorem~\ref{thm:main} is $u\cM$, which we identify with $\prod_n \bbZ/n\bbZ$. Now, any $g\in \hat G$ can be uniquely represented as a sequence $(g_n)_{n\in \bbN}$, where $g_n$ is an integer in the interval $(-n/2,n/2]$.

		We claim that $g\in \ker \hat r$ if and only if the $g_n$'s are absolutely bounded.

		By \cite[Corollary~4.3]{CLPZ01}, for any $a\in M_n(\fC)$ and integer $k \in (-n/2,n/2]$ we have $d_L(a,R_n^k(a)) \geq k$, which easily implies (having in mind the precise identification of $u\cM$ with $\prod_n \bbZ/n\bbZ$) that unbounded sequences are not in the kernel.
	
		On the other hand, to show that absolutely bounded sequences are in $\ker \hat r$, it is enough to show this for sequences bounded by $1$. But for an element $f \in u\cM$ corresponding to such a sequence, the argument after the claim in the proof of Proposition \ref{prop:group_onetypes} and Lemma \ref{lem:ellis_group_onesort} easily yield that for every $n$, $f\restr_{S_{m_n}(M_n)} =R_n^{\epsilon_n}u \restr_{S_{m_n}(M_n)}=u \restr_{S_{m_n}(M_n)}R_n^{\epsilon_n}$ for some $\epsilon_n \in \{-1,0,1\}$. By \cite[Lemma 3.7]{CLPZ01}, it is enough to show that $d_L(m_n,R_n(m_n))$ is bounded (when $n$ varies). 
		By $\omega$-categoricity, we can replace $m_n$ by an enumeration $m_n'$ of any other countable model $M_n'$. So let $m'_n$ be an enumeration of $(\bbQ\cap ([0,1/3n)+\bbZ/n ))/\bbZ\subseteq \bbQ/\bbZ$. Furthermore, put $m''_n:=m'_n+1/3n$ and $m'''_n:=m'_n+2/3n$, and write $M'_n,M''_n,M'''_n$ for the respective models they enumerate. Then $\tp(m'_n/M'''_n)=\tp(m''_n/M'''_n)$, $\tp(m''_n/M'_n)=\tp(m'''_n/M'_n)$, $\tp(m'''_n/M''_n)=\tp(R_n(m'_n)/M''_n)$, so $d_L(m_n',R_n(m_n')) \leq 6$.

		Note that $T$ has NIP (e.g.\ because it is interpretable in an o-minimal theory), so the full Theorem~\ref{thm:main} applies, and the Galois group $\Gal(T)$ is the quotient of $\prod_n \bbZ/n\bbZ$ by the subgroup of bounded sequences. As a topological group, this is exactly the description given by \cite[Theorem~28]{Zie02}; note that the topology is trivial. In terms of Borel cardinality, we obtain $\ell^\infty$ 
		(see the paragraph following the proof of Lemma 3.10 in \cite{KPS13}).
	\end{ex}
	
	\begin{ex}
		Consider the theory $T$ of the multi-sorted structure $M=(M_n,h_{nn'})_{n,n'}$, where $M_n$ are as before, $n$ runs over the integers greater than $1$, while $n'$ ranges over the integers greater than $1$ and dividing $n$; for each pair $n' \mid n$, $h_{nn'}\colon M_n\to M_{n'}$ is the multiplication by $n/n'$. Enumerate each $M_n$ by $m_n$ and $M$ by $(m_n)_n$.

		Each $h_{nn'}$ induces a natural epimorphism $\Aut(M_n)\to \Aut(M_{n'})$, and using that, it is not hard to see that $\Aut(M)\cong \varprojlim_n \Aut(M_n)$. Similarly, $h_{nn'}$ induces a continuous, $\Aut(M_n)$-equivariant surjection $S_{m_n}(M_n)\to S_{m_{n'}}(M_{n'})$ (where $\Aut(M_n)$ acts on $S_{m_{n'}}(M_{n'})$ via $\Aut(M_{n'})$). From that, we can check that in fact, $S_m(M)\approx \varprojlim_n S_{m_n}(M_n)$ and $E(\Aut(M),S_m(M))\cong \varprojlim_n E(\Aut(M_n),S_{m_n}(M_n))$, and similarly to the case of products, the minimal left ideals and the Ellis groups in $E(\Aut(M),S_m(M))$ are the inverse limits of minimal left ideals and Ellis groups in $E(\Aut(M_n),S_{m_n}(M_n))$, respectively.
		In particular, the Ellis group $u\cM$ of $E(\Aut(M),S_m(M))$ is isomorphic to the profinite completion of integers $\widehat \bbZ=\varprojlim_n\bbZ/n\bbZ$. By analysis analogous to the preceding example, we see that $H(u\cM)$ and $D$ are trivial, and $\ker \hat r$ corresponds to the elements of $\widehat \bbZ$ represented by bounded sequences. Those sequences are exactly the elements of $\bbZ\subseteq \widehat \bbZ$ (this follows from the observation that a bounded sequence representing an element of $\widehat \bbZ$ has to eventually stabilize).
		
		Thus, by Theorem~\ref{thm:main}, $\Gal(T)$ is the quotient $\widehat \bbZ/\bbZ$ (which, again, has trivial topology), and, since the theory is NIP (because it is interpretable in an o-minimal theory), $\Gal(T)$ also has the Borel cardinality of $\widehat\bbZ/\bbZ$ which is $E_0$ (which can be seen as a consequence of the fact that it is hyperfinite (as an orbit equivalence relation of a $\bbZ$-action) and non-smooth (as the quotient of a compact Polish group by a non-closed subgroup)).
	\end{ex}
	
	\begin{rem}
		One can show that for $M_1$ (the pure circular order) the Ellis group of $(\Aut(M_1),S_{m_1}(M_1))$ is $\bbZ/1\bbZ$, i.e.\ trivial. However, Lemma~\ref{lem:ellis_group_onesort} does not hold for $n=1$, so one needs a different argument for this particular case (which we will not discuss here). In consequence, we could include $n=1$ in both examples given above (with the same conclusions).
	\end{rem}

	\printbibliography
\end{document}